%
%
\documentclass[12pt]{amsart}
\usepackage[colorlinks=true, pdfstartview=FitV, linkcolor=blue, citecolor=blue, urlcolor=blue, breaklinks=true]{hyperref}
\usepackage{amsmath,amsfonts,amssymb,amsthm,amscd,array,comment,euscript,mathtools,etoolbox}
\usepackage{paralist}
\usepackage[usenames]{color}
\usepackage[all]{xy}

\newcommand{\arxiv}[1]{\href{http://arxiv.org/abs/#1}{\tt arXiv:\nolinkurl{#1}}}

%
%

\newtoggle{comments}
\newtoggle{details}
\newtoggle{detailsnote}

\togglefalse{comments}     

\togglefalse{details}      

\toggletrue{detailsnote}   

%
%

\leftmargin=0in
\topmargin=0pt
\headheight=0pt
\oddsidemargin=0in
\evensidemargin=0in
\textheight=8.75in
\textwidth=6.5in
\parindent=0.5cm
\headsep=0.25in
\widowpenalty=1000

%
%

\newcommand\C{\mathbb{C}}
\newcommand\Z{\mathbb{Z}}
\newcommand\Q{\mathbb{Q}}

\newcommand\N{\mathbb{N}}
\newcommand\bbA{\mathbb{A}}
\newcommand\kk{{\Bbbk}}

\newcommand\rat{\mathrm{rat}}
\newcommand\ev{\mathrm{ev}}
\newcommand\hev{\mathrm{hev}}

\newcommand\g{\mathfrak{g}}
\newcommand\h{\mathfrak{h}}
\newcommand\n{\mathfrak{n}}
\newcommand\fa{\mathfrak{a}}

\newcommand\sm{\mathsf{m}}
\newcommand\sM{\mathsf{M}}

\newcommand\mm{\mathbb{M}}

\newcommand\cF{\mathcal{F}}

\newcommand\cB{\mathcal{B}}

\newcommand\cI{\mathcal{I}}
\newcommand\cE{\mathcal{E}}

\newcommand\cU{\mathcal{U}}

\newcommand\bx{\mathbf{x}}

\newcommand\bA{\mathbf{A}}
\newcommand\bW{\mathbf{W}}
\newcommand\bV{\mathbf{V}}
\newcommand\bR{\mathbf{R}}
\newcommand\bi{\mathbf{i}}

\newcommand\bT{\mathbf{T}}

\newcommand{\hei}{\operatorname{ht}}
\newcommand{\ts}{\textstyle}
\newcommand*{\defeq}{\mathrel{\vcenter{\baselineskip0.5ex \lineskiplimit0pt
                     \hbox{\scriptsize.}\hbox{\scriptsize.}}}%
                     =}
\newcommand{\llift}{{\lambda'}} 

%
%

\DeclareMathOperator{\Hom}{Hom}
\DeclareMathOperator{\Ext}{Ext}
\DeclareMathOperator{\rank}{rank}

\DeclareMathOperator{\Span}{Span}
\DeclareMathOperator{\Spec}{Spec}
\DeclareMathOperator{\maxSpec}{maxSpec}

\DeclareMathOperator{\Id}{Id}
\DeclareMathOperator{\Ann}{Ann}
\DeclareMathOperator{\Supp}{Supp} 
\DeclareMathOperator{\Ob}{Ob}
\DeclareMathOperator{\Mor}{Mor}

\DeclareMathOperator{\ad}{ad}
\DeclareMathOperator{\sym}{sym}
\DeclareMathOperator{\wt}{wt}

\DeclareMathOperator{\rad}{rad}

\DeclareMathOperator{\Irr}{Irr}

%
%

\theoremstyle{plain}
\newtheorem{theo}{Theorem}[section]
\newtheorem*{theo*}{Theorem}
\newtheorem{prop}[theo]{Proposition}
\newtheorem{lem}[theo]{Lemma}
\newtheorem{cor}[theo]{Corollary}

\theoremstyle{definition}
\newtheorem{defin}[theo]{Definition}
\newtheorem*{rem*}{Remark}
\newtheorem{rem}[theo]{Remark}

\newtheorem{example}[theo]{Example}
\newtheorem{assumption}[theo]{Assumption}

\numberwithin{equation}{section}

\allowdisplaybreaks

%
%

\iftoggle{comments}{
\newcommand{\comments}[1]{ \begin{center} \parbox{5 in}{{\bf {\footnotesize Comments:  }}{\footnotesize \textit{#1}}} \end{center}}
}{
\newcommand{\comments}[1]{}
}

\iftoggle{details}{
\newcommand{\details}[1]{\smallskip \color{blue} \begin{footnotesize} \textbf{Details:} #1 \end{footnotesize} \color{black}}
}{
\newcommand{\details}[1]{}
}

%
\begin{document}
%

\title{Global Weyl modules for equivariant map algebras}

\author{Ghislain Fourier}
\address{G.~Fourier: Mathematisches Institut der Universit\"at zu K\"oln}
\email{gfourier@math.uni-koeln.de}
\author{Nathan Manning}
\address{N.~Manning: Department of Mathematics and Statistics, University of Ottawa}
\email{nmanning@uottawa.ca}
\author{Alistair Savage}
\address{A.~Savage: Department of Mathematics and Statistics, University of Ottawa}
\email{alistair.savage@uottawa.ca}
\thanks{The first author was supported by Deutsche Forschungsgemeinschaft Priority Program 1388 (Representation Theory).  The research of the third author was supported by a Discovery Grant from the Natural Sciences and Engineering Research Council of Canada.  The second author was partially supported by the Discovery Grant of the third author.}

\subjclass[2010]{17B65, 17B10}


\keywords{Equivariant map algebra, loop algebra, Weyl module, representation, module}

\begin{abstract}
  Equivariant map algebras are Lie algebras of algebraic maps from a scheme (or algebraic variety) to a \emph{target} finite-dimensional Lie algebra (in the case of the current paper, we assume the latter is a simple Lie algebra) that are equivariant with respect to the action of a finite group.  In the first part of this paper, we define global Weyl modules for equivariant map algebras satisfying a mild assumption.  We then identify a commutative algebra $\bA^\lambda_\Gamma$ that acts naturally on the global Weyl modules, which leads to a Weyl functor from the category of $\bA^\lambda_\Gamma$-modules to the category of modules for the equivariant map algebra in question.  These definitions extend the ones previously given for generalized current algebras (i.e.\ untwisted map algebras) and twisted loop algebras.

  In the second part of the paper, we restrict our attention to equivariant map algebras where the group involved is abelian, acts on the target Lie algebra by diagram automorphisms, and freely on (the set of rational points of) the scheme.  Under these additional assumptions, we prove that $\bA^\lambda_\Gamma$ is finitely generated and the global Weyl module is a finitely generated $\bA^\lambda_\Gamma$-module.  We also define \emph{local} Weyl modules via the Weyl functor and prove that these coincide with the local Weyl modules defined directly in \cite{FKKS12}.  Finally, we show that $\bA^\lambda_\Gamma$ is the algebra of coinvariants of the analogous algebra in the untwisted case.
\end{abstract}

\maketitle \thispagestyle{empty}

\tableofcontents

%
\section{Introduction}
%

Weyl modules for the loop algebra $\g \otimes \mathbb{C}[t^{\pm 1}]$ of a finite-dimensional simple complex Lie algebra $\g$ were introduced by Chari and Pressley more than a decade ago (see \cite{CP01}).  Since then, the study of their properties (for instance, their homological behavior, dimension and character) has been a fruitful and successful process.

The category of finite-dimensional modules for $\g \otimes  \mathbb{C}[t^{\pm 1}]$ is not semisimple. In analogy with the modular representation theory of simple finite-dimensional Lie algebras, for every simple module, there exists a \emph{(local) Weyl module} satisfying certain universal properties.  This Weyl module is finite-dimensional and its character and dimension have been studied and computed in a series of papers (see \cite{CP01,CL06,FL07,Nao12}).  Local Weyl modules have been identified with certain Demazure modules of affine Kac--Moody algebras and their characters are also known to be characters of the $q \to 1$ limit of simple modules of the quantum affine algebra (see \cite{FL07}).

In \cite{CP01}, the class of \emph{global Weyl modules} was defined.  These modules are projective objects in the category of those $\g \otimes \mathbb{C}[t^{\pm 1}]$-modules whose weights are bounded by some fixed dominant integral $\g$-weight. Their $\g$-weight spaces are right modules over polynomial rings in finitely many variables.  It was conjectured in \cite{CP01}, and it can be deduced from results in the aforementioned series of papers, that the global Weyl module is a free right module of finite rank for this polynomial ring (see Theorem~\ref{theo:global-Weyl-module-projective}).

It turns out that the global Weyl module might be the most interesting object to study in the category of bounded $\g \otimes \mathbb{C}[t^{\pm 1}]$-modules.  For instance, it is subject to an analog of the Bernstein--Gelfand--Gelfand reciprocity for simple Lie algebras (see \cite{BBCKL12,BCM12}).  Furthermore, its character is known to be the $q$-Whittaker function, a solution to the $q$-Toda integrable system (see \cite{BF12}).

There are several approaches to generalizing the above objects. In \cite{FL04}, local and global Weyl modules were defined in the setting where $\mathbb{C}[t^{\pm 1}]$ is replaced by the coordinate ring of a complex affine variety.  A more general approach was taken in \cite{CFK10}.  There the modules for a generalized current algebra $\g \otimes A$, where $A$ is a commutative, associative complex unital algebra were studied.  The global Weyl module is again a projective object in a suitable category (see Corollary~\ref{cor:global-Weyl-module-free}) and its weight spaces are right modules for a certain commutative algebra. The \emph{Weyl functor} was introduced and local Weyl modules were studied, together with their homological properties. The algebra of the highest weight space was analyzed and in important cases identified with a tensor product of symmetric powers of $A$.

A different approach was taken in \cite{CFS08} and \cite{FMS11}, where $\g \otimes \mathbb{C}[t^{\pm 1}]$ was replaced by the twisted loop algebra $(\g \otimes \mathbb{C}[t^{\pm 1}])^\Gamma$.  This is the fixed point algebra of $\g \otimes \mathbb{C}[t^{\pm 1}]$ under the action of a group $\Gamma$ of automorphisms of $\g$, generated by a Dynkin diagram automorphism.  This group acts on $\mathbb{C}[t^{\pm 1}]$ by scaling $t$ by roots of unity.  It turns out that every local Weyl module of the twisted loop algebra is obtained by restriction from a local Weyl module of $\g \otimes \mathbb{C}[t^{\pm 1}]$. The global Weyl module was defined in \cite{FMS11} and it was shown that it is again a free right module of finite rank for a certain commutative algebra and it can be embedded in a direct sum of global Weyl modules for $\g \otimes \mathbb{C}[t^{\pm 1}]$. In \cite[Remark 1.10]{BF12} it is conjectured that its character solves the $q$-Toda integrable system in the nonsimply laced case.

In \cite{FKKS12}, the definition of local Weyl modules was generalized to the setting of \emph{equivariant map algebras}. Let $X =\Spec A$ be an affine scheme and $\Gamma$ be a finite group acting on $\g$ and $X$ by automorphisms. The equivariant map algebra is the Lie algebra of equivariant algebraic maps from $X$ to $\g$ and is denoted $(\g \otimes A)^\Gamma$. Local Weyl modules were defined for these algebras under the assumptions that $X$ is of finite type, $\Gamma$ is an abelian group, and the action on $X$ is free.  A key ingredient in this study was the definition of certain \emph{twisting} and \emph{untwisting} functors that relate the representation theory of $\g \otimes A$ and $(\g \otimes A)^\Gamma$.  It was also shown that the homological properties of local Weyl modules can be generalized to the setting of equivariant map algebras.

In the current paper, we define \emph{global} Weyl modules for equivariant map algebras.  The paper can be divided into two parts.  The first part comprises Sections~\ref{sec:EMAs} to~\ref{sec:Weyl-functor}.  After some preliminaries on equivariant map algebras in~Section~\ref{sec:EMAs}, we define in Section~\ref{sec:Weyl-modules} global Weyl modules for equivariant map algebras satisfying a mild assumption (see Assumption~\ref{assumption}).  In particular, this assumption is always satisfied if $\Gamma$ is cyclic or acts on $\g$ by diagram automorphisms.  We also give a presentation of the global Weyl modules in terms of generators and relations (Proposition~\ref{weyl:gen-rel}).  In Section~\ref{sec:Weyl-functor}, we extend the notion of Weyl functors to the twisted/equivariant setting.  In particular, we define a commutative algebra $\bA^\lambda_\Gamma$ which acts naturally on the global Weyl module with highest $\g^\Gamma$-weight $\lambda$.  The Weyl functor is then a functor from the category of $\bA^\lambda_\Gamma$-modules to the category of $(\g \otimes A)^\Gamma$-modules.  We show that these functors (and the global Weyl modules) possess twisted versions of properties satisfied in the untwisted setting.

The second part of the current paper (Sections~\ref{sec:properties-global}--\ref{sec:A-lambda-Gamma}) concerns equivariant map algebras for which $\Gamma$ is abelian, acts on $\g$ by Dynkin diagram automorphisms, and on $\maxSpec A$ freely.  (Note that $\Gamma$ is abelian and acts freely on $\maxSpec A$ in the case of (twisted) loop and multiloop algebras.)  Under these additional assumptions, our main results are the following.
\begin{enumerate}
  \item (Theorem~\ref{theo:A-fg}) The algebras $\bA^\lambda_\Gamma$ are finitely generated.

  \item (Theorem~\ref{theo:finite-generated}) The global Weyl module (of highest weight $\lambda$) is finitely generated as an $\bA_\Gamma^\lambda$-module.
\end{enumerate}
After recalling the twisting functors introduced in \cite{FKKS12} and proving some additional properties of these functors in Section~\ref{sec:twisting}, we turn our attention to local Weyl modules in Section~\ref{sec:local-Weyl-modules}.  There we define the local Weyl modules as the images of one-dimensional irreducible $\bA^\lambda_\Gamma$-modules under the Weyl functor (as in the untwisted setting).  We show (Proposition~\ref{prop:equivalence-of-lWm-defs}) that the modules so defined coincide with those defined directly (i.e.\ without the Weyl functor) in \cite{FKKS12}.  Finally, in Section~\ref{sec:A-lambda-Gamma}, we examine the algebra $\bA^\lambda_\Gamma$.  In particular, we show (Theorem~\ref{theo:Alambda-isom}) that this algebra is isomorphic to a tensor product of symmetric algebras of fixed point subalgebras of $A$.  Under an additional assumption, we also identify  (Theorem~\ref{theo:AlamGam-coinvariants}) $\bA^\lambda_\Gamma$ with the algebra of $\Gamma$-coinvariants of the algebra $\bA^{\llift}$ corresponding to the case where $\Gamma$ is trivial (defined in \cite{CFK10}, but denoted by $\bA_{\llift}$ there), where $\llift$ is a $\g$-weight corresponding to the $\g^\Gamma$-weight $\lambda$.

\medskip

\paragraph{\textbf{Notation}} The set of nonnegative (respectively, positive) integers is denoted by $\N$ (respectively, $\N_+$).  Throughout $\kk$ will denote an algebraically closed field of characteristic zero.  All algebras are over $\kk$ unless otherwise indicated and all associative algebras are assumed to be unital.  Whenever a group $\Gamma$ acts on a $\kk$-vector space $Y$, we denote the subspace of fixed points by $Y^\Gamma$.  For a ring $B$, $B$-mod will denote the category of left $B$-modules.  The notation $S^nB$, $n \in \N$, will denote the subring $(B^{\otimes n})^{S_n}$ of $B^{\otimes n}$ consisting of elements fixed under the natural action of the symmetric group $S_n$ on $B^{\otimes n}$.  Since we work over a field of characteristic zero, this is isomorphic to the quotient of $B^{\otimes n}$ by the ideal generated by the elements $u - \tau(u)$, $u \in B^{\otimes n}$, $\tau \in S_n$.

For a Lie algebra $L$, we denote its universal enveloping algebra by $\cU(L)$.  We have the standard filtration $\cU(L) = \sum_{n \in \N} \cU(L)_n$.   When we refer to the nodes of the Dynkin diagram of a simple Lie algebra of rank $n$ by elements of the set $\{1,2,\dotsc,n\}$, we are referring to the standard labeling that can be found, for instance, in \cite[\S11.4]{Hum72}.  When we refer to the Dynkin diagram of a reductive Lie algebra, we mean the Dynkin diagram of its semisimple part (i.e.\ its derived algebra).  Since we will need to refer to weights of a simple Lie algebra $\g$ and also its subalgebra $\g^\Gamma$ fixed by the action of a group $\Gamma$, we will typically denote weights of $\g$ by $\llift$ and weights of $\g^\Gamma$ by $\lambda$ to avoid confusion.

We will denote by $A$ a finitely generated (hence Noetherian) commutative associative algebra over $\kk$.  We let $X_\rat$ denote the set of $k$-rational points of $X \defeq \Spec A$.  Since $A$ is finitely generated, we have $X_\rat = \maxSpec A$.  For a point $x \in X_\rat$, we will denote the corresponding maximal ideal of $A$ by $\sm_x$.  In some instances, we will identify a point $x$ with its maximal ideal $\sm_x$.

For the reader's convenience, we give here an index of important notation used in the paper.

\medskip

\begin{center}
  \begin{tiny}
    \begin{tabular}{rl!{\hspace{2cm}}rl}
      \multicolumn{4}{c}{\textbf{Index of Notation}} \\
      $\g^\alpha,\ \g^0,\ \g^\pm$ & Page~\pageref{eq:g-plus-minus} & $\Gamma_i$, $e_i$, $f_i$, $h_i$ & Definition~\ref{def:overline} \\
      $\Lambda_\Gamma,\ \Lambda_\Gamma^+,\ Q_\Gamma,\ Q_\Gamma^+$ & Page~\pageref{def:twisted-lattices} (see also~\eqref{def:Lambda-Gamma}) & $\overline{e_i \otimes a}$, $\overline{f_i \otimes a}$, $\overline{h_i \otimes a}$ & Definition~\ref{def:overline} \\
      $\mathcal U_\Gamma,\ \mathcal U_\Gamma^0,\ \mathcal U_\Gamma^\pm$ & \eqref{eq:u-gamma-def} & $\bV^\Gamma_\lambda$ & Definition~\ref{def:V-Gamma-lambda} \\
      $\Xi$ & Page~\pageref{Xi-def}& $M(\psi),\ M^\Gamma(\psi)$ &  Definition~\ref{def:M-psi} \\
      $\cE$, $\cE_\llift$,  $\cE^\Gamma$, $\cE^\Gamma_\lambda$, $Y_\Gamma$ & Definition~\ref{def:wt-ht} & $\Supp J$ & Page~\pageref{def:supp-ideal} \\
      $\psi_\bx$, $\wt$, $\wt_\Gamma$, $\hei$, $\hei_\Gamma$  & Definition~\ref{def:wt-ht} & $\Ann_A^\Gamma V,\ \Supp_A^\Gamma V$ & Definition~\ref{def:support-module} \\
      $V^\Gamma(\psi)$, $V(\psi)$& Definition~\ref{def:EMA-irreducibles} & $X_*$ & Page~\pageref{def:X-star} \\
      $\cI^\Gamma$, $\cI^\Gamma_{\le \tau}$, $\cI^\Gamma_{\le \lambda}$ &Definition~\ref{def:I-Gamma} & $\cF_{\mathbf x},\ \cF_{\mathbf x}^\Gamma$ & Definition~\ref{def:F-x} \\
      $P^\Gamma(V)$ & \eqref{eq:P-Gamma-V} & $\mathbf T,\ \mathbf T_{\mathbf x}$ & Definition~\ref{def:twisting-functor} \\
      $W^\Gamma(V)$ & Definition~\ref{def:twisted-global-Weyl-module} & $e_\bi$, $f_\bi$, $h_\bi$ & \eqref{eq:ebi-fbi-hbi}\\
      $V^\Gamma(\lambda)$, $v^\Gamma_\lambda$ & Page~\pageref{notation:g-Gamma-irred-modules} & $\kappa_{\bi}$ &Page~\pageref{def:kappa-bi} \\
      $W^\Gamma(\lambda)$, $w^\Gamma_\lambda$ & Lemma~\ref{global-weyl-triang} & $W(\psi),\ W^\Gamma(\psi)$& Definition~\ref{def:local-weyl} \\
      $\mathbf A^\lambda_\Gamma$, $\Ann_{\mathcal U^0_\Gamma}(w_\lambda)$ & Definition~\ref{def:A-Gamma-lambda} & $J(\psi)$ & \eqref{def:J-psi} \\
      $\mathbf W^\Gamma_\lambda$ & Definition~\ref{def:twisted-Weyl-functor} & $\bbA^\lambda_\Gamma$ & \eqref{def:bbA} \\
      $\mathbf R^\Gamma_\lambda$ & Definition~\ref{def:R-Gamma-lambda} & $\tilde{\tau}_\lambda$ & \eqref{def:tau-tilde} \\
      $R^+,\ R^+_\Gamma,\ \Pi,\ \Pi_\Gamma$ & Page~\pageref{def:roots-simple-roots} & $\tau_\lambda$ & Lemma~\ref{lem:tau-surjective} \\
    \end{tabular}
  \end{tiny}
\end{center}

\iftoggle{detailsnote}{
\medskip

\paragraph{\textbf{Note on the arXiv version}} For the interested reader, this arXiv version of this paper includes hidden details of some straightforward computations and arguments that are omitted in the pdf file.  These details can be displayed by switching the \texttt{details} toggle to true in the tex file and recompiling.
}{}

\medskip

\paragraph{\textbf{Acknowledgements}} The authors would like to thank Vyjayanthi Chari, Daniel Daigle, Michael Lau and Erhard Neher for helpful conversations.  In particular, they would like to thank Vyjayanthi Chari for explaining the details of the argument found in the proof of Theorem~\ref{theo:global-Weyl-module-projective}\eqref{theo-item:loop-global-Weyl-module-free}, Daniel Daigle for providing an outline of the arguments found in Appendix~\ref{sec:appendix} and some of the commutative algebra arguments in Section~\ref{sec:A-lambda-Gamma}, Michael Lau for pointing us towards Lemma~\ref{lem:g0-abelian} and Erhard Neher for drawing our attention to Example~\ref{eg:ABFP}.

%
\section{Equivariant map algebras} \label{sec:EMAs}
%

In this section we recall some basic facts about equivariant map algebras.  We refer the reader to \cite{NSS12,NS12} for further details.

Let $\g$ be a finite-dimensional simple Lie algebra over $\kk$, and let $\Gamma$ be a finite group  acting on $\g$ by automorphisms and on the finitely generated commutative associative algebra $A$ by algebra automorphisms (hence on $\Spec A$ by scheme automorphisms).  Thus $\Gamma$ acts diagonally on $\g \otimes A$. As a Lie subalgebra, the set of fixed points $\g^\Gamma$ is reductive in $\g$ (see \cite[Ch.~VII, \S1, no.~5]{Bou75}). That is, $\g^\Gamma$ is a reductive Lie algebra, which acts semisimply on $\g$ by the restriction of the adjoint action of $\g$.

Let $I$ and $I_\Gamma$ be the set of nodes of the Dynkin diagrams of $\g$ of $\g^\Gamma$ respectively.  Fix a triangular decomposition $\g^\Gamma = \n_\Gamma^- \oplus \h_\Gamma \oplus \n_\Gamma^+$ of $\g^\Gamma$.  Denote by $Q_\Gamma^+$ the positive root lattice associated to this triangular decomposition and $Q_\Gamma^- = -Q_\Gamma^+$ the negative root lattice. Similarly, let $\Lambda_\Gamma$ (respectively, $\Lambda_\Gamma^+$) be the corresponding weight lattice (respectively, set of dominant integral weights).  \label{def:twisted-lattices} Relative to $\h_\Gamma$, choose a set of Chevalley generators $\{e^\Gamma_i, f^\Gamma_i, h^\Gamma_i\}_{i \in I_\Gamma}$.

We have a decomposition
\begin{equation*}\label{eq:g-plus-minus} \ts
  \g=\bigoplus_{\alpha\in\h_\Gamma^*} \g^\alpha,\quad \g^\alpha=\left\{x\in\g\ |\ [h,x]=\alpha(h) x,\ h\in\h_\Gamma\right\},
\end{equation*}
with only finitely many $\g^\alpha$ nonzero.  We use superscripts here to avoid confusion with the weight spaces of $\g$ considered as a $\g$-module.  Let
\begin{equation}\label{eq:g-scalene} \ts
  \g^-=\bigoplus_{\alpha\in Q_\Gamma^- \setminus \{0\}} \g^\alpha,\quad \g^+= \bigoplus_{\alpha \notin Q_\Gamma^-} \g^\alpha.
\end{equation}
Then clearly $\g=\g^-\oplus\g^0\oplus\g^+$, $\n^\pm_\Gamma \subseteq \g^\pm$, and $\g^0$ and $\g^-$ are Lie subalgebras of $\g$.  Note that $\g^0$ is simply the centralizer $C_\g(\h_\Gamma)$ of $\h_\Gamma$ in $\g$. Moreover, since $\Gamma \g^\alpha = \g^\alpha$ for each $\alpha \in \h_\Gamma^*$, we see that $\Gamma \g^\pm = \g^\pm$ and $\Gamma\g^0 = \g^0$. Finally, it is not difficult to see that $\g^0$ is a self-normalizing subalgebra of $\g$.

It may be that $\g^\Gamma=0$, in which case $\h_\Gamma=0$ and so $\g^0=\g$ is simple.  However, we have the following result.

\begin{lem} \label{lem:g0-abelian} We have that $\g^0$ is abelian and hence $\g^\Gamma$ is nonzero if either of the following conditions hold:
  \begin{enumerate}
    \item \label{lem-item:Gamma-cylic-g0-abelian} The group $\Gamma$ is cyclic.

    \item \label{lem-item:Gamma-diag-g0-abelian} The group $\Gamma$ acts on $\g$ by diagram automorphisms with respect to a Cartan subalgebra $\h$ of $\g$.
  \end{enumerate}
\end{lem}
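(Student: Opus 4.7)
The plan is to prove both parts by identifying $\g^0 = C_\g(\h_\Gamma)$ with a Cartan subalgebra of $\g$ (which is then automatically abelian), and to deduce $\g^\Gamma \ne 0$ from $\h_\Gamma \ne 0$.

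For part (b), since $\Gamma$ acts by diagram automorphisms with respect to $\h$, the Cartan $\h$ is $\Gamma$-invariant and $\h^\Gamma$ is a Cartan subalgebra of $\g^\Gamma$; I would take $\h_\Gamma = \h^\Gamma$. Using the root-space decomposition $\g = \h \oplus \bigoplus_{\alpha \in \Delta} \g_\alpha$ with respect to $\h$, one computes
\begin{equation*}
  \g^0 = C_\g(\h^\Gamma) = \h \oplus \bigoplus_{\alpha \in \Delta,\ \alpha|_{\h^\Gamma} = 0} \g_\alpha.
\end{equation*}
To finish, I would argue that no $\alpha \in \Delta$ restricts to zero on $\h^\Gamma$. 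Since $\alpha|_{\h^\Gamma}$ coincides with the restriction of the $\Gamma$-average $|\Gamma|^{-1}\sum_{\gamma \in \Gamma} \gamma\alpha \in (\h^*)^\Gamma$, and this average determines the restriction isomorphically between $(\h^*)^\Gamma$ and $(\h^\Gamma)^*$, the restriction vanishes iff $\sum_{\gamma \in \Gamma} \gamma\alpha = 0$ in $\h^*$. But $\Gamma$ permutes the simple roots, and $\alpha$ has simple-root coefficients all of the same sign and not all zero; hence $\sum_\gamma \gamma\alpha$ is again a nonzero integer combination of simple roots with coefficients of that same sign. Consequently $\g^0 = \h$ is abelian. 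The same orbit-sum argument applied to the simple coroots $h_i$ shows $\h^\Gamma \ne 0$, whence $\g^\Gamma \ne 0$.

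For part (a), with $\Gamma = \langle \sigma \rangle$ cyclic of finite order, I would appeal to the structure theory of finite-order automorphisms of simple Lie algebras (see, e.g., Kac, \emph{Infinite-Dimensional Lie Algebras}, Ch.~8): there exists a $\sigma$-invariant Cartan subalgebra $\h$ of $\g$ such that $\h^\sigma$ is a Cartan subalgebra of $\g^\sigma$ and $C_\g(\h^\sigma) = \h$. By conjugacy of Cartans in $\g^\sigma$ we may assume $\h_\Gamma = \h^\sigma$, so $\g^0 = C_\g(\h^\sigma) = \h$ is abelian; moreover $\h^\sigma \ne 0$, since $\h^\sigma = 0$ would give $\h = C_\g(0) = \g$, contradicting simplicity and nonabelianness of $\g$. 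I expect part (a) to be the main obstacle, as it rests on the nontrivial fact that the centralizer in $\g$ of a Cartan of $\g^\sigma$ is a full Cartan of $\g$, a consequence of the classification of finite-order automorphisms; by contrast, part (b) reduces to an elementary root-system computation.
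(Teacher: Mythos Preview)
Your proposal is correct. For part~(a) you and the paper do essentially the same thing: both appeal to the structure theory of finite-order automorphisms in Kac's book (the paper cites \cite[Lem.~8.1(b)]{Kac90} directly; you invoke the same fact in expanded form).

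For part~(b) the approaches diverge. The paper does not argue directly: it observes that a group of diagram automorphisms factors through a cyclic group or through $S_3$ in type $D_4$, and that in the $D_4$ case the $S_3$-fixed subalgebra coincides with the $\Z_3$-fixed subalgebra, so one may assume $\Gamma$ is cyclic and invoke part~(a). Your argument instead computes $\g^0$ directly from the root-space decomposition of $\g$ relative to $\h$, showing that no root restricts to zero on $\h^\Gamma$ via the orbit-sum / sign-of-coefficients trick. Your route is more elementary and self-contained for~(b) (it does not need the Kac reference at all), and it actually identifies $\g^0$ with $\h$ rather than merely proving abelianness; the paper's reduction is shorter but leans entirely on the nontrivial input from~(a).
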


\begin{proof}
  That~\eqref{lem-item:Gamma-cylic-g0-abelian} implies the conclusion follows from~\cite[Lem.~8.1(b)]{Kac90}. If~\eqref{lem-item:Gamma-diag-g0-abelian} holds, then the action of $\Gamma$ factors through a cyclic group or the symmetric group $S_3$ on three letters (in type $D_4$).  Since the fixed point subalgebra in type $D_4$ is the same for the full $S_3$-action as it is for the action of the subgroup $\Z_3$, we may in fact assume that $\Gamma$ is cyclic.  Thus the result follows from the fact that~\eqref{lem-item:Gamma-cylic-g0-abelian} implies the conclusion.
\end{proof}

\begin{assumption} \label{assumption}
  For the remainder of this paper we will assume that the subalgebra $\g^0$ is abelian.  By Lemma~\ref{lem:g0-abelian}, this is true if $\Gamma$ is cyclic or acts on $\g$ by diagram automorphisms.
\end{assumption}

\begin{example} \label{eg:ABFP}
  An example showing that it is possible to have $\g^\Gamma$ be nonzero and $\g^0$ be nonabelian is given in \cite[Example~4.3.1]{ABFP09}.  Let $\g$ be the Lie algebra $\mathfrak{s}$ defined there.  Then $\g$ is simple of type $B_3$.  Let $\Gamma \cong \Z_2 \times \Z_2 \times \Z_2$ be the group generated by the order two automorphisms $\sigma_1,\sigma_2,\sigma_3$ described in that reference.  Then $\g^\Gamma \cong \mathfrak{sl}_2$ and $\g^0$ contains the subalgebra consisting of block diagonal $7 \times 7$ matrices with upper left block a $3 \times 3$ zero matrix and arbitrary skew-symmetric lower right $4 \times 4$ block.  Thus $\g^0$ is not abelian.
\end{example}

\begin{defin}[Equivariant map algebra]
  The \emph{map (Lie) algebra} (or \emph{generalized current algebra}) associated to $\g$ and $A$ is the tensor product $\g \otimes A$, with Lie bracket given by extending
  \[
    [u \otimes f, v \otimes g] = [u,v] \otimes fg,\quad u,v \in \g,\ f,g \in A,
  \]
  by linearity.  Thus $\g \otimes A$ is the Lie algebra of algebraic maps from $X = \Spec A$ to $\g$ (identified with affine space) equipped with pointwise multiplication.  The associated \emph{equivariant map (Lie) algebra} is the Lie algebra of fixed points $(\g \otimes A)^\Gamma \subseteq \g \otimes A$, where we consider the diagonal action of $\Gamma$ on $\g \otimes A$.  Thus $(\g \otimes A)^\Gamma$ is the subalgebra of $\g \otimes A$ consisting of those maps that are equivariant with respect to the action of $\Gamma$.
\end{defin}

Since $\Gamma$ respects the decomposition $\g = \g^- \oplus \g^0 \oplus \g^+$, we have a decomposition
\begin{equation} \label{eq:EMA-triangular-decomp}
  (\g \otimes A)^\Gamma = (\g^- \otimes A)^\Gamma \oplus (\g^0 \otimes A)^\Gamma \oplus (\g^+ \otimes A)^\Gamma.
\end{equation}
We let
  \begin{equation}\label{eq:u-gamma-def}
    \cU_\Gamma \defeq \cU((\g \otimes A)^\Gamma),\quad \cU^0_\Gamma \defeq \cU((\g^0\otimes A)^\Gamma),\quad \cU^\pm_\Gamma \defeq \cU((\g^\pm\otimes A)^\Gamma).
  \end{equation}

Let $\Xi$\label{Xi-def} be the character group of $\Gamma$. This is an abelian group, whose group operation we will write additively. Hence, $0$ is the character of the trivial one-dimensional representation, and if an irreducible representation affords the character $\xi$, then $-\xi$ is the character of the dual representation.

If $\Gamma$ is abelian and acts on an algebra $B$ by automorphisms, it is well known that $B=\bigoplus_{\xi \in \Xi} B_\xi$ is a $\Xi$-grading, where $B_\xi$ is the isotypic component of type $\xi$. It follows that $(\g \otimes A)^\Gamma$ can be written as
\begin{equation} \label{eq:EMSA-grading}
  (\g \otimes A)^\Gamma = \ts \bigoplus_{\xi \in \Xi} \, \g_\xi \otimes A_{-\xi},
\end{equation}
since $\g = \bigoplus_\xi \g_\xi$ and $A=\bigoplus_\xi A_\xi$ are $\Xi$-graded and $(\g_\xi \otimes A_{\xi'})^\Gamma = 0$ if $\xi'\ne -\xi$. The decomposition \eqref{eq:EMSA-grading} is an algebra $\Xi$-grading.

For the remainder of this section we assume that $\Gamma$ is abelian, acts freely on $X_\rat$, and acts by diagram automorphisms on $\g$.  Then the set of nodes $I_\Gamma$ of the Dynkin diagram of $\g^\Gamma$ can be naturally identified with the set of $\Gamma$-orbits in $I$.  We will often equate the two in what follows.

\begin{defin}[$\cE$, $\cE^\Gamma$, $Y_\Gamma$, $\psi_\bx$, $\wt$, $\wt_\Gamma$, $\hei$, $\hei_\Gamma$, $\cE_\llift$, $\cE^\Gamma_\lambda$] \label{def:wt-ht}
  Let $\cE$ denote the set of finitely supported functions $\psi \colon X_\rat \to \Lambda^+$.  Here the support of $\psi \in \cE$ is
  \[
    \Supp \psi = \{x \in X_\rat\ |\ \psi(x) \ne 0\}.
  \]
  Since $\Gamma$ acts on $\g$ by diagram automorphisms, it acts naturally on $\Lambda^+$.  We let $\cE^\Gamma$ denote the subset of $\cE$ consisting of those functions that are $\Gamma$-equivariant.

  For a $\Gamma$-invariant subset $Y$ of $X_\rat$, let $Y_\Gamma$ denote the set of subsets of $Y$ containing exactly one point from each $\Gamma$-orbit in $Y$.  For $\psi \in \cE^\Gamma$ and $\bx \in (\Supp \psi)_\Gamma$, define
  \[
    \psi_\bx \colon X_\rat \to \Lambda^+,\quad \psi_\bx(x) =
    \begin{cases}
      \psi(x) & \text{if } x \in \bx, \\
      0 & \text{if } x \not \in \bx.
    \end{cases}
  \]

  For $\psi \in \cE$, we define
  \[ \ts
    \wt \psi \defeq \sum_{x \in \Supp \psi} \psi(x) \in \Lambda^+.
  \]
  If $\psi \in \cE^\Gamma$, we define
  \[
    \wt_\Gamma \psi \defeq (\wt \psi_\bx)|_{\h^\Gamma} \text{ for } \bx \in (\Supp \psi)_\Gamma
  \]
  (this definition is independent of the choice of $\bx$).

  For $\llift \in \Lambda$, write $\llift = \sum_{i \in I} k_i \alpha_i$, $k_i \in \Q$, as a linear combination of simple roots, and define
  \[ \ts
    \hei \llift \defeq \sum_{i \in I} k_i.
  \]
  Similarly, for $\lambda \in \Lambda_\Gamma$, write $\lambda = \sum_{\bi \in I_\Gamma} k_\bi \alpha_\bi$, $k_i \in \Q$, as a linear combination of simple roots, and define
  \[ \ts
    \hei_\Gamma \lambda \defeq \sum_{\bi \in I_\Gamma} k_\bi.
  \]
  For $\psi \in \cE$ (respectively, $\psi \in \cE^\Gamma$), we define
  \[
    \hei \psi \defeq \hei (\wt \psi),\quad (\text{respectively,} \hei_\Gamma \psi \defeq \hei_\Gamma (\wt_\Gamma \psi)).
  \]

  For $\llift \in \Lambda^+$ (respectively, $\lambda \in \Lambda^+_\Gamma$), define $\cE_\llift \defeq \{\psi \in \cE\ |\ \wt \psi = \llift\}$ (respectively, $\cE^\Gamma_\lambda \defeq \{\psi \in \cE^\Gamma\ |\ \wt_\Gamma \psi = \lambda\}$).
\end{defin}

Since $\Gamma$ acts freely on $X_\rat$, the isotropy of any point of $X_\rat$ is trivial.  Having fixed a triangular decomposition of $\g$, the irreducible finite-dimensional representations of $\g$ are enumerated by the set $\Lambda^+$ of dominant integral weights (by associating to a representation its highest weight).  Thus, by \cite[Th.~5.5]{NSS12}, the irreducible finite-dimensional $(\g \otimes A)^\Gamma$-modules are enumerated by the set $\cE^\Gamma$.

\begin{defin}[Modules $V^\Gamma(\psi)$, $V(\psi)$] \label{def:EMA-irreducibles}
  For $\psi \in \cE^\Gamma$, we let $V^\Gamma(\psi)$ denote the corresponding irreducible finite-dimensional $(\g \otimes A)^\Gamma$-module.  Similarly, for $\psi \in \cE$, we let $V(\psi)$ denote the corresponding irreducible finite-dimensional $(\g \otimes A)$-module.
\end{defin}

%
\section{Global Weyl modules} \label{sec:Weyl-modules}
%

In this section, we introduce our main object of study, the global Weyl module.  Let $\fa$ be a reductive Lie algebra.  Then, given a triangular decomposition $\fa = \n^- \oplus \h \oplus \n^+$, the irreducible finite-dimensional modules of $\fa$ are naturally enumerated by the dominant integral weights of $\fa$.  For a dominant integral weight $\lambda$ of $\fa$, let $V(\lambda)$ denote the irreducible $\fa$-module of highest weight $\lambda$ and let $v_\lambda \in V(\lambda)$ be a highest weight vector.

\begin{defin}[Partial order on $\Irr \fa$]
  Suppose that $V_1$ and $V_2$ are finite-dimensional irreducible $\fa$-modules.  If we fix a triangular decomposition $\fa = \n^- \oplus \h \oplus \n^+$, then $V_i$ has a highest weight $\lambda_i$, $i=1,2$.  We say that $V_1 \le V_2$ if $\lambda_2 - \lambda_1$ lies in the positive root lattice of $\fa$.  This partial order is independent of the particular choice of triangular decomposition. \details{This follows from \cite[Ch.~VIII, \S5, no.~3, Prop.~5]{Bou75}.} It induces a partial order on the set $\Irr \fa$ of isomorphism classes of finite-dimensional irreducible $\fa$-modules.
\end{defin}

Let $V$ be a direct sum of irreducible finite-dimensional $\fa$-modules.
Thus we have a decomposition
\begin{equation} \ts
  V = \bigoplus_{\sigma \in \Irr \fa} V_\sigma,
\end{equation}
where $V_\sigma$ is the $\sigma$ isotypic component of $V$ for $\sigma \in \Irr \fa$.  For $\tau \in \Irr \fa$, we define
\begin{equation} \ts
  V_{\not \le \tau} \defeq \bigoplus_{\sigma \in \Irr \fa,\, \sigma \not \le \tau} V_\sigma.
\end{equation}

We will identify $\g^\Gamma$ with the subalgebra $\g^\Gamma \otimes \kk = (\g \otimes \kk)^\Gamma \subseteq (\g \otimes A)^\Gamma$.  For a $(\g \otimes A)^\Gamma$-module $V$ and $\lambda \in \h_\Gamma^*$ (for some Cartan subalgebra $\h_\Gamma$ of $\g^\Gamma$), we denote by $V_\lambda$ the $\lambda$ weight space of $V$, where $V$ is considered as a $\g^\Gamma$-module by restriction.

\begin{defin}[Categories $\cI^\Gamma$, $\cI^\Gamma_{\le \tau}$ and $\cI^\Gamma_{\le \lambda}$]\label{def:I-Gamma}
  Let $\cI^\Gamma$ denote the full subcategory of the category of $(\g \otimes A)^\Gamma$-modules whose objects are the modules whose restriction to $\g^\Gamma$ are direct sums of irreducible finite-dimensional $\g^\Gamma$-modules.  For $\tau \in \Irr \g^\Gamma$, let $\cI^\Gamma_{\le\tau}$ denote the full subcategory of $\cI^\Gamma$ whose objects consist of those modules whose $\sigma$ isotypic components are zero for $\sigma \in \Irr \g^\Gamma$, $\sigma \not \le \tau$.  That is, the objects of $\cI^\Gamma_{\le\tau}$ are $(\g \otimes A)^\Gamma$-modules $V$ whose decomposition into isotypic components is of the form $V = \bigoplus_{\sigma \le \tau} V_\sigma$.  If we have fixed a triangular decomposition of $\g^\Gamma$, then $\Irr \g^\Gamma$ can be identified with the set $\Lambda^+_\Gamma$ of dominant integral weights for $\g^\Gamma$.  In this case, we will sometimes write $\cI^\Gamma_{\le \lambda}$ instead of $\cI^\Gamma_{\le \tau}$, where $\tau$ is the isomorphism class of the $\g^\Gamma$-module of highest weight $\lambda \in \Lambda^+_\Gamma$.  If $\Gamma$ is the trivial group, we often omit the superscripts $\Gamma$.
\end{defin}

\begin{lem}\label{lem:tensor-algebra}
If $V$ is a direct sum of irreducible finite-dimensional $\fa$-modules, then so is the tensor algebra $T(V)\defeq\bigoplus_{n=0}^\infty V^{\otimes n}$.
\end{lem}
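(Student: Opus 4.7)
The plan is to reduce the statement to the classical fact that finite-dimensional modules over a reductive Lie algebra decompose as direct sums of irreducibles once the center acts semisimply, and then take direct sums.

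First I would write $V = \bigoplus_{j \in J} V_j$ with each $V_j$ a finite-dimensional irreducible $\fa$-module, so that
\[ \ts
  V^{\otimes n} = \bigoplus_{(j_1,\dots,j_n) \in J^n} V_{j_1} \otimes \cdots \otimes V_{j_n},
\]
a direct sum of finite-dimensional $\fa$-modules. It therefore suffices to show that every such tensor product $W \defeq V_{j_1} \otimes \cdots \otimes V_{j_n}$ is a direct sum of finite-dimensional irreducible $\fa$-modules, and then to sum over $n$ and over $J^n$.

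Next I would use reductivity: write $\fa = Z(\fa) \oplus [\fa,\fa]$ with $[\fa,\fa]$ semisimple. By Schur's lemma each element of $Z(\fa)$ acts by a scalar on every irreducible $V_{j_k}$, so on the tensor product $W$ an element $z \in Z(\fa)$ acts as the sum of these scalars, i.e.\ again by a scalar. In particular $Z(\fa)$ acts semisimply on $W$. Combined with Weyl's theorem of complete reducibility for the finite-dimensional $[\fa,\fa]$-module $W$, this implies that $W$ is a direct sum of finite-dimensional irreducible $\fa$-modules.

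Assembling these decompositions across all $(j_1,\dots,j_n) \in J^n$ and all $n \geq 0$ (noting that the trivial module $V^{\otimes 0} = \kk$ is of course a finite-dimensional irreducible) yields the desired decomposition of $T(V)$. The main (mild) point to be careful about is the semisimplicity of the $Z(\fa)$-action, since reductive Lie algebras have finite-dimensional modules on which the center can fail to act semisimply; the argument above handles this by leveraging Schur's lemma on each tensor factor.
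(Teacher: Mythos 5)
Your proof is correct and follows essentially the same route as the paper, which simply notes that $\fa$ preserves each summand $V^{\otimes n}$ and that each such summand "clearly" is a direct sum of finite-dimensional irreducibles. You have usefully unpacked that "clearly": the point about the center of the reductive algebra $\fa$ acting by a scalar on each tensor factor (via Schur), hence semisimply on the tensor product, combined with Weyl's theorem for $[\fa,\fa]$, is exactly the justification the paper leaves implicit.
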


\begin{proof}
  This follows from the fact that the action of $\fa$ preserves each summand $V^{\otimes n}$, which clearly has the given property.
\end{proof}

\begin{lem} \label{lem:induced-completely-reducible}
  If $V$ is a direct sum of irreducible finite-dimensional $\g^\Gamma$-modules, then the induced module
  \begin{equation} \label{eq:P-Gamma-V}
    P^\Gamma(V)\defeq \cU_\Gamma \otimes_{\cU(\g^\Gamma)} V
  \end{equation}
 is a projective object in the category $\cI^\Gamma$.
\end{lem}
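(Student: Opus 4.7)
The plan is to verify two things: first, that $P^\Gamma(V)$ actually belongs to $\cI^\Gamma$, and second, that it enjoys the universal projective property inside this category. The proof is the standard ``induction is left adjoint to restriction'' argument, adapted to account for the fact that $(\g \otimes A)^\Gamma$ can be infinite-dimensional.

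For the first point, I would begin by observing that the adjoint action of $\g^\Gamma$ on $(\g \otimes A)^\Gamma$ is nothing other than the restriction of the $\g^\Gamma$-action on $\g \otimes A$ given by acting on the first tensor factor (here $A$ is a trivial $\g^\Gamma$-module). Since $\g$ is a finite-dimensional module over the reductive Lie algebra $\g^\Gamma$, it is a direct sum of finite-dimensional irreducibles; tensoring with $A$ (viewed as a trivial module) and then taking the $\Gamma$-fixed subspace preserves this property, so $(\g \otimes A)^\Gamma$ is a direct sum of finite-dimensional irreducible $\g^\Gamma$-modules. Choose a $\g^\Gamma$-stable complement $\fb$ of $\g^\Gamma$ in $(\g \otimes A)^\Gamma$; such a complement exists by complete reducibility. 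Then the PBW theorem yields an isomorphism of $\g^\Gamma$-modules
\[
P^\Gamma(V) = \cU_\Gamma \otimes_{\cU(\g^\Gamma)} V \;\cong\; \cU(\fb) \otimes V,
\]
where $\g^\Gamma$ acts diagonally on the right. Applying Lemma~\ref{lem:tensor-algebra} to $\fb$ (which is itself a direct sum of finite-dimensional irreducible $\g^\Gamma$-modules) shows that $T(\fb)$ is a direct sum of finite-dimensional irreducible $\g^\Gamma$-modules, and since $\cU(\fb)$ is a $\g^\Gamma$-equivariant quotient of $T(\fb)$, the same holds for $\cU(\fb)$. Finally, the tensor product $\cU(\fb) \otimes V$ (diagonal action) is again of this form, because the tensor product of two finite-dimensional $\g^\Gamma$-modules is finite-dimensional and completely reducible. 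Hence $P^\Gamma(V) \in \cI^\Gamma$.

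For projectivity, I would invoke the standard induction-restriction adjunction: for every $(\g \otimes A)^\Gamma$-module $W$, there is a natural isomorphism
\[
\Hom_{\cU_\Gamma}\bigl(P^\Gamma(V), W\bigr) \;\cong\; \Hom_{\cU(\g^\Gamma)}\bigl(V, W\bigr).
\]
If $W$ belongs to $\cI^\Gamma$, then the restriction of $W$ to $\g^\Gamma$ is a direct sum of finite-dimensional irreducibles, and any short exact sequence $0 \to W' \to W \to W'' \to 0$ in $\cI^\Gamma$ restricts to a short exact sequence in the category of such $\g^\Gamma$-modules, which is semisimple. Since $V$ is itself a direct sum of finite-dimensional irreducible $\g^\Gamma$-modules, the functor $\Hom_{\cU(\g^\Gamma)}(V, -)$ is exact on this restricted sequence. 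Transporting exactness back through the adjunction shows that $\Hom_{\cU_\Gamma}(P^\Gamma(V), -)$ is exact on short exact sequences in $\cI^\Gamma$, which is precisely the assertion that $P^\Gamma(V)$ is projective in $\cI^\Gamma$.

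The only mildly subtle point is the verification that $P^\Gamma(V)$ lies in $\cI^\Gamma$ in the first place, since $(\g \otimes A)^\Gamma$ is typically infinite-dimensional; the argument above handles this by exhibiting a $\g^\Gamma$-stable PBW complement and reducing to Lemma~\ref{lem:tensor-algebra}. Once $P^\Gamma(V) \in \cI^\Gamma$ is established, the projectivity is a formal consequence of the adjunction together with the semisimplicity of the restriction functor to $\g^\Gamma$-modules.
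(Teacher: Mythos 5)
Your proof is correct and is, in substance, the paper's argument; the differences are in how two steps are packaged. For membership in $\cI^\Gamma$, the paper also reduces to Lemma~\ref{lem:tensor-algebra}, but more directly: it observes that $\cU_\Gamma$ is a quotient of $T((\g\otimes A)^\Gamma)$, hence a direct sum of finite-dimensional irreducible $\g^\Gamma$-modules, and then that $P^\Gamma(V)$ is a quotient of $\cU_\Gamma\otimes_\kk V$, so it inherits the property; no PBW decomposition is needed. Your PBW route also works, but note that your complement $\fb$ is only a $\g^\Gamma$-submodule of $(\g\otimes A)^\Gamma$, not in general a Lie subalgebra, so ``$\cU(\fb)$'' should be read as $S(\fb)$ (or the image of the symmetrization map), and the isomorphism $P^\Gamma(V)\cong S(\fb)\otimes V$ of $\g^\Gamma$-modules requires the $\g^\Gamma$-equivariance of the symmetrization; this is standard but is exactly the kind of detail the paper's quotient argument avoids. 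For projectivity, the paper simply cites Hochschild's lemma on induced modules, which is precisely the induction--restriction adjunction argument you write out: $\Hom_{\cU_\Gamma}(P^\Gamma(V),-)\cong\Hom_{\cU(\g^\Gamma)}(V,-)$ composed with the exact restriction to the semisimple category of $\g^\Gamma$-modules that are sums of finite-dimensional irreducibles. So your version is self-contained where the paper defers to a reference, at the cost of the small notational repair above.
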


\begin{proof}
  Consider the action of $\g^\Gamma$ on $\g\otimes A$ given by (restriction of) the adjoint action on the first factor.  Since $\g$ is a completely reducible $\g^\Gamma$-module, it follows that $\g \otimes A$ is a direct sum of irreducible finite-dimensional $\g^\Gamma$-modules. It is easily checked that $\g^\Gamma$ preserves the subalgebra $(\g\otimes A)^\Gamma$, which therefore also has this property.  Then, by Lemma~\ref{lem:tensor-algebra}, we see that $T((\g\otimes A)^\Gamma)$, and hence $\cU_\Gamma$, are direct sums of irreducible finite-dimensional $\g^\Gamma$-modules.  Since the tensor product is distributive over direct sums, $\cU_\Gamma \otimes_\kk V$ is a direct sum of irreducible finite-dimensional $\g^\Gamma$-modules, hence so is its quotient $P^\Gamma(V)$.  Thus $P^\Gamma(V) \in \Ob \cI^\Gamma$.  The fact that $P^\Gamma(V)$ is projective in this category is a special case of a standard result proved in \cite[Lem.~2]{Hoc56}.
\end{proof}

\begin{defin}[Twisted global Weyl module $W^\Gamma(V)$] \label{def:twisted-global-Weyl-module}
  Let $V$ be an irreducible finite-dimensional $\g^\Gamma$-module.  The corresponding \emph{(twisted) global Weyl module} is the $(\g \otimes A)^\Gamma$-module
  \[
    W^\Gamma(V) \defeq P^\Gamma(V) / \big( \cU_\Gamma (P^\Gamma(V)_{\not \le [V]}) \big),
  \]
  where $[V] \in \Irr \g^\Gamma$ is the isomorphism class of $V$.  Up to isomorphism, $W^\Gamma(V)$ depends only on the isomorphism class of $V$.  If $\Gamma$ is trivial, we will often drop the superscript $\Gamma$.  It follows immediately from Lemma~\ref{lem:induced-completely-reducible} that, for all $\tau \in \Irr \g^\Gamma$ and $V \in \tau$, we have $W^\Gamma(V) \in \Ob \cI^\Gamma_{\le\tau}$.
\end{defin}

Given a triangular decomposition $\g^\Gamma = \n_\Gamma^- \oplus \h \oplus \n_\Gamma^+$ of $\g^\Gamma$, we use the notation $V^\Gamma(\lambda)$ to denote the irreducible $\g^\Gamma$-module of highest weight $\lambda \in \Lambda_\Gamma^+$ and $v^\Gamma_\lambda$ to denote a highest weight vector in this module. \label{notation:g-Gamma-irred-modules}  If $\Gamma$ is trivial, we omit the $\Gamma$ superscripts.

\begin{lem}\label{global-weyl-triang}
  For any dominant integral weight $\lambda \in \Lambda_\Gamma^+$, we have
  \[
    W^\Gamma(\lambda) \defeq W^\Gamma(V^\Gamma(\lambda)) = \big( \cU_\Gamma \otimes_{\cU(\g^\Gamma)} V^\Gamma(\lambda) \big)/ \big( \cU_\Gamma (\g^+ \otimes A)^\Gamma \otimes v^\Gamma_\lambda \big).
  \]
  We let $w^\Gamma_\lambda$ denote the image of $1 \otimes v_\lambda^\Gamma$ in the above quotient.  Then $W^\Gamma(\lambda)$ is generated by $w^\Gamma_\lambda$.
\end{lem}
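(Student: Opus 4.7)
The plan is to prove the equality of submodules of $P^\Gamma(V^\Gamma(\lambda))$
\[
\cU_\Gamma \big( P^\Gamma(V^\Gamma(\lambda))_{\not \le [V^\Gamma(\lambda)]} \big) = \cU_\Gamma (\g^+ \otimes A)^\Gamma \otimes v^\Gamma_\lambda,
\]
so that the quotient definition of $W^\Gamma(V^\Gamma(\lambda))$ in Definition~\ref{def:twisted-global-Weyl-module} agrees with the presentation claimed here. The generation statement is then immediate: since $V^\Gamma(\lambda) = \cU(\g^\Gamma) v^\Gamma_\lambda$ and $\cU(\g^\Gamma) \subseteq \cU_\Gamma$, the module $P^\Gamma(V^\Gamma(\lambda))$ is already generated by $1 \otimes v^\Gamma_\lambda$ as a $\cU_\Gamma$-module, and this property descends to $W^\Gamma(\lambda)$ with generator $w^\Gamma_\lambda$.

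For the inclusion ``$\supseteq$'', I would argue by $\h_\Gamma$-weights. Since $\g^+ = \bigoplus_{\alpha \notin Q_\Gamma^-} \g^\alpha$ and $A$ is concentrated in $\h_\Gamma$-weight $0$, every weight vector of $(\g^+ \otimes A)^\Gamma \otimes v^\Gamma_\lambda$ has weight $\lambda + \alpha$ with $-\alpha \notin Q_\Gamma^+$, hence strictly not $\le \lambda$. Any irreducible $\g^\Gamma$-summand of class $\sigma \le [V^\Gamma(\lambda)]$ has all its $\h_\Gamma$-weights $\le \lambda$, so such vectors lie entirely in $P^\Gamma(V^\Gamma(\lambda))_{\not \le [V^\Gamma(\lambda)]}$; applying $\cU_\Gamma$ then yields the required inclusion.

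For the reverse inclusion ``$\subseteq$'', the plan is to pass to $Q \defeq P^\Gamma(V^\Gamma(\lambda))/\cU_\Gamma (\g^+ \otimes A)^\Gamma \otimes v^\Gamma_\lambda$ and show $Q \in \Ob \cI^\Gamma_{\le \lambda}$, which forces $P^\Gamma(V^\Gamma(\lambda))_{\not \le [V^\Gamma(\lambda)]}$ into $\cU_\Gamma (\g^+ \otimes A)^\Gamma \otimes v^\Gamma_\lambda$. Applying PBW to the triangular decomposition~\eqref{eq:EMA-triangular-decomp} gives $\cU_\Gamma = \cU^-_\Gamma \cU^0_\Gamma \cU^+_\Gamma$. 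In $Q$, the image $\bar v$ of $1 \otimes v^\Gamma_\lambda$ is killed by $(\g^+ \otimes A)^\Gamma$ and hence by the entire augmentation ideal of $\cU^+_\Gamma$, so $\cU^+_\Gamma \bar v = \kk \bar v$ and $Q = \cU^-_\Gamma \cU^0_\Gamma \bar v$. Because $\g^0 = C_\g(\h_\Gamma)$ has $\h_\Gamma$-weight $0$ while $\g^-$ has weights in $Q_\Gamma^- \setminus \{0\}$, every weight of $Q$ lies in $\lambda + Q_\Gamma^-$. Combined with $Q \in \Ob \cI^\Gamma$ (as a quotient of $P^\Gamma(V^\Gamma(\lambda)) \in \Ob \cI^\Gamma$ via Lemma~\ref{lem:induced-completely-reducible}, using that quotients of semisimple modules are semisimple), this places $Q \in \Ob \cI^\Gamma_{\le \lambda}$. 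The main technical ingredient is this PBW-based collapse of $\cU^+_\Gamma$ on $\bar v$ to scalars; everything else is $\h_\Gamma$-weight bookkeeping.
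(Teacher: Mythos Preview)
Your proposal is correct and follows essentially the same route as the paper's proof: both directions are established via $\h_\Gamma$-weight considerations, and the key step in the nontrivial inclusion is the PBW factorization $\cU_\Gamma = \cU^-_\Gamma \cU^0_\Gamma \cU^+_\Gamma$ together with the observation that $\cU^-_\Gamma \cU^0_\Gamma$ acting on the highest weight vector produces only weights in $\lambda - Q_\Gamma^+$. Your phrasing in terms of the quotient $Q$ lying in $\cI^\Gamma_{\le\lambda}$ is a mild repackaging of the paper's direct argument that the first summand $\cU^-_\Gamma\cU^0_\Gamma \otimes v^\Gamma_\lambda$ sits inside $\bigoplus_{\mu\le\lambda} P^\Gamma(V)_\mu$, but the content is the same.
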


\begin{proof}
  Let $V = V^\Gamma(\lambda)$ and $v=v^\Gamma_\lambda$.  We need to show that
  \begin{equation} \label{eq:twisted-global-Weyl-equivalence}
    \cU_\Gamma(\g^+ \otimes A)^\Gamma \otimes v = \cU_\Gamma ( P^\Gamma(V))_{\not \le [V]}.
  \end{equation}
  It is clear from the definition of $\g^+$ that we have a weight decomposition
  \[ \ts
    (\g^+ \otimes A)^\Gamma \otimes v = \bigoplus_{\mu \in \Lambda_\Gamma,\, \mu \not \le \lambda} \left( (\g^+ \otimes A)^\Gamma \otimes v \right)_\mu.
  \]
  Thus the left-hand side of~\eqref{eq:twisted-global-Weyl-equivalence} is contained in the right-hand side.

  It remains to prove the reverse inclusion.  For this, it suffices to show that
  \[
    P^\Gamma(V)_\tau \subseteq  \cU_\Gamma (\g^+ \otimes A)^\Gamma \otimes v
  \]
  for all $\tau \not \le [V]$.  Now,
  \[
    P^\Gamma(V) = \cU_\Gamma^- \cU_\Gamma^0 \cU_\Gamma^+ \otimes v = \big( \cU_\Gamma^- \cU_\Gamma^0 \otimes v \big) + \big( \cU_\Gamma (\g^+ \otimes A)^\Gamma \otimes v \big)
  \]
  and the first summand is contained in $\bigoplus_{\mu \le \lambda}  P^\Gamma(V)_\mu$.  The second summand thus contains all submodules of $P^\Gamma(V)$ generated by vectors of weight $\mu$ for $\mu \not \le \lambda$, hence it contains $P^\Gamma(V)_\tau$.
\end{proof}

\begin{rem}
  In the case where $\Gamma$ is trivial, it follows from Lemma~\ref{global-weyl-triang} that Definition~\ref{def:twisted-global-Weyl-module} agrees with the usual definition of the untwisted global Weyl module (see \cite[\S3.3]{CFK10}).  Furthermore, Definition~\ref{def:twisted-global-Weyl-module} reduces to the definition of the twisted global Weyl module in \cite[Def.~3.3]{FMS11} when $A=\C[t^{\pm 1}]$ and $\Gamma$ acts on $\g$ by diagram automorphisms.  Outside of these cases, the general definition of the global Weyl module does not seem to have appeared previously in the literature.
\end{rem}

We conclude this section by giving a presentation of the global Weyl module in terms of generators and relations.

\begin{prop} \label{weyl:gen-rel}
  The global Weyl module $W^\Gamma(\lambda)$ is generated by the vector $w_\lambda^\Gamma$ with defining relations
  \begin{equation} \label{eq:global-Weyl-relations}
    (\g^+\otimes A)^\Gamma w_\lambda^\Gamma=0,\quad (h\otimes 1)w_\lambda^\Gamma=\lambda(h)w_\lambda^\Gamma,\quad (f^\Gamma_i \otimes 1)^{\lambda(h_i)+1}w_\lambda^\Gamma=0,\ \ h\in\h_\Gamma, \ \  i\in I_\Gamma.
  \end{equation}
\end{prop}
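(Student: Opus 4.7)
The plan is to show that the $(\g \otimes A)^\Gamma$-module $\tilde M$ generated by an abstract element $\tilde w$ subject to the three families of relations in~\eqref{eq:global-Weyl-relations} is isomorphic to $W^\Gamma(\lambda)$. I would construct mutually inverse surjective homomorphisms between $\tilde M$ and $W^\Gamma(\lambda)$, each carrying the distinguished generator to that of the other.

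For the map $\tilde M \twoheadrightarrow W^\Gamma(\lambda)$ sending $\tilde w \mapsto w_\lambda^\Gamma$, I would verify that $w_\lambda^\Gamma$ itself satisfies each of the three listed relations in $W^\Gamma(\lambda)$. Relations (1) and (2) follow immediately from the presentation in Lemma~\ref{global-weyl-triang}: the first because we have quotiented by $\cU_\Gamma(\g^+\otimes A)^\Gamma\otimes v_\lambda^\Gamma$, and the second because $v_\lambda^\Gamma$ is a weight-$\lambda$ vector for $\g^\Gamma$. Relation (3) is an immediate consequence of the classical fact that $(f_i^\Gamma)^{\lambda(h_i)+1} v_\lambda^\Gamma = 0$ in the finite-dimensional irreducible $\g^\Gamma$-module $V^\Gamma(\lambda)$.

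For the reverse surjection $W^\Gamma(\lambda) \twoheadrightarrow \tilde M$, by the description of $W^\Gamma(\lambda)$ in Lemma~\ref{global-weyl-triang} it suffices to produce a $\g^\Gamma$-module homomorphism $\varphi \colon V^\Gamma(\lambda) \to \tilde M$ with $v_\lambda^\Gamma \mapsto \tilde w$; the tensor-Hom adjunction then extends $\varphi$ to a $(\g\otimes A)^\Gamma$-module map $\cU_\Gamma\otimes_{\cU(\g^\Gamma)} V^\Gamma(\lambda) \to \tilde M$, and relation (1) applied to $\tilde w$ ensures that this map kills $\cU_\Gamma(\g^+\otimes A)^\Gamma\otimes v_\lambda^\Gamma$ and thus factors through $W^\Gamma(\lambda)$. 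To produce $\varphi$, I would appeal to the classical presentation of the irreducible highest weight $\g^\Gamma$-module $V^\Gamma(\lambda)$: it is generated by $v_\lambda^\Gamma$ subject to $e_i^\Gamma v_\lambda^\Gamma = 0$, $h\, v_\lambda^\Gamma = \lambda(h) v_\lambda^\Gamma$ for $h\in\h_\Gamma$, and $(f_i^\Gamma)^{\lambda(h_i)+1} v_\lambda^\Gamma = 0$. Each of these relations is satisfied by $\tilde w$ in $\tilde M$: the first by relation (1) (noting $e_i^\Gamma \otimes 1 \in (\n_\Gamma^+\otimes A)^\Gamma \subseteq (\g^+\otimes A)^\Gamma$), the second by (2), and the third by (3).

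The two surjections are mutually inverse since each sends the distinguished generator to the other's, and both modules are cyclic on these generators. The main subtlety is the invocation of the classical presentation of $V^\Gamma(\lambda)$ by Serre-type relations in the reductive (not necessarily semisimple) setting; here the center of $\g^\Gamma$ lies inside $\h_\Gamma$ and acts on $\tilde w$ via relation (2), while the Chevalley generators $\{e_i^\Gamma, f_i^\Gamma, h_i^\Gamma\}_{i \in I_\Gamma}$ generate only the semisimple derived subalgebra of $\g^\Gamma$, to which the classical theory applies verbatim.
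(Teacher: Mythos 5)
Your proposal is correct and follows essentially the same route as the paper: both verify the relations hold for $w_\lambda^\Gamma$, then map $P^\Gamma(V^\Gamma(\lambda))$ onto the abstractly presented module via the copy of $V^\Gamma(\lambda)$ generated by $\tilde w$ and check this factors through $W^\Gamma(\lambda)$ to give an inverse. The only (harmless) difference is that you justify the factorization using the kernel description $\cU_\Gamma(\g^+\otimes A)^\Gamma\otimes v_\lambda^\Gamma$ from Lemma~\ref{global-weyl-triang}, while the paper uses the isotypic-component form of the kernel, and you spell out the reductive-versus-semisimple point that the paper leaves implicit.
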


\begin{proof}
  That $w_\lambda^\Gamma$ satisfies the latter two relations follows immediately from the fact that $v_\lambda^\Gamma$ does, and for the first relation we only need to observe that the weights of $W^\Gamma(\lambda)$ lie in $\lambda-Q_\Gamma^+$. To see that these are all the relations, let $W$ be the $(\g\otimes A)^\Gamma$-module generated by a vector $w$ with the given relations, so that there is a surjective homomorphism of $(\g\otimes A)^\Gamma$-modules $\pi_1 \colon W \to W^\Gamma(\lambda)$, extending the assignment $w\mapsto w_\lambda^\Gamma$.  By the relations in~\eqref{eq:global-Weyl-relations}, the vector $w \in W$ generates a $\g^\Gamma$-submodule of $W$ isomorphic to a quotient of $V^\Gamma(\lambda)$.  Thus we have a surjective homomorphism
  \[
    \pi_2 \colon P^\Gamma(V) \to W,\quad u_1 \otimes_{\cU(\g^\Gamma)} u_2 v^\Gamma_\lambda \mapsto u_1u_2w,\quad u_1 \in \cU_\Gamma,\ u_2 \in \cU(\g^\Gamma).
  \]
  Since the $\g^\Gamma$-weights of $W$ are bounded above by $\lambda$, it follows that $P^\Gamma(V)_{\nleq[V^\Gamma(\lambda)]}\subseteq\ker(\pi_2)$.  Thus $\pi_2$ induces a map $W^\Gamma(\lambda) \to W$ inverse to $\pi_1$.
\end{proof}

%
\section{The Weyl functor} \label{sec:Weyl-functor}
%

In this section we extend the definition of the Weyl functor defined in \cite{CFK10} (in the untwisted setting) to the twisted setting of equivariant map algebras.  Throughout this section we fix a triangular decomposition $\g^\Gamma = \n_\Gamma^- \oplus \h_\Gamma \oplus \n_\Gamma^+$.

The following lemma is a modification of the construction in \cite[\S3.4]{CFK10}.  Recall the definition of $\cU^0_\Gamma$ given in~\eqref{eq:u-gamma-def}.

\begin{lem}
  The assignment
  \[
    (uw_\lambda^\Gamma)a \defeq uaw_\lambda^\Gamma, \quad \text{for all } a \in \cU_\Gamma^0,\ u \in \cU_\Gamma,
  \]
  defines a right action of $\cU_\Gamma^0$ on $W^\Gamma(\lambda)$.
\end{lem}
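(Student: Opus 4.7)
The plan is to apply the universal property furnished by Proposition~\ref{weyl:gen-rel}. For each fixed $a \in \cU_\Gamma^0$, I will exhibit a $\cU_\Gamma$-module endomorphism $\phi_a$ of $W^\Gamma(\lambda)$ with $\phi_a(w_\lambda^\Gamma) = aw_\lambda^\Gamma$; the right action is then $(uw_\lambda^\Gamma) \cdot a := \phi_a(uw_\lambda^\Gamma) = u\phi_a(w_\lambda^\Gamma) = uaw_\lambda^\Gamma$, which is manifestly well-defined on $W^\Gamma(\lambda)$. The right-action axioms follow from the uniqueness in that universal property: $\phi_b \circ \phi_a$ sends $w_\lambda^\Gamma \mapsto abw_\lambda^\Gamma = \phi_{ab}(w_\lambda^\Gamma)$, so $\phi_b \circ \phi_a = \phi_{ab}$, whence $(x \cdot a) \cdot b = x \cdot (ab)$; and clearly $\phi_1 = \id$.

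The crux is thus to verify that $aw_\lambda^\Gamma$ satisfies the three defining relations of Proposition~\ref{weyl:gen-rel}. The key observation is that $\cU_\Gamma^0$ has $\h_\Gamma$-weight zero: since $\g^0 = C_\g(\h_\Gamma)$ we have $[\h_\Gamma,(\g^0 \otimes A)^\Gamma] = 0$, and as $\ad h$ is a derivation on $\cU_\Gamma$ this propagates to $[h,a] = 0$ for all $h \in \h_\Gamma$ and $a \in \cU_\Gamma^0$. This immediately gives the second relation, $h \cdot aw_\lambda^\Gamma = ahw_\lambda^\Gamma = \lambda(h)\,aw_\lambda^\Gamma$, and shows in particular that $aw_\lambda^\Gamma \in W^\Gamma(\lambda)_\lambda$. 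For the first relation, decompose $(\g^+ \otimes A)^\Gamma$ into $\h_\Gamma$-weight spaces; since $\Gamma$ preserves each $\g^\alpha$ one has $(\g^+ \otimes A)^\Gamma = \bigoplus_{\alpha \notin Q_\Gamma^-} (\g^\alpha \otimes A)^\Gamma$ with each summand of weight $\alpha$. For $y$ in such a summand, $y \cdot aw_\lambda^\Gamma$ would lie in weight $\lambda + \alpha$, and $\lambda + \alpha \leq \lambda$ is equivalent to $\alpha \in Q_\Gamma^-$; by hypothesis this fails, so the weight space $W^\Gamma(\lambda)_{\lambda + \alpha}$ vanishes (since $W^\Gamma(\lambda) \in \cI^\Gamma_{\leq \lambda}$ has all $\h_\Gamma$-weights $\leq \lambda$), forcing $y \cdot aw_\lambda^\Gamma = 0$. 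The third relation is immediate from the categorical constraint: $W^\Gamma(\lambda)$ decomposes as a direct sum of $\g^\Gamma$-irreducibles $V^\Gamma(\mu)$ with $\mu \leq \lambda$, only the summands with $\mu = \lambda$ contribute to the $\lambda$-weight space, so every element of $W^\Gamma(\lambda)_\lambda$ is a $\g^\Gamma$-highest weight vector of weight $\lambda$ and is therefore annihilated by $(f_i^\Gamma)^{\lambda(h_i)+1}$.

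The main subtlety is that $(\g^+ \otimes A)^\Gamma$ is considerably larger than $(\n_\Gamma^+ \otimes A)^\Gamma$ and can contain elements whose $\h_\Gamma$-weights lie in neither $Q_\Gamma^+$ nor $Q_\Gamma^-$, so a naive strategy based on commutator pushforwards $ya = ay + [y,a]$ together with induction on PBW degree becomes awkward. The weight-space argument above sidesteps this cleanly by exploiting only the a priori constraint $W^\Gamma(\lambda) \in \cI^\Gamma_{\leq \lambda}$ provided by Definition~\ref{def:twisted-global-Weyl-module} together with Lemma~\ref{lem:induced-completely-reducible}, and the zero-weight property of $\cU_\Gamma^0$ forced by Assumption~\ref{assumption}.
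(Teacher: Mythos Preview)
Your proof is correct and follows essentially the same approach as the paper: both reduce well-definedness to showing that $aw_\lambda^\Gamma$ satisfies the defining relations of Proposition~\ref{weyl:gen-rel}, and both verify these relations by the same arguments (the zero-weight property of $\cU_\Gamma^0$, the weight bound $W^\Gamma(\lambda)\in\cI^\Gamma_{\le\lambda}$ for the $(\g^+\otimes A)^\Gamma$-annihilation, and the isotypic decomposition for the $(f_i^\Gamma)^{\lambda(h_i)+1}$-relation). Your framing via endomorphisms $\phi_a$ and the explicit verification of the right-action axioms add a touch more detail, but the underlying argument is the same.
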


\begin{proof}
  To prove that this action is well-defined, we must prove the implication
  \[
    uw^\Gamma_\lambda = u'w^\Gamma_\lambda \implies uaw^\Gamma_\lambda = u'aw^\Gamma_\lambda
  \]
  for all $u,u' \in \cU_\Gamma$ and $a \in \cU^0_\Gamma$.  This is equivalent to proving that
  \[
    (u-u')w^\Gamma_\lambda = 0 \implies (u-u')aw^\Gamma_\lambda =0
  \]
  for all $u,u' \in \cU_\Gamma$ and $a \in \cU^0_\Gamma$.  For this, it suffices to show that the vectors $aw_\lambda^\Gamma, a\in \cU_\Gamma^0$, satisfy the defining relations of $w_\lambda^\Gamma$ given in Proposition~\ref{weyl:gen-rel}. It follows from the definition of $\g^0$ that $aw_\lambda^\Gamma \in W^\Gamma(\lambda)_\lambda$.  Hence each such vector must lie in the $[V^\Gamma(\lambda)]$-isotypic component of $W^\Gamma(\lambda)$, and so the relation $(f_i^\Gamma \otimes 1)^{\lambda(h_i^\Gamma)+1 }aw_\lambda^\Gamma=0$ must hold.  Finally, $(\g^+\otimes A)^\Gamma aw_\lambda^\Gamma=0$ since the weights of $W^\Gamma(\lambda)$ are bounded above by $\lambda$.
\end{proof}

\begin{defin}[Algebra $\bA_\Gamma^\lambda$] \label{def:A-Gamma-lambda}
  The set $\Ann_{\cU^0_\Gamma}(w_\lambda^\Gamma)=\left\{u\in \cU^0_\Gamma\ |\ uw_\lambda^\Gamma=0\right\}$ is an ideal in the (commutative) algebra $\cU^0_\Gamma$, and we define
  \begin{equation}
    \bA_\Gamma^\lambda \defeq \cU^0_\Gamma / \Ann_{\cU^0_\Gamma}(w_\lambda^\Gamma).
  \end{equation}
\end{defin}

It follows from Definition~\ref{def:A-Gamma-lambda} that $W^\Gamma(\lambda)$ is a $(\cU_\Gamma, \bA_\Gamma^\lambda)$-bimodule.

\begin{lem} \label{lem:extend-inner-aut}
  Every inner automorphism of $\g^\Gamma$ can be extended to an inner automorphism of $\g$ commuting with the action of $\Gamma$.
\end{lem}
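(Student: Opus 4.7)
The plan is to exhibit the extension explicitly by exponentiating the same nilpotent elements in $\g$. Recall that the group $\Int(\g^\Gamma)$ of inner automorphisms of the reductive Lie algebra $\g^\Gamma$ is generated by the exponentials $\exp(\ad_{\g^\Gamma} x)$ where $x\in\g^\Gamma$ satisfies $\ad_{\g^\Gamma} x$ nilpotent (elements of the center of $\g^\Gamma$ act as zero under $\ad$, so only the semisimple part $[\g^\Gamma,\g^\Gamma]$ contributes). So it suffices to show that for each such $x$, the map $\exp(\ad_\g x)$ is a well-defined inner automorphism of $\g$ that commutes with $\Gamma$ and restricts to $\exp(\ad_{\g^\Gamma} x)$ on $\g^\Gamma$; then an arbitrary element $\prod_i \exp(\ad_{\g^\Gamma} x_i)$ of $\Int(\g^\Gamma)$ extends as $\prod_i \exp(\ad_\g x_i)$.

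First I would verify that $\ad_\g x$ is nilpotent. Here $x\in [\g^\Gamma,\g^\Gamma]$ is a nilpotent element of the semisimple Lie algebra $[\g^\Gamma,\g^\Gamma]$, and since $\g^\Gamma$ is reductive in $\g$ (Bourbaki, cited in Section~\ref{sec:EMAs}), $\g$ decomposes as a direct sum of finite-dimensional irreducible $\g^\Gamma$-modules. A nilpotent element of a reductive Lie algebra acts nilpotently in any finite-dimensional representation, so $\ad_\g x$ is nilpotent and $\exp(\ad_\g x)\in\Int(\g)$ makes sense.

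Next, the commutation with $\Gamma$ is automatic: for any $\gamma\in\Gamma$, since $x\in\g^\Gamma$ we have
\[
\gamma\circ\exp(\ad_\g x)\circ\gamma^{-1} = \exp(\gamma\circ\ad_\g x\circ\gamma^{-1}) = \exp(\ad_\g(\gamma\cdot x)) = \exp(\ad_\g x).
\]
Finally, $\g^\Gamma$ is $\ad_\g x$-stable (it is $\g^\Gamma$-stable and $x\in\g^\Gamma$), so $\exp(\ad_\g x)$ preserves $\g^\Gamma$, and its restriction there is $\exp(\ad_{\g^\Gamma} x)$, giving the desired extension.

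I do not anticipate a serious obstacle; the only subtle point is the possible concern that the assignment $\prod_i\exp(\ad_{\g^\Gamma}x_i)\mapsto \prod_i \exp(\ad_\g x_i)$ might not be well-defined as a function on $\Int(\g^\Gamma)$. But the lemma only asserts the existence of \emph{some} inner extension for each inner automorphism, not the existence of a group homomorphism $\Int(\g^\Gamma)\to \Int(\g)^\Gamma$, so this concern does not arise.
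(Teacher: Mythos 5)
Your proof is correct and follows essentially the same route as the paper: write the inner automorphism as a product of exponentials of ad-nilpotent elements of $\g^\Gamma$ and exponentiate the same elements in $\g$, checking $\Gamma$-equivariance from $x\in\g^\Gamma$. The only difference is in how nilpotency of $\ad_\g x$ is verified — the paper takes the $x_i$ to be root vectors and notes that $\ad_\g x_i$ raises $\g^\Gamma$-weights, while you invoke preservation of the Jordan decomposition in finite-dimensional representations of the semisimple algebra $[\g^\Gamma,\g^\Gamma]$; both are valid.
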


\begin{proof}
  Let $\sigma$ be an inner automorphism of $\g^\Gamma$.  Then $\sigma$ acts trivially on the center of $\g^\Gamma$ and so it suffices to extend the restriction of $\sigma$ to the semisimple part $[\g^\Gamma,\g^\Gamma]$ of $\g^\Gamma$.  This restriction can be written in the form $e^{\ad x_1} e^{\ad x_2} \dotsm e^{\ad x_n}$, where $x_i \in ([\g^\Gamma,\g^\Gamma])_{\beta_i}$, $i=1,\dotsc,n$, and each $\beta_i$ is a (nonzero) root of $[\g^\Gamma,\g^\Gamma]$ (see, for example, \cite[Th.~27.5(e)]{Hum75}).  Since $\g$ is a sum of finite-dimensional irreducible $\g^\Gamma$-modules, the action of $\ad x_i$ on $\g$ is nilpotent (since it increases weights by $\beta_i$) for all $i=1,\dotsc,n$.  Thus $\sigma$ extends to an inner automorphism of $\g$. Furthermore, since $x_i \in \g^\Gamma$ for each $i=1,\dotsc,n$, this automorphism commutes with the action of $\Gamma$ on $\g$.
\end{proof}

\begin{prop}
  Up to isomorphism, $\bA_\Gamma^\lambda$ depends only the isomorphism class of $V^\Gamma(\lambda)$ and not on the choice of triangular decomposition of $\g^\Gamma$.
\end{prop}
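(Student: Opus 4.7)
My plan is to show that an inner automorphism of $\g^\Gamma$ relating the two triangular decompositions descends to the desired isomorphism of algebras. Fix two triangular decompositions $\g^\Gamma = \n_\Gamma^- \oplus \h_\Gamma \oplus \n_\Gamma^+$ and $\g^\Gamma = \tilde\n_\Gamma^- \oplus \tilde\h_\Gamma \oplus \tilde\n_\Gamma^+$ and an irreducible finite-dimensional $\g^\Gamma$-module $V$ whose highest weights with respect to these two decompositions are $\lambda$ and $\tilde\lambda$ respectively. Since any two triangular decompositions of the reductive Lie algebra $\g^\Gamma$ are conjugate by an inner automorphism, I would pick $\sigma \in \Int(\g^\Gamma)$ with $\sigma(\n_\Gamma^\pm) = \tilde\n_\Gamma^\pm$ (and hence $\sigma(\h_\Gamma) = \tilde\h_\Gamma$). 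Lemma~\ref{lem:extend-inner-aut} supplies an extension $\hat\sigma \in \Int(\g)$ commuting with $\Gamma$, so $\hat\sigma \otimes \id_A$ restricts to an automorphism $\Phi$ of $(\g \otimes A)^\Gamma$ and extends to an algebra automorphism of $\cU_\Gamma$, still denoted $\Phi$. Since $\hat\sigma(C_\g(\h_\Gamma)) = C_\g(\tilde\h_\Gamma)$ and $\hat\sigma(\g^\pm) = \tilde\g^\pm$, we obtain $\Phi(\cU_\Gamma^0) = \tilde\cU_\Gamma^0$ and $\Phi(\cU_\Gamma^\pm) = \tilde\cU_\Gamma^\pm$. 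Because $W^\Gamma(V)$ is defined in Definition~\ref{def:twisted-global-Weyl-module} purely from $V$ as a $\g^\Gamma$-module together with the triangular-decomposition-independent partial order on $\Irr \g^\Gamma$, the underlying $\cU_\Gamma$-module is the same in either setup.

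The crux is to construct a $\Phi$-semilinear automorphism $\Psi$ of $W^\Gamma(V)$ --- that is, a $\kk$-linear automorphism satisfying $\Psi(um) = \Phi(u)\Psi(m)$ for all $u \in \cU_\Gamma$ and $m \in W^\Gamma(V)$ --- sending $w_\lambda^\Gamma$ to a nonzero scalar multiple of $\tilde w_{\tilde\lambda}^\Gamma$. First I would integrate $\sigma$ to a $\sigma$-intertwiner on $V$: writing $\sigma = \prod_i e^{\ad x_i}$ with $x_i$ in root spaces of $[\g^\Gamma,\g^\Gamma]$ as in the proof of Lemma~\ref{lem:extend-inner-aut}, take $\tilde\sigma \defeq \prod_i e^{x_i}$, where each $x_i$ acts on $V$ via the $\g^\Gamma$-action; the exponentials are well-defined because root-space elements act nilpotently on the finite-dimensional module $V$, and a standard computation yields $\tilde\sigma(yv) = \sigma(y)\tilde\sigma(v)$ for $y \in \g^\Gamma$, $v \in V$. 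Since $\tilde\sigma(v_\lambda^\Gamma)$ is killed by $\tilde\n_\Gamma^+$ and has $\tilde\h_\Gamma$-weight $\tilde\lambda$, it equals $c\, \tilde v_{\tilde\lambda}^\Gamma$ for some $c \in \kk^\times$. I would then define $\Psi_P \colon P^\Gamma(V) \to P^\Gamma(V)$ by $u \otimes v \mapsto \Phi(u) \otimes \tilde\sigma(v)$; well-definedness over $\cU(\g^\Gamma)$ is immediate from $\Phi|_{\cU(\g^\Gamma)} = \sigma$ and the intertwining property of $\tilde\sigma$, and the analogous formula with $\sigma^{-1}$ furnishes an inverse. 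Since inner automorphisms of $\g^\Gamma$ preserve isomorphism classes of irreducible $\g^\Gamma$-modules, $\Psi_P$ respects the $\g^\Gamma$-isotypic decomposition of $P^\Gamma(V)$, so it preserves $\cU_\Gamma P^\Gamma(V)_{\not\le [V]}$ and descends to the required automorphism $\Psi$ of $W^\Gamma(V)$.

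With $\Psi$ in hand, the identity $\Psi(w_\lambda^\Gamma) = c\tilde w_{\tilde\lambda}^\Gamma$ combined with $\Phi$-semilinearity gives, for $u \in \cU_\Gamma^0$, the equivalence $u w_\lambda^\Gamma = 0 \iff \Phi(u)\tilde w_{\tilde\lambda}^\Gamma = 0$. Thus $\Phi$ carries $\Ann_{\cU_\Gamma^0}(w_\lambda^\Gamma)$ isomorphically onto $\Ann_{\tilde\cU_\Gamma^0}(\tilde w_{\tilde\lambda}^\Gamma)$, yielding the desired algebra isomorphism $\bA_\Gamma^\lambda \cong \tilde\bA_\Gamma^{\tilde\lambda}$. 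That $\bA_\Gamma^\lambda$ depends only on the isomorphism class of $V^\Gamma(\lambda)$ (and not on the particular choice of highest weight vector) is automatic from the commutativity of $\cU_\Gamma^0$, since distinct highest weight vectors differ by a nonzero scalar and hence have identical annihilators. I expect the most delicate step to be the verification that $\Psi_P$ preserves $\cU_\Gamma P^\Gamma(V)_{\not\le [V]}$: this reduces to the assertion that a semilinear automorphism arising from an inner automorphism of $\g^\Gamma$ sends each $\g^\Gamma$-isotypic component of $P^\Gamma(V)$ to itself, a standard fact that must nonetheless be spelled out.
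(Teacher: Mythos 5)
Your proposal is correct and follows essentially the same route as the paper: conjugate the two triangular decompositions by an inner automorphism of $\g^\Gamma$, extend it to $\g$ commuting with $\Gamma$ via Lemma~\ref{lem:extend-inner-aut}, and build a semilinear isomorphism of induced modules carrying one annihilator ideal onto the other. The only cosmetic difference is that you construct the intertwiner on $V$ explicitly by exponentiating the root vectors, whereas the paper invokes the isomorphism $V^\sigma \cong V$ for inner $\sigma$ from Bourbaki; you are also slightly more careful than the paper in checking that the map descends from $P^\Gamma(V)$ to $W^\Gamma(V)$.
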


\begin{proof}
  Let $D_i = (\n_{D_i}^\pm, \h_{D_i})$, $i=1,2$, be two triangular decompositions of $\g^\Gamma$ and fix a finite-dimensional irreducible $\g^\Gamma$-module $V$.  For $i=1,2$, let $\lambda_i$ be the highest weights of $V$ with respect to the triangular decomposition $D_i$ and let $v_i$ be a highest weight vector.  By \cite[Ch.~VIII, \S5, no.~3, Prop.~5]{Bou75}, there exists an inner automorphism $\sigma$ of $\g^\Gamma$ such that $\sigma(\h_{D_1})=\h_{D_2}$ and $\sigma$ carries the positive root spaces to the positive root spaces.  By Lemma~\ref{lem:extend-inner-aut}, we can extend $\sigma$ to an inner automorphism of $\g$, also denoted $\sigma$, which commutes with the action of $\Gamma$.  Thus, $\sigma$ induces an automorphism of $(\g \otimes A)^\Gamma$.  For a $\g^\Gamma$-module (or $(\g \otimes A)^\Gamma$-module) $W$, let $W^\sigma$ denote the module obtained from $W$ by twisting the action by $\sigma$.  That is, $W^\sigma$ is equal to $W$ as a vector space, with action given by
  \[
    (x,w) \mapsto \sigma^{-1}(x)w,\quad x \in \g^\Gamma\ \text{(respectively, $x \in (\g \otimes A)^\Gamma$)},\ w \in W^\sigma.
  \]
  Then, in $V^\sigma$, $v_1$ is a highest weight vector with respect to $D_2$ of highest weight $\lambda_1 \circ \sigma^{-1}$.  Since $\sigma$ is inner, $V^\sigma \cong V$ (see, for instance, \cite[Ch.~VIII, \S7, no.~2, Rem.~1]{Bou75}) and so $\lambda_1 \circ \sigma^{-1} = \lambda_2$.  Thus we have an isomorphism $V^\sigma \cong V$ determined by $v_1 \mapsto v_2$.  This isomorphism maps $uv_1$ to $\sigma(u)v_2$ for all $u \in \cU(\g^\Gamma)$.

  We also have an isomorphism $(\cU_\Gamma)^\sigma \cong \cU_\Gamma$ of $(\cU_\Gamma,\cU(\g^\Gamma))$-bimodules, where $(\cU_\Gamma)^\sigma$ has the action twisted on both sides.  This isomorphism maps $u$ to $\sigma(u)$ for all $u \in (\cU_\Gamma)^\sigma$.  Thus we have an isomorphism
  \begin{equation} \label{eq:twisting-isom}
    (\cU_\Gamma)^\sigma \otimes_{\cU(\g^\Gamma)} V^\sigma \cong \cU_\Gamma \otimes_{\cU(\g^\Gamma)} V,\quad u \otimes v_1 \mapsto \sigma(u) \otimes v_2,
  \end{equation}
  of left $\cU_\Gamma$-modules.

  Now, $\sigma(\g^0_{D_1}) = \g^0_{D_2}$, where $\g^0_{D_i}$ denotes the centralizer $C_\g(\h_{D_i})$ for $i=1,2$.  It follows that $\sigma \big( \cU((\g^0_{D_1} \otimes A)^\Gamma) \big) = \cU((\g^0_{D_2} \otimes A)^\Gamma)$.  Furthermore, the isomorphism~\eqref{eq:twisting-isom} implies that $\sigma \big( \Ann_{\cU((\g^0_{D_1} \otimes A)^\Gamma)} (1 \otimes v_1) \big) = \Ann_{\cU((\g^0_{D_2} \otimes A)^\Gamma)} (1 \otimes v_2)$, completing the proof.
\end{proof}

Each weight space $W^\Gamma(\lambda)_\mu$, $\mu \in \Lambda_\Gamma$, is a right $\bA_\Gamma^\lambda$-submodule.  In particular, $W^\Gamma(\lambda)_\lambda$ is a right $\bA_\Gamma^\lambda$-module and
\[
  W^\Gamma(\lambda)_\lambda \cong \bA_\Gamma^\lambda \quad \text{as right $\bA_\Gamma^\lambda$-modules}.
\]

\begin{defin}[Twisted Weyl functor $\bW^\Gamma_\lambda$] \label{def:twisted-Weyl-functor}
  For $\lambda \in \Lambda^+_\Gamma$, we define the right exact \emph{(twisted) Weyl functor} (cf.\ \cite[\S3.4]{CFK10}, \cite[\S4.1]{FMS11})
  \[
    \bW^\Gamma_\lambda \colon \bA_\Gamma^\lambda\text{-mod} \to \cI^\Gamma_{\le\lambda},\quad \bW^\Gamma_\lambda M = W^\Gamma(\lambda) \otimes_{\bA_\Gamma^\lambda} M,\quad \bW^\Gamma_\lambda f = 1 \otimes f,
  \]
  for $M \in \Ob \bA_\Gamma^\lambda$-mod, $f \in \Mor \bA_\Gamma^\lambda$-mod. That $\bW^\Gamma_{\lambda} M \in \Ob \cI^\Gamma_{\le\lambda}$ for all $M \in \Ob \bA_\Gamma^\lambda$-mod follows from the fact that $W^\Gamma(\lambda) \in \Ob \cI^\Gamma_{\le\lambda}$.  \details{In particular, if $uw_\lambda^\Gamma \otimes m\in \mathbf W_\lambda^\Gamma M$ for some $u \in \cU_\Gamma$ and $m\in M$, then $\cU(\g^\Gamma)(uw_\lambda^\Gamma \otimes m)\subseteq (\cU(\g^\Gamma)uw_\lambda^\Gamma)\otimes m$, which is a direct sum of irreducible finite-dimensional $\g^\Gamma$-modules.}
\end{defin}

\begin{lem} \label{lem:Ann-kills-V_lambda}
  For $\lambda \in \Lambda_\Gamma^+$ and $V \in \Ob \cI^\Gamma_{\le\lambda}$, we have $(\Ann_{\cU^0_\Gamma}(w_\lambda^\Gamma)) V_\lambda = 0$.
\end{lem}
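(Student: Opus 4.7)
The plan is to show that any weight vector $v \in V_\lambda$ satisfies exactly the defining relations of the generator $w_\lambda^\Gamma$ of $W^\Gamma(\lambda)$ listed in Proposition~\ref{weyl:gen-rel}. Once this is established, the universal property of $W^\Gamma(\lambda)$ furnishes a surjective $(\g \otimes A)^\Gamma$-module homomorphism $\phi\colon W^\Gamma(\lambda) \twoheadrightarrow \cU_\Gamma v$ with $\phi(w_\lambda^\Gamma) = v$. Then for any $u \in \Ann_{\cU^0_\Gamma}(w_\lambda^\Gamma)$, applying $\phi$ to $uw_\lambda^\Gamma = 0$ gives $uv = 0$, and since $v \in V_\lambda$ was arbitrary this yields the conclusion.

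To verify the three relations for an arbitrary $v \in V_\lambda$, I would argue as follows. The weight relation $(h \otimes 1)v = \lambda(h)v$ for $h \in \h_\Gamma$ is automatic. For $(\g^+ \otimes A)^\Gamma v = 0$, I would use weight considerations: by the definition \eqref{eq:g-scalene} of $\g^+$, any nonzero element of $(\g^+ \otimes A)^\Gamma$ has $\g^\Gamma$-weight of the form $\alpha \notin Q_\Gamma^-$, so its action on $v$ lands in the weight space $V_\mu$ with $\mu - \lambda \notin Q_\Gamma^-$, i.e.\ $\mu \not\le \lambda$; but since $V \in \Ob \cI^\Gamma_{\le \lambda}$, the isotypic decomposition forces all weights of $V$ to lie in $\lambda - Q_\Gamma^+$, so such a weight space vanishes. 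For the Serre-type relation $(f_i^\Gamma \otimes 1)^{\lambda(h_i^\Gamma)+1} v = 0$, observe that since weights $\mu$ of $V$ satisfy $\mu \le \lambda$, the $\lambda$-weight space is contained entirely in the $[V^\Gamma(\lambda)]$-isotypic component of $V$ (as no $V^\Gamma(\mu)$ with $\mu < \lambda$ has a nonzero $\lambda$-weight space). Hence $v$ is a sum of highest weight vectors in copies of $V^\Gamma(\lambda)$, and the standard $\fsl_2$-representation theory applied to each copy gives the relation.

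With these three relations in hand, Proposition~\ref{weyl:gen-rel} produces the desired surjection $\phi$, and the conclusion follows immediately as indicated in the first paragraph.

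I do not anticipate any serious obstacle: the proof is essentially a direct application of the generators-and-relations presentation together with the weight bound built into the definition of $\cI^\Gamma_{\le\lambda}$. The only subtlety to be careful about is distinguishing the $\g^\Gamma$-level relations (which only need the classical $\fsl_2$-argument on finite-dimensional $\g^\Gamma$-modules) from the stronger relation $(\g^+ \otimes A)^\Gamma v = 0$, which uses the full weight-space bound coming from $V \in \Ob \cI^\Gamma_{\le \lambda}$ rather than just $\n_\Gamma^+ v = 0$.
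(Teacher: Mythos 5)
Your proof is correct and follows essentially the same route as the paper's: both arguments produce a homomorphism $W^\Gamma(\lambda)\to V$ sending $w_\lambda^\Gamma$ to the given vector $v\in V_\lambda$ and then apply it to $uw_\lambda^\Gamma=0$. The only cosmetic difference is that you invoke the presentation of Proposition~\ref{weyl:gen-rel} (verifying the three relations on $v$), whereas the paper constructs the map directly from the induced module $\cU_\Gamma\otimes_{\cU(\g^\Gamma)}V^\Gamma(\lambda)$ by checking that $\cU_\Gamma(\g^+\otimes A)^\Gamma\otimes v_\lambda^\Gamma$ lies in the kernel; the substantive verifications are the same.
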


\begin{proof}
  Suppose $v' \in V_\lambda$ and let $V' = \cU(\g^\Gamma)v'$ be the $\g^\Gamma$-submodule of $V$ generated by $v'$.  Since $V\in\Ob\cI^\Gamma_{\le\lambda}$, it follows that $V'$ is finite-dimensional.  Since $v'$ is a highest weight vector and $V$ is a sum of irreducible finite-dimensional $\g^\Gamma$-modules, we see that $V' \cong V^\Gamma(\lambda)$.  Identifying $V'$ with $V^\Gamma(\lambda)$, we therefore have a linear map
  \[
    \varphi \colon\cU_\Gamma \otimes_{\cU(\g^\Gamma)} V^\Gamma(\lambda) \to V,\ u \otimes v \mapsto uv,\quad u \in \cU_\Gamma,\ v \in V^\Gamma(\lambda),
  \]
  and
  \[
    \cU_\Gamma(\g^+\otimes A)^\Gamma\otimes v_\lambda^\Gamma \subseteq \ker \varphi.
  \]
  Thus $\varphi$ descends to a homomorphism $\bar{\varphi} \colon W^\Gamma(\lambda)\to V$ mapping $w_\lambda^\Gamma$ to $v'$.  Hence if $u \in \Ann_{\cU^0_\Gamma}(w_\lambda^\Gamma)$, we see that $uv'= \bar{\varphi}(u w_\lambda^\Gamma)=\bar{\varphi}(0)=0$.
\end{proof}

Note that
\[
  \bW^\Gamma_\lambda \bA_\Gamma^\lambda \cong W^\Gamma(\lambda) \quad \text{as $(\g \otimes A)^\Gamma$-modules},
\]
and
\[
  (\bW^\Gamma_\lambda M)_\mu = W^\Gamma(\lambda)_\mu \otimes_{\bA_\Gamma^\lambda} M \quad \text{as vector spaces},
\]
for $\lambda \in \Lambda_\Gamma^+$, $\mu \in \Lambda_\Gamma$ and $M \in \Ob \bA_\Gamma^\lambda$-mod.

\begin{defin}[Twisted restriction functor $\bR^\Gamma_\lambda$]\label{def:R-Gamma-lambda}
  By Lemma~\ref{lem:Ann-kills-V_lambda}, the left action of $\cU_\Gamma^0$ on $V \in \Ob \cI^\Gamma_{\le\lambda}$ induces a left action of $\bA_\Gamma^\lambda$ on $V_\lambda$.  We denote the resulting $\bA_\Gamma^\lambda$-module by $\bR^\Gamma_\lambda V$.  For $\psi \in \Hom_{\cI^\Gamma_{\le\lambda}}(V,V')$, the restriction $\bR^\Gamma_\lambda \psi \colon V_\lambda \to V'_\lambda$ is a morphism of $\bA_\Gamma^\lambda$-modules.  These maps define the \emph{(twisted) restriction functor} $\bR^\Gamma_\lambda \colon \cI^\Gamma_{\le\lambda} \to \bA_\Gamma^\lambda$-mod.  The functor $\bR^\Gamma_\lambda$ is exact since restriction to a weight space is exact.
\end{defin}

The following is a generalization of results that are known in the untwisted case (see \cite[\S3.6 and Prop.~5]{CFK10}).

\begin{prop} \label{prop:right-adjoint}
  The functors $\bW^\Gamma_\lambda$ and $\bR^\Gamma_\lambda$ have the following properties:
  \begin{enumerate}
    \item \label{prop-item:RW=1} $\bR^\Gamma_\lambda \bW^\Gamma_\lambda \cong \Id_{\bA_\Gamma^\lambda\textup{-mod}}$ (isomorphism of functors);

    \item $\bW^\Gamma_\lambda$ is left adjoint to $\bR^\Gamma_\lambda$;

    \item \label{lem-item:W-projectives} the functor $\bW^\Gamma_\lambda$ maps projective objects to projective objects.
  \end{enumerate}
\end{prop}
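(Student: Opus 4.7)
The plan is to establish part~\eqref{prop-item:RW=1} directly from the isomorphism $W^\Gamma(\lambda)_\lambda \cong \bA_\Gamma^\lambda$ of right $\bA_\Gamma^\lambda$-modules noted immediately before Definition~\ref{def:R-Gamma-lambda}, then derive the adjunction~(b) by constructing the unit/counit by hand using the presentation of $W^\Gamma(\lambda)$ from Proposition~\ref{weyl:gen-rel}, and finally deduce~\eqref{lem-item:W-projectives} from the standard homological fact that a left adjoint to an exact functor sends projectives to projectives.

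For~\eqref{prop-item:RW=1}, the computation
\[
  (\bW^\Gamma_\lambda M)_\lambda = W^\Gamma(\lambda)_\lambda \otimes_{\bA_\Gamma^\lambda} M \cong \bA_\Gamma^\lambda \otimes_{\bA_\Gamma^\lambda} M \cong M
\]
is natural in $M$, and a quick check shows the induced left $\bA_\Gamma^\lambda$-action on the left-hand side (coming via Lemma~\ref{lem:Ann-kills-V_lambda} from the $\cU_\Gamma^0$-action on $W^\Gamma(\lambda)_\lambda$) agrees with the $\bA_\Gamma^\lambda$-action on $M$. Hence $\bR^\Gamma_\lambda \bW^\Gamma_\lambda \cong \Id_{\bA_\Gamma^\lambda\textup{-mod}}$.

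For~(b), I would construct an explicit bijection
\[
  \Hom_{\cI^\Gamma_{\le\lambda}}(\bW^\Gamma_\lambda M, V) \cong \Hom_{\bA_\Gamma^\lambda}(M, \bR^\Gamma_\lambda V),
\]
natural in $M \in \Ob \bA_\Gamma^\lambda\text{-mod}$ and $V \in \Ob \cI^\Gamma_{\le\lambda}$. The forward map sends $\varphi$ to $\bR^\Gamma_\lambda \varphi$ composed with the isomorphism from~\eqref{prop-item:RW=1}. For the inverse, given an $\bA_\Gamma^\lambda$-linear $\psi \colon M \to V_\lambda$, I extend to a $(\g \otimes A)^\Gamma$-module map $\tilde\psi \colon \bW^\Gamma_\lambda M \to V$ by the assignment $uw_\lambda^\Gamma \otimes m \mapsto u \psi(m)$, for $u \in \cU_\Gamma$, $m \in M$. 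The well-definedness, which is the main obstacle, splits into two points. First, any $v \in V_\lambda$ is a highest-weight vector of weight $\lambda$: indeed, since $V \in \Ob \cI^\Gamma_{\le\lambda}$ the weight $\lambda$ can only appear in the $[V^\Gamma(\lambda)]$-isotypic component, so $v$ satisfies the defining relations of $w_\lambda^\Gamma$ from Proposition~\ref{weyl:gen-rel}, yielding a well-defined $(\g \otimes A)^\Gamma$-homomorphism $W^\Gamma(\lambda) \to V$ with $w_\lambda^\Gamma \mapsto \psi(m)$ for each fixed $m$. Second, to descend from $W^\Gamma(\lambda) \otimes_\kk M$ to the tensor product over $\bA_\Gamma^\lambda$, I verify that the assignment is balanced: for $a \in \bA_\Gamma^\lambda$, the right action on $W^\Gamma(\lambda)$ is by left multiplication by a lift in $\cU_\Gamma^0$, so $(uw_\lambda^\Gamma \cdot a) \otimes m \mapsto ua\psi(m) = u\psi(am)$, which matches the image of $uw_\lambda^\Gamma \otimes am$ because $\psi$ is $\bA_\Gamma^\lambda$-linear. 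Naturality in $M$ and $V$ and mutual inverseness of the two assignments are then routine.

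For~\eqref{lem-item:W-projectives}, I invoke the standard consequence of adjunction: if $P$ is projective in $\bA_\Gamma^\lambda\text{-mod}$, then
\[
  \Hom_{\cI^\Gamma_{\le\lambda}}(\bW^\Gamma_\lambda P, -) \cong \Hom_{\bA_\Gamma^\lambda}(P, \bR^\Gamma_\lambda(-))
\]
is a composition of the exact functor $\bR^\Gamma_\lambda$ (exactness recorded in Definition~\ref{def:R-Gamma-lambda}) with the exact functor $\Hom_{\bA_\Gamma^\lambda}(P, -)$, hence is exact; so $\bW^\Gamma_\lambda P$ is projective in $\cI^\Gamma_{\le\lambda}$.
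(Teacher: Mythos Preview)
Your proposal is correct and follows essentially the same approach as the paper. The only cosmetic difference is in part~(b): the paper phrases the adjunction via unit and counit natural transformations $\eta_M\colon m\mapsto w^\Gamma_\lambda\otimes m$ and $\epsilon_V\colon uw^\Gamma_\lambda\otimes v\mapsto uv$ and checks the triangle identities, whereas you construct the natural Hom-set bijection directly; but your map $\tilde\psi$ is exactly the paper's $\epsilon_V$ specialized along $\psi$, the well-definedness argument via Proposition~\ref{weyl:gen-rel} and the balanced check are identical, and for part~\eqref{lem-item:W-projectives} the paper simply cites the same standard fact you spell out.
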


\begin{proof}
  \begin{asparaenum}
    \item For $M \in \Ob \bA_\Gamma^\lambda$-mod, we have the following isomorphisms of left $\bA_\Gamma^\lambda$-modules:
      \[
        \bR^\Gamma_\lambda \bW^\Gamma_\lambda M = (\bW^\Gamma_\lambda M)_\lambda = W^\Gamma(\lambda)_\lambda \otimes_{\bA_\Gamma^\lambda} M \cong \bA_\Gamma^\lambda \otimes_{\bA_\Gamma^\lambda} M \cong M.
      \]

    \item We must define natural transformations
      \[
        \epsilon \colon \bW^\Gamma_\lambda \bR^\Gamma_\lambda\Rightarrow \Id_{\cI^\Gamma_{\le\lambda}},\quad \eta \colon \Id_{\bA_\Gamma^\lambda\text{-mod}}\Rightarrow \bR^\Gamma_\lambda\bW^\Gamma_\lambda,
      \]
      such that, for each $M\in\Ob\bA_\Gamma^\lambda$-mod and $V\in\Ob\cI^\Gamma_{\le\lambda}$, we have the equality of morphisms
      \begin{equation}\label{eq:adjoint}
        \Id_{\bW^\Gamma_\lambda M}= \epsilon_{\bW^\Gamma_\lambda M}\circ\bW^\Gamma_\lambda(\eta_M), \quad \Id_{\bR^\Gamma_\lambda V}=\bR^\Gamma_\lambda(\epsilon_V)
        \circ\eta_{\bR^\Gamma_\lambda V}.
      \end{equation}
      For $M\in\Ob\bA_\Gamma^\lambda$-mod, define
      \[
        \eta_M \colon M\to \bR^\Gamma_\lambda\bW^\Gamma_\lambda M, \quad m\mapsto w^\Gamma_\lambda\otimes m.
      \]
      It is straightforward to verify that $\eta_M$ is natural in $M$ and thus the collection $\{\eta_M\ |\ M\in\Ob\bA_\Gamma^\lambda\text{-mod}\}$ does indeed define a natural transformation $\eta \colon \Id_{\bA_\Gamma^\lambda\text{-mod}}\Rightarrow \bR^\Gamma_\lambda\bW^\Gamma_\lambda$.
      \details{To see that $\eta_M$ is natural in $M$, suppose $f:M\to N$ is a morphism of $\bA^\lambda_\Gamma$-modules. Then, for $m\in M$, we have
      \[
        \bR^\Gamma_\lambda \bW^\Gamma_\lambda(f)(\eta_M(m))=\bR^\Gamma_\lambda\bW^\Gamma_\lambda(f)(w^\Gamma_\lambda\otimes m)=(1\otimes f)_\lambda(w^\Gamma_\lambda\otimes m)=w^\Gamma_\lambda\otimes f(m) = \eta_N(f(m)),
      \]
      proving the commutativity of the following diagram:
      \[
        \entrymodifiers={+!!<0pt,\fontdimen22\textfont2>}\xymatrixcolsep{5pc}\xymatrix{ M\ar[r]^{f} \ar[d]_{\eta_M}&N\ar[d]^{\eta_N} \\
        \bR^\Gamma_\lambda\bW^\Gamma_\lambda M\ar[r]_{\bR^\Gamma_\lambda\bW^\Gamma_\lambda(f)}&
        \bR^\Gamma_\lambda\bW^\Gamma_\lambda N}
      \]
      Hence the collection $\{\eta_M\ |\ M\in\Ob\bA_\Gamma^\lambda\text{-mod}\}$ does indeed define a natural transformation $\eta \colon \Id_{\bA_\Gamma^\lambda\text{-mod}}\Rightarrow \bR^\Gamma_\lambda\bW^\Gamma_\lambda$.}

      For $V \in \Ob \cI^\Gamma_{\le \lambda}$, define $\epsilon_V:\bW^\Gamma_\lambda\bR^\Gamma_\lambda V\to V$ as follows. First, we regard $W^\Gamma(\lambda)\otimes_{\kk} \bR^\Gamma_\lambda V$ as a left $(\g\otimes A)^\Gamma$-module via the action of $(\g\otimes A)^\Gamma$ on $W^\Gamma(\lambda)$. Then it follows by Proposition~\ref{weyl:gen-rel} that the assignment $\epsilon_1 \colon W^\Gamma(\lambda)\otimes_{\kk} \bR^\Gamma_\lambda V\to V$ given by $u w^\Gamma_\lambda\otimes v\mapsto uv,\ u\in \cU_\Gamma$, is a well-defined map of left $(\g\otimes A)^\Gamma$-modules. To see that this map factors through to a map $\epsilon_V:\bW^\Gamma_\lambda \bR^\Gamma_\lambda V\to V$ we observe that, for $a \in \bA^\lambda_\Gamma$ and $u\in \cU_\Gamma$, we have
      \[
        \epsilon_1(u w^\Gamma_\lambda a \otimes v) = \epsilon_1(u a w^\Gamma_\lambda \otimes v) = u av = \epsilon_1(u w^\Gamma_\lambda \otimes a v).
      \]
      Again, it is straightforward to check that $\epsilon_V$ is natural in $V$.
      \details{Thus $\epsilon_V$ is given by extending by linearity the assignment $uw^\Gamma_\lambda\otimes v\mapsto uv$, where $u\in \cU_\Gamma$ and $v\in V_\lambda$.  For $f \colon V\to U$ in $\Mor\cI^\Gamma_{\le\lambda}$, we have
      \[
        \epsilon_U(\bW^\Gamma_\lambda \bR^\Gamma_\lambda (f)(uw^\Gamma_\lambda \otimes v)) = \epsilon_U(uw^\Gamma_\lambda\otimes f(v))=uf(v) = f(uv) = f(\epsilon_V(uw^\Gamma_\lambda\otimes v)).
      \]
      Thus the diagram
      \[
        \entrymodifiers={+!!<0pt,\fontdimen22\textfont2>}\xymatrixcolsep{5pc}\xymatrix{ \bW^\Gamma_\lambda\bR^\Gamma_\lambda V\ar[r]^{\bR^\Gamma_\lambda\bW^\Gamma_\lambda(f)} \ar[d]_{\epsilon_V}&\bW^\Gamma_\lambda\bR^\Gamma_\lambda U\ar[d]^{\epsilon_U} \\
        V\ar[r]_{f}&U}
      \]
      commutes and so $\epsilon$ is natural in $V$.}

      Finally, we check the equalities in \eqref{eq:adjoint}. For $M \in \Ob \bA_\Gamma^\lambda$-mod and $m\in M$, we have
      \[
        (\epsilon_{\bW^\Gamma_\lambda M}\circ \bW^\Gamma_\lambda(\eta_M))(uw^\Gamma_\lambda\otimes m)=\epsilon_{\bW^\Gamma_\lambda M}(uw^\Gamma_\lambda\otimes w^\Gamma_\lambda\otimes m)=uw^\Gamma_\lambda\otimes m,
      \]
      and for $V\in \Ob \cI^\Gamma_{\le\lambda}$, $m \in \bR^\Gamma_\lambda V$, we have
      \[
        (\bR^\Gamma_\lambda(\epsilon_V) \circ \eta_{\bR^\Gamma_\lambda V})(m) = \bR^\Gamma_\lambda(\epsilon_V)(w^\Gamma_\lambda\otimes m)=m,
      \]
      which establishes \eqref{eq:adjoint}.

    \item This follows from the fact that $\bW^\Gamma_\lambda$ is left adjoint to a right exact functor (see, for example, \cite[Prop.~II.10.2]{HS97}). \qedhere
  \end{asparaenum}
\end{proof}

\begin{cor} \label{cor:global-Weyl-module-free}
  For $\lambda \in \Lambda_\Gamma^+$, the global Weyl module $W^\Gamma(\lambda)$ is a projective object in $\cI^\Gamma_{\le \lambda}$.
\end{cor}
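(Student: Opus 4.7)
The plan is to deduce this immediately from Proposition~\ref{prop:right-adjoint} together with the identification $\bW^\Gamma_\lambda \bA_\Gamma^\lambda \cong W^\Gamma(\lambda)$ of $(\g \otimes A)^\Gamma$-modules recorded just before Definition~\ref{def:R-Gamma-lambda}.

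First I would observe that $\bA_\Gamma^\lambda$, viewed as a left module over itself, is a free (hence projective) object in the category $\bA_\Gamma^\lambda$-mod. Then, applying part~\eqref{lem-item:W-projectives} of Proposition~\ref{prop:right-adjoint}, which says that the Weyl functor $\bW^\Gamma_\lambda$ carries projectives to projectives, I conclude that $\bW^\Gamma_\lambda \bA_\Gamma^\lambda$ is a projective object in $\cI^\Gamma_{\le \lambda}$. Combining this with the isomorphism $\bW^\Gamma_\lambda \bA_\Gamma^\lambda \cong W^\Gamma(\lambda)$ completes the argument.

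Since part~\eqref{lem-item:W-projectives} of Proposition~\ref{prop:right-adjoint} has already been established (it is a formal consequence of $\bW^\Gamma_\lambda$ being a left adjoint of the exact functor $\bR^\Gamma_\lambda$), there is essentially no genuine obstacle here: the entire content of the corollary is packaged into the preceding proposition. The only mildly nontrivial point worth explicitly recording is that the free module of rank one is projective, which is standard.
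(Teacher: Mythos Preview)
Your proof is correct and matches the paper's own argument essentially verbatim: the paper simply cites Proposition~\ref{prop:right-adjoint}\eqref{lem-item:W-projectives} together with $\bW^\Gamma_\lambda \bA^\lambda_\Gamma = W^\Gamma(\lambda)$.
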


\begin{proof}
  This follows from Proposition~\ref{prop:right-adjoint}\eqref{lem-item:W-projectives} and the fact that $\bW^\Gamma_\lambda \bA^\lambda_\Gamma = W^\Gamma(\lambda)$.
\end{proof}

By Proposition~\ref{prop:right-adjoint}\eqref{prop-item:RW=1}, we have
\begin{equation} \label{eq:W=WRW}
  \bW^\Gamma_\lambda M\cong\bW^\Gamma_\lambda \bR^\Gamma_\lambda \bW^\Gamma_\lambda M \quad \text{for all } M \in \Ob \bA^\lambda_\Gamma\text{-mod}.
\end{equation}
The following theorem gives a homological characterization of this property.  In the untwisted case (i.e.\ when $\Gamma$ is trivial), it was proved in \cite[Th.~1]{CFK10}.

\begin{theo} \label{theo:WR=1-hom-char}
  Let $V\in\Ob\cI^\Gamma_{\leq\lambda}$. Then $V\cong \bW^\Gamma_\lambda\bR^\Gamma_\lambda V$ if and only if, for each $U\in\Ob\cI^\Gamma_{\leq\lambda}$ with $U_\lambda=0$, we have
  \[
    \Hom_{\cI^\Gamma_{\leq\lambda}}(V,U)=0,\quad\Ext^1_{\cI^\Gamma_{\leq\lambda}}(V,U)=0.
  \]
\end{theo}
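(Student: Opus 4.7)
The plan is to prove the two directions of the biconditional using the adjunction established in Proposition~\ref{prop:right-adjoint}, the exactness of $\bR^\Gamma_\lambda$, and the key observation that, for any $M \in \Ob \bA^\lambda_\Gamma$-mod, the module $\bW^\Gamma_\lambda M = W^\Gamma(\lambda) \otimes_{\bA^\lambda_\Gamma} M$ is generated as a $\cU_\Gamma$-module by its $\lambda$-weight space $w^\Gamma_\lambda \otimes M$ (this follows from Lemma~\ref{global-weyl-triang}, since $W^\Gamma(\lambda)$ is generated by $w^\Gamma_\lambda$).

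For the ``only if'' direction, suppose $V \cong \bW^\Gamma_\lambda\bR^\Gamma_\lambda V$ via the counit $\epsilon_V$, and let $U \in \Ob \cI^\Gamma_{\le\lambda}$ with $U_\lambda = 0$, i.e., $\bR^\Gamma_\lambda U = 0$. The Hom vanishing is immediate from the adjunction:
\[
  \Hom_{\cI^\Gamma_{\le\lambda}}(V,U) \cong \Hom_{\bA^\lambda_\Gamma}(\bR^\Gamma_\lambda V, \bR^\Gamma_\lambda U) = 0.
\]
For the Ext vanishing, given a short exact sequence $0 \to U \to E \xrightarrow{\pi} V \to 0$, exactness of $\bR^\Gamma_\lambda$ (Definition~\ref{def:R-Gamma-lambda}) and $U_\lambda = 0$ force $\pi_\lambda \colon E_\lambda \to V_\lambda$ to be an isomorphism of $\bA^\lambda_\Gamma$-modules. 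Let $\sigma \colon V_\lambda \to E_\lambda$ be its inverse, and let $\tilde\sigma = \epsilon_E \circ \bW^\Gamma_\lambda \sigma \colon \bW^\Gamma_\lambda \bR^\Gamma_\lambda V \to E$ be its adjoint. By naturality of $\epsilon$, $\pi \circ \tilde\sigma = \epsilon_V \circ \bW^\Gamma_\lambda(\pi_\lambda \circ \sigma) = \epsilon_V$, which is an isomorphism by hypothesis; hence $\tilde\sigma \circ \epsilon_V^{-1}$ is a section of $\pi$, so the sequence splits.

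For the ``if'' direction, set $E \defeq \bW^\Gamma_\lambda \bR^\Gamma_\lambda V$ and consider $\epsilon_V \colon E \to V$. Let $K = \ker \epsilon_V$ and $C = \coker \epsilon_V$, yielding the four-term exact sequence $0 \to K \to E \to V \to C \to 0$. The triangle identity $\bR^\Gamma_\lambda(\epsilon_V) \circ \eta_{\bR^\Gamma_\lambda V} = \Id_{\bR^\Gamma_\lambda V}$ together with Proposition~\ref{prop:right-adjoint}\eqref{prop-item:RW=1} (which forces $\eta_{\bR^\Gamma_\lambda V}$ to be an isomorphism) implies that $\bR^\Gamma_\lambda(\epsilon_V)$ is an isomorphism; by exactness of $\bR^\Gamma_\lambda$, this gives $K_\lambda = 0 = C_\lambda$. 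Both $K$ and $C$ lie in $\cI^\Gamma_{\le\lambda}$, so the hypothesized Hom vanishing applied to the projection $V \twoheadrightarrow C$ forces $C = 0$, i.e., $\epsilon_V$ is surjective. The hypothesized $\Ext^1$ vanishing then splits the remaining short exact sequence $0 \to K \to E \to V \to 0$, producing a decomposition $E \cong V \oplus K$ in which $E_\lambda = V_\lambda \oplus K_\lambda = V_\lambda \subseteq V$.

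The final step, which is where the generation property is essential, is to conclude $K = 0$: since $E$ is generated as a $\cU_\Gamma$-module by its $\lambda$-weight space, and this weight space is contained in the submodule $V \subseteq E$, we must have $E = V$, so $K = 0$ and $\epsilon_V$ is an isomorphism. I expect the main subtlety to lie in this last argument --- the Ext vanishing on its own only splits off $K$ as a direct summand, and one genuinely needs the presentation of $\bW^\Gamma_\lambda \bR^\Gamma_\lambda V$ as a cyclic-on-$V_\lambda$ module (inherited from the generation of $W^\Gamma(\lambda)$ by $w^\Gamma_\lambda$) to rule out a nonzero $K$ with $K_\lambda = 0$.
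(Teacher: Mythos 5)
Your argument is correct in substance but follows a genuinely different route from the paper's in both halves. For the forward direction, the paper deduces the $\Hom$-vanishing from the fact that $V$ is generated by $V_\lambda$, and gets the $\Ext^1$-vanishing by covering $\bR^\Gamma_\lambda V$ with a projective $P$, applying $\bW^\Gamma_\lambda$, and chasing the long exact sequence, using that the kernel of $\bW^\Gamma_\lambda P \to V$ is again generated by its $\lambda$-weight space. You instead split an arbitrary extension $0 \to U \to E \to V \to 0$ explicitly via the adjoint of $(\pi_\lambda)^{-1}$; this is cleaner but leans on two points you should make explicit: (i) the hypothesis is only an abstract isomorphism $V \cong \bW^\Gamma_\lambda\bR^\Gamma_\lambda V$, so you must first argue that the counit $\epsilon_V$ is itself an isomorphism (this does follow: the triangle identity $\epsilon_{\bW^\Gamma_\lambda M}\circ\bW^\Gamma_\lambda(\eta_M)=\Id$ together with the fact that $\eta_M$ is the isomorphism of Proposition~\ref{prop:right-adjoint}\eqref{prop-item:RW=1} shows $\epsilon_{\bW^\Gamma_\lambda M}$ is invertible, and naturality transports this to $V$); and (ii) you are implicitly identifying $\Ext^1_{\cI^\Gamma_{\le\lambda}}$ with equivalence classes of short exact sequences in $\cI^\Gamma_{\le\lambda}$, which is legitimate because the category is abelian with enough projectives, but is an identification the paper avoids by working only with projective resolutions. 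For the converse, the paper shows surjectivity of $\epsilon_V$ the same way you do in effect, but then concludes $\ker\epsilon_V=0$ from $\Hom(\ker\epsilon_V,\ker\epsilon_V)=0$ via the long exact sequence, whereas you split off $\ker\epsilon_V$ as a direct summand and kill it using the generation of $\bW^\Gamma_\lambda\bR^\Gamma_\lambda V$ by its $\lambda$-weight space. Both are valid; your version makes the role of the generation property more transparent, while the paper's avoids invoking the splitting interpretation of $\Ext^1$ altogether.
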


\begin{proof}
  First, let $V\in\Ob\cI^\Gamma_{\leq\lambda}$ with $V\cong\bW^\Gamma_\lambda\bR^\Gamma_\lambda V$. Suppose that there is a homomorphism of $(\g\otimes A)^\Gamma$-modules $\varphi \colon V\to U$, where $U\in\Ob\cI^\Gamma_{\leq\lambda}$ with $U_\lambda=0$. By hypothesis $V$ is generated as a $\cU_\Gamma$-module by $\bR^\Gamma_\lambda V$, but on the other hand $\bR^\Gamma_\lambda \varphi=0$. Hence, $\varphi=0$.

  To establish the second condition, let $P$ be a projective object of $\bA_\Gamma^\lambda$-mod equipped with a surjective homomorphism $\pi \colon P\to \bR^\Gamma_\lambda V$. Applying the right exact functor $\bW^\Gamma_\lambda$ yields a surjective homomorphism of $(\g\otimes A)^\Gamma$-modules
  \[
    1\otimes\pi:\bW^\Gamma_\lambda P\to \bW^\Gamma_\lambda \bR^\Gamma_\lambda V\cong V,
  \]
  with $\bW^\Gamma_\lambda P$ a projective module by Proposition~\ref{prop:right-adjoint}\eqref{lem-item:W-projectives}. Take $K=\ker(1\otimes\pi)$ to obtain a short exact sequence
  \begin{equation}\label{weyl-seq}
    0\to K\to \bW^\Gamma_\lambda P\to V\to 0.
  \end{equation}
  Now, $K$ is generated by $K_\lambda$ as a $\cU_\Gamma$-module, being a homomorphic image of $\bW^\Gamma_\lambda(\ker\pi)$, which is itself  generated by its $\lambda$ weight space $W^\Gamma(\lambda)_\lambda\otimes_{\bA_\Gamma^\lambda} (\ker\pi)$.  Thus $\Hom_{\cI^\Gamma_{\leq\lambda}}(K,U)=0$, and it follows from the long exact sequence obtained by applying the functor $\Hom_{\cI^\Gamma_{\leq\lambda}}(-,U)$ to the sequence \eqref{weyl-seq} that $\Ext^1_{\cI^\Gamma_{\leq\lambda}}(V, U)=0$.
  \details{We get the long exact sequence:
  \begin{multline*}
    0 \to \Hom_{\cI^\Gamma_{\leq\lambda}}(V,U) \to \Hom_{\cI^\Gamma_{\leq\lambda}}(\bW^\Gamma_\lambda P,U) \to \Hom_{\cI^\Gamma_{\leq\lambda}} (K,U) \to \Ext^1_{\cI^\Gamma_{\leq\lambda}}(V,U) \\
    \to \Ext^1_{\cI^\Gamma_{\leq\lambda}}(\bW^\Gamma_\lambda P) \to \Ext^1_{\cI^\Gamma_{\leq\lambda}} (K,U) \to \cdots,
  \end{multline*}
  which becomes (using the fact that $\Ext^1_{\cI^\Gamma_{\leq\lambda}}(\bW^\Gamma_\lambda P,U)=0$ since $\bW^\Gamma_\lambda P$ is projective)
  \[
    0 \to 0 \to \Hom_{\cI^\Gamma_{\leq\lambda}}(\bW^\Gamma_\lambda P,U) \to 0 \to \Ext^1_{\cI^\Gamma_{\leq\lambda}}(V,U) \to \Ext^1_{\cI^\Gamma_{\leq\lambda}}(\bW^\Gamma_\lambda P,U) \to \Ext^1_{\cI^\Gamma_{\leq\lambda}} (K,U) \to \cdots.
  \]
  }

  Conversely, let $V\in \Ob \cI^\Gamma_{\le \lambda}$ satisfy the given vanishing conditions on $\Hom$ and $\Ext^1$.  Set $V'=\cU_\Gamma V_\lambda$ and observe that $V/V'\in\Ob\cI^\Gamma_\lambda$ with $(V/V')_\lambda=0$.  Thus our hypothesis implies that $\Hom_{\cI^\Gamma_\lambda}(V,V/V')=0$, so that $V=V'$.  We immediately see that the map $\epsilon_V \colon \bW^\Gamma_\lambda \bR^\Gamma_\lambda V \to V$ defined in the proof of Proposition~\ref{prop:right-adjoint} is surjective.  Let $U=\ker(\epsilon_V)$, so that $U_\lambda=0$. Consider the short exact sequence
  \[
    0\to U\to \bW^\Gamma_\lambda\bR^\Gamma_\lambda V \xrightarrow{\epsilon_V} V\to 0.
  \]
  The long exact sequence obtained by applying $\Hom_{\cI^\Gamma_{\le\lambda}}(-,U)$ now gives $\Hom_{\cI^\Gamma_{\le\lambda}}(U,U) = 0$ and hence $U=0$.
  \details{We get the long exact sequence:
  \[
    0 \to \Hom_{\cI^\Gamma_{\leq\lambda}}(V,U) \to \Hom_{\cI^\Gamma_{\leq\lambda}}(\bW^\Gamma_\lambda\bR^\Gamma_\lambda V,U) \to \Hom_{\cI^\Gamma_{\leq\lambda}}(U,U) \to \Ext^1_{\cI^\Gamma_{\leq\lambda}}(V,U) \to \cdots.
  \]
  which becomes (using the fact that $\Hom_{\cI^\Gamma_{\leq\lambda}}(\bW^\Gamma_\lambda\bR^\Gamma_\lambda V,U)=0$ since $\bW^\Gamma_\lambda\bR^\Gamma_\lambda V$ is generated by its $\lambda$ weight space, but $U_\lambda=0$)
  \[
    0 \to 0 \to 0 \to \Hom_{\cI^\Gamma_{\leq\lambda}}(U,U) \to 0 \to \cdots.
  \]
  }
  Thus $\epsilon_V$ is an isomorphism, which completes the proof.
\end{proof}

%
\section{Properties of global Weyl modules} \label{sec:properties-global}
%

This section marks the beginning of the second part of the current paper.  For the remainder of the paper, we assume that $\Gamma$ is a finite abelian group acting freely on $X_\rat$ and acting on $\g$ by diagram automorphisms.  (Note that $\Gamma$ is a finite abelian group and acts freely on $X_\rat$ in the case of (twisted) loop and multiloop algebras.)  Under this additional assumption, we shall deduce in this section some further properties of global Weyl modules and of the algebra $\bA^\Gamma_\lambda$.  In particular, we will see that both are finitely generated.  This generalizes the results of \cite{FMS11} (which considers the twisted loop algebra), but is new even in the case of twisted multiloop algebras.

Fix a triangular decomposition $\g = \n_- \oplus \h \oplus \n_+$.  Then, since $\Gamma$ acts by diagram automorphisms, we have an induced triangular decomposition $\g^\Gamma = \n_-^\Gamma \oplus \h^\Gamma \oplus \n_+^\Gamma$.  We let $R^+$ (respectively, $\Pi$) and $R^+_\Gamma$\label{def:roots-simple-roots} (respectively, $\Pi_\Gamma$) denote the sets of positive (respectively, simple) roots of $\g$ and $\g^\Gamma$, respectively.

\begin{lem}[{\cite[Ch.~V, \S1, no.~9, Th.~2]{Bou85b}}] \label{lem:AGamma-fg}
  The algebra $A^\Gamma$ is finitely generated (as an algebra) and $A_\xi$ is finitely generated as an $A^\Gamma$-module for all $\xi \in \Xi$.
\end{lem}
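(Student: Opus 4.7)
The plan is to use the classical Hilbert--Noether finiteness argument for invariants of finite group actions on finitely generated commutative algebras, which is exactly the content of the cited Bourbaki theorem. The two assertions will follow one after the other: first the finite generation of $A^\Gamma$ as a $\kk$-algebra, then the finite generation of each $A_\xi$ as an $A^\Gamma$-module by a Noetherian argument.

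First, I would establish that $A$ is integral over $A^\Gamma$. Given $a \in A$, the monic polynomial
\[
  P_a(T) = \prod_{\gamma \in \Gamma} (T - \gamma a) \in A[T]
\]
is $\Gamma$-invariant (since $\Gamma$ permutes the factors), so its coefficients lie in $A^\Gamma$, and $P_a(a) = 0$ exhibits $a$ as integral over $A^\Gamma$. Fixing a finite set of $\kk$-algebra generators $a_1,\dotsc,a_n$ of $A$ and letting $B \subseteq A^\Gamma$ be the $\kk$-subalgebra generated by the coefficients of the polynomials $P_{a_1},\dotsc,P_{a_n}$, we see that $B$ is a finitely generated (hence Noetherian) $\kk$-algebra and each $a_i$ is integral over $B$. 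It follows by the standard result on integral extensions that $A$ is finitely generated as a $B$-module.

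Now, since $B$ is Noetherian and $A$ is a finitely generated $B$-module, $A$ is a Noetherian $B$-module. The submodule $A^\Gamma \subseteq A$ is therefore finitely generated over $B$, which (together with the finite generation of $B$ as a $\kk$-algebra) implies that $A^\Gamma$ is a finitely generated $\kk$-algebra. This proves the first assertion.

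For the second assertion, observe that each isotypic component $A_\xi$ is stable under multiplication by $A^\Gamma = A_0$, so $A_\xi$ is an $A^\Gamma$-submodule of $A$. By the first part, $A^\Gamma$ is a finitely generated $\kk$-algebra, hence Noetherian by the Hilbert basis theorem. Moreover, since $B \subseteq A^\Gamma$ and $A$ is finitely generated over $B$, a fortiori $A$ is finitely generated as an $A^\Gamma$-module, so $A$ is a Noetherian $A^\Gamma$-module. The submodule $A_\xi$ is then automatically finitely generated over $A^\Gamma$. The only subtle point—really the only content beyond bookkeeping—is the integrality step in the first paragraph; everything else is a routine application of Noetherianity.
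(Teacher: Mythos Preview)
Your argument is correct and is precisely the classical Hilbert--Noether/Artin--Tate proof. The paper does not give its own proof of this lemma at all: it simply cites Bourbaki, and what you have written is essentially the content of that Bourbaki reference, so there is nothing to compare.
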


\begin{rem} \label{rem:cyclic-assumption}
  Lemma~\ref{lem:AGamma-fg} allows us to make a simplifying assumption as follows.  Let $\Gamma'$ be the subgroup of $\Gamma$ acting trivially on $\g$ (equivalently, fixing the Dynkin diagram of $\g$).  Then $(\g \otimes A)^\Gamma \cong (\g \otimes A^{\Gamma'})^{\Gamma/\Gamma'}$.  By Lemma~\ref{lem:AGamma-fg}, $A^{\Gamma'}$ is finitely generated.  Thus, replacing $A$ by $A^{\Gamma'}$ and $\Gamma$ by $\Gamma/\Gamma'$, we may assume without loss of generality that $\Gamma$ acts faithfully on $\g$ (equivalently, on the Dynkin diagram of $\g$).  Since $\Gamma$ is abelian, this implies that $\Gamma$ is either trivial or is a cyclic group of order two or three.
\end{rem}

\begin{assumption}
  For the remainder of the paper, we will assume that $\Gamma$ is a cyclic group generated by an automorphism $\sigma$ of the Dynkin diagram of $\g$.
\end{assumption}

\begin{lem} \label{lem:A-xi-generators}
  Let $\xi \in \Xi \setminus \{0\}$.  Then there exists a finite subset of $A_\xi$ that generates $A$ as an algebra.
\end{lem}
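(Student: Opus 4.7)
The plan is to exploit the free action of $\Gamma$ on $X_\rat$ to show that the $A$-ideal generated by $A_\xi$ equals all of $A$, and then to leverage the resulting partition of unity to generate $A$ by finitely many elements of $A_\xi$. Observe first that, under the running assumptions, $\Gamma$ has order $|\Gamma| \in \{1,2,3\}$ (with the case $|\Gamma|=1$ making the statement vacuous), so $\Xi$ is cyclic of prime order and any nonzero $\xi \in \Xi$ generates $\Xi$.

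The main step is to prove that $V(A_\xi) \subseteq X_\rat$ is empty. Suppose for contradiction that some $x \in X_\rat$ satisfies $a(x) = 0$ for every $a \in A_\xi$. The free action of $\Gamma$ makes the orbit of $x$ consist of $|\Gamma|$ distinct points, so the Chinese remainder theorem yields $f \in A$ with $f(x) = 1$ and $f(\gamma \cdot x) = 0$ for every nontrivial $\gamma$. The $\xi$-isotypic projection $g = \frac{1}{|\Gamma|}\sum_{\gamma \in \Gamma} \xi(\gamma)^{-1}(\gamma \cdot f)$ then lies in $A_\xi$ but satisfies $g(x) = 1/|\Gamma| \neq 0$, a contradiction. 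Hence by the Nullstellensatz together with the Noetherianness of $A$, there exist finitely many $a_1,\dots,a_r \in A_\xi$ and, after extracting $(-\xi)$-graded parts, $b_1,\dots,b_r \in A_{-\xi}$ with $\sum_i a_i b_i = 1$.

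With this partition of unity in hand, surjectivity of the multiplication map $A_\xi \otimes_\kk A_\eta \to A_{\xi+\eta}$ is immediate for every $\eta \in \Xi$, since any $c \in A_{\xi+\eta}$ can be rewritten as $c = \sum_i a_i(c b_i)$ with $a_i \in A_\xi$ and $c b_i \in A_\eta$. Iterating yields $A_{k\xi} = A_\xi^k$ for every $k \geq 1$, where $A_\xi^k \subseteq A$ denotes the $\kk$-span of $k$-fold products. Taking $k = |\Gamma|$ gives $A^\Gamma = A_0 = A_\xi^{|\Gamma|}$ as $\kk$-subspaces of $A$.

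To finish, Lemma~\ref{lem:AGamma-fg} provides $\kk$-algebra generators $c_1,\dots,c_s$ of $A^\Gamma$ and $A^\Gamma$-module generators $u_1,\dots,u_t \in A_\xi$ of $A_\xi$. Using $A^\Gamma = A_\xi^{|\Gamma|}$, each $c_j$ is a $\kk$-linear combination of finitely many $|\Gamma|$-fold products of elements of $A_\xi$; let $T \subseteq A_\xi$ be the finite union of the $u_\ell$ with all $A_\xi$-factors appearing in these expressions. Then $c_j \in \kk[T]$ for every $j$, so $A^\Gamma \subseteq \kk[T]$; consequently $A_\xi = \sum_\ell A^\Gamma u_\ell \subseteq \kk[T]$, and finally $A = \sum_{k \geq 0} A_\xi^k \subseteq \kk[T]$, as desired. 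I expect the main obstacle to be the first step; the freeness of the $\Gamma$-action on $X_\rat$ is indispensable there.
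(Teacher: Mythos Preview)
Your argument is correct. Both proofs hinge on the same key fact, namely that $1$ can be written as a finite sum of products of elements of $A_\xi$ (equivalently $A_\xi^{|\Gamma|}=A_0$): the paper quotes this from \cite[Lem.~4.4]{NS11}, while you rederive it from the freeness of the $\Gamma$-action on $X_\rat$ via the Nullstellensatz. From there the two arguments diverge in style. The paper takes a finite homogeneous generating set of $A$ and, one generator at a time, replaces each $a_i\in A_\tau$ with $\tau\ne\xi$ by the finitely many elements $a_i g_{\ell,1}\dotsm g_{\ell,r}\in A_\xi$ arising from the expression $1=\sum_\ell g_{\ell,1}\dotsm g_{\ell,m}$; this is a direct induction on the number of ``bad'' generators. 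You instead extract the structural consequence $A_\xi A_\eta=A_{\xi+\eta}$ for all $\eta$, deduce $A=\sum_{k\ge 0}A_\xi^k$, and then invoke Lemma~\ref{lem:AGamma-fg} to pass to a finite subset of $A_\xi$. Your route is slightly more conceptual and makes the graded-module structure explicit; the paper's route is more hands-on and avoids the separate appeal to Lemma~\ref{lem:AGamma-fg}. Both are equally valid and of comparable length.
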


\begin{proof}
  Since $A$ is finitely generated, it admits a finite generating set $\{a_1,\dots,a_n\}$.  Taking homogeneous components of the elements in this set, we may assume that each $a_i$ is homogeneous (i.e.\ belongs to $A_\tau$ for some $\tau \in \Xi$).  Let $m = |\Gamma|$.  By \cite[Lem.~4.4]{NS11}, we have $(A_\xi)^m = A_{m \xi} = A_0$.  Thus, we can write $1 = \sum_{\ell=1}^k g_{\ell,1} g_{\ell,2} \dotsm g_{\ell,m}$ for some $g_{\ell,s} \in A_\xi$, $\ell=1,\dotsc,k$, $s = 1,\dotsc,m$.  Let $S = \{a_1,\dots,a_n\} \cup \{g_{\ell,s}\ |\ \ell=1,\dotsc,k,\ s=1,\dotsc,m\}$.

  Now, suppose $a_i \in A_\tau$ for $\tau \ne \xi$.  Since $\Gamma$ (hence $\Xi$) is a cyclic group of order two or three (see Remark~\ref{rem:cyclic-assumption}), $\xi$ generates $\Xi$.  Thus we have $-\tau = (r-1)\xi$ for some $1 \le r \le m$.  So $\tau + r \xi = \xi$.  Now, $a_i = a_i 1 = a_i \sum_{\ell=1}^k g_{\ell,1} g_{\ell,2} \dotsm g_{\ell,m}$.  For $\ell=1,\dotsc,k$, let $a_{i,\ell} = a_i g_{\ell,1} \dotsm g_{\ell,r} \in A_\xi$ and set $S' = (S \setminus \{a_i\}) \cup \{a_{i,1},\dotsc,a_{i,k}\}$.  Then $S'$ generates $A$ and, compared with $S$, has one fewer element lying outside $A_\xi$.  The result then follows by induction.
\end{proof}

\begin{defin}[$\delta_\bi$]
  For $\mathbf{i}\in I_\Gamma$, define
  \[
    \delta_\bi \defeq
    \begin{cases}
      1, & \text{if $\g$ is of type $A_{2n}$}, \\
      1, &\text{if $\g$ is not of type $A_{2n}$ but $\alpha_{\mathbf i}$ is a short root (i.e.\ $|\bi| > 1$)}, \\
      |\Gamma|, &\text{if $\g$ is not of type $A_{2n}$ and $\alpha_{\mathbf i}$ is a long root (i.e.\ $|\bi|=1$)},
    \end{cases}
  \]
  where we have identified $I_\Gamma$ with the set of $\Gamma$-orbits in $I$ and $|\bi|$ denotes the size of the orbit $\bi$.
\end{defin}

\begin{defin}[$\Gamma_i$, $\overline{e_i \otimes a}$, $\overline{f_i \otimes a}$, $\overline{h_i \otimes a}$] \label{def:overline}
  Let $\{e_i,f_i,h_i\}_{i \in I}$ be a set of Chevalley generators of $\g$.  For $i \in I$, let $\Gamma_i = \{\gamma \in \Gamma\ |\ \gamma i = i\}$ be the isotropy subgroup of $i$.  Then, for $a \in A^{\Gamma_i}$, let
  \[ \ts
    \overline{e_i \otimes a} \defeq \sum_{\gamma \in \Gamma/\Gamma_i} \gamma(e_i \otimes a) = \sum_{\gamma \in \Gamma/\Gamma_i} e_{\gamma i} \otimes \gamma(a) \in (\n^+ \otimes A)^\Gamma.
  \]
  We define $\overline{f_i \otimes a}$ and $\overline{h_i \otimes a}$ similarly.  Note that replacing $i$ by another element in the same $\Gamma$-orbit in $I$ only changes the above elements by a scalar multiple.
\end{defin}

The elements $\overline{h_i \otimes a}$, $i \in I$, $a \in A^{\Gamma_i}$, span $(\h \otimes A)^\Gamma$.  Now fix $\xi \in \Xi \setminus \{0\}$.  Since $(A_\xi)^{|\Gamma|}=A^\Gamma$ (by \cite[Lem.~4.4]{NS11}), we see that the elements $\overline{h_i \otimes a^{\delta_\bi}}$, $\bi \in I_\Gamma$, $i \in \bi$, $a \in A_\xi$, also span $(\h \otimes A)^\Gamma$.

For $b$ an element of any associative algebra $B$, we denote by $b^{(r)}$ the divided power $b^r/r!$.

\begin{lem}\label{lem:twisted-garland}
  Suppose that $\ell\in\mathbb N$, $a\in A_\xi$ for $\xi\in\Xi \setminus\{0\}$, and $\bi \in I_\Gamma$.  Let $i \in \bi$.  Then there exist $q(i,a)_s \in \cU((\h \otimes A)^\Gamma)$, $s=1,\dotsc,\ell$, such that the following statements hold:
  \begin{enumerate}
    \item We have $q(i,a)_0=1$ and $q(i,a)_s \in \cU((\h \otimes \kk[a]_s)^\Gamma)$, where
        \[ \ts
          \kk[a]_s \defeq \{\sum_{i=0}^s c_i a^{\delta_\bi i}\ |\ c_1,\dotsc,c_s \in \kk\} \subseteq A.
        \]

    \item If $\g$ is not of type $A_{2n}$, or if $\g$ is of type $A_{2n}$ and $\alpha_\bi$ is a long root, then
        \begin{equation}\label{lem-eq:a-lambda-fg-eq2} \ts
          (\overline{e_i\otimes a^{\delta_\bi}})^{(\ell)}(\overline{ f_i\otimes 1})^{(\ell+1)}+ (-1)^{\ell+1} \sum_{s=0}^{\ell} (\overline{f_i \otimes a^{\delta_\bi(\ell  - s)}})q(i,a)_s \in \mathcal U_\Gamma(\n^+\otimes A)^\Gamma.
        \end{equation}

    \item If $\g$ is of type $A_{2n}$ and $\alpha_\bi\in R^+_\Gamma$ is the simple short root of $\g^\Gamma$, then
        \begin{equation} \label{lem:eq:a-lambda-fd-A2n-short} \ts
          (\sqrt{2}\ \overline{e_i\otimes 1})^{(2\ell+1)} (y_i\otimes a)^{(\ell+1)}+\sum_{s=0}^{\ell} (\sqrt{2}\ \overline{f_i \otimes a^{\ell  - s+1}})q(i,a)_s \in \mathcal U_\Gamma(\n^+\otimes A)^\Gamma,
        \end{equation}
        where $y_i=-[f_i,f_{i+1}]\in\g_1$ and the subscript $1$ here denotes the nonzero element of $\Xi$.
\end{enumerate}
\end{lem}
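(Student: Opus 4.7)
The plan is to realize both identities as images of the classical Garland identity under a Lie algebra map from a copy of $\fsl_2\otimes B$ (with $B\subseteq A$) into $(\g\otimes A)^\Gamma$; the two cases require different such maps.  For case (b), first note that $a^{\delta_\bi}\in A^{\Gamma_i}$: if $|\bi|>1$ then $\Gamma_i$ is trivial and $\delta_\bi=1$, while if $|\bi|=1$ then $\Gamma_i=\Gamma$, $\delta_\bi=|\Gamma|$, and $a^{|\Gamma|}\in A^\Gamma$ by \cite[Lem.~4.4]{NS11}.  Since we are outside the $A_{2n}$ short-root situation, the roots $\gamma\alpha_i$, $\gamma\in\Gamma/\Gamma_i$, are pairwise orthogonal, so $[e_{\gamma i},f_{\gamma'i}]=0$ for $\gamma\neq\gamma'$, and one checks that the assignment
\[
e\otimes b\mapsto\overline{e_i\otimes b},\qquad f\otimes b\mapsto\overline{f_i\otimes b},\qquad h\otimes b\mapsto\overline{h_i\otimes b},\qquad b\in A^{\Gamma_i},
\]
defines a Lie algebra homomorphism $\varphi\colon\fsl_2\otimes A^{\Gamma_i}\to(\g\otimes A)^\Gamma$ — for example, the cross terms $[e_{\gamma i},f_{\gamma'i}]$ in $[\overline{e_i\otimes b},\overline{f_i\otimes c}]$ vanish for $\gamma\neq\gamma'$, leaving $\overline{h_i\otimes bc}$.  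Under the induced algebra map $\cU(\varphi)$ the left ideal $\cU(\fsl_2\otimes A^{\Gamma_i})\cdot(e\otimes A^{\Gamma_i})$ is carried into $\cU_\Gamma(\n^+\otimes A)^\Gamma$, so the classical Garland identity (cf.\ \cite[Lem.~1.3]{CP01}) in $\cU(\fsl_2\otimes A^{\Gamma_i})$ with $b=a^{\delta_\bi}$ pushes forward to yield~\eqref{lem-eq:a-lambda-fg-eq2}; the required $q(i,a)_s$ are the $\cU(\varphi)$-images of the classical Garland polynomials $\Lambda_s\in\cU(\kk h\otimes\kk[a^{\delta_\bi}]_s)$, giving $q(i,a)_0=1$ and $q(i,a)_s\in\cU((\h\otimes\kk[a]_s)^\Gamma)$ as required by part~(a).

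For case (c) the map $\varphi$ fails to be a homomorphism because $\alpha_i$ and $\alpha_{i+1}$ are adjacent in the $A_{2n}$ Dynkin diagram, so $[e_i,e_{i+1}]\neq 0$.  Here the correct $\fsl_2$-triple at the short simple root $\alpha_\bi$ of $\g^\Gamma$ is $(\sqrt 2\,\overline{e_i\otimes 1},\,\sqrt 2\,\overline{f_i\otimes 1},\,2\,\overline{h_i\otimes 1})$ — the $\sqrt 2$ reflects the fact that the coroot of $\alpha_\bi$ is $2(h_i+h_{i+1})$ rather than $h_i+h_{i+1}$ — and the element $y_i=-[f_i,f_{i+1}]\in\g_1$ has $\h^\Gamma$-weight $-2\alpha_\bi$, playing the role of the negative long-root vector in the twisted affine algebra $A_{2n}^{(2)}$.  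The exponent mismatch $2\ell+1$ versus $\ell+1$ in~\eqref{lem:eq:a-lambda-fd-A2n-short} reflects exactly this factor-of-two weight relation.  The plan is then to establish~\eqref{lem:eq:a-lambda-fd-A2n-short} by induction on $\ell$ using the $A_{2n}^{(2)}$ Serre/commutation relations inside $(\g\otimes A)^\Gamma$, at each step commuting one factor of $y_i\otimes a$ through two factors of $\sqrt 2\,\overline{e_i\otimes 1}$ and collecting the residual $(\h\otimes A)^\Gamma$-valued terms into the $q(i,a)_s$.

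The principal technical obstacle is case (c): on expanding $(\sqrt 2(e_i+e_{i+1}))^{(2\ell+1)}$, the cross terms $e_ie_{i+1}$ and $e_{i+1}e_i$ do not commute past $y_i\otimes a$ cleanly, and the Garland-style reduction must be organized so that the final error term genuinely lies in the smaller ideal $\cU_\Gamma(\n^+\otimes A)^\Gamma$, rather than merely in $\cU(\g\otimes A)(\n^+\otimes A)$.  A convenient bookkeeping device is to run a double induction on $\ell$ and on the $\h^\Gamma$-weight, ensuring that every intermediate expression remains inside $\cU_\Gamma$.  In both cases, the conclusion $q(i,a)_s\in\cU((\h\otimes\kk[a]_s)^\Gamma)$ and $q(i,a)_0=1$ claimed in (a) follow directly from the explicit form of the classical Garland polynomials $\Lambda_s$, which involve only divided powers of $h\otimes b^j$ for $j\leq s$.
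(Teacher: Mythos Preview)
Your treatment of part~(b) is correct and self-contained: the orthogonality of the roots $\gamma\alpha_i$ (outside the $A_{2n}$ short-root case) makes $\varphi\colon\fsl_2\otimes A^{\Gamma_i}\to(\g\otimes A)^\Gamma$ a genuine Lie algebra homomorphism, and pushing forward the untwisted Garland identity gives~\eqref{lem-eq:a-lambda-fg-eq2} with the claimed control on the $q(i,a)_s$.

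Part~(c), however, is not proved --- you have only outlined a strategy.  The phrases ``the plan is then to establish \eqref{lem:eq:a-lambda-fd-A2n-short} by induction'' and ``a convenient bookkeeping device is to run a double induction'' describe work that has not been done.  The difficulty you correctly identify (the failure of $e_i,e_{i+1}$ to commute, and the need to keep every intermediate term inside $\cU_\Gamma$ rather than merely $\cU(\g\otimes A)$) is exactly what makes this case nontrivial, and your sketch gives no mechanism for controlling the proliferation of mixed $e_i^p e_{i+1}^q$ terms or for showing that the resulting $q(i,a)_s$ land in $\cU((\h\otimes\kk[a]_s)^\Gamma)$ rather than some larger algebra.

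The paper bypasses all of this with a single observation that also subsumes your argument for~(b).  Endow $\kk[t]$ with the $\Gamma$-action $\sigma\cdot t=\zeta t$ chosen so that $t\in\kk[t]_\xi$; then $t\mapsto a$ is a $\Gamma$-equivariant algebra map $\kk[t]\to A$, inducing a Lie algebra homomorphism $(\g\otimes\kk[t])^\Gamma\to(\g\otimes A)^\Gamma$.  The identities \eqref{lem-eq:a-lambda-fg-eq2} and \eqref{lem:eq:a-lambda-fd-A2n-short} are already established in the twisted current algebra $(\g\otimes\kk[t])^\Gamma$ in \cite[Lem.~3.3]{CFS08} (with a small correction noted in the paper), and pushing them forward yields the lemma in one stroke, uniformly in both cases.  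The hard combinatorics you propose to redo in case~(c) has thus already been carried out once and for all in the loop-algebra setting.
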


\begin{proof}
  The homomorphism of $\kk$-algebras $\kk[t]\to A$ given by $t \mapsto a$ extends to a homomorphism of Lie algebras $\g\otimes\kk[t]\to\g\otimes A$. Applying this latter map to the formulas found in \cite[Lem.~3.3]{CFS08} proves the lemma.  Note that \cite[Lem.~3.3(iii)(a)]{CFS08} is incorrect. To obtain the correct statement, one should replace $t^{mk+\epsilon}$ in the power series $\mathbf{x}_{n}^-(u)$ in that reference by $t^{mk+\epsilon+1}$.
\end{proof}

\begin{theo} \label{theo:A-fg}
  For all $\lambda \in \Lambda^+_\Gamma$, $\mathbf{A}_{\Gamma}^{\lambda}$ is a finitely generated algebra.
\end{theo}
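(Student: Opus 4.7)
The approach is to exhibit a finite generating set for $\mathbf{A}_\Gamma^\lambda = \cU^0_\Gamma / \Ann_{\cU^0_\Gamma}(w_\lambda^\Gamma)$, following the strategy of \cite{CFK10} in the untwisted case. Under Assumption~\ref{assumption} combined with our standing hypothesis that $\Gamma$ acts by diagram automorphisms, one readily verifies that $\g^0 = \h$ (no nonzero root of $\g$ restricts to zero on $\h^\Gamma$), so $\cU^0_\Gamma = \Sym((\h\otimes A)^\Gamma)$. By the observation following Definition~\ref{def:overline}, this algebra is generated over $\kk$ by the elements $\overline{h_i \otimes a^{\delta_\bi}}$ for $\bi \in I_\Gamma$, $i\in\bi$, $a\in A_\xi$, where $\xi \in \Xi\setminus\{0\}$ is any fixed element. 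If $\Gamma$ is trivial the conclusion is the untwisted result of \cite{CFK10}; otherwise, by Remark~\ref{rem:cyclic-assumption} we may assume $\Gamma$ is cyclic of prime order $2$ or $3$, so $\xi$ generates $\Xi$, and via Lemma~\ref{lem:A-xi-generators} we fix a finite set $\{a_1,\ldots,a_n\}\subseteq A_\xi$ generating $A$ as a $\kk$-algebra.

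The core of the proof is an exponent-bounding argument using Lemma~\ref{lem:twisted-garland}. Set $N_\bi := \lambda(h_\bi^\Gamma)$. For each fixed $\bi$, $i\in\bi$, $a\in A_\xi$, and each $\ell \ge N_\bi$, the relation $(\overline{f_i\otimes 1})^{(\ell+1)}w_\lambda^\Gamma = 0$ (which follows from Proposition~\ref{weyl:gen-rel} together with the fact that $\overline{f_i\otimes 1}$ coincides, up to a nonzero scalar, with $f_\bi^\Gamma\otimes 1$ under the embedding $\g^\Gamma\otimes\kk \subseteq (\g\otimes A)^\Gamma$) combined with Lemma~\ref{lem:twisted-garland}(b) (or (c) in the $A_{2n}$ short-root case) yields
\[ \ts
  \sum_{s=0}^{\ell}(\overline{f_i\otimes a^{\delta_\bi(\ell-s)}})\,q(i,a)_s\, w_\lambda^\Gamma = 0
\]
in $W^\Gamma(\lambda)_{\lambda-\alpha_\bi}$. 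Acting on this equation by $\overline{e_i\otimes 1}$ and applying the standard $\mathfrak{sl}_2$-commutation relations — moving $\overline{e_i\otimes 1}$ to the right using $[e_i,f_i]=h_i$ and the vanishing $(\n^+\otimes A)^\Gamma w_\lambda^\Gamma = 0$ — lifts the identity back to the $\lambda$ weight space and produces a relation in $W^\Gamma(\lambda)_\lambda \cong \mathbf{A}_\Gamma^\lambda$ expressing $q(i,a)_\ell w_\lambda^\Gamma$ as a polynomial in $\{q(i,a)_s w_\lambda^\Gamma : s < \ell\}$. Since by Lemma~\ref{lem:twisted-garland}(a) the element $q(i,a)_s$ has a nonzero leading component along $\overline{h_i\otimes a^{\delta_\bi s}}$, induction on $\ell$ shows that the image of $\overline{h_i\otimes a^{\delta_\bi \ell}}$ in $\mathbf{A}_\Gamma^\lambda$ lies, for every $\ell > N_\bi$, in the subalgebra generated by the images of $\overline{h_i\otimes a^{\delta_\bi j}}$ with $1\le j\le N_\bi$.

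To assemble a genuinely finite generating set, I invoke polarization (valid in characteristic zero):
\[ \ts
  a_{k_1}\cdots a_{k_d} = \frac{1}{d!}\sum_{S\subseteq\{1,\ldots,d\}}(-1)^{d-|S|}\bigl(\sum_{s\in S}a_{k_s}\bigr)^d.
\]
Together with the $\kk$-linearity of $\overline{h_i\otimes -}$, this identifies (for each $d=\delta_\bi j$) the $\kk$-span of $\{\overline{h_i\otimes b^d} : b\in\Span_\kk\{a_1,\ldots,a_n\}\}$ with the $\kk$-span of $\{\overline{h_i\otimes a_{k_1}\cdots a_{k_d}} : k_s\in\{1,\ldots,n\}\}$. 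Combining this with the exponent-bound above (which reduces every $d > \delta_\bi N_\bi$ to $d \le \delta_\bi N_\bi$) and the observation that $A^{\Gamma_i}$ is spanned by monomials in $\{a_1,\ldots,a_n\}$ of degrees in $\delta_\bi\N$ (since $\Gamma_i$-invariance amounts to a degree condition modulo $|\Gamma|$), one concludes that the finite set
\[ \ts
  \{\overline{h_i\otimes a_{k_1}\cdots a_{k_r}} : \bi\in I_\Gamma,\ i\in\bi,\ k_s\in\{1,\ldots,n\},\ r\in\{\delta_\bi,2\delta_\bi,\ldots,\delta_\bi N_\bi\}\}
\]
generates $\mathbf{A}_\Gamma^\lambda$ as an algebra. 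The main technical obstacle is the raising-operator manipulation in the second paragraph: one must track the $\mathfrak{sl}_2$-commutators carefully enough to verify that the leading term $q(i,a)_\ell w_\lambda^\Gamma$ indeed survives to $W^\Gamma(\lambda)_\lambda$ with a nonzero scalar coefficient, and the $A_{2n}$ short-root case requires the analogous but distinct treatment via identity~\eqref{lem:eq:a-lambda-fd-A2n-short}.
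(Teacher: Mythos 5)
Your outline follows essentially the same route as the paper: use the Garland-type identities of Lemma~\ref{lem:twisted-garland}, kill the divided power of the lowering operator on $w_\lambda^\Gamma$ once the exponent exceeds $\lambda(h_\bi)$, and extract a recursion that bounds the exponents appearing in $\overline{h_i\otimes(\cdot)}\,w_\lambda^\Gamma$. The one genuine difference is how you pass from powers of a single element of $A_\xi$ to general monomials: the paper multiplies~\eqref{lem-eq:a-lambda-fg-eq2} on the left by $\overline{e_i\otimes a}$ for an \emph{arbitrary} homogeneous $a\in A^{\Gamma_i}$, so the recursion directly lowers the exponent of one generator $a_k$ inside an arbitrary monomial $a\,a_k^{\delta_\bi\ell}$, whereas you multiply by $\overline{e_i\otimes 1}$ and recover general monomials afterwards by polarization in characteristic zero. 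That is a legitimate alternative and buys nothing essential either way. Two smaller inaccuracies: the recursion isolates the $s=0$ term $\overline{h_i\otimes a^{\delta_\bi\ell}}\,w_\lambda^\Gamma$ (because $q(i,a)_0=1$), not ``$q(i,a)_\ell w_\lambda^\Gamma$''; and Lemma~\ref{lem:twisted-garland}(a) asserts only that $q(i,a)_s\in\cU((\h\otimes\kk[a]_s)^\Gamma)$, not that it has a nonzero component along $\overline{h_i\otimes a^{\delta_\bi s}}$ --- fortunately your induction does not actually need that stronger claim once the $s=0$ term is the one being solved for.

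The genuine gap is the $A_{2n}$ short-root case, which you explicitly defer and whose stated mechanism does not apply there. In identity~\eqref{lem:eq:a-lambda-fd-A2n-short} the lowering factor is $(y_i\otimes a)^{(\ell+1)}$ with $y_i=-[f_i,f_{i+1}]\in\g_1$, not $(\overline{f_i\otimes 1})^{(\ell+1)}$, so the vanishing you rely on --- ``$(\overline{f_i\otimes 1})^{(\ell+1)}w_\lambda^\Gamma=0$ by Proposition~\ref{weyl:gen-rel}'' --- is unavailable: $y_i$ does not lie in $\g^\Gamma$ and the needed vanishing $(y_i\otimes a_k)^{\ell+1}w_\lambda^\Gamma=0$ is not among the defining relations. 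The paper supplies a separate argument: $y_i$ has $\h^\Gamma$-weight $-2\alpha_\bi$, so for $\ell\ge\tfrac12\lambda(h_\bi)$ the vector $(y_i\otimes a_k)^{\ell+1}w_\lambda^\Gamma$ has weight $\lambda-(\lambda(h_\bi)+2)\alpha_\bi$, whose reflection $s_\bi(\lambda-(\lambda(h_\bi)+2)\alpha_\bi)=\lambda+2\alpha_\bi$ does not lie below $\lambda$; since the weights of $W^\Gamma(\lambda)$ are Weyl-group invariant and bounded by $\lambda$, the vector must vanish. Without this (or an equivalent) argument, your proof does not cover type $A_{2n}$ with $\Gamma$ nontrivial, which is precisely the case the paper singles out for special treatment.
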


\begin{proof}
  If the action of $\Gamma$ on $\g$ is trivial, then $(\g \otimes A)^\Gamma \cong \g \otimes A^\Gamma$ and the theorem follows from \cite[Th.~2(i)]{CFK10}.  Thus, we assume that $\Gamma$ acts nontrivially on $\g$.  Fix $\xi \in \Xi \setminus \{0\}$, let $\{a_1, \ldots, a_N \} \subseteq A_\xi$ be generators of $A$ as an algebra (see Lemma~\ref{lem:A-xi-generators}) and fix a generator $a_k$ with $1\le k\le N$.

  First, suppose either that $\g$ is not of type $A_{2n}$ and $\alpha_\bi$ is any simple root of $\g^\Gamma$, or that $\g$ is of type $A_{2n}$ and $\alpha_\bi$ is a long root of $\g^\Gamma$ (i.e.\ $\bi \ne \{n,n+1\}$ when we identify $I_\Gamma$ with $\Gamma$-orbits in $I$).  For any homogeneous $a \in A^{\Gamma_i}$, multiplying both sides of~\eqref{lem-eq:a-lambda-fg-eq2} by $\overline{ e_i \otimes a}$ on the left gives the following identity in $\mathcal U_\Gamma$:
  \[ \ts
    (\overline{e_i \otimes a})(\overline{e_i \otimes a_k^{\delta_\bi}})^{( \ell )}(\overline{f_i\otimes 1})^{(\ell+1)} + (-1)^{\ell+1}  \sum_{s=0}^{\ell} (\overline{h_i \otimes a a_k^{\delta_\bi(\ell - s)}}) q(i,a_k)_s   \in  \mathcal U_\Gamma (\n^+ \otimes A)^\Gamma.
  \]
  Now suppose that $\ell\ge \lambda(h_\bi)$. In this case, we have $(\overline{f_i \otimes 1})^{\ell+1}w_\lambda^\Gamma = 0,$ and thus
  \[ \ts
    (\overline{h_i \otimes a a_k^{\delta_\bi\ell}})w_\lambda^\Gamma = -\left(\sum_{s=1}^{\ell} (\overline{h_i \otimes a a_k^{\delta_\bi(\ell - s)}})q(i,a_k)_s \right)w_\lambda^\Gamma.
  \]
  Iterating this argument, it follows that for all $s_1,\ldots,s_n\in\mathbb N$,
  \begin{equation}\label{a-lambda-fg-H-long}
    (\overline{h_i \otimes a_1^{\delta_\bi s_1}\cdots a_N^{\delta_\bi s_N}})w_\lambda^\Gamma = H(i,s_1,\ldots,s_N)w_\lambda^\Gamma,
  \end{equation}
  where $H(i,s_1,\ldots,s_N)$ is a linear combination of finite products of elements of $\mathcal U((\h\otimes A)^\Gamma$) of the form $(\overline{h_i\otimes a_1^{\delta_\bi\ell_1}\cdots a_N^{\delta_\bi\ell_n}})$ with $0\le \ell_1,\ldots,\ell_N < \lambda(h_\bi)$.  Therefore, the images of the vectors
  \[
    \{ (\overline{h_i \otimes a_1^{\delta_\bi\ell_1} \cdots a_N^{\delta_\bi\ell_N}})\ :\  0 \leq \ell_1, \ldots, \ell_N < \lambda(h_\bi) \}
  \]
  generate $\bA_\Gamma^\lambda$.

  Now let $\g$ be of type $A_{2n}$ and suppose that $\bi$ corresponds to the orbit $\{n,n+1\}$, that is, $\alpha_\bi$ is the simple short root of $\g^\Gamma$.
  Since the generators $a_1, \ldots, a_N$ lie in $A_1$ (where 1 denotes the nontrivial character of $\Gamma$), it follows that $y_i \otimes a_k \in (\g \otimes A)^\Gamma$.  Let $a \in A$.  Multiplying both sides of~\eqref{lem:eq:a-lambda-fd-A2n-short} by $\sqrt{2}\ \overline{ e_i \otimes a}$ on the left gives
  \[ \ts
    2^{\ell+1}(\overline{e_i \otimes a})(\overline{e_i \otimes 1})^{(2 \ell + 1)} (y_i \otimes a_k)^{(\ell+1)} + \sum_{s=0}^{\ell} 2(\overline{h_i \otimes a a_k^{\ell - s+1}})q(i,a_k)_s   \in  \mathcal U_{\Gamma}(\n^+ \otimes A)^\Gamma.
  \]
  Now, we claim that for all $\ell  \ge r_i \defeq \frac{1}{2} \lambda(h_\bi)$, we have $(y_i \otimes a_k)^{\ell+1}w_\lambda^\Gamma = 0$, which implies that
  \[ \ts
    (\overline{h_i \otimes a a_k^{\ell+1}})w_\lambda^\Gamma =-  \sum_{s=1}^{\ell} (\overline{h_i \otimes a a_k^{\ell - s+1}})q(n,a_k)_s w_\lambda^\Gamma.
  \]
  Iterating this, we arrive again at \eqref{a-lambda-fg-H-long} (with the upper bound $\lambda(h_\bi)$ on $\ell_j$, $1 \le j \le N$, replaced by $r_i+1$), and the result follows.

  It remains to prove the claim. A straightforward calculation shows that $[h,y_i]=-2\alpha_{\bi}(h)y_i$ for all $h\in\h$.
  \details{We check on vectors $h_{\mathbf j}$. If $\mathbf j=\{n,n+1\}$ we compute that $2[h_n+h_{n+1},[f_{i+1},f_i]] =-4y_i.$ If $\mathbf j=\{n-1, n+2\}$ we get $2y_i$, and otherwise zero. This is exactly $-2\alpha_\bi(h_{\mathbf j})y_i.$}
  Thus, $(y_i \otimes a_k)^{r_i+1}w_\lambda^\Gamma$ has weight $\lambda-(\lambda(h_\bi)+2)\alpha_\bi$, and so it suffices to show that $\lambda-(\lambda(h_\bi)+2)\alpha_\bi$ is not a weight of $W^\Gamma(\lambda)$.  Since $W^\Gamma(\lambda)$ is a direct sum of irreducible finite-dimensional $\g^\Gamma$-modules, its weights (which all lie below $\lambda$) are invariant under the action of the Weyl group of $\g^\Gamma$.  But $s_\bi(\lambda-(\lambda(h_\bi)+2)\alpha_\bi)=\lambda+2\alpha_\bi$ does not lie below $\lambda$, concluding the proof.
\end{proof}

\begin{lem}\label{lem:garland}
  Suppose $r \in \N$, $a \in A^\Gamma$, $\alpha \in R_\Gamma^+$, and $\{x_\alpha^-, x_\alpha^+, h_\alpha \}$ is an $\mathfrak{sl}_2$-triple in $\g^\Gamma$ corresponding to $\alpha$.  Then
  \[ \ts
    (x_\alpha^+ \otimes a)^r(x_\alpha^- \otimes 1)^{r+1} -  \sum_{s = 0}^r (x_\alpha^- \otimes a^{r-s})p(a, \alpha)_s \in \cU(\g^\Gamma \otimes A^\Gamma)(\n_+^\Gamma \otimes A^\Gamma),
  \]
  for some $p(a, \alpha)_s \in \cU(\h^\Gamma \otimes A^\Gamma) \subseteq \cU((\h \otimes A)^\Gamma)$ with $p(a,\alpha)_0=1$.
\end{lem}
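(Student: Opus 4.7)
The plan is to reduce the assertion to the classical Garland identity for the current algebra $\mathfrak{sl}_2 \otimes \kk[t]$, in the same spirit as the proof of Lemma~\ref{lem:twisted-garland}. Since $\g^\Gamma$ is reductive and $\{x_\alpha^-, x_\alpha^+, h_\alpha\}$ is an $\mathfrak{sl}_2$-triple in $\g^\Gamma$, the elements appearing on the left-hand side of the identity all lie in the enveloping algebra of the Lie subalgebra $\fsl_\alpha \otimes A^\Gamma$, where $\fsl_\alpha \cong \mathfrak{sl}_2$ denotes the span of the triple. Moreover, $A^\Gamma$ is a commutative associative unital $\kk$-algebra, so $\fsl_\alpha \otimes A^\Gamma$ is itself an untwisted map algebra, and the statement we want is precisely an untwisted Garland identity in this algebra.

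First I would fix a standard $\mathfrak{sl}_2$-triple $\{e,f,h\}$ in $\mathfrak{sl}_2$ and consider the $\kk$-algebra homomorphism $\kk[t] \to A^\Gamma$ sending $t \mapsto a$. Composing with the identification $\fsl_\alpha \cong \mathfrak{sl}_2$ (via $x_\alpha^+ \mapsto e$, $x_\alpha^- \mapsto f$, $h_\alpha \mapsto h$), this extends to a homomorphism of Lie algebras
\[
\varphi \colon \mathfrak{sl}_2 \otimes \kk[t] \to \fsl_\alpha \otimes A^\Gamma \subseteq \g^\Gamma \otimes A^\Gamma,
\]
and hence to a homomorphism of the corresponding universal enveloping algebras. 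Under this map, $e \otimes t^n \mapsto x_\alpha^+ \otimes a^n$, $f \otimes t^n \mapsto x_\alpha^- \otimes a^n$, and $h \otimes t^n \mapsto h_\alpha \otimes a^n$, and the image of $(e \otimes \kk[t]) \cdot \cU(\mathfrak{sl}_2 \otimes \kk[t])$ lies in $\cU(\g^\Gamma \otimes A^\Gamma)(\n_+^\Gamma \otimes A^\Gamma)$.

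Next I would invoke the classical Garland identity (see, e.g., \cite[Lem.~7.5]{Gar78} or the untwisted version in \cite[Lem.~3.3]{CFS08}), which asserts the existence of elements $q_s \in \cU(h \otimes \kk[t])$ with $q_0 = 1$ such that
\[
(e \otimes t)^r (f \otimes 1)^{r+1} - \sum_{s=0}^r (f \otimes t^{r-s}) q_s \in \cU(\mathfrak{sl}_2 \otimes \kk[t])(e \otimes \kk[t]).
\]
Applying $\varphi$ to both sides and setting $p(a,\alpha)_s \defeq \varphi(q_s) \in \cU(h_\alpha \otimes A^\Gamma) \subseteq \cU(\h^\Gamma \otimes A^\Gamma)$ yields exactly the asserted identity, with $p(a,\alpha)_0 = \varphi(1) = 1$.

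There is no genuine obstacle: the entire content is the known Garland identity together with the observation that $\g^\Gamma \otimes A^\Gamma$ is an honest (untwisted) map algebra to which it applies directly. The only mildly subtle point is bookkeeping: ensuring that the $p(a,\alpha)_s$ land in $\cU(\h^\Gamma \otimes A^\Gamma)$ rather than merely in $\cU((\h \otimes A)^\Gamma)$, which is automatic from the containment $\varphi(\cU(h \otimes \kk[t])) \subseteq \cU(h_\alpha \otimes A^\Gamma) \subseteq \cU(\h^\Gamma \otimes A^\Gamma)$.
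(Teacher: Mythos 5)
Your proof is correct and is essentially the paper's argument: the paper simply cites the untwisted Garland-type identity of \cite[Lem.~5]{CFK10} with $\g$ and $A$ replaced by $\g^\Gamma$ and $A^\Gamma$, which is exactly your observation that $\g^\Gamma\otimes A^\Gamma$ is an honest untwisted map algebra. Your extra reduction step, pulling back along $\kk[t]\to A^\Gamma$, $t\mapsto a$, to the classical $\mathfrak{sl}_2\otimes\kk[t]$ case, is the same device the paper uses in its proof of Lemma~\ref{lem:twisted-garland} and changes nothing of substance.
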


\begin{proof}
  This follows from~\cite[Lem.~5]{CFK10} (see also~\cite[Lem.~7.5]{Gar78}), where we replace the $\g$ and $A$ there by $\g^\Gamma$ and $A^\Gamma$ respectively.
\end{proof}

\begin{theo}\label{theo:finite-generated}
  For all $\lambda \in \Lambda^+_\Gamma$, the global Weyl module $W^\Gamma(\lambda)$ is a finitely generated right $\bA^\lambda_\Gamma$-module.
\end{theo}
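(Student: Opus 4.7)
The plan is to proceed in two stages, closely following the pattern of the proof of Theorem~\ref{theo:A-fg}. First, I would reduce the problem to showing finite generation of each weight space. Since $W^\Gamma(\lambda) \in \Ob \cI^\Gamma_{\le\lambda}$, it decomposes as a direct sum of irreducible $\g^\Gamma$-modules $V^\Gamma(\nu)$ for $\nu \in \Lambda^+_\Gamma$ with $\nu \le \lambda$. The set of such dominant $\nu$ is finite (a standard fact for dominant weights bounded above), and each $V^\Gamma(\nu)$ has only finitely many $\h_\Gamma$-weights, so $W^\Gamma(\lambda)$ has only finitely many nonzero weight spaces. Since $\bA^\lambda_\Gamma$ acts with weight zero, each weight space is a right $\bA^\lambda_\Gamma$-submodule, reducing the theorem to establishing the finite generation of each $W^\Gamma(\lambda)_\mu$ as a right $\bA^\lambda_\Gamma$-module.

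Applying PBW to the triangular decomposition~\eqref{eq:EMA-triangular-decomp}, combined with the defining relations $(\g^+ \otimes A)^\Gamma w^\Gamma_\lambda = 0$ and $\cU^0_\Gamma w^\Gamma_\lambda = w^\Gamma_\lambda \cdot \bA^\lambda_\Gamma$ and the commutativity of the left $\cU_\Gamma$- and right $\bA^\lambda_\Gamma$-actions, yields $W^\Gamma(\lambda)_\mu = (\cU^-_{\Gamma,\mu-\lambda} w^\Gamma_\lambda) \cdot \bA^\lambda_\Gamma$, where $\cU^-_{\Gamma, \mu - \lambda}$ denotes the $(\mu - \lambda)$-weight piece of $\cU^-_\Gamma$. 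Hence it suffices to exhibit a finite-dimensional subspace $F_\mu \subseteq W^\Gamma(\lambda)_\mu$ satisfying $\cU^-_{\Gamma, \mu - \lambda} w^\Gamma_\lambda \subseteq F_\mu \cdot \bA^\lambda_\Gamma$. Fixing a weight-homogeneous basis of $(\g^- \otimes A)^\Gamma$, PBW gives a spanning set of $\cU^-_{\Gamma, \mu - \lambda}$ consisting of ordered monomials of length at most $\hei_\Gamma(\lambda - \mu)$, so the length is controlled and the remaining task is to bound the (a priori infinite) choices of each factor modulo the right $\bA^\lambda_\Gamma$-action.

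The required factor reduction is a direct analog of the argument of Theorem~\ref{theo:A-fg}. Applying Lemma~\ref{lem:twisted-garland}(b) to $w^\Gamma_\lambda$ for $\ell \ge \lambda(h_\bi)$ and using the Serre-type relation $(f^\Gamma_\bi \otimes 1)^{\lambda(h_\bi)+1} w^\Gamma_\lambda = 0$ from Proposition~\ref{weyl:gen-rel} produces a recursion expressing $(\overline{f_i \otimes a^{\delta_\bi \ell}}) w^\Gamma_\lambda$ as a right $\bA^\lambda_\Gamma$-combination of $(\overline{f_i \otimes a^{\delta_\bi t}}) w^\Gamma_\lambda$ for $t < \ell$ (the type-$A_{2n}$ short-root case is handled analogously using part (c) of that lemma, together with a suitable vanishing of the auxiliary element $y_i \otimes a_k$ on $w^\Gamma_\lambda$ obtained from the Weyl-group invariance of the weights of $W^\Gamma(\lambda)$). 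Combined with Lemma~\ref{lem:A-xi-generators}, an iteration variable by variable bounds the exponents on each of the finitely many generators of $A$ lying in $A_\xi$, yielding the desired finite-dimensional span for length-one factors. For factors of non-simple weight, and for monomials of length $> 1$, I would proceed inductively, exploiting that $\g^- = \n^-$ is generated as a Lie algebra by the Chevalley generators $\{f_i\}_{i \in I}$, together with commutator manipulations in $\cU^-_\Gamma$, to reduce to the simple-root single-factor case already handled.

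The principal obstacle is precisely the bookkeeping inherent in this inductive reduction: after applying Garland to a single factor of a length-$k$ PBW monomial, the correction terms $q(i, a)_s \in \cU^0_\Gamma$ must be commuted past the remaining $k-1$ factors --- each such commutation producing further contributions in $\cU^-_\Gamma$ via the bracket $[(\g^0 \otimes A)^\Gamma, (\g^- \otimes A)^\Gamma] \subseteq (\g^- \otimes A)^\Gamma$ --- before the pure $\cU^0_\Gamma$-remainders can be absorbed into right multiplication by $\bA^\lambda_\Gamma$. Ensuring that this propagation terminates, with only finitely many distinct PBW monomial ``shapes'' surviving after iterated reductions, is the technical heart of the argument.
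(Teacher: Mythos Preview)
Your plan is correct and mirrors the paper's strategy: reduce to weight spaces via PBW, bound single-factor elements using Garland-type relations, then induct on monomial length. Two organizational differences are worth noting. First, rather than Lemma~\ref{lem:twisted-garland}, the paper invokes the untwisted Garland relation (Lemma~\ref{lem:garland}) for $\mathfrak{sl}_2$-triples lying in $\g^\Gamma \otimes A^\Gamma$; this directly bounds $(x_\alpha^- \otimes A^\Gamma)\, w_\lambda^\Gamma$ for every $\alpha \in R^+_\Gamma$ (Claim~1) and, combined with the commutator trick $[h_\alpha \otimes a_j^{m_j}, x_\alpha^- \otimes a_i^{m_i}] = -2\, x_\alpha^- \otimes a_j^{m_j} a_i^{m_i}$, handles mixed monomials in the generators of $A^\Gamma$ without the type-$A_{2n}$ case split. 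Second, the step you describe as ``exploiting that $\n^-$ is generated by $\{f_i\}$'' is precisely where the freeness of the $\Gamma$-action on $X_\rat$ is used: the paper explicitly decomposes $(\n' \otimes A)^\Gamma = \bigoplus_\xi \n'_\xi \otimes A_{-\xi}$ (with $\n' = \bigoplus_{\alpha \in \Pi} \g_{-\alpha}$), handles the nontrivial isotypic pieces via commutators with $(\h \otimes A)^\Gamma$ (Claim~2), and then uses $A_\xi A_{-\xi} = A_0$ to verify $[(\n' \otimes A)^\Gamma,(\n' \otimes A)^\Gamma] = ([\n',\n'] \otimes A)^\Gamma$, which is what makes the Lie-algebra generation argument go through in the twisted setting. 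The ``bookkeeping'' you flag is dispatched by a single clean induction: with a fixed finite set $\mathcal{D} \subset (\n' \otimes A)^\Gamma$, one shows $\cU((\n' \otimes A)^\Gamma)_n\, w_\lambda^\Gamma \subseteq \sum_{\ell \le n} \mathcal{D}^\ell\, w_\lambda^\Gamma\, \bA^\lambda_\Gamma$ by writing $u\tilde u\, w_\lambda^\Gamma \in ([u,u'] + u'u)\, w_\lambda^\Gamma\, \bA^\lambda_\Gamma$ and absorbing both terms into the inductive hypothesis.
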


\begin{proof}
  If the action of $\Gamma$ on $\g$ is trivial, then $(\g \otimes A)^\Gamma \cong \g \otimes A^\Gamma$ and the theorem follows from \cite[Th.~2(i)]{CFK10}.  Thus we assume that $\Gamma$ acts nontrivially on $\g$.

  Let $\{a_1, \ldots, a_s\}$ be a finite set of generators of $A^\Gamma$ (see Lemma~\ref{lem:AGamma-fg}).  By Proposition~\ref{weyl:gen-rel}, we have $(\n_+\otimes A)^\Gamma w_\lambda^\Gamma = 0$, where $w_\lambda^\Gamma$ is the usual generator of $W^\Gamma(\lambda)$.  We also have $\cU((\h \otimes A)^\Gamma) w_\lambda^\Gamma = w_\lambda^\Gamma \bA^\lambda_\Gamma$. Then the PBW theorem implies that
  \[
    W^\Gamma(\lambda) = \cU((\n^- \otimes A)^\Gamma) w_\lambda^\Gamma \bA^\lambda_\Gamma.
  \]
  For a positive root $\alpha \in R^+_\Gamma$ of $\g^\Gamma$, let $\{x_\alpha^+, x_\alpha^-, h_\alpha\} \subseteq \g^\Gamma$ be a corresponding $\mathfrak{sl}_2$-triple.

  \medskip

  \noindent \emph{Claim 1:} For all $\alpha \in R_\Gamma^+$, we have
  \[
    (x_{\alpha}^- \otimes A^\Gamma) w_\lambda^\Gamma \subseteq \Span\{ (x_{\alpha}^- \otimes a_1^{\ell_1} \cdots a_s^{\ell_s}) w_\lambda^\Gamma\bA^\lambda_\Gamma \ | \ 0 \leq \ell_1, \ldots, \ell_s< \lambda(h_\alpha)\}.
  \]

  \medskip

  \noindent \emph{Proof of Claim 1:} Let $r \geq \lambda(h_{\alpha})$ and $1 \le i \le s$.  By Lemma~\ref{lem:garland} we have
  \[ \ts
    \sum_{s = 0}^r (x_\alpha^- \otimes a_i^{r-s})p(a_i, \alpha)_s w_\lambda^\Gamma = (x_\alpha^+ \otimes a_i)^r(x_\alpha^- \otimes 1)^{r+1} w_\lambda^\Gamma = 0.
  \]
  Thus
  \[
    (x_\alpha^- \otimes a_i^r) w_\lambda^\Gamma \in \Span \{ (x_{\alpha}^- \otimes a_i^s) w_\lambda^\Gamma \bA^\lambda_\Gamma \ | \ 0 \leq s < r \}.
  \]
  We then have, by induction, that
  \begin{eqnarray} \label{eq:fin-1}
    (x_\alpha^- \otimes a_i^r) w_\lambda^\Gamma \in \Span \{ (x_{\alpha}^- \otimes a_i^\ell) w_\lambda^\Gamma\bA^\lambda_\Gamma \ | \ 0 \leq \ell < \lambda(h_\alpha)\} \quad \text{for all } r \ge \lambda(h_\alpha).
  \end{eqnarray}
  Now, for $1 \le i,j \le s$ and $m_i, m_j \in \N$, we have
  \begin{align} \label{eq:fin-2}
    (h_\alpha \otimes a_j^{m_j})(x_\alpha^- \otimes a_i^{m_i}) w_\lambda^\Gamma &=  \big( -2 x_\alpha^- \otimes a_j^{m_j} a_i^{m_i} + (x_\alpha^- \otimes a_i^{m_i})(h_\alpha \otimes a_j^{m_j}) \big) w_\lambda^\Gamma \\
    &\in -2(x_\alpha^- \otimes a_j^{m_j} a_i^{m_i})w_\lambda^\Gamma + (x_\alpha^- \otimes a_i^{m_i}) w_\lambda^\Gamma \bA^\lambda_\Gamma \nonumber
  \end{align}
  and so
  \begin{align*}
    (x_\alpha^- \otimes a_j^{m_j} a_i^{m_i})w_\lambda^\Gamma
    &\in \ts \frac{-1}{2} (h_\alpha \otimes a_j^{m_j})(x_\alpha^- \otimes a_i^{m_i}) w_\lambda^\Gamma + (x_\alpha^- \otimes a_i^{m_i}) w_\lambda^\Gamma \bA^\lambda_\Gamma \\
    &\in \ts \Span \{ (h_\alpha \otimes a_j^{m_j}) (x_{\alpha}^- \otimes a_i^\ell) w_\lambda^\Gamma\bA^\lambda_\Gamma, (x_{\alpha}^- \otimes a_i^\ell) w_\lambda^\Gamma\bA^\lambda_\Gamma \ | \ 0 \leq \ell < \lambda(h_\alpha)\}
  \end{align*}
  by~\eqref{eq:fin-1}.  Then, using~\eqref{eq:fin-2} with $m_i=\ell$, we have
  \begin{equation} \label{eq:fin-3}
    (x_\alpha^- \otimes a_j^{m_j} a_i^{m_i})w_\lambda^\Gamma \in \Span \{(x_\alpha^- \otimes a_j^{m_j} a_i^\ell)w_\lambda^\Gamma \bA^\lambda_\Gamma, (x_\alpha^- \otimes a_i^\ell)w_\lambda^\Gamma \bA^\lambda_\Gamma\ |\ 0 \leq \ell < \lambda(h_\alpha)\}.
  \end{equation}
  Replacing $j,i,m_j,m_i$ by $i,j,\ell,m_j$ (respectively) in~\eqref{eq:fin-3}, we have
  \[
    (x_\alpha^- \otimes a_i^\ell a_j^{m_j})w_\lambda^\Gamma \in \Span \{(x_\alpha^- \otimes a_i^\ell a_j^{\ell_j}) w_\lambda^\Gamma \bA^\lambda_\Gamma, (x_\alpha^- \otimes a_j^{\ell_j})w_\lambda^\Gamma \bA^\lambda_\Gamma \ |\ 0 \le \ell_j < h(h_\alpha)\}.
  \]
  Thus
  \[
    (x_\alpha^- \otimes a_j^{m_j} a_i^{m_i})w_\lambda^\Gamma \in \Span \{(x_\alpha^- \otimes a_j^{\ell_j} a_i^{\ell_i})w_\lambda^\Gamma \bA^\lambda_\Gamma\ |\ 0 \leq \ell_i, \ell_j < \lambda(h_\alpha)\}.
  \]

  Repeating the above argument gives
  \begin{equation} \label{eq:fin-5}
    (x_\alpha^- \otimes a_1^{m_1} \cdots a_s^{m_s}) w_\lambda^\Gamma \in \Span\{ (x_{\alpha}^- \otimes a_1^{\ell_1} \cdots a_s^{\ell_s}) w_\lambda^\Gamma\bA^\lambda_\Gamma \ | \ 0 \leq \ell_i  < \lambda(h_\alpha)\ \forall\ i\}.
  \end{equation}
  Since $\{a_1^{m_1} \cdots a_s^{m_s} \ |\ m_i \geq 0\}$ is a spanning set of $A^\Gamma$, Claim 1 follows.

  \medskip

  \noindent \emph{Claim 2:} As a right $\bA^\lambda_\Gamma$-module, $(\n' \otimes A)^\Gamma w_\lambda^\Gamma$ is finitely generated, where $\n' \defeq \bigoplus_{\alpha \in \Pi} \g_{-\alpha}$.

  \medskip

  \noindent \emph{Proof of Claim 2:} Let $m$ be the order of the generator $\sigma$ of $\Gamma$.  This generator $\sigma$ induces a permutation of the simple roots of $\g$. For a simple root $\beta \in \Pi$ of $\g$, we denote by $\{y_\beta^+, y_\beta^-, h_\beta \} \subseteq \g$ a corresponding $\mathfrak{sl}_2$-triple. A basis of $(\n')^\Gamma$ is given by the set
  \[ \ts
    \cB_0 \defeq \{ y_\beta^- \ |\ \beta \in \Pi,\ \sigma(\beta) = \beta \} \cup \left\{ \sum_{j= 0}^{m-1} y_{\sigma^j(\beta)}^- \ |\ \beta \in \Pi,\ \sigma(\beta) \neq  \beta \right\}.
  \]
  If $m=2$, we have a basis $\cB_0 \sqcup \cB_1$ of $\n'$ where
  \[
    \cB_1 \defeq \{ y_\beta^- - y_{\sigma(\beta)}^- \ |\ \beta \in \Pi,\ \sigma(\beta) \neq \beta \}
  \]
  consists of $\sigma$-eigenvectors of eigenvalue $-1$.  If $m=3$ we have a basis $\cB_0 \sqcup \cB_1 \sqcup \cB_2$ of $\n'$ where
  \begin{gather*}
    \cB_1 \defeq \{y_\beta^- + \eta y_{\sigma(\beta)}^- + \eta^2 y_{\sigma^2(\beta)}^- \ |\ \beta \in \Pi,\ \sigma(\beta) \ne \beta \},\\
    \cB_2 \defeq \{y_\beta^- + \eta^2 y_{\sigma(\beta)}^- + \eta y_{\sigma^2(\beta)}^- \ |\ \beta \in \Pi,\ \sigma(\beta) \ne \beta \}.
  \end{gather*}
  Here $\cB_i$, $i=1,2$, consists of $\sigma$-eigenvectors of eigenvalue $\eta^i$, where $\eta$ is a primitive third root of unity.

  For each $\alpha \in \Pi_\Gamma$, after multiplying by a scalar if necessary, we have
  $x_\alpha^- = y_{\beta_\alpha}^-$ (or $x_\alpha^- = \sqrt{\kappa_\alpha} \sum_{j= 0}^{m-1} y_{\sigma^j(\beta_\alpha)}^-$) and $h_\alpha = h_{\beta_\alpha}$ (respectively, $h_\alpha = \kappa_\alpha \sum_{j= 0}^{m-1} h_{\sigma^j(\beta_\alpha)}$) for some $\beta_\alpha \in R^+$, where $\kappa_\alpha=2$ if $\g$ is of type $A_{2n}$ and $\alpha$ is the simple short root of $\g^\Gamma$, otherwise $\kappa_\alpha=1$.  In fact, the $\Gamma$-orbit of $\beta_\alpha$ is uniquely determined by the condition $\beta_\alpha|_{\h^\Gamma} = \alpha$.

  For the remainder of the proof, we restrict our attention to the case $m=2$.  The case $m=3$ is similar and will be omitted.  We have
  \begin{equation} \label{eq:n'A-decomp}
    (\n' \otimes A)^\Gamma = (\n'_0 \otimes A_0) \oplus (\n'_1 \otimes A_1),
  \end{equation}
  where the subscript 1 denotes the nontrivial character of $\Gamma$.  Furthermore, $\cB_1$ is a basis of $\n'_1$.  By Lemma~\ref{lem:AGamma-fg}, we know that $A_1$ is a finitely generated $A^\Gamma$-module.  Let $\{b_1, \ldots, b_k\}$ be a finite set of generators of this module.  Now choose $\alpha \in \Pi_\Gamma$ such that $\sigma(\beta_\alpha) \ne \beta_\alpha$ and set $\beta = \beta_\alpha$.  Then
  \[
    \{ (y_\beta^- - y_{\sigma(\beta)}^-) \otimes a_1^{m_1} \cdots a_s^{m_s} b_i \ |\ m_j \geq 0,\ 1 \leq i \leq k \} \subseteq \n'_1 \otimes A_1 \subseteq (\n' \otimes A)^\Gamma.
  \]
  Furthermore, $(h_\beta - h_{\sigma(\beta)}) \otimes b_i \in (\h \otimes A)^\Gamma$, and so $((h_\beta - h_{\sigma(\beta)}) \otimes b_i) w_\lambda^\Gamma \in w_\lambda^\Gamma \bA^\lambda_\Gamma$.  Now,
  \begin{equation} \label{eq:fin-comm-rel}
    [ h_\beta - h_{\sigma(\beta)}, x_\alpha^- ] = [h_\beta - h_{\sigma(\beta)}, y_\beta^- + y_{\sigma(\beta)}^-] = -(\kappa_\alpha+1)(y_\beta^- - y_{\sigma(\beta)}^-).
  \end{equation}
  This implies, for all $m_j \geq 0$, that
  \begin{align*}
    -(\kappa_\alpha&+1)((y_\beta^- - y_{\sigma(\beta)}^-) \otimes a_1^{m_1} \dotsb a_s^{m_s} b_i)w_\lambda^\Gamma \\
    &= ((h_\beta - h_{\sigma(\beta)}) \otimes b_i)(x_\alpha^- \otimes a_1^{m_1} \cdots a_s^{m_s}) w_\lambda^\Gamma - (x_\alpha^- \otimes a_1^{m_1} \cdots a_s^{m_s})((h_\beta - h_{\sigma(\beta)}) \otimes b_i) w_\lambda^\Gamma \\
    &\in ((h_\beta - h_{\sigma(\beta)}) \otimes b_i)(x_\alpha^- \otimes a_1^{m_1} \cdots a_s^{m_s}) w_\lambda^\Gamma - (x_\alpha^- \otimes a_1^{m_1} \cdots a_s^{m_s}) w_\lambda^\Gamma \bA_\Gamma^\lambda \\
    &\subseteq (h_\beta - h_{\sigma(\beta)}) \otimes b_i)  \Span\{ (x_{\alpha}^- \otimes a_1^{\ell_1} \cdots a_s^{\ell_s}) w_\lambda^\Gamma\bA^\lambda_\Gamma \ | \ 0 \leq \ell_i  < \lambda(h_\alpha)\} \\
    &\qquad \qquad \qquad \qquad \qquad +  \Span\{ (x_{\alpha}^- \otimes a_1^{\ell_1} \cdots a_s^{\ell_s}) w_\lambda^\Gamma\bA^\lambda_\Gamma \ | \ 0 \leq \ell_i  < \lambda(h_\alpha)\ \forall\ i\} \quad \text{(by~\eqref{eq:fin-5})} \\
    &\subseteq \Span \{ ((y_\beta^- - y_{\sigma(\beta)}^-) \otimes a_1^{\ell_1} \cdots a_s^{\ell_s}b_i) w_\lambda^\Gamma\bA^\lambda_\Gamma , (x_{\alpha}^- \otimes a_1^{\ell_1} \cdots a_s^{\ell_s}) w_\lambda^\Gamma\bA^\lambda_\Gamma\ | \ 0 \leq \ell_i  < \lambda(h_\alpha)\ \forall\ i\},
  \end{align*}
  where the last containment follows from~\eqref{eq:fin-comm-rel}.  Since $\kappa_\alpha+1 \ne 0$ and $A_1$ is spanned by elements of the form $a_1^{m_1} \cdots a_s^{m_s}b_i$, we see that $((y_\beta^- - y_{\sigma(\beta)}^-) \otimes A_1) w_\lambda^\Gamma$ is contained in
  \[
    \Span \{ ((y_\beta^- - y_{\sigma(\beta)}^-) \otimes a_1^{\ell_1} \cdots a_s^{\ell_s}b_i) w_\lambda^\Gamma\bA^\lambda_\Gamma , (x_{\alpha}^- \otimes a_1^{\ell_1} \cdots a_s^{\ell_s}) w_\lambda^\Gamma\bA^\lambda_\Gamma\ | \ 0 \leq \ell_j  < \lambda(h_\alpha),\ 1 \le i \le k\}.
  \]
  Claim 2 now follows from~\eqref{eq:n'A-decomp} and the above arguments.

  \medskip

  \noindent \emph{Completion of the proof of Theorem~\ref{theo:finite-generated}:} We continue to assume that $m=2$, the case $m=3$ being similar.  Define
  \begin{gather*}
    \mathcal{D}_0 \defeq \{ z \otimes a_1^{\ell_1} \cdots a_s^{\ell_s} \ | \ z \in \cB_0,\ 0 \leq \ell_j < \lambda(h_\alpha)\ \forall\ j\} \quad \text{and} \\
    \mathcal{D}_1 \defeq \{ z \otimes a_1^{\ell_1} \cdots a_s^{\ell_s} b_i\ | \ z \in \cB_1,\ 0 \leq \ell_j < \lambda(h_\alpha)\ \forall\ j,\ 1 \le i \le k\}.
  \end{gather*}
  Let $\mathcal{D} = \mathcal{D}_0 \cup \mathcal{D}_1$.  We claim that $\cU((\n' \otimes A)^\Gamma)_n w_\lambda^\Gamma \subseteq \sum_{\ell=0}^n \mathcal{D}^\ell w_\lambda^\Gamma \bA^\lambda_\Gamma$ for all $n \in \N_+$.  The result is true for $n=1$ by the above.  Assume that it is true for some $n \ge 1$. Let $u \in (\n' \otimes A)^\Gamma$ and $\tilde u \in \cU((\n' \otimes A)^\Gamma)_n$.  Then, by assumption, we have $\tilde u w_\lambda^\Gamma \in u' w_\lambda^\Gamma \bA^\lambda_\Gamma$ for some $u' \in \sum_{\ell=0}^n \mathcal{D}^\ell$.  Then we have
  \begin{multline*} \ts
    u \tilde u w_\lambda^\Gamma \in uu' w_\lambda^\Gamma \bA^\lambda_\Gamma = ([u,u']w_\lambda^\Gamma + u'uw_\lambda^\Gamma)\bA^\lambda_\Gamma \\ \ts
    \subseteq \cU((\n' \otimes A)^\Gamma)_n w_\lambda^\Gamma \bA^\lambda_\Gamma + u' (\mathcal{D} w_\lambda^\Gamma) \bA^\lambda_\Gamma \subseteq \sum_{\ell =1}^{n+1} \mathcal{D}^\ell w_\lambda^\Gamma \bA^\lambda_\Gamma.
  \end{multline*}
  Thus our claim holds by induction.

  Now,
  \begin{align*}
    [(\n' \otimes A)^\Gamma, (\n' \otimes A)^\Gamma] &= [\n'_0 \otimes A_0 + \n'_1 \otimes A_1, \n'_0 \otimes A_0 + \n'_1 \otimes A_1] \\
    &= \big(([\n'_0,\n'_0] + [\n'_1,\n'_1]) \otimes A_0\big) \oplus \big([\n'_0,\n'_1] \otimes A_1\big) \\
    &= ([\n',\n'] \otimes A)^\Gamma,
  \end{align*}
  where, in the second equality, we have used that $A_1^2=A_0$ by~\cite[Lem.~4.4]{NS11}.  Since $\n'$ generates $\n_-$, an easy inductive argument then shows that $\cU((\n_- \otimes A)^\Gamma)_1 \subseteq \cU((\n' \otimes A)^\Gamma)_N$ for some $N \in \N_+$.  Thus, for $n \in \N_+$, we have
  \[ \ts
    \cU((\n_- \otimes A)^\Gamma)_n w_\lambda^\Gamma \subseteq \cU((\n' \otimes A))_{Nn} w_\lambda^\Gamma \subseteq \sum_{\ell=0}^{Nn} \mathcal{D}^\ell w_\lambda^\Gamma \bA^\lambda_\Gamma.
  \]

  Now, all $\g^\Gamma$-weights of $\n_- \otimes A$ are nonzero (this follows from the fact that $\mathcal{D}$ is a basis of $\n' \otimes A$ and all of its elements have nonzero $\g^\Gamma$-weight) and the set of $\g^\Gamma$-weights occurring in $W^\Gamma(\lambda)$ is finite.  Thus, there exists an $M \in \N$ such that $\cU((\n_- \otimes A)^\Gamma)_n w_\lambda^\Gamma \bA^\lambda_\Gamma = W^\Gamma(\lambda)$ for all $n \ge M$.  Therefore $\sum_{\ell=1}^{NM} \mathcal{D}^\ell w_\lambda^\Gamma \bA^\lambda_\Gamma = W^\Gamma(\lambda)$ and so $W^\Gamma(\lambda)$ is finitely generated as an $\bA^\lambda_\Gamma$-module.
\end{proof}

Theorems~\ref{theo:A-fg} and~\ref{theo:finite-generated} are generalizations of~\cite[Th.~2(i)]{CFK10}, which gives the result in the untwisted setting (i.e.\ when $\Gamma$ is trivial).

We have the following immediate corollary.

\begin{cor} \label{cor:WM-fin-dim}
  If $M$ is a finite-dimensional $\bA^\lambda_\Gamma$-module, then $\bW_\lambda^\Gamma M$ is finite-dimensional.
\end{cor}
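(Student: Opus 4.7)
The plan is to combine the finite generation statement of Theorem~\ref{theo:finite-generated} with the fact that $\bW^\Gamma_\lambda M$ is defined as a tensor product over $\bA^\lambda_\Gamma$. Specifically, by Theorem~\ref{theo:finite-generated}, the global Weyl module $W^\Gamma(\lambda)$ is finitely generated as a right $\bA^\lambda_\Gamma$-module, so we can fix a finite set of generators $w_1, \dotsc, w_n \in W^\Gamma(\lambda)$.

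Then every element of $\bW^\Gamma_\lambda M = W^\Gamma(\lambda) \otimes_{\bA^\lambda_\Gamma} M$ can be written as a sum $\sum_{i=1}^n w_i \otimes m_i$ for some $m_i \in M$, using that $w \otimes (a \cdot m) = (w \cdot a) \otimes m$ over $\bA^\lambda_\Gamma$ and expanding each $w \in W^\Gamma(\lambda)$ in terms of the generators with coefficients in $\bA^\lambda_\Gamma$ acting on the right. Picking a finite $\kk$-basis $m_1, \dotsc, m_k$ of $M$ (which exists since $\dim_\kk M < \infty$), we conclude that the finite set $\{w_i \otimes m_j\}_{1 \le i \le n,\ 1 \le j \le k}$ spans $\bW^\Gamma_\lambda M$ over $\kk$, whence $\dim_\kk \bW^\Gamma_\lambda M \le nk < \infty$.

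There is no real obstacle here; the only thing to be careful about is that the generation statement in Theorem~\ref{theo:finite-generated} is a statement about $W^\Gamma(\lambda)$ as a right $\bA^\lambda_\Gamma$-module, which is exactly what is needed to push scalars through the tensor product in the definition of $\bW^\Gamma_\lambda M$.
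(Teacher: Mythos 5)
Your argument is correct and is exactly what the paper intends: the corollary is stated as an immediate consequence of Theorem~\ref{theo:finite-generated}, and your spelled-out tensor-product argument (finitely many right-module generators of $W^\Gamma(\lambda)$ times a finite $\kk$-basis of $M$ spans $W^\Gamma(\lambda)\otimes_{\bA^\lambda_\Gamma}M$) is the standard justification the authors leave implicit.
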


It is straightforward to show that if $V \in \Ob \cI^\Gamma$ and, for some $\lambda \in \Lambda_\Gamma^+$, we have $\dim V_\lambda = 1$, $\wt V \subseteq \lambda - Q_\Gamma^+$ and $\cU((\g \otimes A)^\Gamma) V_\lambda = V$, then $V$ has a unique irreducible quotient.

The following is a generalization of~\cite[Prop.~8]{CFK10} and a refinement of Theorem~\ref{theo:WR=1-hom-char}.

\begin{cor} \label{cor:local-weyl-module-char-fd}
  Let $V\in\Ob\cI^\Gamma_{\le \lambda}$ with $\dim V_\lambda < \infty$. Then $V\cong \bW^\Gamma_\lambda\bR^\Gamma_\lambda V$ if and only if
  \begin{equation} \label{eq:lWm-hom-char-fd}
    \Hom_{\cI^\Gamma_{\le \lambda}}(V,U)=0 \quad \text{and} \quad \Ext^1_{\cI^\Gamma_{\le \lambda}}(V,U)=0
  \end{equation}
  for all irreducible finite-dimensional $U\in\Ob\cI^\Gamma_{\leq\tau}$ with $U_\lambda=0$.
\end{cor}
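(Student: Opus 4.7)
The ``only if'' direction is immediate from Theorem~\ref{theo:WR=1-hom-char}, as irreducible finite-dimensional $U \in \cI^\Gamma_{\le \lambda}$ with $U_\lambda = 0$ are a special case of the test modules considered there.

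For the ``if'' direction, my strategy is to reduce to Theorem~\ref{theo:WR=1-hom-char} by first establishing that $V$ is finite-dimensional and then upgrading the hypothesized vanishing to all finite-dimensional $U \in \cI^\Gamma_{\le \lambda}$ with $U_\lambda = 0$. Since $\bW^\Gamma_\lambda V_\lambda$ is finite-dimensional by Corollary~\ref{cor:WM-fin-dim}, finite-dimensionality of $V$ will follow once I show that $\bar V \defeq V/\cU_\Gamma V_\lambda$ vanishes. The long exact $\Hom$ and $\Ext^1$ sequences applied to $0 \to \cU_\Gamma V_\lambda \to V \to \bar V \to 0$, together with the observation that $\Hom(\cU_\Gamma V_\lambda, U) = 0$ whenever $U_\lambda = 0$ (since $\cU_\Gamma V_\lambda$ is generated by $V_\lambda$), show that $\bar V$ inherits the hypothesized vanishing.

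To derive a contradiction from $\bar V \ne 0$, I would first observe that, since $\bar V \in \cI^\Gamma_{\le \lambda}$, its $\g^\Gamma$-weights lie in the finite set $\bigsqcup_{\nu \in \Lambda_\Gamma^+,\, \nu \le \lambda} \wt V^\Gamma(\nu)$, so there is a maximal $\mu < \lambda$ with $\bar V_\mu \ne 0$. Any nonzero $\bar v \in \bar V_\mu$ is then a $\g^\Gamma$-highest weight vector by maximality, and since the $\g^\Gamma$-weights of $\g^+$ are positive roots, maximality further forces $(\g^+ \otimes A)^\Gamma \bar v = 0$. Thus $\bar v$ satisfies the defining relations of $w_\mu^\Gamma$ in $W^\Gamma(\mu)$ by Proposition~\ref{weyl:gen-rel}, so $\cU_\Gamma \bar v$ is a cyclic quotient of $W^\Gamma(\mu)$ and admits (via a maximal ideal of $\bA^\mu_\Gamma$) a finite-dimensional irreducible quotient $V^\Gamma(\psi)$ for some $\psi \in \cE^\Gamma_\mu$. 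A Zorn's lemma argument --- choosing $N \subseteq \bar V$ maximal with $\bar v \notin N$, and leveraging the uniqueness-of-irreducible-quotient statement stated just before the corollary --- should then promote this to a nonzero homomorphism $\bar V \twoheadrightarrow V^\Gamma(\psi)$, contradicting $\Hom(V, V^\Gamma(\psi)) = 0$.

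With $V$ finite-dimensional, any finite-dimensional $U \in \cI^\Gamma_{\le \lambda}$ with $U_\lambda = 0$ has a composition series with irreducible finite-dimensional factors, each with zero $\lambda$-weight space; a standard induction on length using the long exact sequences then upgrades the hypothesis to the vanishing of $\Hom(V,U)$ and $\Ext^1(V,U)$ for all such $U$. Applying this to $K \defeq \ker \epsilon_V$ (a finite-dimensional submodule of $\bW^\Gamma_\lambda V_\lambda$ satisfying $K_\lambda = 0$ by exactness of $\bR^\Gamma_\lambda$), the argument from the second half of the proof of Theorem~\ref{theo:WR=1-hom-char} forces $K = 0$, so that $\epsilon_V$ is an isomorphism and $V \cong \bW^\Gamma_\lambda \bR^\Gamma_\lambda V$. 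The main obstacle is the Zorn's lemma step: while maximality of $N$ immediately shows that $\cU_\Gamma(\bar v + N)$ is a finite-dimensional simple submodule of $\bar V/N$, upgrading this to an irreducible \emph{quotient} of $\bar V$ (rather than merely a subquotient) requires the additional input of the uniqueness observation, possibly combined with a careful choice of $\bar v$ so that $\bA^\mu_\Gamma$ acts on it by a character.
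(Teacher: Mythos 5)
Your overall architecture is the same as the paper's: the ``only if'' direction is quoted from Theorem~\ref{theo:WR=1-hom-char}; for the converse one first shows $V=\cU_\Gamma V_\lambda$, then uses finite-dimensionality of $\bW^\Gamma_\lambda V_\lambda$ (Corollary~\ref{cor:WM-fin-dim}) to get a short exact sequence $0\to K\to\bW^\Gamma_\lambda\bR^\Gamma_\lambda V\to V\to 0$ with $K$ finite-dimensional and $K_\lambda=0$, and kills $K$ with the long exact sequence. Your endgame is fine: the paper simply takes an irreducible quotient $U$ of the finite-dimensional $K$ and applies the hypothesis to that single $U$, whereas you first upgrade the vanishing to all finite-dimensional $U$ with $U_\lambda=0$ by induction on composition length; both are correct, yours just costs an extra (routine) induction.

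The genuine problem is the step $V=\cU_\Gamma V_\lambda$, and you have correctly put your finger on why it is delicate: the hypothesis only gives vanishing against \emph{irreducible finite-dimensional} test objects, so one needs $\bar V=V/\cU_\Gamma V_\lambda$ to admit an irreducible finite-dimensional \emph{quotient}, and your Zorn's lemma construction does not produce one. Choosing $N$ maximal with $\bar v\notin N$ only shows that $\cU_\Gamma(\bar v+N)$ is a simple submodule contained in every nonzero submodule of $\bar V/N$ --- a statement about the socle of $\bar V/N$, which need not coincide with $\bar V/N$ itself, and the ``unique irreducible quotient'' observation applies to cyclic highest-weight modules with one-dimensional top weight space, not to $\bar V$. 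Moreover the obstruction is not merely technical: a nonzero object of $\cI^\Gamma_{\le\lambda}$ need not have any irreducible quotient at all, since an isotypic multiplicity space can be an $\bA^\mu_\Gamma$-module with no maximal submodules (e.g.\ a non-finitely-generated module such as the fraction field of $\bA^\mu_\Gamma$), and then every map to an irreducible finite-dimensional module vanishes. For comparison, the paper disposes of this step with the single sentence ``as in the proof of Theorem~\ref{theo:WR=1-hom-char}''; but that proof tests $V$ against $U=V/V'$ itself, which is available there because the hypothesis of the theorem covers \emph{all} objects with vanishing $\lambda$-weight space, not only irreducible finite-dimensional ones. So your proposal is not weaker than the paper's argument --- it is an honest attempt to supply a justification the paper omits --- but as written the gap remains open: you must either strengthen the class of test objects (as in Theorem~\ref{theo:WR=1-hom-char}), or add a hypothesis forcing $\bar V$ to have an irreducible quotient, or find an argument genuinely different from the quotient-producing one you sketch.
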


\begin{proof}
  The forward implication holds by Theorem~\ref{theo:WR=1-hom-char}.  To prove the reverse implication, assume $V \in \Ob \cI^\Gamma_{\le \lambda}$ satisfies \eqref{eq:lWm-hom-char-fd} for all irreducible finite-dimensional $U \in \Ob \cI^\Gamma_{\le \lambda}$ with $U_\lambda=0$. As in the proof of Theorem~\ref{theo:WR=1-hom-char}, we see that $V = \cU((\g \otimes A)^\Gamma) V_\lambda$.  The map $\epsilon_V$ of the proof of Proposition~\ref{prop:right-adjoint} is surjective since $V$ is generated by its highest weight space.  Thus we have a short exact sequence
  \begin{equation} \label{eq:K-seq}
    0 \to K \to \bW^\Gamma_\lambda V_\lambda \xrightarrow{\epsilon_V} V \to 0.
  \end{equation}
  By Corollary~\ref{cor:WM-fin-dim}, $\dim \bW^\Gamma_\lambda V_\lambda < \infty$ and so $\dim K < \infty$ and $K_\lambda = 0$. If $K \ne 0$, then there exists some irreducible finite-dimensional $U \in \Ob \cI^\Gamma_{\le \lambda}$ with $U_\lambda=0$ such that $\Hom_{(\g \otimes A)^\Gamma} (K,U) \ne 0$.  A straightforward argument using the long exact sequence obtained from~\eqref{eq:K-seq} by applying the contravariant left exact functor $\Hom_{(\g \otimes A)^\Gamma}(-,U)$ then yields a contradiction.
  \details{Applying the contravariant left exact functor $\Hom_{(\g \otimes A)^\Gamma}(-,U)$ to the short exact sequence~\eqref{eq:K-seq} yields the long exact sequence
  \begin{multline*}
    0 \to \Hom_{(\g \otimes A)^\Gamma} (V,U) \to \Hom_{(\g \otimes A)^\Gamma} (\bW^\Gamma_\lambda V_\lambda,U) \\
    \to \Hom_{(\g \otimes A)^\Gamma} (K,U) \to \Ext^1_{(\g \otimes A)^\Gamma} (V,U) \to \cdots.
  \end{multline*}
  Since $\Hom_{(\g \otimes A)^\Gamma} (\bW^\Gamma_\lambda V_\lambda,U)=0$ by assumption, we see that $\Hom_{(\g \otimes A)^\Gamma} (K,U)$ injects into $\Ext^1_{(\g \otimes A)^\Gamma} (V,U)$.  But this contradicts the assumption on $V$, which implies that the latter space is zero.}  Thus $K=0$ and hence $V \cong \bW^\Gamma_\lambda V_\lambda = \bW^\Gamma_\lambda \bR^\Gamma_\lambda V$.
\end{proof}

\begin{defin}[Map $\bV^\Gamma_\lambda$] \label{def:V-Gamma-lambda}
  Since $\bA^\lambda_\Gamma$ is a finitely generated commutative algebra, any irreducible finite-dimensional $\bA^\lambda_\Gamma$-module $M$ has dimension one.  For such an $\bA^\lambda_\Gamma$-module $M$, we let $\bV^\Gamma_\lambda M$ denote the unique irreducible quotient of $\bW_\lambda^\Gamma M$, which is finite-dimensional by Corollary~\ref{cor:WM-fin-dim}.  This defines a map $\bV^\Gamma_\lambda$ from the set of irreducible $\bA^\lambda_\Gamma$-modules to the set of irreducible finite-dimensional $(\g \otimes A)^\Gamma$-modules.
\end{defin}

Recall the definition of $V(\psi)$ and $V^\Gamma(\psi)$ from Definition~\ref{def:EMA-irreducibles}.

\begin{defin}[Modules $M(\psi)$ and $M^\Gamma(\psi)$] \label{def:M-psi}
  For $\psi \in \cE_\llift$ (respectively, $\psi \in \cE^\Gamma_\lambda$), define $M(\psi) \defeq \bR_\llift V(\psi)$ (respectively, $M^\Gamma(\psi) \defeq \bR_\lambda^\Gamma V^\Gamma(\psi)$).
\end{defin}

\begin{prop} \label{prop:irred-wt-space-modules}
  \begin{enumerate}
    \item \label{prop-item:gWm-zero} The global Weyl module $W^\Gamma(\lambda)$ is the zero module, and hence the algebra $\bA^\lambda_\Gamma$ is the zero algebra, if $\lambda \in \Lambda_\Gamma^+$ is not the restriction of some element of $\Lambda^+$.
    \item \label{prop-item:irred-functor} For all $\psi \in \cE^\Gamma_\lambda$, we have $V^\Gamma(\psi) \cong \bV_\lambda^\Gamma M^\Gamma(\psi)$.
    \item \label{prop-item:VR-bijection} For all $\lambda \in \Lambda^+_\Gamma$, the maps $\bV^\Gamma_\lambda$ and $\bR^\Gamma_\lambda$ induce mutually inverse bijections between the set of irreducible finite-dimensional $(\g \otimes A)^\Gamma$-modules whose highest weight as a $\g^\Gamma$-module is $\lambda$ and the set of irreducible finite-dimensional $\bA^\lambda_\Gamma$-modules.
    \item The map $\psi \mapsto [M^\Gamma(\psi)]$ is a bijection from $\cE^\Gamma_\lambda$ to the set of isomorphism classes of irreducible finite-dimensional $\bA^\lambda_\Gamma$-modules.
  \end{enumerate}
\end{prop}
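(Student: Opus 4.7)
The plan is to prove the four parts in order (a), (b), (c), (d). The main technical input will be the following observation about any $V \in \Ob \cI^\Gamma_{\le\lambda}$ and any $v \in V_\lambda$: we have $(\g^+\otimes A)^\Gamma v = 0$, since $(\g^+\otimes A)^\Gamma v$ decomposes into $\h_\Gamma$-weight spaces of weight $\lambda + \alpha$ with $\alpha \notin Q_\Gamma^-$ (by the definition of $\g^+$ in~\eqref{eq:g-scalene}), while the weights of $V$ all lie in $\lambda + Q_\Gamma^-$. As a consequence, for any $\bA^\lambda_\Gamma$-submodule $N \subseteq V_\lambda$, the $(\g\otimes A)^\Gamma$-submodule $\cU_\Gamma N$ of $V$ will satisfy $(\cU_\Gamma N)_\lambda = \cU_\Gamma^0 N = \bA^\lambda_\Gamma N = N$.

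For (a), I will argue by contraposition: suppose $W^\Gamma(\lambda) \neq 0$; then $w^\Gamma_\lambda \neq 0$, so $\bA^\lambda_\Gamma \neq 0$. By Theorem~\ref{theo:A-fg}, $\bA^\lambda_\Gamma$ is a finitely generated commutative algebra, hence admits a one-dimensional quotient $M$. Then $\bW^\Gamma_\lambda M$ is a nonzero finite-dimensional module by Corollary~\ref{cor:WM-fin-dim}, with $(\bW^\Gamma_\lambda M)_\lambda = M$, and the characterization recalled just before Corollary~\ref{cor:local-weyl-module-char-fd} yields a unique irreducible quotient $\bV^\Gamma_\lambda M$. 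The opening observation applied to the unique maximal submodule will show $(\bV^\Gamma_\lambda M)_\lambda = M$, so $\bV^\Gamma_\lambda M$ has $\g^\Gamma$-highest weight exactly $\lambda$. By \cite[Th.~5.5]{NSS12}, $\bV^\Gamma_\lambda M \cong V^\Gamma(\psi)$ for some $\psi \in \cE^\Gamma_\lambda$, whose highest $\g^\Gamma$-weight $\lambda = (\wt \psi_\bx)|_{\h^\Gamma}$ is manifestly the restriction of an element of $\Lambda^+$.

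For (b), I will first note that $M^\Gamma(\psi) = V^\Gamma(\psi)_\lambda$ is finite-dimensional; the opening observation together with irreducibility of $V^\Gamma(\psi)$ then makes it a simple $\bA^\lambda_\Gamma$-module, and since $\bA^\lambda_\Gamma$ is commutative and $\kk$ is algebraically closed, it is one-dimensional. The irreducibility of $V^\Gamma(\psi)$ also gives $V^\Gamma(\psi) = \cU_\Gamma M^\Gamma(\psi)$, so the counit $\epsilon_{V^\Gamma(\psi)}$ of Proposition~\ref{prop:right-adjoint} is surjective, and the uniqueness of the irreducible quotient yields $V^\Gamma(\psi) \cong \bV^\Gamma_\lambda M^\Gamma(\psi)$.

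Part (c) amounts to verifying $\bV^\Gamma_\lambda \bR^\Gamma_\lambda \cong \mathrm{id}$ and $\bR^\Gamma_\lambda \bV^\Gamma_\lambda \cong \mathrm{id}$ on isomorphism classes: the first is (b) (after invoking \cite[Th.~5.5]{NSS12} to write each irreducible as $V^\Gamma(\psi)$), and the second will follow as in (a), since for a one-dimensional $M$ the unique maximal submodule $N$ of $\bW^\Gamma_\lambda M$ must satisfy $N_\lambda = 0$ (else $N_\lambda = M$ forces $N \supseteq \cU_\Gamma M = \bW^\Gamma_\lambda M$). Finally, (d) will follow by composing the bijection $\psi \mapsto [V^\Gamma(\psi)]$ of \cite[Th.~5.5]{NSS12}, restricted to $\cE^\Gamma_\lambda$, with the bijection from (c). The hardest step will be verifying simplicity of $M^\Gamma(\psi)$: the argument rests on $(\g^+ \otimes A)^\Gamma$ (rather than merely $(\n_\Gamma^+ \otimes A)^\Gamma$) annihilating all of $V^\Gamma(\psi)_\lambda$, which uses the $\h_\Gamma$-grading of $\g$ in an essential way rather than just the triangular decomposition of $\g^\Gamma$.
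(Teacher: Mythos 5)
Your proposal is correct and follows essentially the same route as the paper's proof: it rests on Theorem~\ref{theo:A-fg}, Corollary~\ref{cor:WM-fin-dim}, the adjunction identity $\bR^\Gamma_\lambda\bW^\Gamma_\lambda\cong\Id$, the surjectivity of the counit $\epsilon_{V^\Gamma(\psi)}$, and \cite[Th.~5.5]{NSS12}. The only cosmetic differences are that you replace the paper's appeal to Nakayama's Lemma in part~(a) by the direct computation $(\bW^\Gamma_\lambda M)_\lambda\cong M\neq 0$, and you spell out (via the observation that $(\g^+\otimes A)^\Gamma$ kills $V_\lambda$, so $(\cU_\Gamma N)_\lambda=N$) the simplicity and one-dimensionality of $M^\Gamma(\psi)$, which the paper leaves implicit.
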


\begin{proof}
  \begin{asparaenum}
    \item Suppose $\lambda \in \Lambda_\Gamma^+$.  The irreducible (one-dimensional) $\bA^\lambda_\Gamma$-modules are of the form $M=\bA^\lambda_\Gamma/\sm$ for some maximal ideal $\sm$ of $\bA^\lambda_\Gamma$.  By Nakayama's Lemma, $\bW^\Gamma_\lambda M = W^\Gamma(\lambda)/\sm W^\Gamma(\lambda)$ is zero if and only if the global Weyl module $W^\Gamma(\lambda)$ is zero.  By Corollary~\ref{cor:WM-fin-dim}, $\bW^\Gamma_\lambda M$ is finite-dimensional.  If it is nonzero, it has some nonzero irreducible finite-dimensional quotient $V$ whose highest $\g^\Gamma$ weight is also $\lambda$.  By \cite[Th.~5.5]{NSS12}, $V$ is a tensor product of evaluation representations (corresponding to representations of $\g$).  Thus, its highest weight must be a restriction of a weight of $\g$.

    \item As in the proof of Proposition~\ref{prop:right-adjoint}, we have a (nonzero) surjective map
      \[
        \bW^\Gamma_\lambda M^\Gamma(\psi) = \bW^\Gamma_\lambda \bR^\Gamma_\lambda V^\Gamma(\psi) \twoheadrightarrow V^\Gamma(\psi).
      \]
      Thus $V^\Gamma(\psi)$ must be isomorphic to the unique irreducible quotient $\bV_\lambda^\Gamma M^\Gamma(\psi)$ of $\bW^\Gamma_\lambda M^\Gamma(\psi)$.

    \item Let $\lambda \in \Lambda^+_\Gamma$.  By \cite[Th.~5.5]{NSS12}, every irreducible finite-dimensional $(\g \otimes A)^\Gamma$-module with highest weight $\lambda$ is of the form $V^\Gamma(\psi)$ for some $\psi \in \cE^\Gamma_\lambda$.  Thus, by part~\eqref{prop-item:irred-functor}, we have that $\bV^\Gamma_\lambda \bR^\Gamma_\lambda$ is the identity map on the set of such modules.  Now, for an irreducible $\bA^\lambda_\Gamma$-module $M$, we have that
        \[
          \bR^\Gamma_\lambda \bV^\Gamma_\lambda M = (\bV^\Gamma_\lambda M)_\lambda = (\bW^\Gamma_\lambda M)_\lambda = \bR^\Gamma_\lambda \bW^\Gamma_\lambda M = M.
        \]

     \item By \cite[Th.~5.5]{NSS12}, the map $\psi \mapsto V^\Gamma(\psi)$ is bijection from $\cE^\Gamma_\lambda$ to the set of irreducible finite-dimensional $(\g \otimes A)^\Gamma$-modules with highest weight $\lambda$.  Then the result follows from part~\eqref{prop-item:VR-bijection}. \qedhere
  \end{asparaenum}
\end{proof}

\begin{rem} \label{rem:restricted-weights}
  The condition in Proposition~\ref{prop:irred-wt-space-modules}\eqref{prop-item:gWm-zero} that $\lambda$ be the restriction of a $\g$ weight is only relevant in the case where $\g$ is of type $A_{2n}$ (and $\Gamma$ acts nontrivially on $\g$).  In this case, the restriction condition amounts to requiring that when $\lambda$ is written as a sum of fundamental weights, its coefficient for the fundamental weight corresponding to the short root be even.  If $\g$ is not of type $A_{2n}$, then the restriction map $\Lambda^+ \to \Lambda_\Gamma^+$ is surjective (see Lemma~\ref{lem:height-wt}\eqref{lem-item:root-restriction}).
\end{rem}

The \hyperlink{proof:projective-free}{proof} of the following theorem will be given at the end of Section~\ref{sec:local-Weyl-modules}.  Part~\eqref{theo-item:loop-global-Weyl-module-free} was first proved in \cite[Th.~6.5]{FMS11}.  However, we will provide some details omitted there.

\begin{theo} \label{theo:global-Weyl-module-projective}
  Suppose $\kk = \C$ and assume that $\Gamma$ is abelian, acting freely on $X_\rat$ and acting on $\g$ by diagram automorphisms.
  \begin{asparaenum}
    \item \label{theo-item:fund-weight-proj-property} If $A$ is the coordinate algebra of a smooth complex algebraic variety and $\lambda$ is a fundamental weight of $\g^\Gamma$, then the global Weyl module $W^\Gamma(\lambda)$ is a projective $\bA^\lambda_\Gamma$-module.  If, in addition, $\bA^\lambda_\Gamma$ is a generalized Laurent polynomial ring $\C[t_1^{\pm 1},\dotsc,t_n^{\pm 1},s_1,\dotsc,s_m]$, $n,m \in \N$, then the global Weyl module $W^\Gamma(\lambda)$ is a free $\bA^\lambda_\Gamma$-module.

    \item \label{theo-item:loop-global-Weyl-module-free} If $A=\C[t^{\pm 1}]$, then $W^\Gamma(\lambda)$ is a free $\bA^\lambda_\Gamma$-module for all $\lambda \in \Lambda_\Gamma^+$, and its rank is equal to the dimension of any local Weyl module.
  \end{asparaenum}
\end{theo}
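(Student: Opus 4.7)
The plan is to reduce both parts to a constancy-of-fibre-dimension statement for the finitely generated $\bA_\Gamma^\lambda$-module $W^\Gamma(\lambda)$, where fibres will be identified with the local Weyl modules defined in Section~\ref{sec:local-Weyl-modules}. By Theorems~\ref{theo:A-fg} and~\ref{theo:finite-generated}, $\bA_\Gamma^\lambda$ is a Noetherian commutative ring and $W^\Gamma(\lambda)$ is finitely generated over it; the standard criterion then says projectivity is equivalent to flatness, which, for a finitely generated module over a reduced Noetherian ring whose $\Spec$ is irreducible (or more generally on each connected component), follows from the local dimension $\dim_{\kappa(\sm)} W^\Gamma(\lambda)\otimes_{\bA^\lambda_\Gamma}(\bA^\lambda_\Gamma/\sm)$ being constant in $\sm\in\maxSpec \bA^\lambda_\Gamma$. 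Under the smoothness hypothesis of part~\eqref{theo-item:fund-weight-proj-property}, the algebra $\bA^\lambda_\Gamma$ will be seen (via the structural results of Section~\ref{sec:A-lambda-Gamma}) to itself be a coordinate ring of a smooth variety, so this criterion applies.

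The first substantive step is to identify these fibres. For a maximal ideal $\sm\subset\bA^\lambda_\Gamma$, the one-dimensional quotient $\bA^\lambda_\Gamma/\sm$ maps under $\bW^\Gamma_\lambda$ to a local Weyl module, and by Proposition~\ref{prop:irred-wt-space-modules} these are parametrized by $\psi\in\cE_\lambda^\Gamma$; so the fibre at $\sm$ is exactly $W^\Gamma(\psi_\sm)$. When $\lambda$ is a fundamental weight of $\g^\Gamma$, every $\psi\in\cE_\lambda^\Gamma$ has support a single $\Gamma$-orbit, and the twisting functor $\mathbf T$ of Section~\ref{sec:twisting} (together with the freeness of the $\Gamma$-action on $X_\rat$) identifies $W^\Gamma(\psi)$ with a corresponding untwisted local Weyl module on that orbit. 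Standard translation invariance of evaluation modules for $\g\otimes A$ makes these dimensions independent of the orbit, which gives the desired constancy and hence projectivity. For the ``free'' clause of~\eqref{theo-item:fund-weight-proj-property}, once $W^\Gamma(\lambda)$ is known to be finitely generated and projective, Swan's extension of the Quillen--Suslin theorem (projective finitely generated modules over $\C[t_1^{\pm 1},\dotsc,t_n^{\pm 1},s_1,\dotsc,s_m]$ are free) gives freeness.

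For part~\eqref{theo-item:loop-global-Weyl-module-free}, $A=\C[t^{\pm 1}]$ and by Theorem~\ref{theo:Alambda-isom} the algebra $\bA^\lambda_\Gamma$ is a generalized Laurent polynomial ring, so the task reduces to proving constancy of fibre dimensions for \emph{all} dominant $\lambda$, after which freeness again follows from Suslin. For this we plan to push the argument of \cite[Th.~6.5]{FMS11} through: using the twisting/untwisting functors to compare twisted local Weyl modules with untwisted ones, and invoking the known fact (see Theorem~\ref{theo:global-Weyl-module-projective} in the untwisted setting, established via \cite{CFK10,CL06,FL07,Nao12}) that local Weyl modules for $\g\otimes\C[t^{\pm 1}]$ at distinct points of $\C^\times$ have equal dimension. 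Combined with the identification of the fibres with local Weyl modules, this yields both projectivity and the assertion that the rank equals the dimension of any local Weyl module.

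The main obstacle is the constancy-of-dimension step. In part~\eqref{theo-item:fund-weight-proj-property} this is essentially bookkeeping via the twisting functor once one knows that in the untwisted setting fundamental local Weyl modules are evaluation modules (hence have point-independent dimension), but for part~\eqref{theo-item:loop-global-Weyl-module-free} one needs dimension-constancy across the full parameter space $\cE^\Gamma_\lambda$, which relies nontrivially on the corresponding (hard) untwisted result and on the careful control the (un)twisting functors provide over how $\Gamma$-orbits interact with $\g^\Gamma$-weight data.
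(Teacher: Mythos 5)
Your overall architecture is the same as the paper's: identify the fibre $W^\Gamma(\lambda)/\sm W^\Gamma(\lambda)$ at each $\sm\in\maxSpec\bA^\lambda_\Gamma$ with a local Weyl module $W^\Gamma(\psi_\sm)$ via Proposition~\ref{prop:irred-wt-space-modules}, prove that these fibre dimensions are constant, and then invoke the commutative-algebra input (constant fibre dimension over a finitely generated domain gives projectivity, i.e.\ Corollary~\ref{cor:constant-localized-dim}, with Quillen--Suslin for freeness) together with Proposition~\ref{prop:equivalence-of-lWm-defs} to transfer dimension counts from the untwisted to the twisted setting. For part~\eqref{theo-item:loop-global-Weyl-module-free} your reduction is also essentially the paper's, provided you make explicit that what is needed is the multiplicativity $\dim_\C W^\Gamma(\psi)=\prod_\ell\dim_\C W^\Gamma(\psi_\ell)$ over a fundamental-weight decomposition of $\psi$; this requires the tensor-product factorization of local Weyl modules with disjoint supports, the pullback from the loop algebra to the current algebra, and only then the translation to the origin and the deep results of \cite{CP01,CL06,FL07,Nao12}.

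The genuine gap is in your justification of constancy in part~\eqref{theo-item:fund-weight-proj-property}. You assert that ``standard translation invariance of evaluation modules for $\g\otimes A$ makes these dimensions independent of the orbit.'' For $A=\C[t^{\pm1}]$ this is fine (the torus acts transitively on $\C^\times$), but part~\eqref{theo-item:fund-weight-proj-property} concerns an arbitrary smooth complex affine variety, which in general admits no transitive group of automorphisms, so there is no translation argument available; moreover the fundamental local Weyl modules need not be evaluation modules of a single fixed $\g$-module. The point-independence of $\dim_\C W(\psi')$ for $\psi'\in\cE_\llift$ with $\llift$ fundamental is a nontrivial theorem, namely \cite[Cor.~8]{CFK10}, whose proof uses that the local Weyl module at a smooth point depends only on the completed local ring $\widehat{A}_{\sm_x}\cong\C[[t_1,\dotsc,t_d]]$, which is the same at every smooth point. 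Replacing your translation-invariance claim by a citation of that result (and then restricting along $\bT$ as you do) closes the gap and recovers the paper's argument.
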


\begin{rem}
  The condition that $\bA^\lambda_\Gamma$ is a generalized Laurent polynomial ring can be verified in specific cases using the explicit realization of $\bA^\lambda_\Gamma$ given in Section~\ref{sec:A-lambda-Gamma} (see Theorem~\ref{theo:Alambda-isom}).
\end{rem}

%
\section{Twisting functors} \label{sec:twisting}
%

In this section we recall the twisting functors introduced in \cite{FKKS12} and prove some facts related to them that will be used in the sequel.  We continue to assume that $\Gamma$ is cyclic, acts freely on $X_\rat$, and acts faithfully on $\g$ by diagram automorphisms (see Remark~\ref{rem:cyclic-assumption}).

We define the \emph{support} of an ideal $J$ of $A$ to be
\[\label{def:supp-ideal}
  \Supp J \defeq \{\sm \in \maxSpec A\ |\ J \subseteq \sm\} \cong \maxSpec (A/J).
\]
Note that the support of an ideal is often defined to be the set of prime (rather than maximal) ideals containing it.  So our definition is more restrictive.  When we refer to the \emph{codimension} of an ideal of an algebra, we mean its codimension as a $\kk$-vector space (and not, for instance, some geometric codimension).

\begin{lem} \label{lem:ideal-form}
  All ideals of $(\g \otimes A)^\Gamma$ are of the form $(\g \otimes J)^\Gamma = \bigoplus_{\xi \in \Xi} \g_\xi \otimes J_{-\xi}$, where $J=\bigoplus_{\xi \in \Xi} J_\xi$ is a $\Gamma$-invariant ideal of $A$.
\end{lem}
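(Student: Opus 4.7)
My plan is to reduce the claim to the classical untwisted analog, namely that every Lie ideal of $\g\otimes A$ (for a simple Lie algebra $\g$) has the form $\g\otimes J$ for an ideal $J\subseteq A$; this is a standard consequence of the simplicity of $\g$ and will be invoked.

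Given an ideal $I\subseteq(\g\otimes A)^\Gamma$, I would let $\tilde I$ denote the Lie ideal of $\g\otimes A$ generated by $I$. Since $I$ lies in the $\Gamma$-fixed subspace and $\Gamma$ acts on $\g\otimes A$ by Lie algebra automorphisms, $\tilde I$ is automatically $\Gamma$-stable. Applying the untwisted result gives $\tilde I=\g\otimes J$ for some ideal $J\subseteq A$, and comparing $\tilde I=\gamma(\tilde I)=\g\otimes\gamma(J)$ for every $\gamma\in\Gamma$ forces $J$ to be $\Gamma$-invariant. Decomposing $J=\bigoplus_{\xi\in\Xi}J_\xi$ under the $\Xi$-grading of $A$ then immediately yields the desired identification
\[
\tilde I^\Gamma=(\g\otimes J)^\Gamma=\bigoplus_{\xi\in\Xi}\g_\xi\otimes J_{-\xi}.
\]

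It therefore remains to prove the equality $I=\tilde I^\Gamma$. The inclusion $I\subseteq\tilde I^\Gamma$ is immediate from $I\subseteq\tilde I$ and $I\subseteq(\g\otimes A)^\Gamma$. For the reverse, I would exploit the free $\Gamma$-action on $X_\rat$: by Galois descent, $A$ is a finitely generated projective $A^\Gamma$-module of rank $|\Gamma|$, and hence $(\g\otimes A)^\Gamma$ is a projective $A^\Gamma$-module of rank $\dim\g$ whose specialization at each maximal ideal $\sm_0\subseteq A^\Gamma$ is naturally isomorphic to $\g$ as a Lie algebra. Since $I$ is a Lie ideal of $(\g\otimes A)^\Gamma$, its image in each such fiber is a Lie ideal of $\g$, and hence is either $0$ or all of $\g$. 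This identifies $I$ with $J_0\cdot(\g\otimes A)^\Gamma$, where $J_0=\{a\in A^\Gamma\mid a\cdot(\g\otimes A)^\Gamma\subseteq I\}$ is an ideal of $A^\Gamma$; setting $J:=J_0\cdot A$ produces the required $\Gamma$-invariant ideal of $A$ with $I=(\g\otimes J)^\Gamma$.

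The hardest step will be making the fiber-wise descent rigorous: verifying that $I$ is actually an $A^\Gamma$-submodule of $(\g\otimes A)^\Gamma$ (which can be established using the bracket with elements of $(\h\otimes A)^\Gamma$ on $\h_\Gamma$-weight spaces), and that specializing at maximal ideals of $A^\Gamma$ recovers $I$ via Nakayama's lemma. A purely algebraic alternative is to apply the averaging operator $\pi\colon\g\otimes A\to(\g\otimes A)^\Gamma$ to iterated brackets generating $\tilde I$, decompose each bracket into $\Xi$-homogeneous components, and use the Jacobi identity together with $[u^{(\xi)},v^{(-\xi)}]\in(\g\otimes A)^\Gamma$ to show inductively that $\pi(\tilde I)\subseteq I$; the free $\Gamma$-action enters precisely to guarantee enough invariants for this averaging to be effective.
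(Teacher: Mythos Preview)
The paper does not prove this lemma itself; it cites \cite[Prop.~7.1]{Sav12}, where the result is established (in the more general setting of Lie superalgebras) by a direct argument exploiting the $\h_\Gamma$-weight decomposition of the ideal together with the bracket relations furnished by the simplicity of $\g$.

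Your strategy of reducing to $I=\tilde I^\Gamma$ is reasonable, but the fiber-wise descent you sketch has a real gap. Knowing that $I$ is an $A^\Gamma$-submodule of $M:=(\g\otimes A)^\Gamma$ and that its image in each fiber $M/\sm_0 M\cong\g$ is either $0$ or all of $\g$ does \emph{not} pin down $I$. Over $A^\Gamma=\kk[t]$ with $M=\kk[t]^2$, the submodule $t\kk[t]\oplus t^2\kk[t]$ has image $0$ at the origin and full image elsewhere, yet it strictly contains $J_0 M$ for $J_0=\{a\in A^\Gamma: aM\subseteq I\}=t^2\kk[t]$. Nakayama only yields $I_{\sm_0}=M_{\sm_0}$ at the ``full'' fibers; it cannot detect the order of vanishing at the others. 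What actually forces all weight components of $I$ to vanish to the same order is the Lie bracket: bracketing an element of $I\cap(\g^\alpha\otimes A)^\Gamma$ with suitable root vectors in $(\g\otimes A)^\Gamma$ transports it to other weight spaces without enlarging the $A$-ideal involved, and this is precisely the structure your fiber argument discards. Your averaging alternative is closer in spirit to what is needed, but as written it is only a hint; making it precise essentially amounts to reproducing the direct weight-space argument of \cite{Sav12} rather than bypassing it.
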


\begin{proof}
  This is proved in \cite[Prop.~7.1]{Sav12} in the more general setting of Lie superalgebras.
\end{proof}

\begin{defin}[Support]\label{def:support-module}
  It follows from Lemma~\ref{lem:ideal-form} that the annihilator of any $(\g \otimes A)^\Gamma$-module $V$ is of the form $(\g \otimes J)^\Gamma$ for a unique $\Gamma$-invariant ideal $J$ of $A$.  We denote this ideal $J$ by $\Ann_A^\Gamma V$.  Thus
  \[
    \Ann_A^\Gamma V \defeq \langle f \in A\ |\ uV=0 \text{ for all } u \in (\g \otimes A)^\Gamma \cap (\g \otimes f) \rangle.
  \]
  We define the \emph{support} of $V$ to be
  \[
    \Supp_A^\Gamma V \defeq \Supp \Ann_A^\Gamma V.
  \]
  When the group $\Gamma$ is trivial, we will often omit the superscript $\Gamma$.
\end{defin}

Let $X_*$\label{def:X-star} denote the set of finite subsets of $X_\rat$ that do not contain two points in the same $\Gamma$-orbit.

\begin{defin}[Categories $\cF_\bx$ and $\cF_\bx^\Gamma$]\label{def:F-x}
  For $\bx \in X_*$, let $\cF_\bx$ denote the full subcategory of the category of $(\g \otimes A)$-modules whose objects are finite-dimensional $(\g \otimes A)$-modules $V$ with $\Supp_A V \subseteq \bx$.  Similarly, let $\cF_\bx^\Gamma$ be the full subcategory of the category of $(\g \otimes A)^\Gamma$-modules whose objects are finite-dimensional $(\g \otimes A)^\Gamma$-modules $V$ with $\Supp_A^\Gamma V \subseteq \Gamma \cdot \bx$.
\end{defin}

If $V$ is a finitely supported $(\g \otimes A)$-module and $V^\Gamma$ denotes the corresponding $(\g \otimes A)^\Gamma$-module obtained by restriction, then it is clear that $\Supp_A^\Gamma V^\Gamma = \Gamma \cdot \Supp_A V$.

\begin{defin}[{Twisting functors $\bT$ and $\bT_\bx$ (\cite[Def.~2.8]{FKKS12})}]\label{def:twisting-functor}
  We have a natural \emph{twisting functor} $\bT$ from the category of $(\g \otimes A)$-modules to the category of $(\g \otimes A)^\Gamma$-modules, defined by restriction.  For any $\bx \in X_*$, we have the induced functor $\bT_\bx \colon \cF_\bx \to \cF_\bx^\Gamma$.
\end{defin}

\begin{prop}[{\cite[Th.~2.10]{FKKS12}}] \label{prop:twisting}
  For $\bx \in X_*$, the functor $\bT_\bx \colon \cF_\bx \to \cF_\bx^\Gamma$ is an isomorphism of categories.  Furthermore, for $\psi \in \cE^\Gamma$ with $\bx \in (\Supp \psi)_\Gamma$, we have $\bT_\bx(V(\psi_\bx)) = V^\Gamma(\psi)$.
\end{prop}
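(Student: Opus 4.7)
The plan is to produce an explicit inverse functor $\mathbf T_\bx^{-1}\colon \cF_\bx^\Gamma \to \cF_\bx$ via a Lie-algebra isomorphism between appropriate finite-dimensional quotients. Set $J = \bigcap_{x \in \bx} \sm_x$ and $J' = \bigcap_{y \in \Gamma \cdot \bx} \sm_y$, the $\Gamma$-saturation of $J$. The first step is to establish, for every $n \in \N_+$, a Lie-algebra isomorphism
\[
  \Phi_n \colon (\g \otimes A)^\Gamma / (\g \otimes J'^n)^\Gamma \xrightarrow{\sim} (\g \otimes A) / (\g \otimes J^n).
\]
Since $\bx \in X_*$ and $\Gamma$ acts freely on $X_\rat$, the points of $\Gamma \cdot \bx$ are pairwise distinct, so the maximal ideals $\sm_y$ for $y \in \Gamma \cdot \bx$ are pairwise comaximal; coprimality is preserved under taking powers, so the Chinese Remainder Theorem gives $A/J'^n \cong \bigoplus_{y \in \Gamma \cdot \bx} A/\sm_y^n$, with $\Gamma$ permuting the summands freely in accordance with its action on $\Gamma \cdot \bx$. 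Tensoring with $\g$ and passing to $\Gamma$-invariants, any invariant element is uniquely determined by its components indexed by $\bx$, yielding $(\g \otimes A/J'^n)^\Gamma \cong \bigoplus_{x \in \bx} \g \otimes A/\sm_x^n \cong \g \otimes A/J^n$ as Lie algebras. Combined with the identification $(\g \otimes J'^n)^\Gamma = (\g \otimes A)^\Gamma \cap (\g \otimes J^n)$ (which reduces via Lemma~\ref{lem:ideal-form} to $\bigcap_\gamma (\gamma J)^n = J'^n$) and the exactness of $\Gamma$-invariants in characteristic zero, this produces $\Phi_n$.

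Next I would show that the action of every $V \in \cF_\bx$ factors through some $(\g \otimes A)/(\g \otimes J^n)$: finite-dimensionality makes $\Ann_A V$ cofinite; the support hypothesis $\Supp_A V \subseteq \bx$ forces $\sqrt{\Ann_A V} \supseteq J$; and the Nullstellensatz in the finitely generated $\kk$-algebra $A$ gives $J^n \subseteq \Ann_A V$ for some $n$. The symmetric statement (with $J'^n$) holds for any $V' \in \cF_\bx^\Gamma$ via Lemma~\ref{lem:ideal-form}. Define $\mathbf T_\bx^{-1}(V')$ to be the vector space $V'$ with $(\g \otimes A)$-action obtained by pulling back along $\Phi_n^{-1}$, for $n$ large enough that $(\g \otimes J'^n)^\Gamma$ annihilates $V'$; compatibility of the $\Phi_n$ as $n$ grows makes the construction independent of $n$. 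A direct unwinding then establishes $\mathbf T_\bx \circ \mathbf T_\bx^{-1} = \Id_{\cF_\bx^\Gamma}$ and $\mathbf T_\bx^{-1} \circ \mathbf T_\bx = \Id_{\cF_\bx}$: on underlying vector spaces both are the identity, and the actions agree through $\Phi_n$. On morphisms, intertwining for the $(\g \otimes A)/(\g \otimes J^n)$-action is equivalent, via $\Phi_n$, to intertwining for the $(\g \otimes A)^\Gamma/(\g \otimes J'^n)^\Gamma$-action, so $\mathbf T_\bx$ induces a bijection of hom-sets.

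For the final assertion $\mathbf T_\bx(V(\psi_\bx)) = V^\Gamma(\psi)$, I would invoke \cite[Th.~5.5]{NSS12}, which realizes $V(\psi_\bx)$ as the tensor product of evaluation modules $\bigotimes_{x \in \bx} \ev_x^*\bigl(V(\psi(x))\bigr)$; this object clearly lies in $\cF_\bx$. Applying $\mathbf T_\bx$ yields an irreducible object of $\cF_\bx^\Gamma$ (irreducibility is preserved by the categorical isomorphism established above) with highest-weight data corresponding, via $\Gamma$-equivariance, to $\psi$. The uniqueness in the classification of \cite[Th.~5.5]{NSS12} then forces the identification with $V^\Gamma(\psi)$.

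The main obstacle is precisely the identification $(\g \otimes A)^\Gamma \cap (\g \otimes J^n) = (\g \otimes J'^n)^\Gamma$, which is where the freeness of the $\Gamma$-action on $X_\rat$ (ensuring that $\bx$ is disjoint from its translates by nontrivial elements of $\Gamma$, so that $J$ and its $\Gamma$-translates are pairwise comaximal) interacts with the fact that coprimality is preserved under ideal powers. This is what makes the finite-dimensional quotient on the $\Gamma$-fixed side match the one on the ambient side and ultimately drives the whole categorical equivalence.
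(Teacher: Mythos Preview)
The paper does not actually prove this proposition: its ``proof'' consists of a single sentence citing \cite[Th.~2.10]{FKKS12} and noting that one must verify $(\psi_\bx)^\Gamma = \psi$ in the notation of that reference. Your proposal, by contrast, reconstructs the argument itself. What you have written is correct and is essentially the content of the cited theorem: the key mechanism is the Chinese Remainder isomorphism $A/J'^n \cong \bigoplus_{y \in \Gamma\cdot\bx} A/\sm_y^n$ with $\Gamma$ acting freely on the summands, together with exactness of $\Gamma$-invariants in characteristic zero, which yields the isomorphism $\Phi_n$; the inverse functor is then pullback along $\Phi_n^{-1}$ for $n$ large. Your verification that $(\g\otimes A)^\Gamma \cap (\g\otimes J^n) = (\g\otimes J'^n)^\Gamma$, using comaximality of the $\gamma J$ to pass between $\bigcap_\gamma (\gamma J)^n$ and $(\bigcap_\gamma \gamma J)^n$, is the right way to pin down the kernel, and your treatment of the second assertion via the classification of \cite[Th.~5.5]{NSS12} is also sound. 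In short, you have supplied the proof that the paper outsources to \cite{FKKS12}; there is no discrepancy in method, only in level of detail.
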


\begin{proof}
  This follows immediately from~\cite[Th.~2.10]{FKKS12} after the straightforward verification that $(\psi_\bx)^\Gamma = \psi$ in the notation of that theorem.
\end{proof}

Let $\omega_i$ be the fundamental weight of $\g$ corresponding to $i \in I$.  So we have $\Lambda^+ = \sum_{i \in I} \N \omega_i$.  Recall that the set of nodes $I_\Gamma$ of the Dynkin diagram of $\g^\Gamma$ can be naturally identified with the set of $\Gamma$-orbits in $I$ (and we will equate the two in what follows).  For $\bi \in I_\Gamma$, we define
\begin{equation} \label{eq:ebi-fbi-hbi} \ts
  e_\bi \defeq \sqrt{\kappa_\bi} \sum_{i \in \bi} e_i,\quad f_\bi \defeq \sqrt{\kappa_\bi} \sum_{i \in \bi} f_i,\quad h_\bi \defeq \kappa_\bi \sum_{i \in \bi} h_i,
\end{equation}
where $\kappa_\bi = 2$\label{def:kappa-bi} if $\g$ is of type $A_{2n}$, $\Gamma$ acts nontrivially on $\g$ and $\bi$ corresponds to the short root of $\g^\Gamma$ (which is of type $B_n$).  Otherwise, $\kappa_\bi=1$.  Then $\{e_\bi, f_\bi, h_\bi\}$ is an $\mathfrak{sl}_2$-triple for each $\bi \in I_\Gamma$ and these triples generate $\g^\Gamma$.  We refer the reader to \cite[\S8.3]{Kac90} for details.

We let $\alpha_\bi$ and $\omega_\bi$ denote the simple root and fundamental weight, respectively, of $\g^\Gamma$ corresponding to $\bi \in I_\Gamma$.  Thus
\begin{equation} \label{def:Lambda-Gamma}\ts
  \Lambda_\Gamma = \bigoplus_{\bi \in I_\Gamma} \Z \omega_\bi,\quad \Lambda_\Gamma^+ = \bigoplus_{\bi \in I_\Gamma} \N \omega_\bi, \quad Q_\Gamma = \bigoplus_{\bi \in I_\Gamma} \Z \alpha_\bi,\quad Q_\Gamma^+ = \bigoplus_{\bi \in I_\Gamma} \N \alpha_\bi
\end{equation}
are the integral weight lattice, dominant integral weight lattice, root lattice, and positive root lattice of $\g^\Gamma$ respectively.

We conclude this section with a lemma collecting some technical results that will be used in the sequel.

\begin{lem} \label{lem:height-wt}
  Suppose that $\Gamma$ acts nontrivially on $\g$ by diagram automorphisms.
  \begin{enumerate}
    \item \label{lem-item:root-restriction} For all $i \in I$, we have $\alpha_i|_{\h^\Gamma} = \alpha_{\Gamma i}$ and $\omega_i|_{\h^\Gamma} = \kappa_{\Gamma i} \omega_{\Gamma i}$.


    \item \label{lem-item:height-wt} For all $\psi \in \cE^\Gamma$ and $\bx \in (\Supp \psi)_\Gamma$, we have $\hei_\Gamma \psi = \hei \psi_\bx$.

    \item \label{lem-item:category-I-restriction} Let $\llift \in \Lambda^+$ and set $\lambda \defeq \llift|_{\h^\Gamma}$.  Then, for $V \in \Ob \cI_{\le \llift}$, we have $\bT(V) \in \Ob \cI^\Gamma_{\le \lambda}$ and $V_\llift = \bT(V)_{\lambda}$ as vector spaces.
  \end{enumerate}
\end{lem}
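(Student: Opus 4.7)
For part~(a), the plan is to compute directly from the explicit $\mathfrak{sl}_2$-triples in~\eqref{eq:ebi-fbi-hbi}. For $h\in\h^\Gamma$, expanding both sides of $[h, e_\bi] = \alpha_\bi(h) e_\bi$ using $e_\bi = \sqrt{\kappa_\bi}\sum_{i\in\bi} e_i$ and $[h,e_i]=\alpha_i(h)e_i$ yields $\alpha_\bi(h) = \alpha_i(h)$ for each $i\in\bi$; the $\Gamma$-invariance of $h$ forces $\alpha_i(h)$ to be constant on the orbit $\bi$, which proves $\alpha_i|_{\h^\Gamma} = \alpha_{\Gamma i}$. For the fundamental weights, I would evaluate $\omega_i|_{\h^\Gamma}$ against each simple coroot $h_\bj = \kappa_\bj \sum_{j\in\bj} h_j$: at most one summand contributes, giving $\omega_i(h_\bj) = \kappa_{\Gamma i}\,\delta_{\Gamma i,\bj}$, which matches the evaluation of $\kappa_{\Gamma i}\omega_{\Gamma i}$.

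Part~(b) should then follow from part~(a) together with Definition~\ref{def:wt-ht}. Writing $\wt \psi_\bx = \sum_{i\in I} k_i \alpha_i$ with $k_i\in\Q$ gives $\hei \psi_\bx = \sum_i k_i$. Restricting to $\h^\Gamma$ and applying part~(a) rewrites $\wt_\Gamma \psi = \sum_\bi\bigl(\sum_{i\in\bi} k_i\bigr)\alpha_\bi$, so $\hei_\Gamma \psi = \sum_\bi\sum_{i\in\bi} k_i = \sum_i k_i = \hei \psi_\bx$.

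For part~(c), the plan splits into two steps. First, to show $\bT(V)\in\Ob\cI^\Gamma_{\le\lambda}$: since $\g^\Gamma$ is reductive and acts semisimply on every finite-dimensional $\g$-module, the restriction of each irreducible summand $V(\llift')$ of $V$ (with $\llift'\le\llift$) is a direct sum of irreducible finite-dimensional $\g^\Gamma$-modules. Every $\g^\Gamma$-weight of $\bT(V)$ is the restriction of some $\g$-weight $\nu\le\llift$, and part~(a) implies that restriction carries $Q^+$ into $Q_\Gamma^+$, so every $\g^\Gamma$-weight of $\bT(V)$ is $\le\lambda$; in particular so are the highest weights of its irreducible constituents. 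Second, the inclusion $V_\llift\subseteq\bT(V)_\lambda$ is immediate, so it remains to prove that $\llift$ is the only $\g$-weight of $V$ whose restriction equals $\lambda$: if $\nu|_{\h^\Gamma}=\lambda$ and $\llift - \nu = \sum_i k_i\alpha_i$ with $k_i\in\N$, then restricting gives $\sum_\bi\bigl(\sum_{i\in\bi} k_i\bigr)\alpha_\bi = 0$, and linear independence of the $\alpha_\bi$ forces $k_i=0$ for all~$i$, hence $\nu = \llift$.

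No serious obstacle is expected; the entire argument is an unwinding of definitions combined with elementary weight combinatorics. The most delicate point is the equality $V_\llift = \bT(V)_\lambda$ in part~(c), which relies crucially on the nonnegativity of the coefficients $k_i$ (coming from $\nu\le\llift$) to ensure that the restriction map is injective on the set of $\g$-weights of $V$ that restrict to~$\lambda$.
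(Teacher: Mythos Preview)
Your proposal is correct and follows the same approach as the paper, which omits part~(a) as a ``straightforward computation'' and gives only a one-line sketch for each of parts~(b) and~(c). Your argument simply fills in the details the paper leaves implicit, including the key observation for the weight-space equality in~(c) that nonnegativity of the $k_i$ combined with $\sum_{i\in\bi} k_i = 0$ forces all $k_i=0$.
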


\begin{proof}
  \begin{asparaenum}
    \item This is a straightforward computation and will be omitted.

    \item Suppose $\psi \in \cE^\Gamma$ and choose $\bx \in (\Supp \psi)_\Gamma$.  Then
      \[
        \hei_\Gamma \psi = \hei_\Gamma \left((\wt \psi_\bx)|_{\h^\Gamma}\right) = \hei \wt \psi_\bx = \hei \psi_\bx,
      \]
      where the second equality follows from part~\eqref{lem-item:root-restriction}.

    \item Let $V \in \Ob \cI_{\le \llift}$.  Then the $\g$-weights of $V$ lie in $\llift - Q^+$, where $Q^+$ is the positive root lattice of $\g$.  By part~\eqref{lem-item:root-restriction}, the $\g^\Gamma$-weights of $\bT(V)$ lie in $\lambda - Q^+_\Gamma$ and so $\bT(V) \in \Ob \cI^\Gamma_{\le \lambda}$.  The second part of the statement follows easily. \qedhere
  \end{asparaenum}
\end{proof}

%
\section{Local Weyl modules} \label{sec:local-Weyl-modules}
%

In this section we define local Weyl modules and prove some of their important properties.  We continue to assume that $\Gamma$ is cyclic, acts freely on $X_\rat$ and acts faithfully on $\g$ by diagram automorphisms.  We will also assume that the action of $\Gamma$ on $\g$ is nontrivial, since the case of trivial action has been covered in \cite{CFK10}.  Recall the definition of the $M(\psi)$ and $M^\Gamma(\psi)$ from Definition~\ref{def:M-psi}.

\begin{defin}[Local Weyl modules $W(\psi)$ and $W^\Gamma(\psi)$]\label{def:local-weyl}
  Let $\psi \in \cE$ (respectively, $\psi \in \cE^\Gamma$) and set $\llift = \wt \psi$ (respectively, $\lambda = \wt_\Gamma \psi$).  The corresponding \emph{untwisted} (respectively, \emph{twisted}) \emph{local Weyl module} is $W(\psi) \defeq \bW_\llift M(\psi)$ (respectively, $W^\Gamma(\psi) \defeq \bW_\lambda^\Gamma M^\Gamma(\psi)$).
\end{defin}

\begin{lem} \label{lem:ideal-gen-by-zero-component}
  If $J = \bigoplus_{\xi \in \Xi} J_\xi$ is a $\Gamma$-invariant ideal of $A$, then $A_\tau J_\xi = J_{\tau + \xi}$ for all $\tau,\xi \in \Xi$.
\end{lem}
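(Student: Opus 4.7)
The plan is to prove the two containments $A_\tau J_\xi \subseteq J_{\tau+\xi}$ and $J_{\tau+\xi} \subseteq A_\tau J_\xi$ separately. The first is essentially immediate from the grading structure: since $A = \bigoplus_\xi A_\xi$ is a $\Xi$-graded algebra, we have $A_\tau A_\xi \subseteq A_{\tau+\xi}$, and because $J$ is an ideal, $A_\tau J_\xi \subseteq J \cap A_{\tau+\xi} = J_{\tau+\xi}$.

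The reverse containment is the substantive part. The key observation I would use is the identity $A_\tau \cdot A_{-\tau} = A_0$ in the graded algebra $A$. Given this, the chain
\[
  J_{\tau+\xi} = A_0 \cdot J_{\tau+\xi} = A_\tau \cdot A_{-\tau} \cdot J_{\tau+\xi} \subseteq A_\tau \cdot J_\xi,
\]
finishes the argument, where the last containment holds because $J$ is an ideal and $-\tau + (\tau+\xi) = \xi$, so $A_{-\tau} J_{\tau+\xi} \subseteq J \cap A_\xi = J_\xi$.

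Thus the problem reduces to proving $A_\tau A_{-\tau} = A_0$. For this I would invoke the fact (cited from \cite[Lem.~4.4]{NS11} in the paragraph preceding Lemma~\ref{lem:twisted-garland}) that when $\Gamma$ acts freely on $X_\rat$, $(A_\xi)^m = A_0$ for all $\xi \in \Xi$, where $m = |\Gamma|$. In particular, $1 \in (A_\tau)^m$, so we may write $1 = \sum_\ell g_{\ell,1} \cdots g_{\ell,m}$ with each $g_{\ell,s} \in A_\tau$. Grouping the factors as $g_{\ell,1} \cdot (g_{\ell,2} \cdots g_{\ell,m})$, the first factor lies in $A_\tau$ and the second in $A_{(m-1)\tau} = A_{-\tau}$, which shows $1 \in A_\tau A_{-\tau}$ and hence $A_\tau A_{-\tau} = A_0$.

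The main obstacle is philosophical rather than technical: one must recognize that the free-action hypothesis on $\Gamma$ (through \cite{NS11}) is exactly what ensures that each homogeneous component $A_\xi$ behaves like an invertible $A_0$-module, making $A_\tau A_{-\tau}$ equal to all of $A_0$ rather than merely a proper subideal. Once this is in hand, the argument is formal and very short.
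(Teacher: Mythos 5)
Your proof is correct and follows essentially the same route as the paper: the forward containment from the grading and ideal property, and the reverse containment via $J_{\tau+\xi} = A_0 J_{\tau+\xi} = A_\tau A_{-\tau} J_{\tau+\xi} \subseteq A_\tau J_\xi$, with the identity $A_\tau A_{-\tau} = A_0$ supplied by \cite[Lem.~4.4]{NS11} (the paper cites that lemma for this identity directly, whereas you derive it from $(A_\tau)^{|\Gamma|} = A_0$, an inessential difference).
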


\begin{proof}
  Fix $\tau, \xi \in \Xi$.  Since $J$ is an ideal, we have $A_\tau J_\xi \subseteq J_{\tau + \xi}$.  Now choose $a \in J_{\tau + \xi}$.  By \cite[Lem.~4.4]{NS11}, we have $A_\tau A_{-\tau} = A_0$.  Thus $J_{\tau + \xi} = A_0 J_{\tau + \xi} = A_\tau A_{-\tau} J_{\tau + \xi} \subseteq A_\tau J_\xi$.
\end{proof}

\begin{lem} \label{lem:EMA-ideal}
  Suppose that $J_0$ is an ideal of $A_0 = A^\Gamma$.  Then the ideal of $(\g \otimes A)^\Gamma$ generated by $\g^\Gamma \otimes J_0$ is $(\g \otimes J)^\Gamma$, where $J= \bigoplus_{\xi \in \Xi} A_\xi J_0$ is the ideal of $A$ generated by $J_0$.
\end{lem}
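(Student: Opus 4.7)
The plan is to prove both inclusions between $K$, the ideal of $(\g \otimes A)^\Gamma$ generated by $\g^\Gamma \otimes J_0$, and $(\g \otimes J)^\Gamma$. First I would verify that $J = \bigoplus_{\xi \in \Xi} A_\xi J_0$ is a $\Gamma$-invariant ideal of $A$: the decomposition is $\Xi$-graded (hence $\Gamma$-stable), and $A_\tau \cdot A_\xi J_0 \subseteq A_{\tau+\xi} J_0 \subseteq J$. Consequently $\g \otimes J$ is a $\Gamma$-stable ideal of $\g \otimes A$, so $(\g \otimes J)^\Gamma$ is a Lie ideal of $(\g \otimes A)^\Gamma$. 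Since $\g^\Gamma \otimes J_0 = \g_0 \otimes J_0 \subseteq (\g \otimes J)^\Gamma$, this gives the easy inclusion $K \subseteq (\g \otimes J)^\Gamma$.

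For the reverse inclusion, I would use the $\Xi$-grading
\[
(\g \otimes J)^\Gamma = \bigoplus_{\xi \in \Xi} \g_\xi \otimes A_{-\xi} J_0,
\]
and for each $\xi$ introduce
\[
V_\xi \defeq \{ x \in \g_\xi \mid x \otimes A_{-\xi} J_0 \subseteq K \}.
\]
The goal reduces to proving $V_\xi = \g_\xi$ for every $\xi$. Clearly $V_0 = \g_0$, since $\g_0 \otimes J_0 \subseteq K$ by definition. The key step is to show that $V \defeq \bigoplus_\xi V_\xi$ is stable under the adjoint action of $\g$. Given $y \in \g_\sigma$, $x \in V_\tau$, and arbitrary $c \in A_{-(\sigma + \tau)}$, $j \in J_0$, Lemma~\ref{lem:ideal-gen-by-zero-component} applied to the $\Gamma$-invariant ideal $A$ itself yields $A_{-\sigma} A_{-\tau} = A_{-(\sigma + \tau)}$, so I can write $c = \sum_k b_k a_k$ with $b_k \in A_{-\sigma}$ and $a_k \in A_{-\tau}$. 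Then
\[
[y, x] \otimes c j = \sum_k \bigl[\, y \otimes b_k,\, x \otimes a_k j\, \bigr],
\]
where each $y \otimes b_k$ lies in $\g_\sigma \otimes A_{-\sigma} \subseteq (\g \otimes A)^\Gamma$ (as is readily checked from the characters), and each $x \otimes a_k j$ lies in $K$ because $x \in V_\tau$; hence $[y,x] \otimes cj \in K$, so $[y,x] \in V_{\sigma+\tau}$ and $[\g, V] \subseteq V$.

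Under the standing hypothesis that $\Gamma$ acts non-trivially on $\g$ by diagram automorphisms, $\g^\Gamma$ is non-zero, so $V \supseteq V_0 = \g_0 \ne 0$. Since the adjoint representation of the simple Lie algebra $\g$ is irreducible, the $\g$-submodule $V$ must equal $\g$. Therefore $V_\xi = \g_\xi$ for every $\xi$, and summing over $\xi$ produces $(\g \otimes J)^\Gamma \subseteq K$, completing the proof. The main obstacle is engineering the central bracket decomposition: one is tempted to first prove $[\g^\Gamma, \g_\xi] = \g_\xi$ directly (which does hold, but requires invoking the irreducibility of each non-zero $\g_\xi$ as a $\g^\Gamma$-module); it is cleaner to exploit the full $\g$-module structure on $V$ together with the multiplicative identity $A_{-\sigma} A_{-\tau} = A_{-(\sigma+\tau)}$ furnished by Lemma~\ref{lem:ideal-gen-by-zero-component}.
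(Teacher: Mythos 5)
Your proof is correct, but it takes a genuinely different route from the paper's. The paper's argument is a two-line squeeze through Lemma~\ref{lem:ideal-form}: since \emph{every} ideal of $(\g\otimes A)^\Gamma$ has the form $(\g\otimes J')^\Gamma$ for a $\Gamma$-invariant ideal $J'$ of $A$, the generated ideal is some $(\g\otimes J')^\Gamma$ with $J_0\subseteq J'_0$, hence $J\subseteq J'$; and conversely $(\g\otimes J)^\Gamma$ is itself an ideal containing $\g^\Gamma\otimes J_0$ (this is where Lemma~\ref{lem:ideal-gen-by-zero-component} enters), forcing $J'\subseteq J$. You avoid the classification of ideals entirely and instead re-derive the relevant special case by hand: the sets $V_\xi$ assemble into an $\ad\g$-submodule $V$ of $\g$ because $A_{-\sigma}A_{-\tau}=A_{-(\sigma+\tau)}$ lets you write $[y,x]\otimes cj$ as a sum of brackets of an element of $(\g\otimes A)^\Gamma$ with an element of $K$, and then simplicity of $\g$ together with $V\supseteq\g^\Gamma\neq 0$ forces $V=\g$. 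Both arguments ultimately rest on the same input, namely $A_\xi A_{-\xi}=A_0$ (freeness of the $\Gamma$-action, via \cite[Lem.~4.4]{NS11}); the paper routes it through Lemma~\ref{lem:ideal-gen-by-zero-component} and outsources the hard structural step to \cite[Prop.~7.1]{Sav12}, whereas you use it directly to factor elements of $A_{-(\sigma+\tau)}$. What your approach buys is self-containedness — no appeal to the external classification result — at the cost of length; in effect you have reproved the instance of Lemma~\ref{lem:ideal-form} that this lemma actually needs. All the individual steps check out, including the identification $V_0=\g_0=\g^\Gamma$ (using $A_0J_0=J_0$) and the graded comparison $V=\g\Rightarrow V_\xi=\g_\xi$.
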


\begin{proof}
  By Lemma~\ref{lem:ideal-form}, the ideal of $(\g \otimes A)^\Gamma$ generated by $\g^\Gamma \otimes J_0$ is of the form $(\g \otimes J')^\Gamma = \bigoplus_{\xi \in \Xi} \g_\xi \otimes J'_{-\xi}$ for some $\Gamma$-invariant ideal $J'$ of $A$.  Clearly we have $J_0 \subseteq J'_0$ and so $J \subseteq J'$.  By Lemma~\ref{lem:ideal-gen-by-zero-component}, $(\g \otimes J)^\Gamma$ is an ideal of $(\g \otimes A)^\Gamma$ containing $\g^\Gamma \otimes J_0$, and so we must have $J' \subseteq J$.
\end{proof}

For $\psi \in \cE$, define
\begin{equation} \label{def:J-psi} \ts
  J(\psi) \defeq \prod_{\sm \in \Supp \psi} \sm.
\end{equation}
If $\psi \in \cE^\Gamma$, then $J(\psi)$ is clearly $\Gamma$-invariant.  Recall the definition of $Y_\Gamma$ for a $\Gamma$-invariant subset $Y \subseteq X_\rat$ given in Definition~\ref{def:wt-ht}.

\begin{prop} \label{prop:local-Weyl-module-annihilated}
  For $\psi \in \cE^\Gamma$, the ideal $(\g \otimes J(\psi)^k)^\Gamma$ annihilates the local Weyl module $W^\Gamma(\psi)$ for some positive integer $k$.  In particular,  $W^\Gamma(\psi) \in \cF^\Gamma_\bx$ for $\bx \in (\Supp \psi)_\Gamma$.
\end{prop}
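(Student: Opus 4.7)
The plan is to reduce the annihilation of $W^\Gamma(\psi)$ to one on its cyclic generator $w_\psi$ via a Lie-ideal argument, and then verify the reduced statement by combining a character computation for the $\g^0$-part with the Garland-type identities for the $\g^-$-part.

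Since $\bA^\lambda_\Gamma$ is finitely generated and commutative (Theorem~\ref{theo:A-fg}), the module $M^\Gamma(\psi)$ is one-dimensional, and hence $W^\Gamma(\psi) = \bW^\Gamma_\lambda M^\Gamma(\psi)$ is finite-dimensional by Corollary~\ref{cor:WM-fin-dim}. Let $w_\psi$ denote the canonical cyclic generator (the image of $w_\lambda^\Gamma \otimes 1$). Because $J(\psi)$ is $\Gamma$-invariant, $(\g \otimes J(\psi)^k)^\Gamma$ is a Lie ideal of $(\g \otimes A)^\Gamma$ for every $k$; a straightforward induction on PBW-degree shows that any Lie ideal of $(\g \otimes A)^\Gamma$ that annihilates a cyclic generator of a module must annihilate the whole module (using $y(uw) = [y,u]w + u(yw)$ with $[y,u]$ remaining in the Lie ideal). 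It therefore suffices to find $k$ with $(\g \otimes J(\psi)^k)^\Gamma w_\psi = 0$.

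I would handle the three summands of the triangular decomposition~\eqref{eq:EMA-triangular-decomp} separately. The $(\g^+ \otimes A)^\Gamma$-contribution vanishes on $w_\psi$ by the highest weight relation of Proposition~\ref{weyl:gen-rel}. For the zero part, the action of $(\g^0 \otimes A)^\Gamma \subseteq \cU^0_\Gamma$ on $w_\psi \in W^\Gamma(\psi)_\lambda = M^\Gamma(\psi)$ factors through the character $\chi_\psi \colon \bA^\lambda_\Gamma \to \kk$ defining $M^\Gamma(\psi)$. By Proposition~\ref{prop:irred-wt-space-modules}, this character agrees with the action on $V^\Gamma(\psi)_\lambda$; by Proposition~\ref{prop:twisting}, $V^\Gamma(\psi) = \bT_\bx V(\psi_\bx)$ is (after untwisting via $\bT$) a tensor product of evaluation representations at the points of $\bx$. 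A direct computation on the highest weight vector of this tensor product gives $\chi_\psi(\overline{h_i \otimes a}) = 0$ for every $a \in J(\psi) \cap A^{\Gamma_i}$, since $a$ vanishes on every point of $\Supp \psi = \Gamma \bx$. Averaging such elements shows that $(\g^0 \otimes J(\psi))^\Gamma w_\psi = 0$.

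The main obstacle is the $\g^-$-part: showing $(\g^- \otimes J(\psi)^k)^\Gamma w_\psi = 0$ for $k$ sufficiently large. Here I would mimic the argument in the proof of Theorem~\ref{theo:A-fg}, applying the twisted Garland identities of Lemma~\ref{lem:twisted-garland} with $a$ drawn from $J(\psi) \cap A_\xi$ for $\xi \in \Xi \setminus \{0\}$ (using Lemma~\ref{lem:A-xi-generators} together with the $\Gamma$-invariance of $J(\psi)$ to ensure that such elements generate a sufficiently large piece of $J(\psi)$). Combined with the defining relation $(\overline{f_i \otimes 1})^{\lambda(h_\bi)+1} w_\psi = 0$ of Proposition~\ref{weyl:gen-rel}, the Garland identities express each $(\overline{f_i \otimes a^{\delta_\bi \ell}}) w_\psi$ for $\ell \ge \lambda(h_\bi)$ as an element of $\cU((\g^0 \otimes J(\psi))^\Gamma) \cdot w_\psi$, which vanishes by the previous paragraph (here one uses that for $a \in J(\psi)$ and $s \ge 1$, the element $q(i,a)_s$ lies in the ideal of $\cU((\h \otimes A)^\Gamma)$ generated by $(\h \otimes a^{\delta_\bi})$). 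Iterating across a generating set of $J(\psi)^k$ then yields the required annihilation for some $k$ depending on $\lambda$, and the \emph{in particular} clause follows since $\Supp J(\psi)^k = \Gamma \cdot \bx$.
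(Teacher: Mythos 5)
Your strategy is genuinely different from the paper's, and its skeleton is sound. The paper never reduces to annihilating the cyclic generator: it first shows $(f_\theta \otimes a^{\lambda(h_\theta)})(w_\lambda^\Gamma \otimes m)=0$ for $\theta$ the highest root (via \cite[Prop.~9]{CFK10} applied to $\g^\Gamma \otimes A^\Gamma$), propagates this over the whole module using $[f_\theta,\n^-]=0$ and $W^\Gamma(\psi)=\cU((\n^-\otimes A)^\Gamma)(w_\lambda^\Gamma\otimes m)$, then uses the simplicity of $\g^\Gamma$ (and, in type $A_{2n}$, the irreducibility of $\g_1$ as a $\g^\Gamma$-module, via a delicate analysis of $[f_{\beta_\Gamma},f_1-f_{2n}]$) to upgrade annihilation by one lowest-weight vector of the adjoint representation to annihilation by everything, and finally invokes \cite[Th.~5]{AKL94} to pass from $a^N$, $a\in J_0$, to the ideal power $(J_0)^N$. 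Your reduction — a Lie ideal of $(\g\otimes A)^\Gamma$ that kills a cyclic generator kills the module — is correct (induction on the PBW filtration, exactly as you indicate) and is arguably a cleaner organizing principle; your treatment of the $\g^+$ and $\g^0=\h$ parts is also fine, the character computation via Propositions~\ref{prop:irred-wt-space-modules} and~\ref{prop:twisting} being exactly why $(\h\otimes J(\psi))^\Gamma w_\psi=0$.

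The $\g^-$ step, however, has concrete gaps. (i) Lemma~\ref{lem:twisted-garland} only produces relations for the simple negative root spaces, i.e.\ for $(\n'\otimes A)^\Gamma$ with $\n'=\bigoplus_{\alpha\in\Pi}\g_{-\alpha}$, whereas you need all of $\g^-=\n^-$. This is fixable — the annihilator of $w_\psi$ in $(\g\otimes A)^\Gamma$ is a Lie subalgebra, $\n'$ generates $\n^-$, and $J^kJ^k=J^{2k}$ (with the graded refinement coming from Lemma~\ref{lem:ideal-gen-by-zero-component}), so bracketing gives $(\g_{-\alpha}\otimes J^{k\hgt\alpha})^\Gamma w_\psi=0$ for every positive root $\alpha$ — but the step has to be stated. (ii) The Garland identities only kill $\overline{f_i\otimes a^{\delta_\bi\ell}}$ for individual $a\in J(\psi)_\xi$; reaching every graded component of $(\g_{-\alpha_i}\otimes J^k)^\Gamma$ requires a polarization argument (this is precisely where the paper cites \cite[Th.~5]{AKL94}) together with bracketing against $(\h\otimes A)^\Gamma$ to move between $\Xi$-components, none of which appears in your sketch. (iii) The property of $q(i,a)_s$ that your argument hinges on — that for $s\ge 1$ it acts by $0$ on a vector killed by $\h\otimes a^{\delta_\bi r}$, $r\ge 1$ — is true but does not follow from the statement of Lemma~\ref{lem:twisted-garland}, since $\kk[a]_s$ as defined there contains the constants; it must be extracted from \cite[Lem.~3.3]{CFS08}. (iv) For the short simple root in type $A_{2n}$ you must use~\eqref{lem:eq:a-lambda-fd-A2n-short} rather than~\eqref{lem-eq:a-lambda-fg-eq2}, and the input is the vanishing of $(y_i\otimes a)^{(\ell+1)}w_\psi$ (proved by the Weyl-group weight argument in Theorem~\ref{theo:A-fg}), not the relation $(\overline{f_i\otimes 1})^{\lambda(h_\bi)+1}w_\psi=0$ that you quote. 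With these points supplied, your route goes through and yields a proof noticeably different in structure from the one in the paper.
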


\begin{proof}
  Let $\lambda = \wt_\Gamma \psi$, fix a nonzero element $m \in M^\Gamma(\psi)$ and let $J = J(\psi)$.

  First suppose that $\g$ is not of type $A_{2n}$.  Let $\theta$ be the highest root of $\g$ and let $\{e_\theta, f_\theta, h_\theta\}$ be a corresponding $\mathfrak{sl}_2$-triple.  Then $\{e_\theta, f_\theta, h_\theta\} \subseteq \g^\Gamma$ and this set forms an $\mathfrak{sl}_2$-triple corresponding to $\theta_\Gamma \defeq \theta|_{\h^\Gamma}$, which is the highest root of $\g^\Gamma$.
  \details{Using Lemma~\ref{lem:height-wt}\eqref{lem-item:root-restriction} and the explicit description of the highest roots given, for instance, in \cite[\S12.2, Table~2]{Hum72}, we see that $\theta|_{\h^\Gamma}$ is the highest root of $\g^\Gamma$ and that $\theta$ is the \emph{only} root of $\g$ that restricts to the highest root of $\g^\Gamma$.  Thus $\Gamma$ must act trivially on $\g_\theta$ (similarly, on $\g_{-\theta}$) and the result follows.}
  One can see from Proposition~\ref{weyl:gen-rel} (see also \cite[Prop.~4]{CFK10}) that there is a map of $(\g^\Gamma \otimes A^\Gamma)$-modules from the untwisted global Weyl module for $\g^\Gamma \otimes A^\Gamma$ to the twisted global Weyl module for $(\g \otimes A)^\Gamma$ that maps $w_\lambda$ to $w_\lambda^\Gamma$.  Thus, applying \cite[Prop.~9]{CFK10} (see also \cite[Prop.~4.1]{CFS08}) with $\g^\Gamma$ in place of $\g$ and $A^\Gamma$ in place of $A$, we have
  \begin{equation}\label{f-theta-annihilate}
    \left(f_\theta \otimes a^{\lambda(h_{\theta})}\right)(w_\lambda^\Gamma \otimes m) = 0,\quad a \in J^\Gamma.
  \end{equation}
  Since $[f_\theta, \n^-]=0$ and $\cU\left((\n^- \otimes A)^\Gamma\right) (w_\lambda^\Gamma \otimes m) = W^\Gamma(\psi)$,  we have
  \[
    \left(f_\theta \otimes a^{\lambda(h_\theta)}\right) W^\Gamma(\psi)=0, \quad a \in J^\Gamma.
  \]
  Since $\g^\Gamma$ is simple and the annihilator of $W^\Gamma(\psi)$ in $\g^\Gamma \otimes A^\Gamma$ is an ideal, it follows that $\g^\Gamma \otimes a^{\lambda(h_\theta)}$ annihilates $W^\Gamma(\psi)$ for all $a \in J^\Gamma = J_0$.  By Lemma~\ref{lem:ideal-form}, the annihilator of $W^\Gamma(\psi)$ is of the form $(\g \otimes J')^\Gamma$ for some $\Gamma$-invariant ideal $J'$ of $A$.  By the above, we have $a^{\lambda(h_\theta)} \in J'_0$ for all $a \in J_0$.  By \cite[Th.~5]{AKL94}, this implies that $(J_0)^{\lambda(h_\theta)} \subseteq J'_0$.  Let $K$ be the ideal of $A$ generated by $(J_0)^{\lambda(h_\theta)}$.  For $\xi \in \Xi$, we have
  \[
    K_\xi = A_\xi (J_0)^{\lambda(h_\theta)} = J_\xi (J_0)^{{\lambda(h_\theta)}-1} = (J^{\lambda(h_\theta)})_\xi,
  \]
  where the second equality holds by Lemma~\ref{lem:ideal-gen-by-zero-component} and the third equality holds by \cite[Lem.~4.4]{NS11}.  Thus $K=J^{\lambda(h_\theta)}$.  By Lemma~\ref{lem:EMA-ideal}, the ideal of $(\g \otimes A)^\Gamma$ generated by $\g \otimes (J_0)^{\lambda(h_\theta)}$ is equal to $(\g \otimes K)^\Gamma$.  Thus we have $J^{\lambda(h_\theta)} = K \subseteq J'$.

  Now suppose that $\g$ is of type $A_{2n}$. Let $\beta_\Gamma$ denote the highest root of $\g^\Gamma$ (which is of type $B_n$) and $\beta_\Gamma^s$ the highest short root. We let $\{e_{\beta_\Gamma}, f_{\beta_\Gamma}, h_{\beta_\Gamma}\}$ be an $\mathfrak{sl}_2$-triple in $\g^\Gamma$ corresponding to $\beta_\Gamma$.  Since $\Gamma$ is of order two, $\g$ decomposes into $\g^\Gamma \oplus \g_1$. By \cite[Prop.~8.3d]{Kac90}, we know that $\g_1$ is a simple $\g^\Gamma$-module of highest weight $2\sum_{\bi\in I_\Gamma}\alpha_\bi$ (\cite[\S8.3, Table]{Kac90}), which is equal to $2\beta_\Gamma^s$ (see, for example, \cite[\S12.2, Table 2]{Hum72}).

  Recall that $ \{ e_{i}, f_i , h_i\}$ is an $\mathfrak{sl}_2$-triple in $\g$ corresponding to the simple root $\alpha_i \in R^+$, $i \in I$. Then, for $1 \leq i \leq n$,
  \begin{equation} \label{eq:simple-root-spaces}
    f_{\mathbf{1}} = \sqrt{\kappa_{\mathbf{1}}}(f_1+f_{2n}) \in \g^\Gamma \text{ and }f_i - f_{2n+1 - i} \in \g_{1}.
  \end{equation}
  Before completing the proof of the proposition, we develop some additional ideas specific to the $A_{2n}$ case.  Since $\g^\Gamma$ is of type $B_n$, $2\beta_\Gamma^s-\beta_\Gamma$ is the simple root $\alpha_{\mathbf{1}}$.
  On the other hand, it follows from Lemma~\ref{lem:height-wt}\eqref{lem-item:root-restriction} that the weight of $f_1-f_{2n}$ is $-\alpha_{\mathbf1}$.  Consider the vector
  \[
   [f_{\beta_\Gamma}, f_1-f_{2n}]\in(\g_1)_{-2\beta_\Gamma^s},
  \]
  which we claim is nonzero. Assuming the claim, we see that $ [f_{\beta_\Gamma}, f_1-f_{2n}]$ spans the lowest weight space of $\g_1$ as a $\g^\Gamma$-module, since simple finite-dimensional modules for $B_n$ are self-dual. Therefore
  \begin{equation}\label{fbg-generates}
    \left [\cU(\n_+^\Gamma), [f_{\beta_\Gamma}, f_1-f_{2n}]\right]=\g_1,
  \end{equation}
  that is, $[f_{\beta_\Gamma}, f_1-f_{2n}]$ generates $\g_1$ as an $\n_+^\Gamma$-module.

  To see that $[f_{\beta_\Gamma}, f_1-f_{2n}]\ne 0$, let $w\in (\g_1)_{-2\beta_\Gamma^s}$.  Since $[h_{\beta_\Gamma},w]=-2\beta_\Gamma^s(h_{\beta_\Gamma})w=-2w$, it follows that $[e_{\beta_\Gamma}, w]$ is a nonzero weight vector of weight $-\alpha_{\mathbf1}$.  Now this weight space has dimension 1, for the following reason.  First, a straightforward computation shows that
  \[
    s_{\mathbf{2}}s_{\mathbf{3}} \dotsm s_{\mathbf{n-1}}s_{\mathbf{n}}s_{\mathbf{n-1}} \dotsm s_{\mathbf{3}}s_{\mathbf{2}}s_{\mathbf{1}}(-\alpha_{\mathbf1}) = \alpha_{\mathbf{1}}+2\alpha_{\mathbf{2}}+\cdots+2\alpha_{\mathbf{n}} = \beta_\Gamma
  \]
  and so $-\alpha_{\mathbf1}$ is in the Weyl group orbit of $\beta_\Gamma$.  Now the dimension of the $\beta_\Gamma$ weight space of the Verma module of highest weight $2\beta_\Gamma^s$ is one, the number of ways of writing $2\beta_\Gamma^s - \beta_\Gamma = \alpha_{\mathbf{1}}$ as a sum of positive roots.  Thus $V(2\beta_\Gamma^s)_{\beta_\Gamma}$ has dimension at most 1.  On the other hand, since $\beta_\Gamma$ is a dominant weight lying below the highest weight of $V(2\beta_\Gamma^s)$, the dimension is also at least 1, and thus $\dim_\kk V(2\beta_\Gamma^s)_{\beta_\Gamma}=1$.  Because $\beta_\Gamma$ is Weyl conjugate to $-\alpha_\mathbf{1}$, it follows that $\dim V(2\beta_\Gamma^s)_{-\alpha_{\mathbf{1}}} = 1$ and so $[e_{\beta_\Gamma}, w]$ is a nonzero constant multiple of $f_1-f_{2n}$. Since $[f_{\beta_\Gamma}, [e_{\beta_\Gamma},w]]$ is a nonzero multiple of $w$, we see that $[f_{\beta_\Gamma}, f_1-f_{2n}]\ne 0$, which proves the claim and establishes \eqref{fbg-generates}.

  Now, the set of weights $\mu_\Gamma\in \Lambda_\Gamma$ with $-\beta_\Gamma>\mu_\Gamma>-2\beta_\Gamma^s$ is empty since the difference $2\beta_\Gamma^s-\beta_\Gamma$ is the simple root $\alpha_\mathbf{1}$, and so it follows that
  \begin{equation}\label{n-1-minus}
    [(\n_-)_1, f_{\beta_\Gamma}]=\Span_\kk  [f_{\beta_\Gamma}, f_1-f_{2n}].
  \end{equation}
  Finally, we observe that
  \begin{equation} \label{n-minus-rk}
    [\n_-,[f_{\beta_\Gamma}, f_1-f_{2n}]] = 0.
  \end{equation}
  To see this, note that $[\n_-^\Gamma, [f_{\beta_\Gamma}, f_1-f_{2n}]]=0$ since any vector in this space would be a vector in $\g_1$ of $\g^\Gamma$-weight strictly lower than the lowest weight. On the other hand, $[(\n_-)_1, [f_{\beta_\Gamma}, f_1-f_{2n}]]=0$ since any vector in this space would be a vector in $\g^\Gamma$ of $\g^\Gamma$-weight strictly lower than $-2\beta_\Gamma^s\le-\beta_\Gamma$.

  To complete the proof of the proposition in the case where $\g$ is of type $A_{2n}$, we will generalize the arguments used in \cite[Prop.~4.1]{CFS08}, where the proposition was proved for the twisted loop algebra. Recall that we have decompositions $\g = \g^\Gamma \oplus  \g_{1}$ and $J=J_0\oplus J_1$, and so again by \cite[Lem.~4.4]{NS11}, we have
  \[
    (\g\otimes J^r)^\Gamma=(\g^\Gamma \otimes (J_0)^r)\oplus (\g_1\otimes J_1(J_0)^{r-1}) \quad \text{for } r \ge 1.
  \]
  Thus it suffices to show that
  \begin{gather} \label{6-a2n-sts1}
    \left( \g^\Gamma\otimes (J_0)^k \right) W^\Gamma(\psi) = 0\quad\text{and} \\
    \label{6-a2n-sts2}
    \left( \g_1\otimes J_1(J_0)^k \right) W^\Gamma(\psi) = 0,
  \end{gather}
  for sufficiently large $k$.  As in \eqref{f-theta-annihilate} we obtain
  \[ \label{eq:theta-action2}
    \left(f_{\beta_\Gamma} \otimes a^{ \lambda(h_{\beta_\Gamma})}\right)(w_\lambda^\Gamma \otimes m) = 0,\quad a \in J_0.
  \]
  Using the fact that $f_{\beta_\Gamma}$ is a lowest weight vector for the adjoint representation of $\g^\Gamma$, it follows, once again using \cite[Th.~5]{AKL94}, that
  \begin{equation} \label{eq:ggamma-ideal}
    \left(\g^\Gamma \otimes (J_0)^{\lambda(h_{\beta_\Gamma})}\right) (w_\lambda^\Gamma \otimes m) =0.
  \end{equation}
  In particular,
  \begin{equation} \label{eq:alphan-ideal}
    \left(f_\mathbf{1} \otimes   (J_0)^{\lambda(h_{\beta_\Gamma})}\right)(w_\lambda^\Gamma \otimes m) = 0.
  \end{equation}
  Since $[h_1-h_{2n},f_{\mathbf{1}}] = -(2+\delta_{1,n})\sqrt{\kappa_{\mathbf 1}}(f_1-f_{2n}) \in \g_1$, we have
  \begin{align*}
    \left( (f_1 - f_{2n}) \otimes J_1 (J_0)^{\lambda(h_{\beta_\Gamma})} \right) (w_\lambda^\Gamma \otimes m)
    &= \left( [h_1-h_{2n}, f_{\mathbf{1}}] \otimes J_1 (J_0)^{\lambda(h_{\beta_\Gamma})} \right) (w_\lambda^\Gamma \otimes m) \\
    &= \left[ (h_1 - h_{2n}) \otimes J_1, f_{\mathbf{1}} \otimes (J_0)^{\lambda(h_{\beta_\Gamma})} \right] (w_\lambda^\Gamma \otimes m).
  \end{align*}
  Thus
  \begin{equation}  \label{eq:yn-action}
    \left( (f_1 - f_{2n}) \otimes J_1 (J_0)^{\lambda(h_{\beta_\Gamma})} \right) (w_\lambda^\Gamma \otimes m) = 0,
  \end{equation}
  by~\eqref{eq:alphan-ideal} and the fact that $(h_1 - h_{2n}) \otimes J_1 \subseteq (\h \otimes A)^\Gamma$ acts by scalar multiplication on the highest weight vector $w_\lambda^\Gamma \otimes m$.

  By~\eqref{eq:ggamma-ideal} and~\eqref{eq:yn-action}, we have
  \begin{multline}\label{eq:lw-ideal}
    \left( [f_{\beta_\Gamma}, f_1 - f_{2n}] \otimes  J_{1} (J_0)^{2\lambda(h_{\beta_\Gamma})} \right) (w_\lambda^\Gamma \otimes m) \\
    = \left[f_{\beta_\Gamma}\otimes (J_0)^{\lambda(h_{\beta_\Gamma})}, (f_1 - f_{2n}) \otimes J_{1}(J_0)^{\lambda(h_{\beta_\Gamma})}\right] (w_\lambda^\Gamma\otimes m)=0.
  \end{multline}
  By~\eqref{n-minus-rk}, $[\n_-, [f_{\beta_\Gamma}, f_1-f_{2n}]]=0$. Thus, again using the fact that $W^\Gamma(\psi) = \cU((\n^- \otimes A)^\Gamma) (w_\lambda^\Gamma \otimes m)$, we have that
  \[
    \left( \left[f_{\beta_\Gamma}, f_1 -f_{2n}\right]\otimes J_{1}(J_0)^{2\lambda(h_{\beta_\Gamma})}\right) W^\Gamma(\psi) = 0.
  \]
  By \eqref{fbg-generates}, $[f_{\beta_\Gamma}, f_1 -f_{2n}]$ generates $\g_{1}$ as an $\n_{+}^{\Gamma}$-module, which now establishes~\eqref{6-a2n-sts2}.

  Combining~\eqref{n-1-minus}, \eqref{n-minus-rk} and~\eqref{eq:ggamma-ideal}, we see that
  \[
    \left( f_{\beta_\Gamma} \otimes  (J_0)^{k} \right)) W^\Gamma(\psi) \subseteq \cU((\n^- \otimes A)^{\Gamma}) \left( [f_{\beta_\Gamma}, f_1 -f_{2n}] \otimes  J_{1}(J_0)^{k} \right) (w_\lambda^\Gamma \otimes m)=0
  \]
  for $k\ge 2\lambda(h_{\beta_\Gamma})$, where the last equality follows by \eqref{eq:lw-ideal}.  Again since $f_{\beta_\Gamma}$ is a lowest weight vector for the adjoint representation of $\g^\Gamma$, we have
  \[
    (\g^\Gamma \otimes (J_0)^{k}) W^\Gamma(\psi) = 0 \quad \text{for} \quad k \geq 2\lambda(h_{\beta_\Gamma}),
  \]
  which establishes~\eqref{6-a2n-sts1} and completes the proof.
\end{proof}

\begin{lem} \label{lem:lWm-characterization}
  A $(\g \otimes A)^\Gamma$-module $V$ is isomorphic to the local Weyl module $W^\Gamma(\psi)$ if and only if it satisfies the following three conditions:
  \begin{enumerate}
    \item \label{lem-item:lWm-cat} $V \in \Ob \cI^\Gamma_{\le \lambda}$, where $\lambda = \wt_\Gamma \psi$;

    \item \label{lem-item:lWm-top} $\bR^\Gamma_\lambda V \cong M^\Gamma(\psi)$;

    \item \label{lem-item:lWm-hom} $\Hom_{\cI^\Gamma_{\le \lambda}} (V,U)=0$ and $\Ext^1_{\cI^\Gamma_{\le \lambda}}(V,U)=0$ for all irreducible finite-dimensional $U \in \Ob \cI^\Gamma_{\le \lambda}$ with $U_\lambda=0$.
  \end{enumerate}
\end{lem}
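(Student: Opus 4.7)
The plan is to derive Lemma~\ref{lem:lWm-characterization} as a formal consequence of the adjunction-like identity $\bR^\Gamma_\lambda \bW^\Gamma_\lambda \cong \Id$ established in Proposition~\ref{prop:right-adjoint}\eqref{prop-item:RW=1}, together with the homological characterization of modules of the form $\bW^\Gamma_\lambda \bR^\Gamma_\lambda V$ given in Corollary~\ref{cor:local-weyl-module-char-fd}. The single additional ingredient is that $M^\Gamma(\psi)$ is one-dimensional: by Definition~\ref{def:V-Gamma-lambda}, together with the fact that $\bA^\lambda_\Gamma$ is a finitely generated commutative algebra (Theorem~\ref{theo:A-fg}), every irreducible finite-dimensional $\bA^\lambda_\Gamma$-module has dimension one. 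In particular, the hypothesis ``$\dim V_\lambda < \infty$'' of Corollary~\ref{cor:local-weyl-module-char-fd} will be automatic in both directions of the proof, as condition~\eqref{lem-item:lWm-top} forces $\dim V_\lambda = \dim M^\Gamma(\psi) = 1$.

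For the forward direction, I would suppose $V = W^\Gamma(\psi) = \bW^\Gamma_\lambda M^\Gamma(\psi)$ and verify the three conditions in turn. Condition~\eqref{lem-item:lWm-cat} is immediate from the target category of $\bW^\Gamma_\lambda$ (Definition~\ref{def:twisted-Weyl-functor}). Condition~\eqref{lem-item:lWm-top} follows from Proposition~\ref{prop:right-adjoint}\eqref{prop-item:RW=1}, which yields $\bR^\Gamma_\lambda V \cong \bR^\Gamma_\lambda \bW^\Gamma_\lambda M^\Gamma(\psi) \cong M^\Gamma(\psi)$. This in turn forces $V \cong \bW^\Gamma_\lambda \bR^\Gamma_\lambda V$, so condition~\eqref{lem-item:lWm-hom} is supplied by the (easier) direction of Corollary~\ref{cor:local-weyl-module-char-fd} applied to $V$ (whose $\lambda$-weight space is one-dimensional).

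For the converse, assume $V$ satisfies~\eqref{lem-item:lWm-cat}--\eqref{lem-item:lWm-hom}. Using~\eqref{lem-item:lWm-top} and the one-dimensionality of $M^\Gamma(\psi)$, we have $\dim V_\lambda = 1 < \infty$, so the hypotheses of Corollary~\ref{cor:local-weyl-module-char-fd} are in force. That corollary, invoked with~\eqref{lem-item:lWm-cat} and~\eqref{lem-item:lWm-hom}, gives $V \cong \bW^\Gamma_\lambda \bR^\Gamma_\lambda V$; combining with the identification in~\eqref{lem-item:lWm-top} yields
\[
  V \;\cong\; \bW^\Gamma_\lambda \bR^\Gamma_\lambda V \;\cong\; \bW^\Gamma_\lambda M^\Gamma(\psi) \;=\; W^\Gamma(\psi),
\]
completing the proof.

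The main ``content'' needed has already been accomplished in Proposition~\ref{prop:right-adjoint} and Corollary~\ref{cor:local-weyl-module-char-fd}, so I do not expect any genuine obstacle to this lemma; essentially the only thing to be careful about is the functoriality step needed to convert the abstract isomorphism $\bR^\Gamma_\lambda \bW^\Gamma_\lambda M^\Gamma(\psi) \cong M^\Gamma(\psi)$ into the concrete identification of $\bR^\Gamma_\lambda V$ used in the converse. This is routine once one recalls from Proposition~\ref{prop:right-adjoint}\eqref{prop-item:RW=1} that the isomorphism is realized explicitly by $m \mapsto w^\Gamma_\lambda \otimes m$.
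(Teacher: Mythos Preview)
Your proof is correct and follows essentially the same route as the paper: both directions are reduced to Corollary~\ref{cor:local-weyl-module-char-fd} via the identity $\bR^\Gamma_\lambda \bW^\Gamma_\lambda \cong \Id$ from Proposition~\ref{prop:right-adjoint}\eqref{prop-item:RW=1}. You are slightly more explicit than the paper in justifying the hypothesis $\dim V_\lambda < \infty$ of Corollary~\ref{cor:local-weyl-module-char-fd} by noting that $M^\Gamma(\psi)$ is one-dimensional, which is a welcome clarification.
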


\begin{proof}
  If $V$ satisfies the conditions in the lemma, then by Corollary~\ref{cor:local-weyl-module-char-fd} we have $V \cong \bW^\Gamma_\lambda \bR^\Gamma_\lambda V \cong \bW^\Gamma_\lambda M^\Gamma(\psi) = W^\Gamma(\psi)$.

  Conversely, the local Weyl module $W^\Gamma(\psi)$ satisfies~\eqref{lem-item:lWm-cat} by definition of the functor $\bW^\Gamma_\lambda$.  We have $\bR^\Gamma_\lambda W^\Gamma(\psi) = \bR^\Gamma_\lambda \bW^\Gamma_\lambda M^\Gamma(\psi) \cong M^\Gamma(\psi)$ by Lemma~\ref{prop:right-adjoint}\eqref{prop-item:RW=1}.  Then $W^\Gamma(\psi)$ satisfies~\eqref{lem-item:lWm-hom} by~\eqref{eq:W=WRW} and Theorem~\ref{theo:WR=1-hom-char} (or Corollary~\ref{cor:local-weyl-module-char-fd}).
\end{proof}

Fix $\psi \in \cE^\Gamma$ and $\bx \in (\Supp \psi)_\Gamma$.  Set $\llift = \wt \psi_\bx$ and $\lambda = \wt_\Gamma \psi = \llift|_{\h^\Gamma}$.  By \cite[Prop.~9]{CFK10}, there exists a positive integer $n_1$ such that $\g \otimes J(\psi_\bx)^{n_1}$ annihilates the untwisted local Weyl module $\bW_\llift M(\psi_\bx)$.  By Proposition~\ref{prop:local-Weyl-module-annihilated}, there exists a positive integer $n_2$ such that $(\g \otimes J(\psi)^{n_2})^\Gamma$ annihilates the twisted local Weyl module $\bW_{\lambda}^\Gamma M^\Gamma(\psi)$.  Let $n=\max(n_1,n_2)$.  We have a sequence of isomorphisms
\begin{multline*} \ts
  (\g \otimes A)^\Gamma/(\g \otimes J(\psi)^n)^\Gamma \cong (\g \otimes A/J(\psi)^n)^\Gamma \cong \left( \bigoplus_{x \in \Supp \psi} \g \otimes A/\sm_x^n \right)^\Gamma \\ \ts
  \cong \bigoplus_{x \in \bx} \g \otimes A/\sm_x^n \cong (\g \otimes A)/(\g \otimes J(\psi_\bx)^n).
\end{multline*}

\begin{prop} \label{prop:equivalence-of-lWm-defs}
  Suppose that $\Gamma$ is abelian, acts freely on $X_\rat$ and acts on $\g$ by diagram automorphisms. Then $W^\Gamma(\psi) = \bT (W(\psi_\bx))$ for all $\psi \in \cE^\Gamma$ and $\bx \in (\Supp \psi)_\Gamma$.  In other words, the twisted local Weyl module is obtained by restriction from an untwisted local Weyl module and thus isomorphic to the twisted local Weyl module as defined in \cite[Def.~3.7]{FKKS12}.
\end{prop}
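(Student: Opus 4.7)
The plan is to show that $\bT(W(\psi_\bx))$ satisfies the three characterizing properties of $W^\Gamma(\psi)$ listed in Lemma~\ref{lem:lWm-characterization}. Let $\llift = \wt \psi_\bx$ and $\lambda = \wt_\Gamma \psi$, so that $\lambda = \llift|_{\h^\Gamma}$. The untwisted analog of Proposition~\ref{prop:local-Weyl-module-annihilated} (namely \cite[Prop.~9]{CFK10}) gives $W(\psi_\bx) \in \cF_\bx$, and the inclusion $J(\psi) \subseteq J(\psi_\bx)$ allows one to deduce that $\bT(W(\psi_\bx))$ is annihilated by $(\g \otimes J(\psi)^n)^\Gamma$ for sufficiently large $n$, placing it in $\cF_\bx^\Gamma$.

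Condition~(a) of Lemma~\ref{lem:lWm-characterization}, namely $\bT(W(\psi_\bx)) \in \Ob \cI^\Gamma_{\le \lambda}$, is immediate from Lemma~\ref{lem:height-wt}\eqref{lem-item:category-I-restriction}. For condition~(b), the same lemma identifies $\bR^\Gamma_\lambda \bT(W(\psi_\bx)) = \bT(W(\psi_\bx))_\lambda$ with $W(\psi_\bx)_\llift \cong M(\psi_\bx)$ as vector spaces. Applying $\bT$ to the surjection $W(\psi_\bx) \twoheadrightarrow V(\psi_\bx)$ and restricting to highest weight spaces gives a $(\g^0 \otimes A)^\Gamma$-equivariant isomorphism $\bT(W(\psi_\bx))_\lambda \cong \bT(V(\psi_\bx))_\lambda$; by Proposition~\ref{prop:twisting} the target is $V^\Gamma(\psi)_\lambda = M^\Gamma(\psi)$. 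Since the $\bA^\lambda_\Gamma$-action on both sides is induced by the restriction of the $(\g^0 \otimes A)$-action to $(\g^0 \otimes A)^\Gamma$, this is an isomorphism of $\bA^\lambda_\Gamma$-modules.

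The main obstacle is condition~(c). The key observation is that, since Proposition~\ref{prop:twisting} makes $\bT_\bx$ an isomorphism of categories, the $(\g \otimes A)^\Gamma$-submodules of $\bT(W(\psi_\bx))$ correspond bijectively to $(\g \otimes A)$-submodules of $W(\psi_\bx)$; in particular, $\bT(W(\psi_\bx))$ is generated by its $\lambda$-weight space as a $\cU((\g \otimes A)^\Gamma)$-module, since $W(\psi_\bx)$ is generated by its $\llift$-weight space as a $\cU(\g \otimes A)$-module. The vanishing of $\Hom_{\cI^\Gamma_{\le \lambda}}(\bT(W(\psi_\bx)), U)$ for irreducible finite-dimensional $U$ with $U_\lambda = 0$ follows immediately. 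For $\Ext^1$, any extension $0 \to U \to V \to \bT(W(\psi_\bx)) \to 0$ splits: when $U \in \cF_\bx^\Gamma$, support-additivity places $V \in \cF_\bx^\Gamma$ and the equivalence $\bT_\bx$ reduces the splitting question to the analogous vanishing for $W(\psi_\bx)$ known in the untwisted setting (cf.\ \cite[Prop.~8]{CFK10}); when $\Supp_A^\Gamma U \not\subseteq \Gamma \bx$, a CRT-style argument using an element $a \in A^\Gamma$ lying in the $\Gamma$-invariant annihilator of $U$ and with $1-a$ in that of $\bT(W(\psi_\bx))$ produces an explicit splitting via the action of $\g^\Gamma \otimes a \subseteq (\g \otimes A)^\Gamma$.

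Once these three conditions are verified, Lemma~\ref{lem:lWm-characterization} yields $\bT(W(\psi_\bx)) \cong W^\Gamma(\psi)$. The final assertion comparing with the local Weyl module of \cite[Def.~3.7]{FKKS12} is then immediate, since the latter is by construction the restriction of an untwisted local Weyl module from $\cF_\bx$ to $\cF_\bx^\Gamma$.
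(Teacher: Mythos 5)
Your overall strategy is the same as the paper's: verify the three conditions of Lemma~\ref{lem:lWm-characterization} for $\bT(W(\psi_\bx))$, handling conditions (a) and (b) via Lemma~\ref{lem:height-wt} and the identification $\bT_\bx(V(\psi_\bx)) = V^\Gamma(\psi)$, and transferring the Hom/Ext vanishing in condition (c) to the untwisted setting through the category isomorphism $\bT_\bx$. Conditions (a) and (b) are handled correctly and essentially as in the paper.

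The gap is in your case analysis for $\Ext^1$. You split according to whether $\Supp_A^\Gamma U \subseteq \Gamma\cdot\bx$ or not, but the second branch is only treated under the implicit assumption that $\Supp_A^\Gamma U$ and $\Gamma\cdot\bx$ are \emph{disjoint}: an element $a \in A^\Gamma$ with $a \in \Ann_A^\Gamma U$ and $1-a \in \Ann_A^\Gamma \bT(W(\psi_\bx))$ exists if and only if these two annihilators are comaximal, i.e.\ if and only if the supports do not meet (at a common point $\sm$ both ideals lie in $\sm$, forcing $1 \in \sm$). An irreducible $U = V^\Gamma(\varphi)$ with $U_\lambda = 0$ may perfectly well have $\Supp \varphi$ meeting $\Gamma\cdot\bx$ without being contained in it (e.g.\ $\Supp\varphi \supsetneq \Supp\psi$), and then neither of your two arguments applies: $U \notin \cF_\bx^\Gamma$, and the required $a$ does not exist. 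The paper's proof sidesteps this by simply \emph{enlarging} $\bx$ to an element of $X_*$ whose $\Gamma$-saturation contains $\Supp\varphi$; this does not change $\psi_\bx$ (hence not $W(\psi_\bx)$), it places both $\bT_\bx(W(\psi_\bx))$ and $U = \bT_\bx(V(\varphi_\bx))$ in $\cF_\bx^\Gamma$ for the enlarged $\bx$, and then the single reduction to the untwisted vanishing (\cite[Th.~4.5]{FKKS12}) covers every $U$. You should replace your dichotomy with this enlargement step (or otherwise supply an argument for the overlapping-but-not-contained case); as written the verification of condition (c) is incomplete.
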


\begin{proof}
  Fix $\psi \in \cE^\Gamma$ and $\bx \in (\Supp \psi)_\Gamma$.  By Proposition~\ref{prop:local-Weyl-module-annihilated}, $W^\Gamma(\psi) \in \cF_\bx^\Gamma$.  Let $\llift = \wt \psi_\bx$ and $\lambda = \wt_\Gamma \psi$, so that $\lambda = \llift|_{\h^\Gamma}$.  Then $W(\psi_\bx) \in \Ob \cI_{\le \llift}$ and so, by Lemma~\ref{lem:height-wt}\eqref{lem-item:category-I-restriction}, $\bT(W(\psi_\bx)) \in \Ob \cI^\Gamma_{\le \lambda}$.

  Now suppose $V^\Gamma(\varphi)$, $\varphi \in \cE^\Gamma$, is an irreducible finite-dimensional object of $\cI_{\le \lambda}^\Gamma$ with $V^\Gamma(\varphi)_\lambda=0$.  This implies that $\wt_\Gamma \varphi \in \lambda - Q^+_\Gamma = \wt_\Gamma \psi - Q^+_\Gamma$.  Thus, by Lemma~\ref{lem:height-wt}\eqref{lem-item:height-wt}, we have $\hei \varphi_\bx < \hei \psi_\bx$.  Enlarging $\bx$ if necessary, we may assume that $\Supp \varphi \subseteq \Gamma \cdot \bx$ (i.e.\ $V^\Gamma(\varphi) \in \cF_\bx^\Gamma$).  For $\ell=0,1$, we have
  \[
    \Ext^\ell_{\cF_\bx^\Gamma}(\bT_\bx(W(\psi_\bx)), V^\Gamma(\varphi)) = \Ext^\ell_{\cF_\bx^\Gamma}(\bT_\bx(W(\psi_\bx)), \bT_\bx(V(\varphi_\bx))) = \Ext^\ell_{\cF_\bx}(W(\psi_\bx), V(\varphi_\bx))=0,
  \]
  where the first two equalities follow from Proposition~\ref{prop:twisting} and the last equality follows from \cite[Th.~4.5]{FKKS12}.

  Now, the weight space $W(\psi_\bx)_\llift$ is isomorphic to the weight space $V(\psi_\bx)_\llift$ as a $(\h \otimes A)$-module.  Restricting the action to $(\h \otimes A)^\Gamma$ and using the fact that $\bT_\bx (V(\psi_\bx)) = V(\psi)$, we see that the weight space $(\bT_\bx(W(\psi_\bx)))_\lambda$ is isomorphic to the weight space $V(\psi)_\lambda$ as a $(\h \otimes A)^\Gamma$-module, and hence as a $\bA^\lambda_\Gamma$-module.  Therefore $\bR^\Gamma_\lambda \bT_\bx (W(\psi_\bx)) = M^\Gamma(\psi)$.  Thus, by Lemma~\ref{lem:lWm-characterization}, we have $W^\Gamma(\psi) = \bT_\bx(W(\psi_\bx))$, as desired.
\end{proof}

We are now in a position to prove Theorem~\ref{theo:global-Weyl-module-projective}.

\begin{proof}[Proof of Theorem~\ref{theo:global-Weyl-module-projective}] \hypertarget{proof:projective-free}
  Choose $\llift \in \Lambda^+$ such that $\llift|_{\h^\Gamma} = \lambda$ (see Prop.~\ref{prop:irred-wt-space-modules}\eqref{prop-item:gWm-zero}).  By Proposition~\ref{prop:equivalence-of-lWm-defs}, the local Weyl modules $W^\Gamma(\psi)$, $\psi \in \cE_\lambda^\Gamma$, for $(\g \otimes A)^\Gamma$ are restrictions of local Weyl modules $W(\psi')$, $\psi' \in \cE_\llift$, for the untwisted map algebra $\g \otimes A$.  In addition, we have that $\bA^\lambda_\Gamma$ is a finitely generated algebra and $W^\Gamma(\lambda)$ is a finitely generated $\bA^\lambda_\Gamma$-module by Theorems~\ref{theo:A-fg} and~\ref{theo:finite-generated}.
  \begin{asparaenum}
    \item Assume that $A$ is the coordinate algebra of a smooth complex algebraic variety and $\lambda$ is a fundamental weight of $\g^\Gamma$.  Then, by \cite[Cor.~8]{CFK10}, the dimensions of the local Weyl modules $W(\psi')$, $\psi' \in \cE_\llift$, for $\g \otimes A$ are all equal.  By the above, this implies that the dimensions of the local Weyl modules $W^\Gamma(\psi)$, $\psi \in \cE_\lambda^\Gamma$, are all equal.  Then the result follows from Corollary~\ref{cor:constant-localized-dim}.

    \item Assume $A = \C[t^{\pm 1}]$.  Let $\psi \in \cE_\lambda^\Gamma$ and write $\psi = \sum_{\ell=1}^m \psi_\ell$ where $\psi_\ell \in \cE_{\lambda_\ell}^\Gamma$ and $\lambda_\ell$ is a fundamental weight for $\ell=1,\dotsc,m$.  By part~\eqref{theo-item:fund-weight-proj-property} (and Corollary~\ref{cor:constant-localized-dim}), it suffices to show that \begin{equation} \label{eq:local-Weyl-tensor-property} \ts
          \dim_\C W^\Gamma(\psi) = \prod_{\ell=1}^m \dim_\C W^\Gamma(\psi_\ell).
        \end{equation}
        By \cite[Th.~2]{CP01} (or \cite[Prop.~3.9]{FKKS12}), we are reduced to the case where the support of $\psi$ is a single $\Gamma$-orbit.  Furthermore, since the twisted local Weyl modules are restrictions of untwisted ones (as explained above), it suffices to consider the untwisted case, that is, we can assume that $\Gamma$ is trivial.  Suppose $\Supp \psi = \{a\}$ for some $a \in \C^*$.  Then, by Proposition~\ref{prop:local-Weyl-module-annihilated}, $W(\psi)$ is annihilated by $\g \otimes (t-a)^N\C[t^{\pm 1}]$ for some $N \in \N$.   We have a commutative diagram
        \[
          \xymatrix{
            \g \otimes \C[t^{\pm 1}] \ar@{->>}[r] & \g \otimes \big(\C[t^{\pm 1}]/(t-a)^N\C[t^{\pm 1}]\big) \\
            \g \otimes \C[t] \ar@{^(->}[u] \ar@{->>}[r] & \g \otimes \big(\C[t]/(t-a)^N\C[t]\big) \ar[u]^{\cong}
          }
        \]
        where the vertical arrows are induced by inclusion and the horizontal arrows are the natural projections.  Since the local Weyl modules for the current algebra $\g \otimes \C[t]$ are annihilated by $\g \otimes (t-a)^N \C[t]$ (increasing $N$ if necessary), it follows that the pullback of the local Weyl module for the loop algebra $\g \otimes \C[t^{\pm 1}]$ is the local Weyl module for the current algebra (see Lemma~\ref{lem:lWm-characterization}).  Thus it suffices to prove~\eqref{eq:local-Weyl-tensor-property} for the current algebra.  Then, by considering the automorphism of $\g \otimes \C[t]$ determined by $u \otimes f(t) \mapsto u \otimes f(t-a)$ for $f(t) \in \C[t]$, we see that it suffices to prove~\eqref{eq:local-Weyl-tensor-property} when $a=0$ (i.e.\ $\psi$ is supported at the origin).  This was proved in \cite[Th.~5]{CP01} for $\g = \mathfrak{sl}_2$ (in fact, the statement there is for the loop algebra itself), in \cite[Th.~1.5.1]{CL06} for $\g = \mathfrak{sl}_n$, in \cite[Cor.~B]{FL07} (and conjectured in~\cite{CP01b}) for simple simply laced $\g$ and in~\cite[Cor.~A]{Nao12} for arbitrary simple $\g$.  We note that the freeness of the global Weyl module in the untwisted case also follows from results found in \cite{Nak01,BN04}. \qedhere
  \end{asparaenum}
\end{proof}

%
\section{The algebra \texorpdfstring{$\bA_\lambda^\Gamma$}{acting on the weight spaces}} \label{sec:A-lambda-Gamma}
%

In this section we give an alternative, and more explicit, characterization of the algebra $\bA_\lambda^\Gamma$.  We also relate it to the corresponding algebra in the untwisted setting.  We assume in this section that the Jacobson radical $\rad A$ of $A$ (which is equal to the nilradical of $A$ since $A$ is finitely generated) is zero and that $\Gamma$ is a nontrivial cyclic group acting faithfully on $\g$ by diagram automorphisms (see Remark~\ref{rem:cyclic-assumption}).  We also continue to assume that $\Gamma$ acts freely on $X_\rat$.  In addition, we assume in this section that $\g$ is not of type $A_{2n}$.  In other words, we assume that $\Gamma$ acts by \emph{admissible} diagram automorphisms (no two nodes of the Dynkin diagram are contained in the same $\Gamma$-orbit).  In this section, for a commutative associative unital algebra $B$, we will not distinguish between a point of $\maxSpec B$ and its corresponding maximal ideal.

Since $\g$ is not of type $A_{2n}$, the restriction map $\Lambda^+ \to \Lambda_\Gamma^+$ is surjective (see Lemma~\ref{lem:height-wt}\eqref{lem-item:root-restriction}).  Recall that we can naturally identify $I_\Gamma$ with the set $I/\Gamma$ of $\Gamma$-orbits on $I$.  Fix $\lambda = \sum_{\bi \in I_\Gamma} r_\bi \omega_\bi \in \Lambda^+_\Gamma$.  For $\bi \in I_\Gamma$ and $i \in \bi$, let $r_i = r_\bi$.  Recall that, for $i \in I$, we let $\Gamma_i = \{\gamma \in \Gamma\ |\ \gamma i = i\}$ denote the corresponding isotropy subgroup.  Let $J \subseteq I$ contain one point in each orbit of the $\Gamma$-action on $I$.  We will often identify $J$ with the set $\{1,\dots,|J|\}$ using the standard labeling of Dynkin diagrams found, for instance, in \cite[\S11.4]{Hum72}.   Define
\begin{equation} \label{def:bbA} \ts
  \bbA^\lambda_\Gamma \defeq \bigotimes_{j \in J} S^{r_j} (A^{\Gamma_j}).
\end{equation}
So we have a natural identification
\begin{equation} \label{eq:AA-maxSpec} \ts
  \maxSpec \bbA^\lambda_\Gamma \cong \prod_{j \in J} \big( (\maxSpec A^{\Gamma_j})^{r_j} \big)/S_{r_j}.
\end{equation}
From now on, we will identify the two sides of~\eqref{eq:AA-maxSpec}.

Recall that $\cE^\Gamma$ is the set of $\Gamma$-equivariant, finitely-supported maps from $X_\rat = \maxSpec A$ to $\Lambda^+$.  Let $\mm \in \maxSpec \bbA^\lambda_\Gamma$.  Then $\mm$ is of the form
\[
  \mm = ((\mm_{j,\ell})_{\ell=1}^{r_j})_{j \in J},
\]
where, for $j \in J$, the (unordered) tuple $(\mm_{j,\ell})_{\ell=1}^{r_j}$ is an element of $\big( (\maxSpec A^{\Gamma_j})^{r_j} \big)/S_{r_j}$.  Hence,  each $\mm_{j,\ell}$ can be identified with an element of the quotient $(\maxSpec A)/{\Gamma_j} \cong \maxSpec A^{\Gamma_j}$, that is, with a $\Gamma_j$-orbit in $\maxSpec A$.  Then define $\psi_\mm \in \cE^\Gamma$ by
\[ \ts
  \psi_\mm(\sm) = \sum_{j \in J,\ \gamma \in \Gamma/\Gamma_j} \sum_{\ell=1}^{r_j} \delta_{\sm,\gamma \mm_{j,\ell}} \omega_{\gamma j},\quad \sm \in \maxSpec A,
\]
where $\delta_{\sm,\sm'}$, $\sm \in \maxSpec A$, $\sm' \in \maxSpec A^{\Gamma_j}$, is equal to one if $\sm \mapsto \sm'$ under the quotient map $\maxSpec A \twoheadrightarrow \maxSpec A^{\Gamma_j}$ corresponding to the inclusion $A^{\Gamma_j} \hookrightarrow A$ (i.e., if $\sm' = \sm \cap A^{\Gamma_j}$), and is equal to zero otherwise.  It is readily verified that the map $\mm \mapsto \psi_\mm$ is a well-defined bijection of sets $\maxSpec \bbA^\lambda_\Gamma \to \cE^\Gamma_\lambda$.
\details{To see that $\psi_\mm(\sm)$ is $\Gamma$ equivariant, note that, for $\tau \in \Gamma$,
\begin{gather*} \ts
  \psi_\mm(\tau \sm) = \sum_{j \in J,\ \gamma \in \Gamma/\Gamma_j} \sum_{\ell=1}^{r_j} \delta_{\tau \sm, \gamma \mm_{j,\ell}} \omega_{\gamma j} = \frac{1}{|\Gamma_j|} \sum_{j \in J,\ \gamma \in \Gamma} \sum_{\ell=1}^{r_j} \delta_{\tau \sm, \gamma \mm_{j,\ell}} \omega_{\gamma j} \\ \ts
  = \frac{1}{|\Gamma_j|} \sum_{j \in J,\ \gamma \in \Gamma} \sum_{\ell=1}^{r_j} \delta_{\sm, \tau^{-1} \gamma \mm_{j,\ell}} \omega_{\gamma j} = \frac{1}{|\Gamma_j|} \sum_{j \in J,\ \nu \in \Gamma} \sum_{\ell=1}^{r_j} \delta_{\sm, \nu \mm_{j,\ell}} \omega_{\nu \tau j} \\ \ts
  = \tau \left(\frac{1}{|\Gamma_j|} \sum_{j \in J,\ \nu \in \Gamma} \sum_{\ell=1}^{r_j} \delta_{\sm, \nu \mm_{j,\ell}} \omega_{\nu j} \right) = \tau \psi_\mm(\sm),
\end{gather*}
where, in the fourth equality, we set $\nu = \tau^{-1} \gamma$.}

Recall that for $\sm \in \maxSpec A$, the quotient $A/\sm$ is canonically isomorphic to $\kk$.  We will identify the two in what follows.  For $\psi \in \cE^\Gamma$, choose $\sM \in (\Supp \psi)_\Gamma$ and consider the composition
\begin{equation} \label{eq:hev-def} \ts
  (\h \otimes A)^\Gamma \hookrightarrow \h \otimes A \twoheadrightarrow \bigoplus_{\sm \in \sM} \left( \h \otimes (A/\sm) \right) \cong \bigoplus_{\sm \in \sM} \h \xrightarrow{\sum_{\sm \in \sM} \psi(\sm)} \kk.
\end{equation}
One readily verifies that this map does not depend on the choice of $\sM$.  It induces a map
\[
  \hev_\psi^\Gamma \colon \cU((\h \otimes A)^\Gamma) \to \kk.
\]
We use the notation $\hev_\psi^\Gamma$ to distinguish this evaluation representation of $(\h \otimes A)^\Gamma$ from the evaluation map $\ev_\psi^\Gamma$.

\begin{lem} \label{lem:Ann-in-hev}
  For $\lambda \in \Lambda^+_\Gamma$, we have $\Ann_{\cU((\h \otimes A)^\Gamma)} w_\lambda^\Gamma \subseteq \bigcap_{\psi \in \cE^\Gamma_\lambda} \ker \hev_\psi^\Gamma$.
\end{lem}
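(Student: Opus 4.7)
The plan is to deduce the containment from the existence, for each $\psi \in \cE^\Gamma_\lambda$, of a surjection from $W^\Gamma(\lambda)$ onto the irreducible module $V^\Gamma(\psi)$ under which $\cU((\h \otimes A)^\Gamma)$ acts on the image of $w_\lambda^\Gamma$ precisely through the character $\hev_\psi^\Gamma$. Given such a surjection, any $u \in \Ann_{\cU((\h \otimes A)^\Gamma)} w_\lambda^\Gamma$ kills the highest weight vector $v_\psi$ of $V^\Gamma(\psi)$, forcing $\hev_\psi^\Gamma(u) = 0$.

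First I would realize $V^\Gamma(\psi)$ concretely: fixing $\sM \in (\Supp \psi)_\Gamma$, Proposition~\ref{prop:twisting} gives $V^\Gamma(\psi) = \mathbf{T}_\sM(V(\psi_\sM))$, and $V(\psi_\sM)$ is the tensor product $\bigotimes_{\sm \in \sM} V(\psi(\sm))$ of evaluation modules with highest weight vector $v_\psi = \bigotimes_{\sm \in \sM} v_{\psi(\sm)}$. This vector has $\g^\Gamma$-weight $(\wt \psi_\sM)|_{\h^\Gamma} = \lambda$. Next I would verify that $v_\psi$ satisfies the defining relations of $w_\lambda^\Gamma$ listed in Proposition~\ref{weyl:gen-rel}: the relations $(h \otimes 1)v_\psi = \lambda(h)v_\psi$ and $(f_i^\Gamma \otimes 1)^{\lambda(h_i)+1}v_\psi = 0$ hold because $v_\psi$ generates a copy of $V^\Gamma(\lambda)$ inside $V^\Gamma(\psi)$, and $(\g^+ \otimes A)^\Gamma v_\psi = 0$ because any nonzero contribution would have an $\h_\Gamma$-weight not lying in $\lambda - Q_\Gamma^+$, whereas all weights of $V^\Gamma(\psi)$ do. Consequently there is a surjective $(\g \otimes A)^\Gamma$-homomorphism $\pi \colon W^\Gamma(\lambda) \twoheadrightarrow V^\Gamma(\psi)$ with $\pi(w_\lambda^\Gamma) = v_\psi$.

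The main (and only) substantive point is then to unpack the action of $(\h \otimes A)^\Gamma$ on $v_\psi$. For $h \otimes a \in (\h \otimes A)^\Gamma$ viewed inside $\h \otimes A$, the Lie algebra action on the tensor product $\bigotimes_{\sm \in \sM} V(\psi(\sm))$ is by the derivation rule, and on each factor $h \otimes a$ acts as evaluation $a(\sm)h$, which multiplies $v_{\psi(\sm)}$ by $\psi(\sm)(h) a(\sm)$; summing gives
\[
  (h \otimes a) \cdot v_\psi = \Bigl(\sum_{\sm \in \sM} \psi(\sm)(h) a(\sm)\Bigr) v_\psi = \hev_\psi^\Gamma(h \otimes a)\, v_\psi,
\]
matching the composition defining $\hev_\psi^\Gamma$ in \eqref{eq:hev-def}. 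By multiplicativity, $u \cdot v_\psi = \hev_\psi^\Gamma(u) v_\psi$ for every $u \in \cU((\h \otimes A)^\Gamma)$. Finally, if $u \in \Ann_{\cU((\h \otimes A)^\Gamma)} w_\lambda^\Gamma$, applying $\pi$ yields $0 = \pi(u w_\lambda^\Gamma) = u v_\psi = \hev_\psi^\Gamma(u) v_\psi$, and since $v_\psi \ne 0$ we conclude $\hev_\psi^\Gamma(u) = 0$. As $\psi$ was arbitrary, $u$ lies in $\bigcap_{\psi \in \cE^\Gamma_\lambda} \ker \hev_\psi^\Gamma$, completing the argument.
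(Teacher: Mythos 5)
Your proof is correct and follows essentially the same route as the paper: both arguments reduce to the fact that $u$ annihilates the highest weight vector of $V^\Gamma(\psi)$ (which is a quotient of a Weyl module generated by a vector killed by $u$) together with the observation that $\cU((\h \otimes A)^\Gamma)$ acts on that vector through the character $\hev_\psi^\Gamma$. The only cosmetic difference is that the paper obtains $u v_\psi = 0$ by passing through the local Weyl module $W^\Gamma(\psi)$ and cites the definition of $V^\Gamma(\psi)$ for the character computation, whereas you build the surjection $W^\Gamma(\lambda)\twoheadrightarrow V^\Gamma(\psi)$ directly from Proposition~\ref{weyl:gen-rel} and verify the evaluation-module computation explicitly.
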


\begin{proof}
  Let $u \in \Ann_{\cU((\h \otimes A)^\Gamma)} w_\lambda^\Gamma$ and $\psi \in \cE^\Gamma_\lambda$.  Since $V^\Gamma(\psi)$ is a quotient of $W^\Gamma(\psi)$, we have $u v^\Gamma_\lambda = 0$.  But, by the definition of $V^\Gamma(\psi)$, we have $u v^\Gamma_\lambda = \hev_\psi^\Gamma(u) v^\Gamma_\lambda$.  Thus $\hev_\psi^\Gamma(u)=0$.
\end{proof}

For a $\kk$-algebra $B$ and $m \in \N$, define
\[ \ts
  \sym_m(b) \defeq \sum_{\ell=1}^m 1^{\otimes (\ell-1)} \otimes b \otimes 1^{\otimes (m-\ell)} \in S^mB,\quad b \in B.
\]
Recall that if $B$ is finitely generated, then $S^m B$ is generated by elements of the form $\sym_m(b)$, $b \in B$ (see \cite[Lem.~4.56(ii)]{EGHLSVY} or note that, since $B$ is finitely generated, this follows from the case where $B$ is a polynomial algebra in finitely many variables, in which case the result can be found in \cite[Th.~1.2]{Dal99}, but goes back to \cite{Sch1852}).  Thus the algebra $\bbA^\lambda_\Gamma$ is generated by the classes of elements of the form
\[ \ts
  \sym^j_\lambda(a) \defeq 1^{\otimes(r_1 + \dotsb + r_{j-1})} \otimes \sym_{r_j}(a) \otimes 1^{\otimes (r_{j+1} + \dotsb + r_{|J|})},\quad j \in J ,\ a \in A^{\Gamma_j}.
\]

The Lie algebra $(\h \otimes A)^\Gamma$ is spanned by elements of the form
\begin{equation} \label{eq:hAG-form} \ts
  \overline{h_j \otimes a},\quad j \in J,\ a \in A^{\Gamma_j}
\end{equation}
(recall Definition~\ref{def:overline}).  Let $\tilde \tau_\lambda \colon \cU((\h \otimes A)^\Gamma) \twoheadrightarrow \bbA^\lambda_\Gamma$ be the surjective map determined by
\begin{equation} \label{def:tau-tilde} \ts
  \tilde \tau_\lambda \left( \overline{h_j \otimes a} \right) = \sym^j_\lambda (a),\quad j \in J,\ a \in A^{\Gamma_j}.
\end{equation}

\begin{lem} \label{lem:tau-eval-composition}
  We have $\hev_{\psi_\mm}^\Gamma = \ev_\mm \circ \tilde \tau_\lambda$ for all $\mm \in \maxSpec \bbA^\lambda_\Gamma$, where $\ev_\mm \colon \bbA^\lambda_\Gamma \twoheadrightarrow \bbA^\lambda_\Gamma/\mm \cong \kk$ is the canonical projection for $\mm \in \maxSpec \bbA^\lambda_\Gamma$.
\end{lem}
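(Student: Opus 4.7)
The algebra $(\h\otimes A)^\Gamma$ is abelian (since $\h\otimes A$ is), so $\cU((\h\otimes A)^\Gamma)$ is a symmetric algebra, and both $\hev_{\psi_\mm}^\Gamma$ and $\ev_\mm\circ\tilde\tau_\lambda$ are algebra maps into $\kk$. My plan is therefore to reduce the identity to its restriction to the generating set $\{\overline{h_j\otimes a}:j\in J,\ a\in A^{\Gamma_j}\}$ of $(\h\otimes A)^\Gamma$, and then compute both sides directly on such generators.

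The right-hand side is immediate from the definitions: under the identification~\eqref{eq:AA-maxSpec}, $\ev_\mm$ evaluates the symmetric-power generator $\sym_\lambda^j(a)$ at the $j$-th tuple of $\mm=((\mm_{j',\ell})_\ell)_{j'}$, yielding $\sum_{\ell=1}^{r_j} a(\mm_{j,\ell})$. For the left-hand side, I would expand $\overline{h_j\otimes a}=\sum_{\gamma\in\Gamma/\Gamma_j}h_{\gamma j}\otimes\gamma(a)$ and unwind the composition~\eqref{eq:hev-def} defining $\hev_{\psi_\mm}^\Gamma$. Inserting the explicit formula for $\psi_\mm$ and using the pairing $\omega_{\gamma'j'}(h_{\gamma j})=\delta_{\gamma'j',\gamma j}$, which (since $J$ is a set of $\Gamma$-orbit representatives on $I$) forces $j'=j$ and $\gamma'\equiv\gamma\pmod{\Gamma_j}$, the resulting expression collapses to
\[ \ts
  \sum_{\sm\in\sM}\sum_{\gamma\in\Gamma/\Gamma_j}\sum_{\ell=1}^{r_j}(\gamma(a)\bmod\sm)\,\delta_{\sm,\gamma\mm_{j,\ell}}.
\]

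The main step that requires care, and which I anticipate will be the trickiest combinatorial piece, is to show that each index pair $(j,\ell)$ contributes exactly $a(\mm_{j,\ell})$ to this triple sum. The abelianness of $\Gamma$ gives $\Gamma_{\gamma j}=\Gamma_j$, so the expression $\gamma\mm_{j,\ell}$ is well-defined on cosets modulo $\Gamma_j$; the freeness of the $\Gamma$-action on $X_\rat$ then produces, for each $(j,\ell)$, a unique $\sm^\ast\in\sM$ lying in the $\Gamma$-orbit $\Gamma\mm_{j,\ell}$ and a unique $\gamma_0\in\Gamma/\Gamma_j$ with $\sm^\ast\in\gamma_0\mm_{j,\ell}$. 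Combining the general identity $(\gamma_0 a)(\sm^\ast)=a(\gamma_0^{-1}\sm^\ast)$ with the $\Gamma_j$-invariance of $a$ (noting $\gamma_0^{-1}\sm^\ast\in\mm_{j,\ell}$) reduces this contribution to $a(\mm_{j,\ell})$, matching the right-hand side and completing the verification.
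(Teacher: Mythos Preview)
Your argument is correct and follows the same overall strategy as the paper: both sides are algebra homomorphisms, so it suffices to check on the generators $\overline{h_j\otimes a}$, and you carry out that check accurately.

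The one genuine difference is in how the left-hand side is evaluated. You work with an arbitrary choice of $\sM\in(\Supp\psi_\mm)_\Gamma$ and do the full combinatorial bookkeeping: expanding $\psi_\mm$, using the pairing $\omega_{\gamma'j'}(h_{\gamma j})=\delta_{\gamma'j',\gamma j}$, and then invoking the freeness of the $\Gamma$-action to locate the unique contributing pair $(\sm^*,\gamma_0)$ for each $(j,\ell)$. The paper instead exploits the remark (made just before the lemma) that $\hev^\Gamma_\psi$ is independent of the choice of $\sM$: it simply picks $\sM$ to contain preimages $\sm_{k,\ell}\in\maxSpec A$ of the $\mm_{k,\ell}$, so that the identity coset $\gamma=e$ already does the job and the sum collapses immediately to $\sum_{\ell}(a+\sm_{k,\ell})$. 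Your route is more self-contained (you never appeal to the independence of $\sM$), while the paper's is shorter; both are valid.
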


\begin{proof}
  It suffices to prove that the maps agree on elements of the form~\eqref{eq:hAG-form}.  Fix $k \in J$ and $a \in A^{\Gamma_k}$.  Choose $\mm = ((\mm_{j,\ell})_{\ell=1}^{r_j})_{j \in J} \in \maxSpec \bbA^\lambda_\Gamma$.  Then
  \[ \ts
    \ev_\mm \circ \tilde \tau_\lambda ( \overline{h_k \otimes a} ) = \ev_\mm (\sym^k_\lambda(a)) = \sum_{\ell=1}^{r_k} (a + \mm_{k,\ell}) \in \kk,
  \]
  where we have canonically identified $A^{\Gamma_k}/\mm_{k,\ell} \cong \kk$ for $\ell=1,\dotsc,r_k$.

  On the other hand, choose $\sM$ in~\eqref{eq:hev-def} to contain $\{\sm_{k,\ell}\}_{\ell=1}^{r_k}$, where $\sm_{k,\ell} \mapsto \mm_{k,\ell}$ under the quotient map $\maxSpec A \twoheadrightarrow \maxSpec A^{\Gamma_k}$.  Then
  \[ \ts
    \hev_{\psi_\mm}^\Gamma ( \overline{h_k \otimes a} ) = \sum_{\ell=1}^{r_k} (a + \sm_{k,\ell}) \in \kk.
  \]
  Since, for $\ell=1,\dotsc,r_k$, the inclusion $A^{\Gamma_k} \hookrightarrow A$ induces an isomorphism $A^{\Gamma_k}/\mm_{k,\ell} \cong A/\sm_{k,\ell}$ mapping $a + \mm_{k,\ell} \mapsto a + \sm_{k,\ell}$, the proof is complete.
\end{proof}

\begin{lem} \label{lem:tau-surjective}
  We have $\Ann_{\cU((\h \otimes A)^\Gamma)} w_\lambda^\Gamma \subseteq \ker \tilde \tau_\lambda$ and thus $\tilde \tau_\lambda$ induces a surjective algebra homomorphism $\tau_\lambda \colon \bA^\lambda_\Gamma \twoheadrightarrow \bbA^\lambda_\Gamma$.
\end{lem}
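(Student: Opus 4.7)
The plan is to chain together Lemmas~\ref{lem:Ann-in-hev} and~\ref{lem:tau-eval-composition} to show that the image $\tilde\tau_\lambda(u)$ of any $u \in \Ann_{\cU((\h \otimes A)^\Gamma)} w_\lambda^\Gamma$ evaluates to zero at every maximal ideal of $\bbA^\lambda_\Gamma$, and then to conclude via a reducedness argument that such an element must itself be zero.

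Concretely, I would first fix $u \in \Ann_{\cU((\h \otimes A)^\Gamma)} w_\lambda^\Gamma$. Lemma~\ref{lem:Ann-in-hev} gives $\hev_\psi^\Gamma(u) = 0$ for every $\psi \in \cE^\Gamma_\lambda$. Since the assignment $\mm \mapsto \psi_\mm$ is a bijection $\maxSpec \bbA^\lambda_\Gamma \to \cE^\Gamma_\lambda$, we may rewrite this as $\hev_{\psi_\mm}^\Gamma(u) = 0$ for all $\mm$, and Lemma~\ref{lem:tau-eval-composition} then turns this into $\ev_\mm(\tilde\tau_\lambda(u)) = 0$ for every $\mm \in \maxSpec \bbA^\lambda_\Gamma$. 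Hence $\tilde\tau_\lambda(u)$ lies in every maximal ideal of $\bbA^\lambda_\Gamma$, i.e.\ in its Jacobson radical.

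The main obstacle is to show that this forces $\tilde\tau_\lambda(u) = 0$, which requires $\bbA^\lambda_\Gamma$ to have trivial Jacobson radical. For this I would argue as follows. The algebra $\bbA^\lambda_\Gamma$ is finitely generated (each $A^{\Gamma_j}$ is finitely generated by Lemma~\ref{lem:AGamma-fg}, the symmetric power $S^{r_j}(A^{\Gamma_j})$ is finitely generated, and a finite tensor product of finitely generated algebras is finitely generated), so its Jacobson radical coincides with its nilradical. It therefore suffices to show that $\bbA^\lambda_\Gamma$ is reduced. Our standing assumption $\rad A = 0$ makes $A$ reduced, and $A^{\Gamma_j} \subseteq A$ is then reduced as well. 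Over an algebraically closed field of characteristic zero, the tensor product $(A^{\Gamma_j})^{\otimes r_j}$ of reduced $\kk$-algebras is reduced, so its $S_{r_j}$-invariant subring $S^{r_j}(A^{\Gamma_j})$ is reduced; applying the tensor product argument once more over $j \in J$ yields that $\bbA^\lambda_\Gamma$ is reduced. (If needed, these reducedness facts can be invoked from the appendix.)

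Combining these steps gives $\tilde\tau_\lambda(u) = 0$, so $\Ann_{\cU((\h \otimes A)^\Gamma)} w_\lambda^\Gamma \subseteq \ker \tilde\tau_\lambda$. By the definition of $\bA^\lambda_\Gamma = \cU^0_\Gamma / \Ann_{\cU^0_\Gamma}(w_\lambda^\Gamma)$ (Definition~\ref{def:A-Gamma-lambda}) together with the fact that Assumption~\ref{assumption} makes $\g^0$ abelian so that $\cU^0_\Gamma$ is commutative and the elements $\overline{h_j \otimes a}$ span $(\h \otimes A)^\Gamma$, the surjective map $\tilde\tau_\lambda$ descends to a surjective algebra homomorphism $\tau_\lambda \colon \bA^\lambda_\Gamma \twoheadrightarrow \bbA^\lambda_\Gamma$, completing the proof.
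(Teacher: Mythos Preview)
Your argument is correct and follows essentially the same route as the paper's proof: both combine Lemma~\ref{lem:Ann-in-hev}, Lemma~\ref{lem:tau-eval-composition}, the bijection $\mm \mapsto \psi_\mm$, and the vanishing of $\rad \bbA^\lambda_\Gamma$. The paper packages this as a chain of equalities $\ker \tilde\tau_\lambda = \bigcap_\mm \ker(\ev_\mm \circ \tilde\tau_\lambda) = \bigcap_\psi \ker \hev^\Gamma_\psi$ and then applies Lemma~\ref{lem:Ann-in-hev}, whereas you trace a single element through; you also spell out in more detail why $\bbA^\lambda_\Gamma$ is reduced (the paper simply notes $\rad \bbA^\lambda_\Gamma = 0$ since $\rad A = 0$). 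One small point: your final paragraph is slightly tangled---what you actually need is that in the setting of Section~\ref{sec:A-lambda-Gamma} (diagram automorphisms, $\g$ not of type $A_{2n}$) one has $\g^0 = \h$, so $\cU^0_\Gamma = \cU((\h \otimes A)^\Gamma)$ and the domain of $\tilde\tau_\lambda$ matches the algebra whose quotient defines $\bA^\lambda_\Gamma$; the reducedness facts you cite are standard but are not in the appendix.
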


\begin{proof}
  We have
  \[ \ts
    \ker \tilde \tau_\lambda = \bigcap_{\mm \in \maxSpec \bbA^\lambda_\Gamma} \ker \ev_\mm \circ \tilde \tau_\lambda = \bigcap_{\psi \in \cE_\lambda^\Gamma} \ker \hev^\Gamma_\psi.
  \]
  Indeed, the first equality follows from the fact that $\rad \bbA^\lambda_\Gamma = 0$ (since $\rad A=0$) and the second follows from Lemma~\ref{lem:tau-eval-composition} and the fact that the map $\mm \mapsto \psi_\mm$ is a bijection $\maxSpec \bbA^\lambda_\Gamma \to \cE^\Gamma_\lambda$.   The statement of the lemma then follows from Lemma~\ref{lem:Ann-in-hev}.
\end{proof}

Recall that we have a $\Xi$-grading $A = \bigoplus_{\xi \in \Xi} A_\xi$ on $A$, where $\Xi$ is the character group of $\Gamma$.  Choosing a basis for each $A_\xi$ yields a basis $\cB$ of $A$.  We do this in such a way that our basis contains the element $1 \in A$.  Since $A$ is finitely generated, the basis $\cB$ is countable.  So we can write $\cB = \{a_r\ |\ r \in \N\}$, with $a_0=1$.  We say that $a_r \le a_{r'}$ if $r \le r'$.  Since each $A^{\Gamma_j}$, $j \in J$, is a sum of isotypic components $A_\xi$, $\xi \in \Xi$, we have that $\cB_j \defeq \cB \cap A^{\Gamma_j}$ is a basis of $A^{\Gamma_j}$ for $j \in J$.

\begin{lem} \label{lem:A-lambda-Gamma-spanning-set}
  The elements
  \[ \ts
    \prod_{j \in J} \prod_{s=1}^{m_j} \overline{h_j \otimes b_{j,s}}\, w^\Gamma_\lambda, \quad b_{j,s} \in \cB_j,\ a_0 < b_{j,1} \le \dots \le b_{j,m_j},\ m_j \le r_j \text{ for } j \in J,
  \]
  span $W^\Gamma(\lambda)_\lambda$.
\end{lem}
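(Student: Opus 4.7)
The plan is to produce the claimed spanning set in three stages.

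First, under the standing assumptions of this section ($\g$ not of type $A_{2n}$ and $\Gamma$ acting by admissible diagram automorphisms), the centralizer $\g^0 = C_\g(\h_\Gamma)$ coincides with the Cartan subalgebra $\h$, so $\cU^0_\Gamma = \cU((\h \otimes A)^\Gamma)$ is commutative. Combining the triangular decomposition~\eqref{eq:EMA-triangular-decomp} and PBW with the highest-weight relation $(\g^+ \otimes A)^\Gamma w^\Gamma_\lambda = 0$ from Proposition~\ref{weyl:gen-rel} gives $W^\Gamma(\lambda)_\lambda = \cU^0_\Gamma\, w^\Gamma_\lambda$. Since $(\h \otimes A)^\Gamma$ is spanned by $\overline{h_j \otimes a}$ with $j \in J$ and $a \in A^{\Gamma_j}$, expanding $a$ in the basis $\cB_j$ and ordering the factors using commutativity produces a spanning set of the claimed form, but with $m_j \in \N$ unrestricted and the $b_{j,s}$ possibly equal to $a_0$.

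Next, I would eliminate factors with $b_{j,s} = a_0 = 1$. Since our hypotheses force $\kappa_\bj = 1$, we have $\overline{h_j \otimes 1} = h_\bj \in \h^\Gamma$, which commutes with every $\overline{h_i \otimes c}$ and acts on $w^\Gamma_\lambda$ as the scalar $\lambda(h_\bj) = r_j$. Any such factor can be pulled to the right and absorbed into the scalar $r_j$, reducing to the case $b_{j,s} > a_0$.

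The main step is the bound $m_j \le r_j$. Fixing $j \in J$ with $m_j > r_j$, I would apply the relevant Garland identity---Lemma~\ref{lem:garland} (inside $\g^\Gamma \otimes A^\Gamma$) when $|\bj|=1$ and the twisted Lemma~\ref{lem:twisted-garland} when $|\bj| = |\Gamma|$ (so $\delta_\bj = 1$)---with $\ell = r_j$ and a homogeneous $a \in A^{\Gamma_j}$. Multiplying on the left by $\overline{e_j \otimes a'}$ for $a' \in A^{\Gamma_j}$ and applying to $w^\Gamma_\lambda$, the facts that $\overline{e_j \otimes a'} \in (\n^+ \otimes A)^\Gamma$ annihilates $w^\Gamma_\lambda$, that $(\overline{f_j \otimes 1})^{r_j+1} = (f_\bj)^{r_j+1}$ annihilates $w^\Gamma_\lambda$ by Proposition~\ref{weyl:gen-rel}, and the commutator identity $[\overline{e_j \otimes a'}, \overline{f_j \otimes c}] = \overline{h_j \otimes a'c}$ combine to yield the relation
\[
  \sum_{s=0}^{r_j} \overline{h_j \otimes a'\, a^{\delta_\bj (r_j - s)}}\, q(j, a)_s\, w^\Gamma_\lambda = 0.
\]
The $s = r_j$ summand contributes a unique top-degree term (in the number of $\overline{h_j \otimes (\cdot)}$-factors), proportional to $\overline{h_j \otimes a'} (\overline{h_j \otimes a^{\delta_\bj}})^{r_j}\, w^\Gamma_\lambda$ with a nonzero coefficient. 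Rearranging expresses this degree-$(r_j + 1)$ element as a combination of strictly lower-degree monomials in the $\overline{h_j \otimes (\cdot)}$-variables. I would then polarize in $a$---replacing $a$ by $\sum_i t_i b_i$ for basis elements $b_i$ and extracting the multilinear coefficient of $t_1 \cdots t_{r_j}$---to obtain a reduction
\[
  \overline{h_j \otimes a'}\, \overline{h_j \otimes c_1}\cdots\overline{h_j \otimes c_{r_j}}\, w^\Gamma_\lambda \in \Span\bigl\{\text{monomials with at most $r_j$ factors of $\overline{h_j \otimes (\cdot)}$}\bigr\}
\]
for arbitrary $a', c_1, \dots, c_{r_j} \in \cB_j$. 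Iterating on the excess $\max(0, m_j - r_j)$ and running over $j \in J$ then brings every monomial into the required form.

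The main obstacle is the polarization step, which must accommodate the full $\Xi$-grading on $A^{\Gamma_j}$: the Garland identity directly produces reductions only for $(\overline{h_j \otimes a^{\delta_\bj}})^{r_j}$ with $a$ homogeneous of a single grade, so to reduce arbitrary products with $c_i$ ranging over all of $\cB_j$ one must polarize within each graded piece $A_\xi \cap A^{\Gamma_j}$ separately and combine, using the identity $(A_\xi)^{|\Gamma|} = A^\Gamma$ (cf.\ the proof of Lemma~\ref{lem:A-xi-generators}) to handle the case $|\bj| = 1$, where $\delta_\bj = |\Gamma|$.
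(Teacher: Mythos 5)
Your stages 1 and 2 are fine and match the setup the paper takes for granted: under the standing hypotheses $\g^0=\h$, so $W^\Gamma(\lambda)_\lambda=\cU((\h\otimes A)^\Gamma)w^\Gamma_\lambda$ is spanned by unrestricted monomials in the $\overline{h_j\otimes b}$, and factors with $b=a_0=1$ are scalars. The gap is in stage 3, precisely at the point you flag. For a long simple root ($|\bj|=1$) your plan does work: Lemma~\ref{lem:garland} holds for \emph{arbitrary} $a\in A^{\Gamma_j}=A^\Gamma$, so polarizing $a\mapsto\sum_i t_i c_i$ in the relation $\sum_{s}(h_j\otimes a'a^{r_j-s})p(a,\alpha)_s\,w^\Gamma_\lambda=0$ reduces $\overline{h_j\otimes a'}\prod_{i=1}^{r_j}\overline{h_j\otimes c_i}\,w^\Gamma_\lambda$ for arbitrary $c_i\in\cB_j$. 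But for a short simple root ($|\bj|>1$, so $\Gamma_j$ is trivial and $\cB_j=\cB$ is a homogeneous basis of all of $A$), the only available identity is Lemma~\ref{lem:twisted-garland}, whose input is a single element $a$ of a single nonzero component $A_\xi$. Polarizing within $A_\xi$ therefore only reduces monomials all of whose factors lie in that one fixed $A_\xi$; it says nothing about monomials whose factors are spread over different components $A_\tau$ (including $\tau=0$), and such monomials genuinely occur in your stage-1 spanning set. ``Polarizing within each graded piece separately and combining'' does not repair this, because the relation you need is multilinear in $r_j+1$ independent arguments of mixed degrees, not a sum of relations each homogeneous in one variable; nor does $(A_\xi)^{|\Gamma|}=A^\Gamma$ help, since $\overline{h_j\otimes de}$ is not a polynomial in $\overline{h_j\otimes d}$ and $\overline{h_j\otimes e}$ inside $\cU^0_\Gamma$.

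What you need is the fully multilinear form of the Garland identity, and the paper obtains it directly, with no generating-function machinery: for any list $a_{p_1},\dotsc,a_{p_\ell}\in\cB_j$ (not necessarily equal or co-graded) and $\ell\ge r_j+1$, expand
\[ \ts
  0=\Bigl(\prod_{s=1}^\ell\overline{e_j\otimes a_{p_s}}\Bigr)\bigl(\overline{f_j\otimes 1}\bigr)^\ell w^\Gamma_\lambda
\]
using only the commutator relations $[\overline{e_j\otimes a},\overline{f_j\otimes a'}]=\overline{h_j\otimes aa'}$, $[\overline{h_j\otimes a},\overline{e_j\otimes a'}]=2\,\overline{e_j\otimes aa'}$, $[\overline{h_j\otimes a},\overline{f_j\otimes a'}]=-2\,\overline{f_j\otimes aa'}$. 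The leading term is a nonzero multiple of $\prod_{s}\overline{h_j\otimes a_{p_s}}\,w^\Gamma_\lambda$, and every other term is a product of strictly fewer factors of the form $\overline{h_j\otimes c}$ with $c$ a product of several of the $a_{p_s}$ (re-expanded in the basis $\cB_j$); induction on $\ell$ then finishes the argument. Replacing your Garland-plus-polarization step by this direct expansion closes the gap.
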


\begin{proof}
  It suffices to prove that for all $j \in J$ and $a_{p_1},\dotsc,a_{p_\ell} \in \cB_j$ with $1 \le p_1 \le \dots \le p_\ell$, $\ell \in \N$, we have
  \[ \ts
    \prod_{s=1}^\ell \overline{h_j \otimes a_{p_s}}\, w^\Gamma_\lambda \in \Span_\kk \left\{ \left. \prod_{q=1}^m \overline{h_j \otimes a_{t_q}}\, w^\Gamma_\lambda \ \right|\ 1 \le t_1 \le \dots \le t_m,\ m \le r_j \right\}.
  \]
  For $j \in J$ and $a,a' \in \cB_j$, we have
  \begin{gather*} \ts
    \left[ \overline{e_j \otimes a}, \overline{f_j \otimes a'} \right] = \overline{h_j \otimes aa'},\\ \ts
    \left[ \overline{h_j \otimes a}, \overline{e_j \otimes a'} \right] = 2\, \overline{e_j \otimes aa'},\quad \left[ \overline{h_j \otimes a}, \overline{f_j \otimes a'} \right] = - 2\, \overline{f_j \otimes aa'}.
  \end{gather*}
  Thus, for $\ell \ge r_j + 1$, we have
  \[ \ts
    0 = \left( \prod_{s=1}^\ell \overline{e_j \otimes a_{p_s}} \right) \left( \overline{f_j \otimes 1} \right)^\ell w^\Gamma_\lambda = \prod_{s=1}^\ell \overline{h_j \otimes a_{p_s}}\, w^\Gamma_\lambda + C w^\Gamma_\lambda,
  \]
  where $C$ is a $\kk$-linear combination of elements of the form $\prod_{s=1}^m \overline{h_j \otimes a_{p_{k_s}}}$ with $m < \ell$.  The lemma follows by induction on $\ell$.
\end{proof}

By Lemma~\ref{lem:A-lambda-Gamma-spanning-set}, we see that $\bA^\lambda_\Gamma$ is spanned by the image of the set
\begin{equation} \label{eq:A-lambda-Gamma-span-set} \ts
  \left\{ \left. \prod_{j \in J} \prod_{s=1}^{m_j} \overline{h_j \otimes b_{j,s}}\ \right|\ b_{j,s} \in \cB_j,\ a_0 < b_{j,1} \le \dots \le b_{j,m_j},\ m_j \le r_j \text{ for } j \in J \right\}.
\end{equation}

We now state the first main result of this section, which gives an explicit realization of the algebra $\bA^\lambda_\Gamma$.

\begin{theo} \label{theo:Alambda-isom}
  The map $\tau_\lambda \colon \bA^\lambda_\Gamma \to \bbA^\lambda_\Gamma$ is an isomorphism of algebras.
\end{theo}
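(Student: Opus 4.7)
By Lemma~\ref{lem:tau-surjective}, $\tau_\lambda$ is surjective, so it suffices to prove injectivity. My strategy is to compare $\bA^\lambda_\Gamma$ and $\bbA^\lambda_\Gamma$ by passing to maximal spectra. By Theorem~\ref{theo:A-fg}, $\bA^\lambda_\Gamma$ is a finitely generated commutative $\kk$-algebra, so its irreducible finite-dimensional modules are one-dimensional and correspond bijectively to maximal ideals. Proposition~\ref{prop:irred-wt-space-modules}(d) then yields a bijection $\cE^\Gamma_\lambda \to \maxSpec \bA^\lambda_\Gamma$, $\psi \mapsto \Ann_{\bA^\lambda_\Gamma} M^\Gamma(\psi)$. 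The map $\mm \mapsto \psi_\mm$ provides an analogous bijection $\maxSpec \bbA^\lambda_\Gamma \to \cE^\Gamma_\lambda$, and Lemma~\ref{lem:tau-eval-composition} shows these are compatible under $\tau_\lambda^*$, since the one-dimensional $\bA^\lambda_\Gamma$-module $\bA^\lambda_\Gamma/\tau_\lambda^{-1}(\mm)$ has character $\ev_\mm \circ \tau_\lambda = \hev^\Gamma_{\psi_\mm}$, matching the character by which $\bA^\lambda_\Gamma$ acts on $M^\Gamma(\psi_\mm)$.

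Consequently, $\ker \tau_\lambda$ is contained in every maximal ideal of $\bA^\lambda_\Gamma$, so by the Nullstellensatz (applied to the finitely generated $\kk$-algebra $\bA^\lambda_\Gamma$) we have $\ker \tau_\lambda \subseteq \mathrm{nil}(\bA^\lambda_\Gamma)$. Conversely, the assumption $\rad A = 0$ yields that each $A^{\Gamma_j}$ is reduced, and in characteristic zero this implies $\bbA^\lambda_\Gamma = \bigotimes_{j \in J} S^{r_j}(A^{\Gamma_j})$ is reduced; so $\mathrm{nil}(\bA^\lambda_\Gamma) \subseteq \ker \tau_\lambda$. Hence $\ker \tau_\lambda = \mathrm{nil}(\bA^\lambda_\Gamma)$, and the theorem reduces to proving that $\bA^\lambda_\Gamma$ itself is reduced.

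The main obstacle is this last reducedness claim, which I would handle by showing directly that the spanning set $S$ of $\bA^\lambda_\Gamma$ from Lemma~\ref{lem:A-lambda-Gamma-spanning-set} maps under $\tau_\lambda$ to a $\kk$-linearly independent subset of $\bbA^\lambda_\Gamma$; this forces $\tau_\lambda$ to be a bijection on $S$, hence an isomorphism, whence $\bA^\lambda_\Gamma \cong \bbA^\lambda_\Gamma$ is reduced. Because $\bbA^\lambda_\Gamma$ factors as a tensor product, the linear independence reduces to showing, for each $j \in J$, that the products $\prod_{s=1}^m \sym_{r_j}(b_s)$ (with $0 \le m \le r_j$ and $1 < b_1 \le \dots \le b_m$ in $\cB_j$) are linearly independent in $S^{r_j}(A^{\Gamma_j})$. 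Expanding in the monomial basis indexed by multisets of size $r_j$ from $\cB_j$, the contributions from placing the $b_s$ in distinct tensor slots yield a nonzero multiple of the basis element indexed by the multiset $\{b_1, \dots, b_m, 1, \dots, 1\}$, while contributions from coincident slots involve products $b_s b_{s'}$ that must be re-expanded in $\cB_j$. The technical heart of the argument is to choose an order on multisets---combining the count of non-unit entries with a filtration on $A^{\Gamma_j}$ induced by a finite set of generators---so that the element above is the strict leading term of each product, giving distinct leading terms for distinct tuples $(b_1, \dots, b_m)$ and hence the required linear independence.
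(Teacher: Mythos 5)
Your overall strategy coincides with the paper's: by Lemma~\ref{lem:tau-surjective} only injectivity is at issue, and the paper likewise proves it by showing that the images under $\tau_\lambda$ of the spanning set~\eqref{eq:A-lambda-Gamma-span-set} are linearly independent, reducing via the tensor factorization of $\bbA^\lambda_\Gamma$ to the linear independence of the products $\prod_{s=1}^m \sym_{r_j}(b_s)$ in $S^{r_j}(A^{\Gamma_j})$. Your first paragraph (identifying $\ker\tau_\lambda$ with the nilradical via the bijection $\mm \mapsto \psi_\mm$ and Lemma~\ref{lem:tau-eval-composition}) is correct but ends up redundant, since the linear-independence claim you then invoke already yields injectivity outright.

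The one place you stop short is what you call the technical heart: you propose an order on multisets combining the count of non-unit entries with an auxiliary filtration of $A^{\Gamma_j}$, but you do not verify the strict-leading-term property. In fact no auxiliary filtration is needed. If a placement $\phi \colon \{1,\dotsc,m\} \to \{1,\dotsc,r_j\}$ of the $b_s$ into tensor slots is non-injective, then after re-expanding each slot's product in the basis $\cB_j$, every resulting pure tensor has at most $|\im \phi| \le m-1$ non-unit entries (unoccupied slots carry $1$, and each occupied slot contributes a single basis element), \emph{regardless} of how $b_s b_{s'}$ expands. So grading by the number of non-unit entries alone isolates, in top degree $m$, a nonzero multiple of the basis element of the multiset $\{b_1,\dotsc,b_m,1,\dotsc,1\}$; distinct non-decreasing tuples give distinct such basis elements, and a downward induction on $m$ finishes. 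This is precisely the paper's argument, phrased there as the projection of $S^{r_j}(A^{\Gamma_j})$ onto $(A^{\Gamma_j}_+)^{\otimes \ell} \otimes 1^{\otimes(r_j-\ell)}$ with $A^{\Gamma_j}_+$ the span of the non-unit basis vectors. So your plan is sound and essentially the paper's, but as written the decisive combinatorial step is asserted rather than proved, and the extra filtration you reach for would only obscure it.
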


\begin{proof}
  By Lemma~\ref{lem:tau-surjective}, it suffices to show that $\tau_\lambda$ is injective.  For this, it is enough to show that the images under $\tau_\lambda$ of the elements of the set~\eqref{eq:A-lambda-Gamma-span-set} are linearly independent (over $\kk$) in $\bbA^\lambda_\Gamma$. Now, for $b_{j,s} \in \cB_j$, $s=1,\dotsc,m_j$, $a_0 < b_{j,1} \le \dots \le b_{j,m_j}$, $m_j \le r_j$, $j \in J$, we have
  \begin{equation} \label{eq:tau-lambda-formula} \ts
    \tau_\lambda \left( \prod_{j \in J} \prod_{s=1}^{m_j} \overline{h_j \otimes b_{j,s}} \right) = \prod_{j \in J} \prod_{s=1}^{m_j} \sym^j_\lambda(b_{j,s}).
  \end{equation}
  Since the tensor product of linearly independent sets is linearly independent, it suffices to prove that, for a fixed $j \in J$, the elements
  \[ \ts
    \prod_{s=1}^m \sym_{r_j}(b_s),\quad b_s \in \cB_j,\ s=1,\dotsc,m,\ a_0 < b_1 \le \dots \le b_m,\ m \le r_j,
  \]
  are linearly independent elements of $S^{r_j}(A^{\Gamma_j})$.  Consider a linear combination of distinct elements of this set equal to zero:
  \begin{equation} \label{eq:lin-comb-syms} \ts
    \sum_{t=1}^N \left( c_t \prod_{s=1}^{m_t} \sym_{r_j} (b_{s,t}) \right) = 0
  \end{equation}
  for some $b_{s,t} \in \cB_j$, $t=1,\dotsc,N$, $s=1,\dotsc,m_t$, $a_0 < b_{1,t} \le \dotsb \le b_{m_t,t}$, $m_t \le r_j$ and $c_1,\dotsc,c_N \in \kk$.  Choose $\ell \in \{1,\dotsc,r_j\}$.  Let $A^{\Gamma_j}_+ \defeq \Span_\kk \{b \in \cB_j\ |\ b \ne 1\} \subsetneq A^{\Gamma_j}$.  Applying the projection $S^{r_j}(A^{\Gamma_j}) \twoheadrightarrow (A^{\Gamma_j}_+)^{\otimes \ell} \otimes 1^{\otimes(r_j-\ell)}$ to both sides of~\eqref{eq:lin-comb-syms} gives
  \[ \ts
    \sum_{1 \le t \le N,\ m_t=\ell} c_t \sum_{\varsigma \in S_{\ell}} \varsigma \left( b_{1,t} \otimes b_{2,t} \otimes \dotsb \otimes b_{\ell,t} \otimes 1^{\otimes(r_j-\ell)} \right) = 0,
  \]
  where we view $S_\ell$ as a subgroup of $S_n$ in the natural way (i.e., permuting the first $\ell$ elements). Since $b_{1,t},\dots,b_{\ell,t}$ are elements of a basis of $A$ (for any $t$), the elements
  \[ \ts
    \sum_{\varsigma \in S_{\ell}} \varsigma \left( b_{1,t} \otimes b_{2,t} \otimes \dotsb \otimes b_{\ell,t} \otimes 1^{\otimes(r_j-\ell)} \right)
  \]
  appearing above are linearly independent.  Thus $c_t=0$ for all $t=1,\dots,N$.
\end{proof}

In the remainder of this section, we provide an alternative description of $\bbA^\lambda_\Gamma$ in terms of coinvariants that does not depend on the choice $J$ of one element from each $\Gamma$-orbit in $I$.  For an algebra $B$ with the action (by automorphisms) of a finite group $\Upsilon$, we define $B_{(\Upsilon)}$ to be the ideal of $B$ generated by the set $\{b - \gamma b\ |\ b \in B,\ \gamma \in \Upsilon\}$ and let $B_\Upsilon \defeq B/B_{(\Upsilon)}$ be the algebra of coinvariants.  We hope this causes no confusion with the notation $\bbA^\lambda_\Gamma$, which is not, a priori, the algebra of coinvariants of $\bbA^\lambda$ (but see Theorem~\ref{theo:AlamGam-coinvariants}).  Note that if $\Upsilon$ is abelian, then $B_{(\Upsilon)}$ is the ideal of $B$ generated by $\bigoplus_{\xi \in \Xi,\, \xi \ne 0} B_\xi$, where $\Xi$ is the character group of $\Upsilon$.

\begin{lem} \label{lem:transitive-group-action-on-tensor-prod}
  Suppose that $\Upsilon$ is a finite group acting on a commutative unital $\kk$-algebra $B$ by algebra automorphisms and simply transitively on a finite set $Z$.  Consider the action of $\Upsilon$ on $B' \defeq \bigotimes_{z \in Z} B_z$, where $B_z=B$ for all $z \in Z$, determined by
  \[ \ts
    \gamma \left(\bigotimes_{z \in Z} b_z\right) = \bigotimes_{z \in Z} \gamma b_{\gamma^{-1}z},\quad \gamma \in \Upsilon,\ b_z \in B, z \in Z.
  \]
  Then $(B')_\Upsilon \cong B$.
\end{lem}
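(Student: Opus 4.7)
The plan is to exploit the simply transitive action to identify $Z$ with $\Upsilon$ via a choice of base point, and then construct explicit inverse maps between $B$ and $(B')_\Upsilon$. Pick any $z_0 \in Z$ and, for each $z \in Z$, let $\gamma_z$ denote the unique element of $\Upsilon$ with $\gamma_z z_0 = z$. Choose $z_0$ so that $\gamma_{z_0} = e$. Because the action is free, one has the cocycle-type identity $\gamma_{\delta w} = \delta \gamma_w$ for all $\delta \in \Upsilon$ and $w \in Z$ (both sides send $z_0$ to $\delta w$, and the action is free).

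First I would define a $\kk$-algebra homomorphism
\[
  \phi \colon B' \to B, \qquad \phi\Bigl(\bigotimes_{z \in Z} b_z\Bigr) = \prod_{z \in Z} \gamma_z^{-1} b_z,
\]
which is well defined and multiplicative because $B$ is commutative. I would then verify that $\phi$ factors through the coinvariants $(B')_\Upsilon$ by checking $\phi(\delta b') = \phi(b')$ for each generator and each $\delta \in \Upsilon$; reindexing the product via $w = \delta^{-1} z$ and applying the cocycle identity gives
\[
  \phi\bigl(\delta\textstyle\bigotimes_z b_z\bigr)
  = \prod_z \gamma_z^{-1} \delta\, b_{\delta^{-1} z}
  = \prod_w \gamma_{\delta w}^{-1} \delta\, b_w
  = \prod_w \gamma_w^{-1} b_w
  = \phi\bigl(\textstyle\bigotimes_z b_z\bigr).
\]
Hence $\phi$ descends to an algebra map $\bar\phi \colon (B')_\Upsilon \to B$.

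Next I would construct a candidate inverse $\psi \colon B \to (B')_\Upsilon$ by sending $b$ to the class of the ``elementary tensor'' $b_{(z_0)}$ having $b$ in the $z_0$-slot and $1$ elsewhere; this is clearly an algebra homomorphism. The identity $\bar\phi \circ \psi = \id_B$ is immediate from $\gamma_{z_0} = e$. For the other direction, I would observe that in $(B')_\Upsilon$ the generator $b_{(z)}$ equals $(\gamma_z^{-1} b)_{(z_0)}$ (apply $\gamma_z^{-1}$); writing $\bigotimes_z b_z = \prod_z b_{z,(z)}$ and using multiplicativity then gives $\bigotimes_z b_z \equiv \bigl(\prod_z \gamma_z^{-1} b_z\bigr)_{(z_0)} = \psi(\phi(\bigotimes_z b_z))$ in $(B')_\Upsilon$, so $\psi \circ \bar\phi = \id_{(B')_\Upsilon}$.

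The substantive content is essentially just the cocycle identity $\gamma_{\delta w} = \delta\gamma_w$ and the ensuing reindexing, so I expect no real obstacle beyond careful bookkeeping; the commutativity of $B$ is essential both to define $\phi$ as an algebra map and to assert that reshuffling factors in the product is legitimate.
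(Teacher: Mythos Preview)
Your proof is correct and follows essentially the same approach as the paper: both pick a base point to identify $Z$ with $\Upsilon$, define the same multiplication map $\phi=\varpi$ via $\bigotimes_z b_z \mapsto \prod_z \gamma_z^{-1} b_z$, and verify $\Upsilon$-invariance by the reindexing you describe. The only cosmetic difference is that the paper proves $\ker\varpi \subseteq (B')_{(\Upsilon)}$ directly by a telescoping reduction $b' \equiv \varpi(b') \otimes 1 \otimes \cdots \otimes 1 \pmod{(B')_{(\Upsilon)}}$, whereas you package this same reduction as the identity $\psi\circ\bar\phi = \id$ for an explicit inverse $\psi$; the underlying computation ($b_{(z)} \equiv (\gamma_z^{-1}b)_{(z_0)}$ plus multiplicativity) is identical.
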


\begin{proof}
  Label the elements of $\Upsilon$ so that we have $\Upsilon = \{\tau_1,\dots,\tau_n\}$, with $\tau_1$ being the identity element of $\Upsilon$.  Choose $z_1 \in Z$, and set $z_i = \tau_i z_1$ for $i=1,\dotsc,n$.  So $Z=\{z_1,\dots,z_n\}$ and we have a natural action of $\Upsilon$ on the set $\{1,\dots,n\}$ by defining $\gamma i = j$ if $\gamma z_i = z_j$ (equivalently, if $\gamma \tau_i = \tau_j$) for $\gamma \in \Upsilon$ and $i \in \{1,\dots,n\}$.

  Consider the algebra homomorphism determined by
  \[ \ts
    \varpi \colon B' \to B,\quad \bigotimes_{i=1}^n b_i \mapsto \prod_{i=1}^n \tau_i^{-1} b_i,\quad b_i \in B_{z_i}.
  \]
  Clearly $\varpi$ is surjective and so it remains to show that $\ker \varpi = (B')_{(\Upsilon)}$.  For all $b' = \bigotimes_{i=1}^n b_i$ (recalling that elements of this form span $B'$), we have
  \[ \ts
    \varpi (\gamma b') = \varpi \left( \bigotimes_{i=1}^n \gamma b_{\gamma^{-1}i} \right) = \prod_{i=1}^n \tau_i^{-1} \gamma b_{\gamma^{-1}i} = \prod_{j=1}^n \tau_j^{-1} b_j = \varpi (b'),
  \]
  where, in the third equality, we have changed the index by setting $j = \gamma^{-1}i$. Thus $(B')_{(\Upsilon)} \subseteq \ker \varpi$.

  Now suppose $b' \in \ker \varpi$.  Then we can write
  \[ \ts
    b' = \sum_{j=1}^\ell ( b_{1,j} \otimes \dotsb \otimes b_{n,j} ),\quad b_{i,j} \in B_{z_i} \text{ for all } 1 \le i \le n,\ 1 \le j \le \ell.
  \]
  It is straightforward to verify that
  \[ \ts
    b_{1,j} \otimes \dotsb \otimes b_{n,j} \equiv \left( \prod_{i=1}^n \tau_i^{-1}b_{i,j} \right) \otimes 1 \otimes \dotsb \otimes 1 \mod B_{(\Upsilon)}.
  \]
  \details{
    For each $j=1,\dots,n$, the element $b_{1,j} \otimes \dotsb \otimes b_{n,j}$ is equal to the telescoping sum
    \begin{gather*} \ts
       \left( \prod_{i=1}^n \tau_i^{-1}b_{i,j} \right) \otimes 1 \otimes \dotsb \otimes 1 \\
       \ts + \left( \prod_{i=1}^{n-1} \tau_i^{-1} b_{i,j} \right) \otimes 1 \otimes \dotsb \otimes 1 \otimes b_{n,j} - \left( \prod_{i=1}^n \tau_i^{-1}b_{i,j} \right) \otimes 1 \otimes \dotsb \otimes 1 \\
       \ts + \left( \prod_{i=1}^{n-2} \tau_i^{-1} b_{i,j} \right) \otimes 1 \otimes \dotsb \otimes 1 \otimes b_{n-1,j} \otimes b_{n,j} - \left( \prod_{i=1}^{n-1} \tau_i^{-1} b_{i,j} \right) \otimes 1 \otimes \dotsb \otimes 1 \otimes b_{n,j} \\
       \ts \dotsb + b_{1,j} \otimes \dotsb \otimes b_{n,j} - (b_{1,j} \tau_2^{-2}b_{2,j}) \otimes 1 \otimes b_{3,j} \otimes \dotsb \otimes b_{n,j},
    \end{gather*}
    which, in turn, is equal to
    \begin{gather*} \ts
       \left( \prod_{i=1}^n \tau_i^{-1}b_{i,j} \right) \otimes 1 \otimes \dotsb \otimes 1 \\
       \ts + \Big(\left( \prod_{i=1}^{n-1} \tau_i^{-1} b_{i,j} \right) \otimes 1 \otimes \dotsb \otimes 1\Big) \Big(1 \otimes \dotsb \otimes 1 \otimes b_{n,j} - (\tau_n^{-1}b_{n,j})\otimes 1 \otimes \dotsb \otimes 1\Big) \\
       \ts + \Big(\left( \prod_{i=1}^{n-2} \tau_i^{-1} b_{i,j} \right) \otimes 1 \otimes \dotsb \otimes 1 \otimes b_{n,j} \Big) \Big(1 \otimes \dotsb \otimes 1 \otimes b_{n-1,j} \otimes 1 - ( \tau_{n-1}^{-1} b_{{n-1},j}) \otimes 1 \otimes \dotsb \otimes 1 \Big) \\
       \ts \dotsb + \Big( b_{1,j} \otimes 1 \otimes b_{3,j} \dotsb \otimes b_{n,j} \Big) \Big( 1 \otimes b_{2,j} \otimes 1 \otimes \dotsb \otimes 1 - \tau_2^{-2}b_{2,j} \otimes 1 \otimes \dotsb \otimes 1 \Big).
    \end{gather*}
    Each of the right-hand factors in the lines above (except the first line) are elements of the form $b'' - \tau b''$ for some $b'' \in B'$ and $\tau \in \Upsilon$.
  }
  Therefore,
  \[ \ts
    b' \equiv \left(\sum_{j=1}^\ell \prod_{i=1}^n \tau_i^{-1}b_{i,j} \right) \otimes 1 \otimes \dotsb \otimes 1 \equiv \varpi(b') \otimes 1 \otimes \dotsb \otimes 1 \equiv 0 \mod B_{(\Upsilon)}. \qedhere
  \]
\end{proof}

\begin{lem} \label{lem:coinvariant-reduction}
  Suppose that $\Upsilon$ is a finite cyclic group acting on a finitely generated commutative associative unital $\kk$-algebra $B$ by algebra automorphisms in such a way that the induced action of $\Upsilon$ on $\maxSpec B$ is free.  Fix a positive integer $n$ and consider the diagonal action of $\Upsilon$ on $B^{\otimes n}$.  This action commutes with the natural action of $S_n$ on $B^{\otimes n}$ and thus we have an induced action of $\Upsilon$ on $S^n B$.
  \begin{enumerate}
    \item If the order of $\Upsilon$ does not divide $n$, then $(S^n B)_\Upsilon = 0$.
    \item If $n = m |\Upsilon|$ for some positive integer $m$ and $(S^n B)_\Upsilon$ is reduced, then $(S^n B)_\Upsilon \cong S^m(B^\Upsilon)$.
  \end{enumerate}
\end{lem}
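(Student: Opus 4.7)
The key geometric observation for both parts is that $\maxSpec(S^nB)_\Upsilon$ can be naturally identified with the $\Upsilon$-fixed locus of $\maxSpec S^nB \cong (\maxSpec B)^n/S_n$, that is, with the set of $\Upsilon$-invariant multisets of size $n$ in $\maxSpec B$. Since the $\Upsilon$-action on $\maxSpec B$ is free, each orbit has cardinality exactly $|\Upsilon|$, and every $\Upsilon$-invariant multiset of size $n$ is a disjoint union of full orbits with multiplicities; in particular such a multiset can exist only when $|\Upsilon|$ divides $n$.

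For part (a), this immediately gives $\maxSpec(S^nB)_\Upsilon = \emptyset$ whenever $|\Upsilon| \nmid n$. Since $(S^nB)_\Upsilon$ is a finitely generated commutative $\kk$-algebra (being a quotient of the finitely generated $S^nB$) and any nonzero ring admits a maximal ideal, this forces $(S^nB)_\Upsilon = 0$.

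For part (b), write $k = |\Upsilon|$, fix a generator $\gamma$ of $\Upsilon$, and assume $n = mk$. The plan is to construct an explicit surjective ring homomorphism $\bar\Psi \colon (S^nB)_\Upsilon \to S^m(B^\Upsilon)$ and then combine surjectivity of its dual on $\maxSpec$'s with the reducedness hypothesis to obtain injectivity. The \emph{twisted multiplication} $\mu \colon B^{\otimes k} \to B$, $b_1 \otimes \dotsb \otimes b_k \mapsto b_1 \cdot (\gamma b_2) \cdots (\gamma^{k-1} b_k)$, is a $\kk$-algebra homomorphism by commutativity of $B$, hence so is $\mu^{\otimes m} \colon B^{\otimes n} \to B^{\otimes m}$. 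A short computation on the generators $\sym_n(b)$ of $S^nB$ yields $\mu^{\otimes m}(\sym_n(b)) = k\,\sym_m(\bar b)$, where $\bar b \defeq \frac{1}{k}\sum_{j=0}^{k-1}\gamma^j b \in B^\Upsilon$ is the $\Upsilon$-average of $b$. Consequently $\Psi \defeq \mu^{\otimes m}|_{S^nB}$ lands in $S^m(B^\Upsilon) \subseteq B^{\otimes m}$, and since $\bar b = \overline{\gamma b}$ it descends to $\bar\Psi$ on the $\Upsilon$-coinvariant quotient. Surjectivity is then immediate: $\bar\Psi(\sym_n(c)) = k\,\sym_m(c)$ for every $c \in B^\Upsilon$, and in characteristic zero the elements $\sym_m(c)$ generate $S^m(B^\Upsilon)$ as a $\kk$-algebra (by the generation fact cited before Lemma~\ref{lem:A-lambda-Gamma-spanning-set}, applied to the finitely generated algebra $B^\Upsilon$).

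For injectivity I will identify the induced map $\bar\Psi^* \colon \maxSpec S^m(B^\Upsilon) \to \maxSpec(S^nB)_\Upsilon$ with the geometric ``union of orbits'' bijection $(\mathfrak y_1, \dotsc, \mathfrak y_m) \mapsto \bigsqcup_i \mathfrak y_i$; this follows by evaluating $\bar\Psi(\sym_n(b))$ against a point and matching it with the evaluation of $\sym_n(b)$ on the corresponding $\Upsilon$-invariant multiset, using the averaging identity $\sum_j b(\gamma^j y) = k\,\bar b(y)$ and the freeness of the action. Since $\bar\Psi^*$ is then surjective, every maximal ideal of $(S^nB)_\Upsilon$ contains $\ker\bar\Psi$; for a finitely generated $\kk$-algebra the Jacobson radical equals the nilradical, so the reducedness hypothesis gives $\ker\bar\Psi = 0$. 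The main subtlety will be making the identification of $\bar\Psi^*$ with the union-of-orbits map precise; once that is done, everything else is routine given the algebraic construction of $\bar\Psi$.
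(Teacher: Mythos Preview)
Your proof is correct and follows essentially the same route as the paper's: the twisted multiplication map $\mu\colon B^{\otimes k}\to B$, $b_1\otimes\cdots\otimes b_k\mapsto b_1(\gamma b_2)\cdots(\gamma^{k-1}b_k)$ is exactly the paper's map, and the endgame (surjectivity of the induced map on $\maxSpec$ via the union-of-orbits bijection, then reducedness) is identical. Your explicit formula $\mu^{\otimes m}(\sym_n(b)) = k\,\sym_m(\bar b)$ on algebra generators is a clean way to handle both the landing in $S^m(B^\Upsilon)$ and the descent to coinvariants in one stroke, whereas the paper routes these through the $\Xi$-grading of $S^\ell B$; otherwise the arguments coincide.
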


\begin{proof}
  \begin{asparaenum}
    \item \label{lem-item:coinvariant-reduction-zero} We have a bijection
      \[ \ts
        \maxSpec (S^n B)_\Upsilon \cong \left( \left( \prod_{i=1}^n \maxSpec B \right)/S_n \right)^\Upsilon.
      \]
      In other words, the maximal ideals of $(S^n B)_\Upsilon$ can be identified with $\Upsilon$-invariant unordered $n$-tuples of maximal ideals of $B$.  Therefore, they are unions of $\Upsilon$-orbits on the set $\maxSpec B$.  Since this action is free, $(S^n B)_\Upsilon$ has no maximal ideals if $n$ is not divisible by the order of $\Upsilon$.

    \item Let $\ell = |\Upsilon|$ and assume $n=m \ell$ for some positive integer $m$.  Recall that for any $\kk$-algebra $C$ with an $\Upsilon$-action, we have the induced grading $C=\bigoplus_{\xi \in \Xi}C_\xi$, where $\Xi$ is the character group of $\Upsilon$.  Let $\sigma$ be a generator of $\Upsilon$ and consider the map
      \[
        B^{\otimes \ell} \to B,\quad b_1 \otimes \dotsb \otimes b_\ell \mapsto b_1 \sigma(b_2) \dotsm \sigma^{\ell-1}(b_\ell),
      \]
      extended by linearity.  Let $\Phi'$ denote the restriction of this map to $S^\ell B$.  One easily checks that $\Phi'$ is $\Upsilon$-invariant and $\Phi'(S^\ell B) = B_0$.  Thus,
      \begin{equation} \label{eq:Phi'-vanish}
        \Phi'((S^\ell B)_\xi) = 0 \quad \text{for all } \xi \ne 0.
      \end{equation}
      We have the induced surjective map
      \[
        \Phi \colon (S^\ell B)^{\otimes m} \xrightarrow{(\Phi')^{\otimes m}} (B_0)^{\otimes m}.
      \]
      Restricting $\Phi$ to $S^n B$ gives a surjective map
      \[
        \varphi \colon S^n B \twoheadrightarrow S^m(B_0).
      \]
      Now,
      \begin{equation} \label{eq:Bn-decomp} \ts
        B^{\otimes n} \cong (B^{\otimes n})_0 \oplus B', \quad \text{where } B' = \bigoplus_{\xi \in \Xi,\, \xi \ne 0} (B^{\otimes n})_\xi.
      \end{equation}
      Each summand in the decomposition~\eqref{eq:Bn-decomp} is preserved by the action of $S_n$.  Thus,
      \[
        S^n B \cong ((B^{\otimes n})_0)^{S_n} \oplus (B')^{S_n}.
      \]
      Now,
      \[ \ts
        (B')^{S_n} \subseteq \bigoplus_{\xi_1 + \dotsb + \xi_m \ne 0} (S^\ell B)_{\xi_1} \otimes \dotsb \otimes (S^\ell B)_{\xi_m}.
      \]
      Thus it follows from~\eqref{eq:Phi'-vanish} that $\varphi((B')^{S_n})=0$.  So $\varphi$ vanishes on the ideal of $S^n B$ generated by $(B')^{S_n}$, which is precisely $(S^n B)_{(\Upsilon)}$.  Therefore, $\varphi$ induces a surjective map of algebras
      \[
        \bar \varphi\colon (S^n B)_\Upsilon \twoheadrightarrow S^m(B_0) = S^m(B^\Upsilon).
      \]
      Applying the functor $\Spec$ gives a morphism of schemes
      \[
        \Spec \bar \varphi \colon \Spec S^m(B^\Upsilon) \to \Spec (S^n B)_\Upsilon
      \]
      which is a closed immersion.  Now,
      \begin{gather*}
        \maxSpec S^m(B^\Upsilon) \cong (\maxSpec B^\Upsilon)^m/S_m \cong ((\maxSpec B)/\Upsilon)^m/S_m, \quad \text{and} \\
        \maxSpec (S^n B)_\Upsilon \cong (\maxSpec S^n B)^\Upsilon \cong ((\maxSpec B)^n/S_n)^\Upsilon.
      \end{gather*}
      The map $\Spec \bar \varphi$ induces a bijection between these two sets.  Namely, it maps the element of $\maxSpec S^m(B^\Upsilon)$ corresponding to an (unordered) $m$-tuple of $\Upsilon$-orbits on $\maxSpec B$ to the union (counting multiplicity) of these orbits, which is an $\Upsilon$-invariant $n$-tuple of $\maxSpec B$. In particular, $\Spec \bar \varphi$ is surjective on maximal ideals.  Thus $\ker \bar \varphi$ is included in the intersection of all the maximal ideals of $(S^n B)_\Upsilon$.  Now, since $B$ is finitely generated, so is $B^{\otimes n}$, hence so is the fixed point algebra $S^n B = (B^{\otimes n})^{S_n}$, and thus so is the quotient $(S^n B)_\Upsilon$.  Therefore, the intersection of all the maximal ideals of $(S^n B)_\Upsilon$ is equal to the nilradical of $(S^n B)_\Upsilon$, which is zero by our assumption that $(S^n B)_\Upsilon$ is reduced.   Thus, $\bar \varphi$ is injective and hence an isomorphism. \qedhere
\end{asparaenum}
\end{proof}

Define
\begin{equation} \ts
  \bbA^\lambda \defeq  \bigotimes_{i \in I} S^{r_i |\Gamma_i|} A.
\end{equation}
The diagonal action of $\Gamma$ on $A^{\otimes r_i|\Gamma_i|}$ induces an action on $S^{r_i |\Gamma_i|}A$ for each $i \in I$.  Then $\Gamma$ acts on $\bbA^\lambda$ via
\[ \ts
  \gamma \left( \bigotimes_{i \in I} a_i \right) = \bigotimes_{i \in I} \gamma a_{\gamma^{-1}i},\quad \gamma \in \Gamma,\ a_i \in S^{r_i|\Gamma_i|}A,\ i \in I.
\]

\begin{theo} \label{theo:AlamGam-coinvariants}
  If $(S^{r_i|\Gamma_i|}A)_{\Gamma_i}$ is reduced for all $i \in I$, then
  \[
    \bbA^\lambda_\Gamma \cong (\bbA^\lambda)_\Gamma.
  \]
\end{theo}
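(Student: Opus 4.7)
The plan is to decompose both $(\bbA^\lambda)_\Gamma$ and $\bbA^\lambda_\Gamma$ as tensor products indexed by $\Gamma$-orbits in $I$, and to handle each orbit separately via Lemmas~\ref{lem:coinvariant-reduction} and~\ref{lem:transitive-group-action-on-tensor-prod}. Since $\Gamma$ is abelian, $\Gamma_i = \Gamma_j$ whenever $i$ and $j$ lie in a common $\Gamma$-orbit, so the integer $r_i|\Gamma_i|$ depends only on the orbit, and
\[
\bbA^\lambda = \bigotimes_{\bj \in I_\Gamma} C_\bj, \qquad C_\bj \defeq \bigotimes_{i \in \bj} S^{r_j|\Gamma_j|}A,
\]
with $\Gamma$ preserving each $C_\bj$ and acting diagonally across the orbits. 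Since $\bbA^\lambda_\Gamma$ already has one tensor factor $S^{r_j}(A^{\Gamma_j})$ per orbit, it suffices to show $(C_\bj)_\Gamma \cong S^{r_j}(A^{\Gamma_j})$ for each $\bj$ and that these orbit-wise coinvariants assemble correctly.

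The first key auxiliary fact I would establish is that, for any finite abelian group $G$ acting by automorphisms on commutative $\kk$-algebras $X_1,\dotsc,X_n$, the diagonal action satisfies
\[
\bigl(X_1 \otimes \dotsb \otimes X_n\bigr)_G \cong (X_1)_G \otimes \dotsb \otimes (X_n)_G.
\]
Writing $I_{\text{diag}}$ for the diagonal coinvariants ideal and $I_{\text{fw}}$ for the kernel of the projection onto the right-hand side, the inclusion $I_{\text{fw}} \subseteq I_{\text{diag}}$ is immediate, since each generator $1 \otimes \dotsb \otimes (x - gx) \otimes \dotsb \otimes 1$ of $I_{\text{fw}}$ has the form $a - ga$ in the big tensor product ($g$ fixes the inserted $1$'s). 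For the reverse inclusion, decompose any generator $b - gb$ of $I_{\text{diag}}$ into $\widehat{G}^n$-homogeneous components via the character gradings on the $X_k$: a homogeneous component of $b$ that is not annihilated by $g$ has nontrivial total character, and hence at least one tensor slot with $\xi_k \ne 0$; the element $x^{(k)}_{\xi_k}$ is then a nonzero scalar multiple of $x^{(k)}_{\xi_k} - g' x^{(k)}_{\xi_k}$ for a well-chosen $g' \in G$, so it lies in $(X_k)_{(G)}$ and the whole component lies in $I_{\text{fw}}$.

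Applying this identity gives $(\bbA^\lambda)_\Gamma \cong \bigotimes_\bj (C_\bj)_\Gamma$, reducing the computation to a single orbit. For each $\bj$, since $\Gamma_j$ is normal in $\Gamma$ (as $\Gamma$ is abelian), the coinvariants factor as $(C_\bj)_\Gamma = ((C_\bj)_{\Gamma_j})_{\Gamma/\Gamma_j}$. Commutativity of $\Gamma$ also forces $\Gamma_j$ to fix every $i \in \bj$ pointwise, so the $\Gamma_j$-action on $C_\bj$ is diagonal across its tensor factors. Applying the tensor identity once more, together with Lemma~\ref{lem:coinvariant-reduction}(b), whose reducedness hypothesis is exactly the hypothesis of the theorem, gives
\[
(C_\bj)_{\Gamma_j} \cong \bigotimes_{i \in \bj} (S^{r_j|\Gamma_j|}A)_{\Gamma_j} \cong \bigotimes_{i \in \bj} S^{r_j}(A^{\Gamma_j}).
\]
The residual group $\Gamma/\Gamma_j$ acts by algebra automorphisms on $A^{\Gamma_j}$ (since $\Gamma$ preserves $A^{\Gamma_j}$ and $\Gamma_j$ acts trivially there) and permutes $\bj$ simply transitively, so Lemma~\ref{lem:transitive-group-action-on-tensor-prod} applied with $\Upsilon = \Gamma/\Gamma_j$, $Z = \bj$, and $B = S^{r_j}(A^{\Gamma_j})$ collapses this to $(C_\bj)_\Gamma \cong S^{r_j}(A^{\Gamma_j})$.

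Reassembling the orbit factors yields $(\bbA^\lambda)_\Gamma \cong \bigotimes_{\bj \in I_\Gamma} S^{r_j}(A^{\Gamma_j}) = \bbA^\lambda_\Gamma$, as desired. The main technical obstacle is the tensor identity in the second paragraph: it relies crucially on both the abelianness of $G$ (to use the $\widehat{G}$-grading) and the commutativity of the $X_k$ (so that the ideal generated by a single $(X_k)_{(G)}$ still captures the nontrivially graded elements in the other slots through multiplication), and without it the orbit decomposition collapses. Once it is in place, the remaining work amounts to a straightforward verification that the hypotheses of Lemmas~\ref{lem:coinvariant-reduction}(b) and~\ref{lem:transitive-group-action-on-tensor-prod} are satisfied.
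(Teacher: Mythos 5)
Your proof is correct and takes essentially the same route as the paper: reduce to a single $\Gamma$-orbit, identify $S^{r_j}(A^{\Gamma_j})$ with $(S^{r_j|\Gamma_j|}A)_{\Gamma_j}$ via Lemma~\ref{lem:coinvariant-reduction}, and collapse the orbit factors via Lemma~\ref{lem:transitive-group-action-on-tensor-prod} with $\Upsilon = \Gamma/\Gamma_j$ and $Z = \bj$. The only difference is bookkeeping: you explicitly prove the factorization $(X_1\otimes\dotsb\otimes X_n)_G \cong (X_1)_G\otimes\dotsb\otimes(X_n)_G$ and use the two-stage coinvariants $B_\Gamma = (B_{\Gamma_j})_{\Gamma/\Gamma_j}$, whereas the paper asserts the corresponding isomorphism in~\eqref{eq:double-surjection} without proof and instead computes the kernel of the composite surjection directly.
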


\begin{proof}
  For $i \in I$, let $B_i = S^{r_i |\Gamma_i|}A$.  Since the action of $\Gamma$ preserves each factor $\bigotimes_{i \in \bi} B_i$ in $\bbA^\lambda = \bigotimes_{\bi \in I_\Gamma} \bigotimes_{i \in \bi} B_i$, it suffices to prove the theorem for the case where $\lambda = r_\bi \omega_\bi$ for some $\bi \in I_\Gamma$.  Let $j \in J$ be the point in the $\Gamma$-orbit $\bi$ that we chose in our definition of $\bbA^\lambda_\Gamma$.  Since $\Gamma$ is commutative, we have $\Gamma_i = \Gamma_j$ for all $i \in \bi$. So we have $\bbA^\lambda_\Gamma = S^{r_j}(A^{\Gamma_j})$ and $\bbA^\lambda = \bigotimes_{i \in \bi} B_i$.

  By Lemma~\ref{lem:coinvariant-reduction}, we have
  \[
    \bbA^\lambda_\Gamma = S^{r_j}(A^{\Gamma_j}) \cong (B_j)_{\Gamma_j}.
  \]
  Consider the composition
  \begin{equation} \label{eq:double-surjection} \ts
    \bbA^\lambda = \bigotimes_{i \in \bi} B_i \stackrel{\varpi'}{\twoheadrightarrow} \left( \bigotimes_{i \in \bi} B_i \right)_{\Gamma_j} \cong \bigotimes_{i \in \bi} (B_i)_{\Gamma_j} \stackrel{\varpi}{\twoheadrightarrow} (B_j)_{\Gamma_j},
  \end{equation}
  where $\varpi'$ is the natural projection and the third map is the map $\varpi$ of the proof of Lemma~\ref{lem:transitive-group-action-on-tensor-prod}, with $\Upsilon \defeq \Gamma/\Gamma_j$ and $Z \defeq \bi$.  Since $\varpi$ and $\varpi'$ are both surjective, it suffices to show that the kernel of the above composition is $(\bbA^\lambda)_{(\Gamma)}$.  We know from the proof of Lemma~\ref{lem:transitive-group-action-on-tensor-prod} that the kernel of $\varpi$ is $\left( \bigotimes_{i \in \bi} (B_i)_{\Gamma_j} \right)_{(\Gamma/\Gamma_j)}$, which is isomorphic to $\left( \left( \bigotimes_{i \in \bi} B_i \right)_{\Gamma_j} \right)_{(\Gamma/\Gamma_j)}$ under the isomorphism in~\eqref{eq:double-surjection}. Thus it suffices to show that
  \begin{equation} \label{eq:varpi'-equality} \ts
    \left( \bigotimes_{i \in \bi} B_i \right)_{(\Gamma)} = (\varpi')^{-1}  \left( \left( \left( \bigotimes_{i \in \bi} B_i \right)_{\Gamma_j} \right)_{(\Gamma/\Gamma_j)} \right).
  \end{equation}
  But this follows from the fact that, for $b \in \bigotimes_{i \in \bi} B_i$ and $\gamma \in \Gamma$, we have $\varpi'(b-\gamma b) = \varpi'(b) - \bar \gamma \varpi'(b)$, where $\bar \gamma$ denotes the image of $\gamma$ in $\Gamma/\Gamma_j$.
\end{proof}

\begin{lem} \label{lem:symmetric-Laurent-polys}
  For $m \in \N$, we have
  \[
    \kk[t_1^{\pm 1},\dotsc,t_m^{\pm 1}]^{S_m} = \kk[e_1,\dotsc,e_m,e_m^{-1}],
  \]
  where $e_\ell$ denotes the $\ell$-th elementary symmetric polynomial for $\ell=1,\dotsc,m$.  Here the action of $S_m$ is by permutation of the variables $t_1,\dotsc,t_m$.
\end{lem}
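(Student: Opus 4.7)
The plan is to establish both inclusions. The containment $\kk[e_1,\dotsc,e_m,e_m^{-1}] \subseteq \kk[t_1^{\pm 1},\dotsc,t_m^{\pm 1}]^{S_m}$ is immediate, since each $e_\ell$ is symmetric by definition and $e_m = t_1 t_2 \dotsm t_m$ is a unit in the Laurent polynomial ring with inverse $t_1^{-1} \dotsm t_m^{-1}$, which is also $S_m$-invariant.

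For the reverse inclusion, the key idea is to reduce to the classical fundamental theorem of symmetric polynomials by clearing denominators using the unit $e_m$. Given $f \in \kk[t_1^{\pm 1},\dotsc,t_m^{\pm 1}]^{S_m}$, I would choose $N \in \N$ large enough so that $e_m^N f \in \kk[t_1,\dotsc,t_m]$; this is possible because $e_m^N$ multiplies each monomial $t_1^{a_1} \dotsm t_m^{a_m}$ appearing in $f$ by $t_1^N \dotsm t_m^N$, and $f$ involves only finitely many monomials. Since $e_m^N$ is itself symmetric and $f$ is symmetric, the product $e_m^N f$ lies in $\kk[t_1,\dotsc,t_m]^{S_m}$. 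By the classical fundamental theorem of symmetric polynomials, $e_m^N f \in \kk[e_1,\dotsc,e_m]$. Therefore $f = e_m^{-N}(e_m^N f) \in \kk[e_1,\dotsc,e_m,e_m^{-1}]$, completing the proof.

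There is no substantial obstacle here; the argument is essentially a one-step reduction to the classical symmetric polynomial theorem, and the only point requiring a small amount of care is verifying that $N$ can be chosen uniformly to clear all denominators in $f$, which follows from finiteness of the support of $f$ as a Laurent polynomial.
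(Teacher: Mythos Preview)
Your proposal is correct and follows essentially the same approach as the paper: both arguments multiply by a sufficiently large power of $e_m = t_1 \dotsm t_m$ to clear denominators, then invoke the classical fundamental theorem of symmetric polynomials to conclude.
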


\begin{proof}
  We clearly have $\kk[e_1,\dotsc,e_m,e_m^{-1}] \subseteq \kk[t_1^{\pm 1},\dotsc,t_m^{\pm 1}]^{S_m}$, so it remains to prove the reverse inclusion.  Suppose
  \[ \ts
    y = \sum_{i_1,\dotsc,i_m} c_{i_1,\dotsc,i_m} t_1^{i_1} \dotsb t_m^{i_m} \in \kk[t_1^{\pm 1},\dotsc,t_m^{\pm 1}]^{S_m}.
  \]
  Since all but a finite number of the $c_{i_1,\dotsc,i_m}$ are zero, we can find a positive integer $N$ such that $i_\ell + N \ge 0$ for all $\ell=1,\dotsc,m$ whenever $c_{i_1,\dotsc,i_m} \ne 0$.  Then
  \[ \ts
    y = (t_1 t_2 \dotsm t_m)^{-N} \sum_{i_1,\dotsc,i_m} c_{i_1,\dotsc,i_m} t_1^{i_1+N} \dotsb t_m^{i_m+N} \in e_m^{-N} \kk[t_1,\dots,t_m]^{S_m} \subseteq \kk[e_1,\dotsc,e_m,e_m^{-1}]. \qedhere
  \]
\end{proof}

\begin{cor}
  In the case of a twisted loop algebra, where $A = \kk[t,t^{-1}]$ and the generator $\sigma$ of $\Gamma$ acts on $X_\rat \cong \kk^\times$ by multiplication by a primitive $|\Gamma|$-th root of unity, then $\bbA^\lambda_\Gamma \cong (\bbA^\lambda)_\Gamma$.
\end{cor}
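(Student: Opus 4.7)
The plan is to deduce the corollary from Theorem~\ref{theo:AlamGam-coinvariants} by verifying its hypothesis, namely that $(S^{r_i|\Gamma_i|}A)_{\Gamma_i}$ is reduced for every $i \in I$. Since we assume $\g$ is not of type $A_{2n}$ and $\Gamma$ is cyclic, each $\Gamma_i$ is a (cyclic) subgroup of $\Gamma$, so the analysis reduces to a single uniform calculation for each $i$.

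First I would rewrite $\bbA \defeq S^m A$ concretely, where $m = r_i |\Gamma_i|$. Since $A = \kk[t,t^{-1}]$ and $S^m A = \kk[t_1^{\pm 1}, \dotsc, t_m^{\pm 1}]^{S_m}$, Lemma~\ref{lem:symmetric-Laurent-polys} identifies this with $\kk[e_1, \dotsc, e_m, e_m^{-1}]$, where $e_k$ denotes the $k$-th elementary symmetric polynomial. Next I would compute the action of $\Gamma_i$ on $\bbA$. Since a generator of $\Gamma$ acts on $t$ by multiplication by a primitive $|\Gamma|$-th root of unity, a generator $\sigma_i$ of the subgroup $\Gamma_i$ acts on $t$ by multiplication by a primitive $\ell$-th root of unity $\zeta$, where $\ell \defeq |\Gamma_i|$. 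The induced diagonal action on $\kk[t_1^{\pm 1}, \dotsc, t_m^{\pm 1}]$ sends $t_j \mapsto \zeta t_j$ for each $j$, and hence $e_k \mapsto \zeta^k e_k$ for all $k$. Thus the isotypic decomposition of $\bbA$ under $\Gamma_i$ places $e_k$ in the component of weight $k \bmod \ell$.

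I would then identify the ideal $\bbA_{(\Gamma_i)}$. Since $\bbA_{(\Gamma_i)}$ is generated by the sum of the nontrivial isotypic components, and these components are spanned by monomials in $e_1, \dotsc, e_m, e_m^{-1}$ whose total weight (mod $\ell$) is nonzero, a simple argument shows $\bbA_{(\Gamma_i)}$ is precisely the ideal generated by $\{e_k : 1 \le k \le m,\ k \not\equiv 0 \pmod \ell\}$. Using the fact that $m = r_i \ell$ (so that $e_m = e_{r_i \ell}$ has weight $0$ and remains invertible in the quotient), this yields
\[
  \bbA/\bbA_{(\Gamma_i)} \;\cong\; \kk[e_\ell, e_{2\ell}, \dotsc, e_{r_i \ell}, e_{r_i \ell}^{-1}],
\]
a Laurent polynomial ring in $r_i$ variables. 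This is an integral domain, hence reduced, so the hypothesis of Theorem~\ref{theo:AlamGam-coinvariants} holds and the corollary follows.

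The only slightly delicate step is the second inclusion $\bbA_{(\Gamma_i)} \subseteq (e_k : k \not\equiv 0 \pmod \ell)$; it requires observing that every monomial of nonzero weight in the $e_k$'s necessarily contains at least one factor $e_k$ with $k \not\equiv 0 \pmod \ell$, since the weight of a monomial is the sum (mod $\ell$) of the weights of its factors, and $e_m = e_{r_i \ell}$ together with any $e_k$ with $k \equiv 0 \pmod \ell$ contribute zero weight. Everything else is routine bookkeeping.
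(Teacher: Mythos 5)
Your proposal is correct and follows essentially the same route as the paper: both identify $S^{r_i|\Gamma_i|}A$ with $\kk[e_1,\dotsc,e_m,e_m^{-1}]$ via Lemma~\ref{lem:symmetric-Laurent-polys}, observe that a generator of $\Gamma_i$ scales $e_k$ by $\zeta^k$ so that the coinvariant ideal is generated by the $e_k$ with $k$ not a multiple of $|\Gamma_i|$, and conclude that the quotient $\kk[e_{|\Gamma_i|},\dotsc,e_{r_i|\Gamma_i|},e_{r_i|\Gamma_i|}^{-1}]$ is a domain, hence reduced, so Theorem~\ref{theo:AlamGam-coinvariants} applies. Your extra care about why monomials of nonzero weight must contain a factor $e_k$ with $k\not\equiv 0 \pmod{|\Gamma_i|}$ is a detail the paper leaves implicit but changes nothing in substance.
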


\begin{proof}
  In this case, for $i \in I$, we have, by Lemma~\ref{lem:symmetric-Laurent-polys},
  \[
    S^{r_i |\Gamma_i|}A \cong \kk [ e_1,\dotsc,e_{r_i |\Gamma_i|},e_{r_i |\Gamma_i|}^{-1} ] \subseteq \kk[t_1^{\pm 1},\dotsc,t_{r_i |\Gamma_i|}^{\pm 1}] \cong A^{\otimes r_i |\Gamma_i|}.
  \]
  Viewing the above isomorphisms as identifications, it is easily seen that $(S^{r_i |\Gamma_i|}A)_{(\Gamma_i)}$ is the subring of $S^{r_i |\Gamma_i|}A$ generated by the $e_\ell$ for $\ell$ not a multiple of $|\Gamma_i|$.  Thus
  \[
    (S^{r_i |\Gamma_i|}A)_{\Gamma_i} = \kk[e_{|\Gamma_i|},e_{2|\Gamma_i|},\dots,e_{r_i|\Gamma_i|},e_{r_i |\Gamma_i|}^{-1}]
  \]
  is reduced and the result follows from Theorem~\ref{theo:AlamGam-coinvariants}.
\end{proof}

%
\appendix
\section{Some results in commutative algebra} \label{sec:appendix}
%

This appendix contains some technical results from commutative algebra that are used in the proof of Theorem~\ref{theo:global-Weyl-module-projective}.

\begin{defin}[Rank]
  If $R$ is an integral domain and $X$ is a finitely generated $R$-module, we define the \emph{rank} of $X$ to be $\rank X \defeq \dim_{S^{-1}R}(S^{-1}X)$, where $S=R\setminus\left\{0\right\}$.
\end{defin}

\begin{prop} \label{prop:app-local-rank}
  Suppose that $O$ is a Noetherian local domain with unique maximal ideal $\sm$. Let $Y$ be a finitely generated $O$-module. Then
  \[
    \dim_{O/\sm} Y/\sm Y\ge \rank Y,
  \]
  and if equality holds, then $Y$ is free over $O$ with rank equal to $\rank Y$.
\end{prop}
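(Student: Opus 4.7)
The proof plan is to combine Nakayama's lemma with localization at the zero prime (the generic point).

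First, I would set $n \defeq \dim_{O/\sm} Y/\sm Y$, which is finite because $Y$ is finitely generated. Choose elements $y_1,\dotsc,y_n \in Y$ whose images in $Y/\sm Y$ form an $O/\sm$-basis. Since $Y$ is finitely generated over the local ring $O$, Nakayama's lemma implies that $y_1,\dotsc,y_n$ generate $Y$ as an $O$-module, giving a short exact sequence
\[
0 \to K \to O^n \to Y \to 0,
\]
where $K$ denotes the kernel of the surjection $O^n \twoheadrightarrow Y$.

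Next, I would localize at $S = O \setminus \{0\}$. Since $O$ is a domain, $F \defeq S^{-1}O$ is its field of fractions. Localization is exact, so we obtain a short exact sequence of $F$-vector spaces
\[
0 \to S^{-1}K \to F^n \to S^{-1}Y \to 0.
\]
By definition $\rank Y = \dim_F S^{-1}Y$, and hence $n = \rank Y + \dim_F S^{-1}K \ge \rank Y$, establishing the inequality.

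For the equality statement, suppose $n = \rank Y$. Then $\dim_F S^{-1}K = 0$, i.e.\ $S^{-1}K = 0$, which means every element of $K$ is annihilated by some nonzero element of $O$. But $K$ is a submodule of $O^n$, and $O^n$ is torsion-free since $O$ is a domain; hence $K$ is torsion-free, so $K = 0$. This yields $Y \cong O^n$, a free $O$-module of rank $n = \rank Y$. The argument is entirely standard commutative algebra and I do not foresee any essential obstacles; the only points requiring care are verifying the applicability of Nakayama (which holds because $O$ is local and $Y$ is finitely generated) and the exactness of localization.
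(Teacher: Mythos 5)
Your proof is correct and follows essentially the same route as the paper: Nakayama's lemma to obtain $n$ generators, then localization at $S = O\setminus\{0\}$ to compare with the rank. Your kernel-and-torsion phrasing of the equality case is just a more explicit version of the paper's observation that linear independence over $S^{-1}O$ forces linear independence over $O$.
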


\begin{proof}
  Set $n=\dim_{O/\sm} Y/\sm Y$ and choose $y_1,\ldots,y_n\in Y$ whose images are a basis  of $Y/\sm Y$ over $O/\sm$. By Nakayama's Lemma, the elements $y_1,\ldots,y_n$ generate $Y$ over $O$. Then $y_1,\ldots,y_n$ span $S^{-1}Y$ over $S^{-1}O$, where $S=O\setminus\left\{0\right\}$, which proves the first part of the proposition. If equality holds, then $y_1,\ldots,y_n$ are linearly independent over $S^{-1}O$, and hence linearly independent over $O$; that is, $Y$ is free over $O$ with basis $y_1,\ldots,y_n$.
\end{proof}

\begin{cor}\label{cor:appendix}
  Let $R$ be a Noetherian integral domain, $X$ be a finitely generated $R$-module and $\sm$ be a maximal ideal of $R$. Then
  \[
    \dim_{R/\sm} X/\sm X\ge\rank X,
  \]
  and if equality holds, then $X_{\sm}$ is free as a module over $R_{\sm}$, with rank equal to $\rank X$.
\end{cor}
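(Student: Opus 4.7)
The plan is to reduce the corollary to the local case handled by Proposition~\ref{prop:app-local-rank} via localization at $\sm$. First I would set $O \defeq R_\sm$ and $Y \defeq X_\sm$. Since $R$ is a Noetherian integral domain, so is its localization $O$, and $O$ is local with unique maximal ideal $\sm O$, with residue field $O/\sm O \cong R/\sm$. Since $X$ is finitely generated over $R$, $Y$ is finitely generated over $O$, so the hypotheses of Proposition~\ref{prop:app-local-rank} apply to $Y$ over $O$.

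Next I would identify the two invariants appearing in the statement with those appearing in the proposition. For the residue-space dimension, localization is exact and commutes with quotients, so
\[
    Y/\sm O\, Y \;=\; X_\sm/(\sm X)_\sm \;\cong\; (X/\sm X)_\sm.
\]
Because every element of $R\setminus\sm$ already acts invertibly on the $R/\sm$-vector space $X/\sm X$, the localization map $X/\sm X \to (X/\sm X)_\sm$ is an isomorphism, giving $\dim_{O/\sm O}(Y/\sm O Y) = \dim_{R/\sm}(X/\sm X)$. For the rank, letting $S = R\setminus\{0\}$, we have $S^{-1}R = \mathrm{Frac}(R) = \mathrm{Frac}(O)$, and $S^{-1}X \cong S^{-1}(X_\sm) = S^{-1}Y$ (since inverting all of $S$ subsumes inverting $R\setminus\sm$), so $\rank_R X = \rank_O Y$.

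Applying Proposition~\ref{prop:app-local-rank} to $Y$ over $O$ now immediately yields the inequality
\[
    \dim_{R/\sm} X/\sm X \;=\; \dim_{O/\sm O} Y/\sm O Y \;\ge\; \rank_O Y \;=\; \rank_R X,
\]
and, when equality holds, the same proposition yields that $Y = X_\sm$ is free over $O = R_\sm$ of rank equal to $\rank_O Y = \rank_R X$.

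This is essentially a bookkeeping argument and I do not foresee a real obstacle; the only point that requires a moment's care is the verification that localization at $\sm$ does not alter either $\dim_{R/\sm}(X/\sm X)$ or the generic rank, which is immediate from the compatibility of localization with quotients and from the transitivity of localization.
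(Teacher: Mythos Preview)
Your proposal is correct and follows essentially the same route as the paper: set $O = R_\sm$, $Y = X_\sm$, verify that $\dim_{R/\sm}(X/\sm X)$ and $\rank_R X$ coincide with their local counterparts, and invoke Proposition~\ref{prop:app-local-rank}. The only cosmetic difference is that the paper phrases the equality of residue-space dimensions by saying a basis of $X/\sm X$ passes to a basis of $X_\sm/\mathsf{n} X_\sm$, whereas you invoke exactness of localization and invertibility of $R\setminus\sm$ on $X/\sm X$; these are interchangeable justifications of the same fact.
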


\begin{proof}
  The localization $R_\sm$ is a local ring with a unique maximal ideal $\mathsf{n}$.  Then $\rank X_\sm=\rank X$, and moreover
  \[
    \dim_{R/\sm} X/\sm X = \dim_{R_\sm/\mathsf{n}}  X_\sm/\mathsf{n} X_\sm,
  \]
  since a basis $x_1+\sm X,\ldots x_k+\sm X$ of $X/\sm X$ over $R/\sm$ passes to a basis $x_1+\mathsf n X_\sm,\ldots,x_k+\mathsf n X_\sm$ of $X_\sm/\mathsf n X_\sm$ over $R_\sm/\mathsf n$.
  The corollary then follows by applying Proposition~\ref{prop:app-local-rank} with $O=R_\sm$ and $Y=X_\sm$.
\end{proof}

\begin{lem}\label{lem:appendix}
  Let $R$ be an integral domain and $X$ be a finitely generated $R$-module. Then there is a nonempty open subset $U \subseteq \Spec R$ such that for each maximal ideal $\sm\in U$, we have $\dim_{R/\sm} X/\sm X=\rank X.$
\end{lem}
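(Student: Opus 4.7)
Set $n = \rank X$ and let $K = \operatorname{Frac}(R)$. The strategy is to find a single nonzero element $a \in R$ such that $X[1/a]$ is already free of rank $n$ over $R[1/a]$; then $U = D(a)$ will be the desired open set, since for any maximal ideal $\sm \in D(a)$ the localization $X_{\sm}$ becomes free of rank $n$ over $R_{\sm}$, so $X/\sm X \cong (R/\sm)^n$ and hence $\dim_{R/\sm} X/\sm X = n = \rank X$.

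To produce such an $a$, I would first choose $x_1,\dots,x_n \in X$ whose images in $X \otimes_R K$ form a $K$-basis, and let $F = Rx_1 + \dots + Rx_n \subseteq X$. By construction, the natural map $R^n \to F$ sending $e_i \mapsto x_i$ is injective (any $R$-linear relation would, after tensoring with $K$, contradict $K$-independence), so $F$ is free of rank $n$. Moreover, the inclusion $F \hookrightarrow X$ becomes an isomorphism after tensoring with $K$, so $(X/F) \otimes_R K = 0$, which means $X/F$ is a torsion $R$-module. Since $X$ is finitely generated over $R$, so is $X/F$; choosing a finite generating set $\bar y_1,\dots,\bar y_m$ and nonzero annihilators $a_j \in R$ with $a_j \bar y_j = 0$, the product $a \defeq a_1 \cdots a_m \in R \setminus \{0\}$ annihilates $X/F$. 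Equivalently, $aX \subseteq F$.

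Now localize at $a$. Since $a$ is a unit in $R[1/a]$ and $aX \subseteq F \subseteq X$, we obtain $X[1/a] = F[1/a] \cong R[1/a]^n$. For any $\sm \in D(a)$, i.e.\ any prime with $a \notin \sm$, we have $X_\sm = (X[1/a])_{\sm R[1/a]} \cong R_\sm^n$, and hence $X/\sm X \cong X_\sm/\sm X_\sm \cong (R/\sm)^n$. The set $U = D(a)$ is nonempty in $\Spec R$ because $R$ is a domain and $a \ne 0$, so $(0) \in D(a)$.

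The only real subtlety will be ensuring that the $K$-basis $x_1,\dots,x_n$ can be chosen from $X$ itself (rather than from $X \otimes_R K$); but since $X \otimes_R K$ is the localization of $X$ at $R \setminus \{0\}$, any $K$-basis can be scaled by a common denominator to lie in the image of $X$, and then one uses a maximal $R$-linearly independent subset of that image. No step requires $R$ to be Noetherian, which is why the hypothesis was omitted.
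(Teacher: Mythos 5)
Your proof is correct and follows essentially the same route as the paper: both arguments produce a free rank-$n$ submodule $F=\sum R x_i$ together with a single nonzero element (your annihilator $a$ of the torsion module $X/F$; the paper's product of denominators of the generators) whose inversion identifies $X$ with $F$, and then take $U$ to be the corresponding basic open set. The only difference is cosmetic: you conclude via the standard isomorphism $X/\sm X\cong X_\sm/\sm X_\sm\cong (R/\sm)^n$, whereas the paper verifies directly that the images of the $x_i$ form a basis of $X/\sm X$.
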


\begin{proof}
  Set $S=R\setminus\left\{0\right\}$ and choose a basis $\frac{x_1}{a_1},\dotsc,\frac{x_n}{a_n}$ of $S^{-1}X$ over $S^{-1}R$, where $n=\rank X$, $x_i\in X$ and $a_i\in S$ for $i=1,\ldots,n$. Rescaling gives a basis $x_1,\ldots,x_n$ of $S^{-1}X$ over $S^{-1}R$.

  Now choose a finite set $\{g_1,\ldots,g_N\}$ of generators of $X$ as an $R$-module.  For  $j=1,\ldots,N$, we can consider $g_j$ as an element of $S^{-1}X$ and thus write it in the form
  \[
    g_j=\textstyle\sum_{i=1}^n\frac{a_{ij}}{b_j}x_i \quad \text{for some } b_j \in S,\ a_{ij} \in R,\ 1 \le i \le n,
  \]
  where we found a common denominator for the coefficients of the $x_i$.  Thus,
  \[
    g_j\in\textstyle\sum_{i=1}^n R_{b_j}x_i,
  \]
  where $R_{b_j}$ denotes the localization of $R$ at the multiplicative set generated by $b_j$. Let $T$ denote the multiplicative subset of $R$ generated by $b_1,\ldots,b_N$. Thus, $X_T$ is generated by $x_1,\ldots,x_n$ over $T^{-1}R$.  Since $x_1,\ldots,x_n$ are linearly independent over $S^{-1}R$, they are also independent over $T^{-1}R$, and thus $X_T\cong R_{T}^{\oplus n}$.

  We claim it now follows that $U=\bigcap_{j=1}^N D_{b_j}$ satisfies the condition in the statement of the lemma, where $D_{b_j}=\left\{\mathsf{p}\in\Spec(R)\ |\ b_j \notin \mathsf{p} \right\}$ is the basic open set associated to $b_j$. To see this, it suffices to show that for each maximal ideal $\sm\in U$, the set $\cB \defeq \left\{ x_i+\sm X \right\}_{i=1}^n$ is a basis of $X/\sm X$ over $R/\sm$.

  We first show that $\cB$ is linearly independent. Suppose that
  \begin{equation} \label{appendix:lincomb} \ts
    0=\sum_{i=1}^n(r_i+\sm)(x_i+\sm X),\quad r_i \in R,\ x_i \in X,
  \end{equation}
  that is, $\sum_{i=1}^n r_i x_i = \sum_{j=1}^k m_j y_j$ for some $m_j \in \sm$ and $y_j\in X$.  Now, in $X_T$ we have, for $j=1,\dotsc,k$,
  \[ \ts
    y_j = \sum_{i=1}^n\frac{s_{i,j}}{t_{i,j}} x_i \quad \text{for some } t_{i,j}\in T,\ s_{i,j} \in R.
  \]
  Thus,
  \[ \ts
    \sum_{j=1}^k \sum_{i=1}^n\frac{s_{i,j} m}{t_{i,j}} x_i = \sum_{j=1}^k m_jy_j = \sum_{i=1}^n r_i x_i,
  \]
  and so, by the independence of the $x_i$ over $T^{-1}R$, we have $\sum_{j=1}^k \frac{s_{i,j} m}{t_{i,j}} = r_i$ for each $i$.  Therefore, $r_i \prod_{j=1}^k t_{i,j}\in\sm$. Since $\sm\in U$, we have $b_j\in R\setminus \sm$ for each $1\le j\le N$.  Thus $T \cap \sm = \varnothing$.  Hence $\prod_{j=1}^k t_{i,j} \notin\sm$, and so $r_i\in\sm$.  Thus the linear combination \eqref{appendix:lincomb} is trivial.

  Finally, we show that $\cB$ spans $X/\sm X$ over $R/\sm$.  Suppose $x+\sm X\in X/\sm X$. In $X_T$,
  again we write
  \[ \ts
    x = \sum_{i=1}^n\frac{s_i}{t_i} x_i \quad \text{for some } t_i \in T,\ s_i \in R.
  \]
  Setting $t=\prod_{i=1}^n t_i$ and $t_i'=\prod_{j\ne i} t_j$, we have
  \[ \ts
    tx = \sum_{i=1}^n\ s_it_i' x_i.
  \]
  Since $\sm$ is maximal, there exists $r\in R$ such that $1-rt\in\sm$, and we have
  \[
    trx=\textstyle\sum_{i=1}^n s_it_i'r x_i.
  \]
  Reducing modulo $\sm$, it follows that
  \[
    x+\sm X=(1+\sm)x+\sm X=(rt+\sm)x+\sm X =\textstyle\sum_{i=1}^n (s_it_i'r+\sm)(x_i+\sm X),
  \]
  which shows that $\cB$ is a spanning set and completes the proof.
\end{proof}

\begin{prop}\label{prop2:appendix}
  Let $R$ be a finitely generated algebra over a field such that $R$ is an integral domain and let $X$ be a finitely generated $R$-module. Suppose that there exists $n \in \N$ such that $\dim_{R/\sm} X/\sm X=n$ for all maximal ideals $\sm$ of $R$. Then $X_{\sm}$ is a free $R_{\sm}$-module of rank $n$ for all maximal ideals $\sm$ of $R$.
\end{prop}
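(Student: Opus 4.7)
The plan is to combine Lemma~\ref{lem:appendix} and Corollary~\ref{cor:appendix} with the fact that a finitely generated algebra over a field is Jacobson, so that maximal ideals are dense in $\Spec R$. The key observation is that the hypothesis gives a constant value $n$ for the fiber dimensions, and we already know from Corollary~\ref{cor:appendix} that this value is an upper bound for $\rank X$ and yields freeness when it equals $\rank X$.

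First, I would invoke Lemma~\ref{lem:appendix} to obtain a nonempty open subset $U \subseteq \Spec R$ such that $\dim_{R/\sm} X/\sm X = \rank X$ for every maximal ideal $\sm \in U$. Next, since $R$ is a finitely generated algebra over a field, it is a Jacobson ring, so every nonempty open subset of $\Spec R$ contains a maximal ideal (the maximal ideals are dense in $\Spec R$). Choose any maximal ideal $\sm_0 \in U$. Then by the hypothesis, $\dim_{R/\sm_0} X/\sm_0 X = n$, and by the conclusion of Lemma~\ref{lem:appendix}, this dimension also equals $\rank X$. Hence $n = \rank X$.

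Finally, for any maximal ideal $\sm$ of $R$, the hypothesis gives $\dim_{R/\sm} X/\sm X = n = \rank X$, so the equality condition in Corollary~\ref{cor:appendix} is satisfied. Therefore $X_\sm$ is a free $R_\sm$-module of rank $\rank X = n$, completing the proof.

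The main obstacle, if any, is not computational but rather remembering the Jacobson property of finitely generated algebras over a field (a consequence of the Nullstellensatz): every nonempty Zariski-open subset of $\Spec R$ contains a closed point. Without this, one only gets freeness on a dense open set rather than at every maximal ideal. The rest of the argument is essentially a clean chaining of the two previous results.
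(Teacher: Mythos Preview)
Your proof is correct and follows essentially the same approach as the paper's: both invoke Lemma~\ref{lem:appendix} to obtain the open set $U$, use the density of maximal ideals in $\Spec R$ (the Jacobson property of finitely generated algebras over a field) to find a maximal ideal in $U$ and deduce $n=\rank X$, and then apply Corollary~\ref{cor:appendix} at every maximal ideal.
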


\begin{proof}
  By Lemma~\ref{lem:appendix}, there is a nonempty open set $U \subseteq \Spec R$  with the property that, for each maximal ideal $\sm\in U$, we have $\dim_{R/\sm} X/\sm X=\rank X$.  Since $R$ is finitely generated, the maximal ideals of $R$ are dense in $\Spec (R)$, and so there is at least one maximal ideal in $U$.
  \details{Since $R$ is a finitely generated algebra over a field, every prime ideal of $R$ is an intersection of maximal ideals by \cite[Ch.~V, \S4, Th.~3]{Bou85b}. In particular each prime ideal is the intersection of maximal ideals containing it. It follows that for a prime ideal $\mathsf p$ contained in a basic open set $D_f$ for some $f\in R$, there is a maximal ideal containing $\mathsf p$ which lies in $D_f$. Therefore, the maximal ideals are dense in $\Spec(R)$.}
  Thus $\dim_{R/\sm} X/\sm X=\rank X$ for all maximal ideals $\sm$. Applying Corollary~\ref{cor:appendix} completes the proof.
\end{proof}

\begin{theo}[{\cite[Chap.~II, \S5, no.~2, Th.~1]{Bou85a}}] \label{theo-bourbaki:appendix}
  Let $R$ be a commutative ring and $P$ be an $R$-module. Then $P$ is a finitely generated projective module if and only if $P$ is a finitely presented module and, for every maximal ideal $\sm$ of $R$, $P_{\sm}$ is a free $R_{\sm}$-module.
\end{theo}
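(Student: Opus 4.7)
The plan is to prove both implications using the standard interplay between projectivity, freeness of modules over local rings, and behavior of $\Hom$ under localization.

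For the forward direction, suppose $P$ is finitely generated projective. Then $P$ is a direct summand of some $R^n$. Writing $R^n = P \oplus Q$, the module $Q$ is itself a direct summand of a finitely generated free module, hence finitely generated; this gives a presentation $0 \to Q \to R^n \to P \to 0$ with $Q$ finitely generated, so $P$ is finitely presented. For localization, $P_{\sm}$ is a direct summand of $(R_{\sm})^n$, hence finitely generated projective over the local ring $R_{\sm}$. A Nakayama argument (lift a basis of $P_{\sm}/\sm R_{\sm} P_{\sm}$ over the residue field to elements of $P_{\sm}$; these generate by Nakayama, and the kernel of the resulting surjection $R_{\sm}^k \to P_{\sm}$ is a direct summand whose reduction mod the maximal ideal vanishes, forcing it to be zero by Nakayama) shows $P_{\sm}$ is free.

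For the converse, assume $P$ is finitely presented and $P_{\sm}$ is free over $R_{\sm}$ for every maximal ideal $\sm$. I will show that $\Hom_R(P, -)$ is exact, equivalently that for any surjection $M \twoheadrightarrow N$ of $R$-modules, the induced map $\Hom_R(P, M) \to \Hom_R(P, N)$ is surjective. The crucial tool is the isomorphism
\[
  \Hom_R(P, N)_{\sm} \cong \Hom_{R_{\sm}}(P_{\sm}, N_{\sm}),
\]
valid whenever $P$ is finitely presented. This is established by taking a finite presentation $R^m \to R^n \to P \to 0$, applying the left exact functor $\Hom_R(-, N)$ to obtain a left exact sequence, localizing (which is exact), and comparing with the analogous left exact sequence over $R_{\sm}$ using the natural isomorphisms $\Hom_R(R^k, N)_{\sm} \cong N_{\sm}^k \cong \Hom_{R_{\sm}}(R_{\sm}^k, N_{\sm})$; the five lemma then produces the desired isomorphism.

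Granted this, let $C$ denote the cokernel of $\Hom_R(P, M) \to \Hom_R(P, N)$. Localizing at a maximal ideal $\sm$ and applying the isomorphism above, $C_{\sm}$ is identified with the cokernel of $\Hom_{R_{\sm}}(P_{\sm}, M_{\sm}) \to \Hom_{R_{\sm}}(P_{\sm}, N_{\sm})$. Since $P_{\sm}$ is free over $R_{\sm}$, the functor $\Hom_{R_{\sm}}(P_{\sm}, -)$ is exact, so this cokernel vanishes and hence $C_{\sm} = 0$ for every maximal ideal $\sm$. A standard local-global principle then gives $C = 0$, proving $P$ is projective. The main obstacle is establishing the commutation of $\Hom$ with localization for finitely presented modules; this is where the finite presentation hypothesis is genuinely used, and without it the backward direction fails (flat plus finitely generated does not imply projective in general without finite presentation).
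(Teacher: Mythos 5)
Your proof is correct. Note that the paper does not actually prove this statement — it is quoted verbatim from Bourbaki (\cite[Chap.~II, \S5, no.~2, Th.~1]{Bou85a}) and used as a black box in Corollary~\ref{cor:constant-localized-dim} — so there is no argument in the paper to compare against; yours is the standard textbook proof. Both directions are sound: the forward direction correctly extracts finite presentation from the splitting $R^n \cong P \oplus Q$ and uses the split Nakayama argument to get freeness of $P_{\sm}$ over the local ring, and the converse correctly isolates the one point where finite presentation is genuinely needed, namely the isomorphism $\Hom_R(P,N)_{\sm} \cong \Hom_{R_{\sm}}(P_{\sm},N_{\sm})$, before concluding by the local-global vanishing of the cokernel.
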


\begin{cor} \label{cor:constant-localized-dim}
  Suppose that $R$ is a finitely generated algebra over a field such that $R$ is an integral domain and suppose that $P$ is a finitely generated $R$-module.  Furthermore, assume that there exists $n \in \N$ such that $\dim_{R/\sm} P/\sm P=n$ for all maximal ideals $\sm$ of $R$.  Then $P$ is projective.  Moreover, if $R$ is a generalized Laurent polynomial ring $\kappa [t_1^{\pm 1},\dotsc,t_\ell^{\pm 1},s_1,\dotsc,s_m]$, $\ell,m \in \N$, over a field $\kappa$, then $P$ is free of rank $n$ over $R$.
\end{cor}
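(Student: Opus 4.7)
The plan is to stitch together the two theorems from the appendix that have already been prepared for this purpose, and then invoke the Quillen--Suslin theorem (together with its Laurent polynomial refinement) for the stronger freeness assertion.

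First I would observe that the hypothesis that $R$ is a finitely generated algebra over a field makes $R$ Noetherian, so a finitely generated $R$-module $P$ is automatically finitely presented. Combined with the constant fibre dimension hypothesis $\dim_{R/\sm} P/\sm P = n$ for every maximal ideal $\sm$, Proposition~\ref{prop2:appendix} applies directly to yield that $P_{\sm}$ is a free $R_{\sm}$-module of rank $n$ for every maximal ideal $\sm$ of $R$. Theorem~\ref{theo-bourbaki:appendix} (the characterization of finitely generated projective modules via local freeness) then immediately gives that $P$ is a finitely generated projective $R$-module. This establishes the first assertion.

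For the second assertion, suppose $R = \kappa[t_1^{\pm 1},\dotsc,t_\ell^{\pm 1},s_1,\dotsc,s_m]$. By the first part, $P$ is finitely generated and projective over $R$. The Quillen--Suslin theorem asserts that every finitely generated projective module over a polynomial ring $\kappa[x_1,\dotsc,x_N]$ is free; its extension to (generalized) Laurent polynomial rings, due to Swan, asserts the same for $\kappa[t_1^{\pm 1},\dotsc,t_\ell^{\pm 1},s_1,\dotsc,s_m]$. Applying this result directly, $P$ is free. The rank must be $n$, since localizing at any maximal ideal $\sm$ gives a free module of rank $n$ by Proposition~\ref{prop2:appendix}, and localization preserves rank for free modules.

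The only step that is not entirely routine is the appeal to the Laurent version of Quillen--Suslin; the polynomial ring case alone would not suffice to handle the $t_i^{\pm 1}$ factors. I would therefore cite Swan's (or Suslin's) extension explicitly. Everything else is a direct chain: Noetherian $\Rightarrow$ finitely presented $\Rightarrow$ apply Proposition~\ref{prop2:appendix} $\Rightarrow$ apply Theorem~\ref{theo-bourbaki:appendix} $\Rightarrow$ projective; then projective $+$ Quillen--Suslin/Swan $\Rightarrow$ free of the correct rank.
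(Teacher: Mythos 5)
Your proposal is correct and follows exactly the paper's argument: Proposition~\ref{prop2:appendix} plus Theorem~\ref{theo-bourbaki:appendix} give projectivity, and the Laurent-polynomial version of Quillen--Suslin (the paper cites Lam's book for precisely this generalized form, which covers the $t_i^{\pm 1}$ factors as you note) gives freeness. The extra details you supply -- Noetherian implies finitely presented, and the rank computation via localization -- are just the routine steps the paper compresses into ``follows immediately.''
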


\begin{proof}
  The first part of the corollary follows immediately from Proposition~\ref{prop2:appendix} and Theorem~\ref{theo-bourbaki:appendix}.  The second part follows from the Quillen--Suslin Theorem (see, e.g., \cite[Cor. V.4.10]{Lam06}).
\end{proof}


\bibliographystyle{alpha}
\bibliography{global-weyl-modules-biblist}

\end{document}